\documentclass[11pt]{article}
\usepackage{amssymb,amsmath,amsthm,mathtools}
\usepackage{graphicx}
\usepackage{booktabs}
\usepackage{enumitem}
\usepackage[hidelinks]{hyperref}
\allowdisplaybreaks

\usepackage{geometry}
\numberwithin{equation}{section}
\newtheorem{theorem}{Theorem}[section]
\newtheorem{lemma}[theorem]{Lemma}
\newtheorem{proposition}[theorem]{Proposition}

\theoremstyle{remark}

\theoremstyle{plain}

\def\Re{{\mathop{\rm Re}\,}}
\def\Im{{\mathop{\rm Im}\,}}
\newcommand{\R}{\mathbb R}
\newcommand{\C}{\mathbb C}
\newcommand{\N}{\mathcal N}
\newcommand{\E}{\mathcal E}
\newcommand{\Y}{\mathcal Y}
\newcommand{\X}{\mathcal X}
\newcommand{\Z}{\mathcal Z}
\newcommand{\LL}{\mathcal L}
\newcommand{\QQ}{\mathcal Q}
\newcommand{\Cd}{\mathfrak C}
\newcommand{\CP}{\mathrm{CP}}
\newcommand{\norm}[1]{\left\lVert #1\right\rVert}
\newcommand{\ip}[2]{\left(#1,#2\right)_\rho}
\newcommand{\dx}{\,dx}

\begin{document}
\title{Linearize or Discretize First? Error Estimates under Mesh-Uniform
Regularity and Exponential Stability for the KdV Equation with Boundary
Dissipation\thanks{This work is supported by the National Natural Science
Foundation of China (grant No. 12171431).\medskip}}
\author{Kangsheng Liu\thanks{Academy for Advanced Interdisciplinary Studies,
Northeast Normal University, Changchun 130024, Jilin, China; School of
Mathematical Sciences, Zhejiang University, Hangzhou, 310058, China
(email: ksliu@zju.edu.cn).}}
\date{}
\maketitle

\begin{center}
\begin{minipage}{6in}
{\bf Abstract.}
We study the Korteweg--de Vries equation on a bounded interval with
boundary dissipation, and its spatial semidiscretization by a weighted
cubic Hermite Galerkin method. A small nonlinear wave can be approximated
by a linear discrete simulation in two orders: linearize first and then
discretize, or discretize first and then linearize. Both orders lead to
the same approximation and to the same total error. They divide this
error differently between linearization and discretization. The
difference of the two divisions is what we call the ordering error. We
show that it is quadratic in the size of the initial datum and carries a
positive power of the mesh size, for all time.
\par\smallskip
For the continuous initial-boundary value problem, the solution map is
analytic, with a power series that converges for all time in
exponentially weighted trajectory norms. The semigroup associated with the linearized equation has
Crandall--Pazy regularity with the exact index $3/2$. For the weighted scheme,
the discrete semigroups inherit exponential stability, Kato smoothing and Crandall--Pazy regularity, with bounds uniform in the mesh size.
We also establish mesh-uniform analyticity of the corresponding nonlinear solution maps.
\par\smallskip
These properties give error estimates in which the error and the datum
are measured in the same norm: in the energy norm, in
the $H_0^1$ norm, which controls the amplitude of the wave, and in the $H^2$ norm, which controls its slope. On the whole half-line
the semidiscrete solutions converge in the energy norm for every fixed
small energy datum. In exponentially weighted time-integrated norms they
converge uniformly on balls, at the rate $h^{1/6}$ in $L^4(0,\infty;L^2)$
for data in $L^2$ and at the rate $h^{2/3}$ in $L^2(0,\infty;H^1)$ for
data in $H_0^1$. On every interval $[t_0,\infty)$, $t_0>0$, same-norm
convergence holds uniformly on balls at algebraic rates. The ordering
error is of order $h^{1/6}$ times the square of the energy norm of the
datum, uniformly for all $t\ge0$. In the strong norms its rates carry a
time weight. In $H^1$ this time weight can be removed at the cost of
half a derivative more on the datum. For the unweighted scheme, a grid wave packet
of zero group velocity destroys mesh-uniform Kato smoothing and
Crandall--Pazy regularity.
\vskip 3mm

{\bf Keywords.} Korteweg--de Vries equation, Hermite Galerkin method,
ordering error, error estimates, Crandall--Pazy semigroup, mesh-uniform
property, analytic solution map
\vskip 4mm

{\bf 2020 Mathematics Subject Classification: } 35Q53, 65M60, 65M15,
47D06, 35B65, 35B35
\vskip 3mm
\end{minipage}
\end{center}

\section{Introduction}\label{sec:intro}

A small nonlinear wave is often simulated by a linear discrete model.
Two approximations are then made: the equation is linearized, and it is
discretized. They can be made in either order. Does the order matter?

We study this question for the Korteweg--de Vries (KdV)
equation~\cite{KdV} on an interval of fixed length $L>0$:
\begin{equation}\label{eq:pde}
 u_t+u_{xxx}+uu_x=0,\qquad
 u(0,t)=u(L,t)=u_x(L,t)=0,\qquad u(x,0)=u_0(x).
\end{equation}
We call $u_0$ the initial datum, or simply the datum. Let $v$ solve the
linearized equation with the same datum, and let $u_h$ and $v_h$ be the
Galerkin approximations of $u$ and $v$. Linearizing first gives the path
$u\to v\to v_h$. Discretizing first gives the path $u\to u_h\to v_h$. Both
paths end at $v_h$, so the total error $u-v_h$ is the same. The order
decides how this error is divided between linearization and
discretization. Changing the order changes both shares by the same
amount,
\[
 \Cd_h=(u-v)-(u_h-v_h)=(u-u_h)-(v-v_h).
\]
We call $\Cd_h$ the \emph{ordering error}. It vanishes at $t=0$. Let
$\varepsilon$ be the size of the datum and $h^p$ a mesh factor. Separate
bounds for the four shares give only
$\Cd_h=O(\min\{\varepsilon^2,h^p\varepsilon\})$. Is the ordering error
of the product form $h^p\varepsilon^2$? Does this hold for all time?

For a weighted Galerkin scheme the answer to both questions is yes. Let
$H=L^2((0,L),(L+x)\dx)$, and let $\lambda=3\pi^2/(4L^3)$ be the energy
decay rate described below. For every $0<\nu<\lambda$ there
are $r,C>0$, depending only on $L$, $\nu$ and the mesh regularity, such
that for real data
\begin{equation}\label{eq:headline}
 \sup_{t\ge0}e^{\nu t}\norm{\Cd_h(t)}_H\le Ch^{1/6}\norm{u_0}_H^2
 \qquad\text{if }\norm{u_0}_H\le r\text{ and }0<h\le\min\{1,L\}.
\end{equation}
The bound holds on a whole energy ball and for all time, and no higher
norm of the datum enters. The full errors cannot do this. The kernel of
the initial projection contains nonzero smooth functions, so $u-u_h$ and $v-v_h$
are not small at $t=0$ uniformly on a ball. Their difference is small
from $t=0$ on. In $H_0^1$ and $H^2$, which control the amplitude and the
slope of the wave, the ordering error is of order $h^{1/6}\varepsilon^2$
and $h^{4/15}\varepsilon^2$ at positive times
(Theorem~\ref{thm:ordering}).

The starting point is the dissipation. Control and stabilization of KdV
on a bounded interval have been studied
intensively~\cite{Rosier,PMVZ,RZ,RZsurvey,Cerpa}. We omit the transport
term $u_x$, which produces Rosier's critical lengths~\cite{Rosier}, so
that the boundary dissipation stabilizes \eqref{eq:pde} for every $L$.
The $L^2$ norm is dissipated only at the left endpoint. With the
multiplier $L+x$, Rosier's multiplier
method~\cite[Proposition~3.2]{Rosier} shows that the linear flow on the
weighted space $H$ decays like $e^{-\lambda t}$ and has Kato
smoothing~\cite{Kato83} (Proposition~\ref{prop:rosier}).

Three properties enter, each uniformly in the mesh. Exponential stability
makes estimates valid for all time. Analytic dependence on the datum
separates the quadratic part of the solution from the rest. Smoothing in
time gives positive powers of $h$ for data of low regularity. The
delicate one is smoothing, because the KdV semigroup on a bounded
interval is not analytic. Chu, Coron, Shang and Tang~\cite{CCST} proved
that the semigroup generated by $-\partial_x-\partial_x^3$, with the
boundary conditions of \eqref{eq:pde}, is of Gevrey class $s$ for every
$s>3/2$. Resolvent decay of order $|\tau|^{-\beta}$ on the imaginary axis
gives every Gevrey class of order greater than $1/\beta$, by Taylor's
criterion~\cite[Chapter~5]{Taylor89}. Crandall and
Pazy~\cite{CrandallPazy69} showed that such decay makes the semigroup
differentiable, with $\norm{AT(t)}=O(t^{1-2/\beta})$. On Banach spaces,
Eberhardt, El-Mennaoui and Engel~\cite[Theorem~2]{EEE95} obtained
$O(t^{-\gamma})$ for every $\gamma>1/\beta$. Only recently did
Wakaiki~\cite{WakaikiCP} prove the exact characterization: on a Hilbert
space, resolvent decay of order $|\tau|^{-\beta}$ is equivalent to
$\norm{AT(t)}=O(t^{-1/\beta})$ as $t\to0$. We call the second property
Crandall--Pazy regularity of index $1/\beta$. For KdV, $\beta=2/3$ and
the index is $3/2$. A power bound with an explicit index is what
time-weighted error estimates require.

The index $3/2$ is attained, inherited and lost. It is attained by the
linearized semigroup: a wave packet $\chi e^{ikx}$ at time $t\sim k^{-2}$
gives $\norm{AT(t)}\ge ct^{-3/2}$. It is inherited, uniformly in the
mesh, by the cubic Hermite Galerkin scheme in the weighted space $H$, and
the scheme cannot do better uniformly. It is lost without the weight. On
fine uniform meshes the unweighted scheme admits a grid wave packet at a
frequency of zero group velocity. The packet persists for at least a
time of order $h$. It rules out mesh-uniform Kato smoothing and
mesh-uniform Crandall--Pazy regularity of index $3/2$
(Theorems~\ref{thm:ctcp} and~\ref{thm:cp},
Proposition~\ref{prop:Squasimode}). The weight works because the
Galerkin form satisfies the weighted energy identity exactly on the trial
space. The mesh-uniform exponential decay and Kato smoothing then hold
with the same constants as for the solution to the continuous problem.
The energy identity also gives a resolvent estimate of order
$|\tau|^{-2/3}$, uniform in the mesh.

Whether discretizations inherit the decay of a damped system has been
studied extensively. For the wave equation the answer is negative in
general. Numerical waves of high frequency may travel with vanishing
group velocity~\cite{Trefethen}, and then they escape boundary
observation and damping~\cite{InfanteZuazua,Zuazua05}. Uniform decay can
be restored by numerical viscosity or by other modifications of the
scheme~\cite{TTZ,MunchPazoto,RTT}. Liu and Zheng~\cite{LiuZheng94}
characterized uniform exponential stability of a family of uniformly
bounded semigroups on Hilbert spaces by a uniform resolvent bound on the
imaginary axis. For the
critical generalized KdV equation with localized damping, Pazoto,
Sep\'ulveda and Vera Villagr\'an~\cite{PSV} proved exponential decay for
small data, uniformly in the mesh parameters. We refer to
Tr\'elat~\cite{Trelat} for a survey. Mesh-uniform regularity has a
different history. Parabolic semigroups are analytic in time, and their
discretizations have a natural uniform regularity. Earlier work has
therefore concentrated on them. For non-analytic semigroups we are not
aware of mesh-uniform regularity results.

The Hilbert direct sum of the discrete generators gives a criterion for
mesh-uniform regularity and, at the same time, for uniform stability. A
mesh-uniform resolvent bound is a resolvent
bound for this single generator. By Wakaiki's theorem, a family of
contraction semigroups on Hilbert spaces is uniformly of Crandall--Pazy
class if and only
if it satisfies a uniform resolvent decay estimate
(Proposition~\ref{prop:uniformCP}). This is the regularity counterpart
of the characterization of Liu and Zheng~\cite{LiuZheng94}, which can be
read as the Gearhart--Huang--Pr\"uss theorem~\cite{Gearhart,Huang,Pruss}
for the direct sum. The direct sum of a uniformly regular family again
has Crandall--Pazy regularity, so its growth bound equals its spectral
bound. Uniform exponential stability of such a family is therefore
characterized by the spectral bound of the direct sum. Both criteria are
characterizations, so this one is equivalent to that of Liu and Zheng.
It replaces their resolvent bound by a spectral bound. For our scheme a
comparison of resolvents with the continuous problem controls this
spectral bound on sufficiently fine meshes, at every rate below
$-s(A)$, the optimal decay rate of the continuous semigroup. Here $s(A)$
is the spectral bound of the generator, and $-s(A)>8\lambda$.

Two more steps complete the argument. Analyticity of the solution maps
is proved by a Catalan majorant for the Duhamel power series in
exponentially weighted trajectory spaces. At the energy level the
nonlinearity loses one derivative. Kato smoothing, integrated over all
times, compensates for this loss. For the ordering error the key is
a cancellation. In its mild form the linear error $v-v_h$, which is not
small uniformly on a ball, drops out. Each remaining term is quadratic
in the datum and contains a discretization error. Mixed time norms of
type $L^4$ and $L^{4/3}$ control these terms on a whole energy ball.

Along the way we obtain error estimates for the full approximations.
Numerical methods for KdV have a long history. Galerkin,
Petrov--Galerkin, discontinuous Galerkin and finite-difference schemes
were analyzed in~\cite{BDK,DK,Shen,BCKX,HKR,CLR}. Most of these works
concern the periodic problem or the real line. Where they prove error
estimates, these hold on finite time intervals. The estimates require a
higher norm of the solution, or of the datum, than the norm of the
error. Our error bounds hold on the whole half-line and decay
exponentially, with constants independent of time. Except in two
results, the datum is measured in the norm of the error: in $H$, in
$H_0^1$ or in a compatible $H^2$ space. In time-integrated norms the
errors are of order $h^{1/6}$ in $L^4(L^2)$ for energy data and
$h^{2/3}$ in $L^2(H^1)$ for data in $H_0^1$, uniformly on balls. On
$[t_0,\infty)$, $t_0>0$, pointwise rates hold with explicit time weights
(Theorems~\ref{thm:convergence}, \ref{thm:samenorm}
and~\ref{thm:higher}).

We are not aware of an earlier study of the ordering error, nor of
mesh-uniform analyticity of solution maps for discretizations of KdV.
Analytic dependence on the data is classical for
KdV~\cite{Bourgain,KPV,BSZ}. On a bounded interval, Bona, Sun and
Zhang~\cite{BSZ} proved it on finite time intervals. Here the Taylor
remainder is uniform for all time, and data, solution and remainder are
measured in the same norm. Without dissipation this form is not to be
expected. On the line, Ifrim, Koch and Tataru~\cite{IKT} obtain
linear-type dispersive bounds for times of order $\varepsilon^{-3}$, for
localized data of size $\varepsilon$. In parabolic problems, exponential
decay and an integrable smoothing kernel allow a contraction in weighted
trajectory spaces~\cite{Henry}. For KdV the derivative loss requires
Kato smoothing at the energy level.

The paper is organized as follows. Section~\ref{sec:continuous} treats
the continuous problem: the analytic solution map, Crandall--Pazy
regularity and exponential decay. Section~\ref{sec:discrete} introduces
the weighted scheme and proves its mesh-uniform properties.
Section~\ref{sec:samenorm} defines the ordering error and proves the
error estimates in the norm of the datum. Section~\ref{sec:unweighted}
shows that the unweighted scheme loses the uniformity.

Throughout, $x_\wedge=\min\{1,x\}$ for $x\ge0$, for instance $t_\wedge$
and $(t-s)_\wedge$;
$\langle\omega\rangle=(1+\omega^2)^{1/2}$. The letter $C$ denotes a
positive constant that may change from line to line. Unless stated
otherwise, it depends only on $L$, on the mesh-regularity constant and
on the decay rates.

\section{The continuous problem}\label{sec:continuous}
\setcounter{equation}{0}

\subsection{Energy, adjoint and fractional spaces}\label{sec:setting}
Set $\rho(x)=L+x$ and $H=L^2((0,L),\rho\dx)$, with inner product
$\ip fg=\int_0^Lf\bar g\rho\dx$ and norm $\norm\cdot_H$. We write
$\norm\cdot_2$ for the unweighted $L^2$ norm and $|f|_1=\norm{f'}_2$. A
prime always denotes a spatial derivative. Define
\begin{equation}\label{eq:operator}
 A=-\partial_x^3,\qquad
 D(A)=\{f\in H^3(0,L): f(0)=f(L)=f'(L)=0\},\qquad B=-A,\qquad
 T(t)=e^{tA},
\end{equation}
and $F(f,g)=-\frac12(fg)'$, $\N(f)=F(f,f)$. Then \eqref{eq:pde} reads
$u_t=Au+\N(u)$. The bilinear map $F$ has no complex conjugation. Complex
spaces are used in the analytic arguments; the physical solutions are
real.

For $f\in H^2$ with $f(0)=f(L)=f'(L)=0$ and $\psi\in H_0^1(0,L)$ put
\begin{equation}\label{eq:form}
 b(f,\psi)=-\int_0^Lf''\,\overline{(\rho\psi)'}\dx .
\end{equation}
One integration by parts gives $\ip{Af}\psi=-b(f,\psi)$ for $f\in D(A)$.
Put
\[
 a=\frac32,\qquad p=\frac{\sqrt{2L}\,L}{\pi},\qquad
 \lambda=\frac a{p^2}=\frac{3\pi^2}{4L^3}.
\]
Two further integrations by parts give
\begin{equation}\label{eq:energy}
 \Re b(f,f)=a|f|_1^2+\frac L2|f'(0)|^2,\qquad
 \norm f_H\le p|f|_1 .
\end{equation}
The inequality combines $\rho\le2L$ with Poincar\'e's inequality.

The next proposition modifies~\cite[Propositions~3.1 and~3.2]{Rosier}.
There the space is $L^2(0,L)$, the equation contains the transport term,
and the multiplier estimates hold on finite time intervals.

\begin{proposition}[Contraction, decay and Kato smoothing]\label{prop:rosier}
The operator $B$ is invertible and maximal accretive, and $A$ generates
a contraction semigroup $T$ on $H$. For $f\in H$ and $t\ge0$,
\begin{equation}\label{eq:ctenergy}
 \norm{T(t)}_{H\to H}\le e^{-\lambda t},\qquad
 \norm{T(t)f}_H^2+2a\int_0^t|T(s)f|_1^2\,ds\le\norm f_H^2 .
\end{equation}
\end{proposition}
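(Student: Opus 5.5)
We argue via Lumer--Phillips on $H$, using the form $b$ and the identity \eqref{eq:energy}. The key computation, stated before the proposition, is $\Re\ip{Bf}f=\Re b(f,f)=a|f|_1^2+\frac L2|f'(0)|^2\ge0$ for $f\in D(A)$, since $\ip{Af}f=-b(f,f)$. So $B$ is accretive on $H$. The identity itself is checked by writing $\Re\int f''\overline{(\rho f)'}=\Re\int f''\rho\bar f'+\Re\int f''\bar f$. The first term is $\frac12\int\rho(|f'|^2)'=\frac12[\rho|f'|^2]_0^L-\frac12|f|_1^2$, which equals $-\frac L2|f'(0)|^2-\frac12|f|_1^2$ because $f'(L)=0$ and $\rho(0)=L$. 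The second term is $[f'\bar f]_0^L-|f|_1^2=-|f|_1^2$, because $f(0)=f(L)=0$. Thus $-\Re b=\ldots$ with the signs as displayed, giving $a=3/2$; I will just recheck the signs, since $\ip{Af}\psi=-b(f,\psi)$ fixes the orientation.

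\emph{Invertibility.} Solve $Bf=f'''=g$ for $g\in H$ (note $H=L^2$ as sets, with equivalent norms since $L\le\rho\le2L$). Integrating three times from $L$ with $f(L)=f'(L)=0$ gives $f(x)=\frac12\int_x^L(s-x)^2g(s)\,ds\cdot(-1)+c(x-L)^2/2$, with $c=f''(L)$ free. The remaining condition $f(0)=0$ fixes $c$ uniquely, since the coefficient $L^2/2\ne0$. Hence $B:D(A)\to H$ is bijective with bounded inverse, by explicit kernel bounds or by the closed graph theorem. So $0\in\rho(B)$, and together with accretivity this gives $\lambda+B$ surjective for small $\lambda>0$, hence for all $\lambda>0$ by the standard perturbation argument: accretive plus one point of the resolvent set in the right half-plane. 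Therefore $B$ is maximal accretive and $D(A)$ is dense. By Lumer--Phillips, $A$ generates a contraction semigroup $T$ on $H$.

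\emph{Estimates \eqref{eq:ctenergy}.} For $f\in D(A)$ put $y(t)=T(t)f\in C^1([0,\infty);H)$. Then
\[\frac{d}{dt}\norm{y}_H^2=2\Re\ip{Ay}{y}=-2a|y|_1^2-L|y'(0)|^2\le-2a|y|_1^2 .\]
Integrating gives the Kato smoothing inequality for $f\in D(A)$. Using $\norm y_H\le p|y|_1$ we get $\frac d{dt}\norm y_H^2\le-\frac{2a}{p^2}\norm y_H^2=-2\lambda\norm y_H^2$, and Gronwall yields $\norm{T(t)f}_H\le e^{-\lambda t}\norm f_H$. Both bounds extend to all $f\in H$ by density. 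For the integral term, $T(s)f_n\to T(s)f$ in $H$ uniformly on $[0,t]$, and the integral bound shows $T(\cdot)f_n$ is Cauchy in $L^2(0,t;H_0^1)$. So the limit lies in that space, coincides with $T(\cdot)f$, and the inequality passes to the limit, using lower semicontinuity or the exact convergence.

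The only delicate point is the sign bookkeeping in the boundary terms of \eqref{eq:energy}, which determines that the dissipation sits at $x=0$ and that $a=3/2$ (coming from $\rho'=1$ contributing $|f|_1^2$ twice with opposite partial signs). Everything else is standard semigroup theory.
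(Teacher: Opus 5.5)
Your proof is correct and follows the paper's route. Accretivity comes from $\Re b(f,f)=a|f|_1^2+\frac L2|f'(0)|^2$; your sign check is right. Solvability of the boundary-value problem with Lumer--Phillips gives the generator, Poincar\'e and Gronwall give the decay, time integration gives the Kato bound, and density extends both to $H$. The only difference is in the surjectivity step: the paper solves $(s+B)f=g$ for every $s\ge0$ via uniqueness and an invertible $3\times3$ boundary matrix, whereas you solve $f'''=g$ explicitly at $s=0$ and reach $s>0$ by a Neumann series, which is equally valid.
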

\begin{proof}
By \eqref{eq:energy}, $\Re\ip{Bf}f=\Re b(f,f)\ge0$ for $f\in D(A)$. For
$s\ge0$ the homogeneous third-order boundary problem $(s+B)f=0$
therefore has only the zero solution. Its three-by-three boundary matrix
is invertible, and variation of constants solves $(s+B)f=g$ for every
$g\in H$, with $\norm f_{H^3}\le C\norm g_H$. Hence $B$ is invertible
and maximal accretive, and $A$ generates a contraction semigroup.

Let $f\in D(A)$ and $u(t)=T(t)f$. By \eqref{eq:energy},
\[
 \frac d{dt}\norm{u(t)}_H^2=-2\Re b(u,u)=-2a|u|_1^2-L|u'(0)|^2 .
\]
This is the multiplier identity~\cite[(3.16)]{Rosier} with $q=L+x$,
without the transport term. The Poincar\'e inequality in
\eqref{eq:energy} and $\lambda=a/p^2$ give
$2a|u|_1^2\ge2\lambda\norm u_H^2$, and Gronwall's inequality yields the
first estimate. Integration over $(0,t)$ yields the second. Both extend
to $f\in H$ by density.
\end{proof}
The second estimate is the Kato smoothing of the linear flow;
compare~\cite[(3.18)]{Rosier}.

Three integrations by parts identify the $H$-adjoint of $A$:
\begin{equation}\label{eq:adjoint}
 A^*g=\rho^{-1}(\rho g)''',\quad
 D(A^*)=\{g\in H^3:g(0)=g(L)=g'(0)=0\},\quad
 -\Re\ip{A^*g}g=a|g|_1^2+L|g'(L)|^2 .
\end{equation}
The adjoint condition on the derivative sits at the left end, whereas
$D(A)$ prescribes $f'(L)=0$.

Define the fractional scale
\begin{equation}\label{eq:scale}
 V_\theta=D(B^{\theta/3}),\qquad
 \norm f_{V_\theta}=\norm{B^{\theta/3}f}_H,\qquad 0\le\theta\le3 .
\end{equation}
In particular $V_0=H$, $V_1=H_0^1(0,L)$,
\[
 V_2=\{f\in H^2:f(0)=f(L)=f'(L)=0\},\qquad V_3=D(A),
\]
and at these integer indices the norms are equivalent to the Sobolev
norms. The half-plane functional calculus for Hilbert-space contraction
semigroups gives $\norm{B^{is}}\le e^{\pi|s|/2}$ for real
$s$~\cite[Theorem 11.12]{Haase}. Interpolation of fractional powers and
of boundary spaces~\cite{Seeley} gives
\begin{equation}\label{eq:interp}
 [V_r,V_s]_\alpha=V_{(1-\alpha)r+\alpha s},\qquad
 0\le r,s\le3,\quad 0\le\alpha\le1 .
\end{equation}
Away from $\theta=1/2,3/2$, $V_\theta$ is the subspace of $H^\theta$
carrying exactly the boundary conditions of order $j<\theta-1/2$. At the
critical indices it retains the interpolation condition; for instance
$V_{1/2}=H_{00}^{1/2}$. In all cases $V_\theta\hookrightarrow H^\theta$.
We shall often use the moment inequality
\begin{equation}\label{eq:moment}
 \norm f_{V_\theta}\le C\norm f_H^{1-\theta/s}\norm f_{V_s}^{\theta/s},
 \qquad 0\le\theta\le s\le3,\quad s>0 .
\end{equation}
For $1\le\theta\le3$ the nonlinearity satisfies
\begin{equation}\label{eq:product}
 \norm{F(f,g)}_{V_{\theta-1}}
 \le C_\theta\norm f_{V_\theta}\norm g_{V_\theta}.
\end{equation}
At integer indices this follows from one-dimensional Sobolev products and
the boundary conditions. For $\theta=2$, $(fg)'$ vanishes at both ends.
For $\theta=3$, also
$(fg)''(L)=f''(L)g(L)+2f'(L)g'(L)+f(L)g''(L)=0$. Bilinear complex
interpolation gives the intermediate indices, including the critical
ones.

\subsection{The analytic solution map}\label{sec:analytic}
Let $V$ be a Banach space and $\nu>0$. We write $\Y_\nu(V)$ for the space
of continuous trajectories $w:[0,\infty)\to V$ with
\[
 \norm w_{\Y_\nu(V)}=\sup_{t\ge0}e^{\nu t}\norm{w(t)}_V<\infty ,
\]
and $C_\nu^0(V)$ for its closed subspace defined by $e^{\nu t}w(t)\to0$
as $t\to\infty$. For $\sigma>0$ put
\begin{equation}\label{eq:Enorm}
 \E_\sigma(w)=\sup_{t\ge0}e^{\sigma t}\norm{w(t)}_H+
 \Big(\int_0^\infty e^{2\sigma t}|w(t)|_1^2\,dt\Big)^{1/2}.
\end{equation}
For fixed $\nu$, the energy trajectory space is
\begin{equation}\label{eq:X0}
 \X_0=C_\nu^0(H)\cap L^2(e^{2\nu t}dt;H_0^1),\qquad
 \norm w_{\X_0}=\E_\nu(w).
\end{equation}
For $1\le\theta\le3$ let $\X_\theta=\X_0\cap C_\nu^0(V_\theta)$, with the
norm $\E_\nu(w)+\norm w_{\Y_\nu(V_\theta)}$. For $0<\theta<1$ let
$\X_\theta=[\X_0,\X_1]_\theta$ be the complex interpolation space. We
show in Section~\ref{sec:analyticproof} that
$\X_\theta\hookrightarrow C_\nu^0(V_\theta)$. Thus, for every $\theta$,
the norm of $\X_\theta$ dominates both $\E_\nu$ and
$\norm\cdot_{\Y_\nu(V_\theta)}$. When the weight must be displayed we
write $\X_{0,\sigma}$ for the space \eqref{eq:X0} with $\nu$ replaced by
$\sigma$.

\begin{theorem}[Analytic solution map]\label{thm:continuous}
Fix $0\le\theta\le3$ and $0<\nu<\lambda$. There are $r>0$, $M>0$, both
depending only on $L,\theta,\nu$, and continuous homogeneous polynomials
$P_n:V_\theta\to\X_\theta$ of degree $n$, with $P_1(f)=T(\cdot)f$, such
that
\begin{equation}\label{eq:Pn}
 \norm{P_n(f)}_{\X_\theta}\le M\Big(\frac{\norm f_{V_\theta}}{2r}\Big)^n,
 \qquad f\in V_\theta,\ n\ge1 .
\end{equation}
For complex $f\in V_\theta$ with $\norm f_{V_\theta}<r$ the series
\[
 \mathcal U(f)=\sum_{n\ge1}P_n(f)
\]
converges absolutely in the norm of $\X_\theta$, uniformly on the ball.
Its sum is the unique solution of the mild form of \eqref{eq:pde} in a
ball of $\X_\theta$ whose radius depends only on $L,\theta,\nu$. It is
real for real $f$. For every integer $m\ge1$ there is $C_m$ such that
\begin{equation}\label{eq:taylor}
 \norm{\mathcal U(f)-\sum_{n=1}^mP_n(f)}_{\X_\theta}
 \le C_m\norm f_{V_\theta}^{m+1},\qquad \norm f_{V_\theta}<r .
\end{equation}
In particular, with $\varepsilon=\norm{u_0}_{V_\theta}<r$ and
$u=\mathcal U(u_0)$,
\begin{equation}\label{eq:continuous}
 \norm{u(t)}_{V_\theta}\le C\varepsilon e^{-\nu t},\qquad
 \norm{u(t)-T(t)u_0}_{V_\theta}\le C\varepsilon^2e^{-\nu t},\qquad
 t\ge0 .
\end{equation}
\end{theorem}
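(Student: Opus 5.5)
The proof runs through the Duhamel operator. Write $\mathcal B(w_1,w_2)(t)=\int_0^tT(t-s)F(w_1(s),w_2(s))\,ds$, so the mild form of \eqref{eq:pde} reads $u=T(\cdot)f+\mathcal B(u,u)$.

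\textbf{Step 1: the two bounds.} I would prove
\[
 \norm{T(\cdot)f}_{\X_\theta}\le C_0\norm f_{V_\theta},\qquad
 \norm{\mathcal B(w_1,w_2)}_{\X_\theta}\le K\norm{w_1}_{\X_\theta}\norm{w_2}_{\X_\theta}.
\]
For $\theta=0$ the linear bound is Proposition~\ref{prop:rosier} applied with the shifted semigroup $e^{\nu t}T(t)$. This is allowed because $\nu<\lambda$: the energy identity gives $\frac d{dt}(e^{2\nu t}\norm u_H^2)\le -2(a-\nu p^2)e^{2\nu t}|u|_1^2$, and $a-\nu p^2>0$. For $1\le\theta\le3$ I would apply the same argument to $B^{\theta/3}T(t)f=T(t)B^{\theta/3}f$, combined with the $\theta=0$ bound. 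For $0<\theta<1$ the bound follows by interpolation.

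\textbf{Step 2: the bilinear estimate.} This is the main obstacle, and the hardest case is $\theta=0$, where $F$ loses a derivative. I would argue by duality. Pair $\mathcal B(w_1,w_2)(t)$ with $g\in H$ and move $T(t-s)$ onto $g$ as $T^*(t-s)g$. Integrate by parts in $x$ so that the derivative falls on $\rho T^*(t-s)g$; no boundary terms appear since $w_i\in H_0^1$. The resulting integrand is bounded by $\norm{w_1w_2}_{L^1}\,\norm{(\rho T^*g)'}_{L^\infty}$, and $L^\infty$ in space costs more than $H^1$. So instead I would estimate $\norm{w_1w_2}_{L^2}\le\norm{w_1}_{L^\infty}\norm{w_2}_{L^2}\le C|w_1|_1\norm{w_2}_H$ and pair it with $|T^*(t-s)g|_1$.

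The adjoint satisfies the Kato smoothing $\int_0^\infty e^{2\nu\tau}|T^*(\tau)g|_1^2\,d\tau\le C\norm g_H^2$, by \eqref{eq:adjoint} and the same argument as in Proposition~\ref{prop:rosier}. Cauchy--Schwarz in $s$, with the weights $e^{\nu s}$ and $e^{\nu(t-s)}$ split between the factors, then gives the $\sup_t e^{\nu t}\norm{\cdot}_H$ part of the norm.

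The $L^2(e^{2\nu t}dt;H_0^1)$ part is harder. I would treat it by the $TT^*$ smoothing argument, i.e.\ the standard inhomogeneous Kato estimate: the map $h\mapsto\int_0^tT(t-s)\partial_x h\,ds$ is bounded from $L^2_tL^2_x$ to $L^2_tH^1_x$. This holds by a Laplace transform in time and a resolvent bound $\norm{(i\tau+\nu-A)^{-1}\partial_x}_{L^2\to H^1}\le C$ uniformly in $\tau$. That resolvent bound is the genuinely delicate point; I would derive it from the energy identity \eqref{eq:energy} applied to the resolvent equation. The input is $h=w_1w_2$ with $\norm{h}_{L^2_tL^2_x}$ controlled by $\sup_t\norm{w_2}_H\cdot\norm{w_1}_{L^2H^1}$, with the exponential weights distributed between the factors.

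For $\theta\ge1$, \eqref{eq:product} gives $F\in V_{\theta-1}$. One factor $B^{1/3}$ is absorbed by the smoothing, and $\sup$-norms in $V_\theta$ follow as in the $\theta=0$ case. Bilinear complex interpolation covers $0<\theta<1$.

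\textbf{Step 3: series, uniqueness, remainder.} Define the polynomials recursively:
\[
 P_1=T(\cdot)f,\qquad P_n=\sum_{j+k=n}\mathcal B(P_j,P_k).
\]
By induction, $\norm{P_n}\le K^{n-1}c_{n-1}(C_0\norm f)^n$, where the Catalan numbers satisfy $c_{n-1}\le4^{n-1}$. This gives \eqref{eq:Pn} with $2r=(4KC_0)^{-1}$ and $M=C_0/(4K)$, say. Each $P_n$ is continuous and homogeneous of degree $n$, since it is built from bounded multilinear maps; it is real for real $f$ because $F$ has no conjugation.

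The series converges absolutely and uniformly on $\norm f<r$, where the ratio is at most $1/2$, and the sum solves the fixed-point equation term by term. Uniqueness in a ball of radius $\sim1/(4K)$ follows from a contraction argument using the bilinear estimate. The Taylor remainder is the geometric tail $\sum_{n>m}M(\norm f/2r)^n\le C_m\norm f^{m+1}$. Finally, \eqref{eq:continuous} follows from the case $m=1$ together with the fact that the $\X_\theta$ norm dominates $\Y_\nu(V_\theta)$; for $\theta<1$ this domination is the embedding promised in Section~\ref{sec:analyticproof}.
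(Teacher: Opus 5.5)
Your proposal is correct. Steps~1 and~3 follow the paper's argument: the same Catalan recursion, the same radius $r=(8KC_0)^{-1}$, uniqueness from the Lipschitz constant $K(\norm w+\norm{\tilde w})\le\frac12$ on the ball of radius $(4K)^{-1}$, and the geometric tail for \eqref{eq:taylor}.

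The genuine difference is how you close the energy endpoint in Step~2. The paper proves one forced weighted energy inequality, \eqref{eq:forced}. It multiplies the energy identity by $e^{2\nu t}$, bounds the source by $\norm g_{V_1^*}|y|_1$, and absorbs it into $(a-\nu p^2)|y|_1^2$. Combined with $\norm{F(w,\tilde w)}_{V_1^*}\le C\norm{w\tilde w}_2$, this controls the supremum part and the weighted $L^2_tH_0^1$ part of $\E_\nu$ at once, with no adjoint and no Fourier transform.

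You instead split $\E_\nu$ into its two pieces. You treat the supremum by duality against the adjoint Kato smoothing, and the time-integrated gradient by Plancherel with a $\tau$-uniform resolvent bound from $V_1^*$ to $V_1$. Both pieces are sound. The Plancherel route is in fact what the paper uses later, in Proposition~\ref{prop:uniformspectraldecay}, Step~5, for rates beyond $\lambda$, where direct absorption is unavailable. For $\nu<\lambda$ it simply costs more: you need the adjoint computation \eqref{eq:adjoint}, a density argument to justify the transform, and two estimates instead of one.

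Watch the sign. The weighted problem needs the resolvent at $-\nu+i\tau$, that is $(i\tau-\nu-A)^{-1}$, not $(i\tau+\nu-A)^{-1}$. At that point the real part of the resolvent identity gives $(a-\nu p^2)|u|_1\le\norm g_{V_1^*}$.

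For $\theta\ge1$, you should say which smoothing absorbs the factor $B^{1/3}$. The paper uses the integrable kernel $\norm{T(t)}_{V_{\theta-1}\to V_\theta}\le Ct_\wedge^{-1/2}e^{-\lambda t}$ from \eqref{eq:ctsmoothing}, which rests on Theorem~\ref{thm:ctcp}(ii). Your duality argument also works here. Apply it to $B^{(\theta-1)/3}F(w_1,w_2)\in H$, using $D((B^*)^{1/3})=H_0^1$ and the decay $e^{-2\nu s}$ of the quadratic source. That route would even avoid Crandall--Pazy regularity.

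Two small items are part of this proof and should not be cited as already available:
\begin{enumerate}
\item For $0<\theta<1$, the embedding $\X_\theta\hookrightarrow C^0_\nu(V_\theta)$, which you need for \eqref{eq:continuous}. It follows by interpolating the evaluation maps $w\mapsto e^{\nu t}w(t)$, bounded uniformly in $t$ at both endpoints, together with density of $\X_1$ in $\X_\theta$.
\item The decay $e^{\nu t}\norm{\mathcal B(w_1,w_2)(t)}\to0$, which membership in the closed subspace $C^0_\nu$ requires. It needs a line, for instance by approximating with inputs of compact support in time.
\end{enumerate}
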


The word \emph{analytic} refers to the dependence on the datum. It does not
assert that the linear semigroup is analytic in time; it is not. With the
initial norm as denominator, the relative linearization error is
$O(\varepsilon)$ for all time. The theorem gives no lower bound for
$\norm{u(t)}$. The $O(\varepsilon)$ bound should not be read as a relative error with the
instantaneous solution in the denominator. At $\theta=0$ the series
converges also in the weighted $L^2(H_0^1)$ norm. This form of the
statement survives discretization (Section~\ref{sec:uniformanalytic}).

\subsection{Crandall--Pazy regularity and exponential decay}\label{sec:cp}
Let $S$ be a $C_0$-semigroup with generator $G$ on a Hilbert space. It is
\emph{immediately differentiable} if $S(t)$ maps into $D(G)$ for every
$t>0$. For $0<\beta\le1$ it belongs to the \emph{Crandall--Pazy class}
$\CP_\beta$ if, in addition,
$\limsup_{t\to0+}t^{1/\beta}\norm{GS(t)}<\infty$. We then say that $S$
has \emph{Crandall--Pazy regularity} of index $1/\beta$.

Let $s(G)$ be the spectral bound of $G$ and $\omega_0(G)$ the growth
bound of $S$. An immediately differentiable semigroup is norm continuous
on $(0,\infty)$, so the spectral mapping theorem~\cite[Section
IV.3]{EngelNagel} gives
\begin{equation}\label{eq:sdg}
 \omega_0(G)=s(G).
\end{equation}
This holds in particular for every semigroup of class $\CP_\beta$.

\begin{theorem}[Wakaiki~{\cite[Theorem 2.3]{WakaikiCP}}]\label{thm:wakaiki}
Let $G$ generate a $C_0$-semigroup $S$ on a Hilbert space and let
$0<\beta\le1$. Then $S\in\CP_\beta$ if and only if there are
$\Omega,C>0$ such that $i\omega\in\rho(G)$ and
$\norm{R(i\omega,G)}\le C|\omega|^{-\beta}$ for all $|\omega|\ge\Omega$.
\end{theorem}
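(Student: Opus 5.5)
The statement is Wakaiki's theorem, which the paper imports from~\cite{WakaikiCP}; I would prove the two implications separately. By rescaling $S(t)\mapsto e^{-\mu t}S(t)$ for a suitable $\mu$ (this changes $\norm{GS(t)}$ only by bounded terms near $t=0$), I may assume $\norm{S(t)}\le Me^{-t}$.

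\emph{Necessity.} Assume $\norm{GS(t)}\le Ct^{-1/\beta}$ for $0<t\le1$.
\begin{enumerate}[label=(\roman*)]
\item Fix $\lambda=\xi+i\omega$ with $0<\xi\le1$ and $|\omega|$ large. Write $R(\lambda,G)=\int_0^\infty e^{-\lambda t}S(t)\,dt$ and split the integral at $\delta=|\omega|^{-\beta}$.
\item The piece over $(0,\delta)$ has norm at most $M\delta$.
\item On $(\delta,\infty)$, integrate by parts once:
\[
 \int_\delta^\infty e^{-\lambda t}S(t)\,dt=\lambda^{-1}\Big(e^{-\lambda\delta}S(\delta)+\int_\delta^\infty e^{-\lambda t}GS(t)\,dt\Big).
\]
With the Crandall--Pazy bound on $(\delta,1)$ and $\norm{GS(t)}\le C$ for $t\ge1$, this is bounded by $C|\omega|^{-1}\big(1+\delta^{1-1/\beta}\big)=C|\omega|^{-\beta}$ when $\beta<1$. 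For $\beta=1$ the same estimate has a logarithm. In that case I would use instead that $\norm{GS(t)}=O(t^{-1})$ makes $S$ analytic, which gives the sector resolvent bound directly.
\item These bounds are uniform in $\xi\in(0,1]$, and $\norm{R(\lambda,G)}\le C|\omega|^{-\beta}$ on a vertical line forces $i\omega\in\rho(G)$ by a Neumann series of radius $c|\omega|^\beta$. So the estimate passes to the imaginary axis.
\end{enumerate}

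\emph{Sufficiency.}
\begin{enumerate}[label=(\roman*)]
\item Perturbing off the axis with a Neumann series shows that the resolvent exists on the region $\Sigma=\{\Re\lambda\ge-c|\Im\lambda|^\beta,\ |\Im\lambda|\ge\Omega\}$ and satisfies $\norm{R(\lambda,G)}\le C|\Im\lambda|^{-\beta}$ there.
\item Deforming the inverse Laplace integral onto the curve $\Gamma:\Re\lambda=-c|\Im\lambda|^\beta$ shows that $S(t)$ maps into $D(G)$ for $t>0$. Integrability of $\lambda e^{\lambda t}R(\lambda,G)$ along $\Gamma$ comes from the factor $e^{-ct|\omega|^\beta}$. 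This gives immediate differentiability.
\item Estimating absolute values on $\Gamma$ loses: it yields only $\int|\omega|^{1-\beta}e^{-ct|\omega|^\beta}\,d\omega\sim t^{-2/\beta+1}$, which is the classical Crandall--Pazy exponent. The sharp index $1/\beta$ needs the Hilbert structure.
\end{enumerate}

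\emph{Main obstacle: the sharp index.} I would work with the bilinear pairing $\big(tGS(t)x,y\big)$. I would represent it on the imaginary axis, at Re $\lambda=\xi>0$ and then letting $\xi\downarrow0$, as an integral of $e^{i\omega t}$ against $(i\omega)R(i\omega,G)^2x$, paired with $y$. The resolvent square is split as a product of one resolvent acting on $x$ and the adjoint resolvent acting on $y$. I then cut frequencies at $N=t^{-1/\beta}$.
\begin{itemize}
\item For $|\omega|\ge N$, apply Cauchy--Schwarz in $\omega$. Plancherel for $S$ and for the adjoint semigroup $S^*$ gives $\int\norm{R(\xi+i\omega,G)x}^2\,d\omega\le C\xi^{-1}\norm x^2$, and similarly with $G^*$. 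Combined with the pointwise bound $|\omega|\,\norm{R}\le C|\omega|^{1-\beta}$, this should leave a factor $N^{1-\beta}$ times a power of $t$ that balances out.
\item For $|\omega|\le N$, use the pointwise bound $|\omega|\,\norm{R(i\omega,G)}^2\le C|\omega|^{1-2\beta}$ integrated up to $N$.
\end{itemize}
Choosing $\xi\sim 1/t$ should make both pieces $O\big(t\cdot t^{-1/\beta}\big)$, hence $\norm{GS(t)}=O(t^{-1/\beta})$. Getting this exponent bookkeeping to close, rather than losing a factor, is the step I expect to be hardest. If it fails, I would fall back on Wakaiki's route through the functional calculus of the (shifted) generator.
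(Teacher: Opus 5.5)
The paper gives no proof of this statement; it cites Wakaiki. Your necessity direction is correct and classical, and it holds on Banach spaces. Splitting at $\delta=|\omega|^{-\beta}$ and integrating by parts once gives $O(|\omega|^{-\beta})$ for $\beta<1$. For $\beta=1$, analyticity does the job, or a second integration by parts with $\norm{G^2S(t)}\le Ct^{-2}$. The Neumann series then moves the bound to $i\R$. Immediate differentiability via the contour $\Re\lambda=-c|\Im\lambda|^\beta$ is also standard.

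\emph{The gap.} The gap is the sharp index, which is the real content of the theorem, and both of your frequency estimates fail there.

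At high frequencies, Cauchy--Schwarz with the two Plancherel bounds leaves the scalar multiplier $|\omega|$, which is unbounded on $|\omega|\ge N$. There $|\omega|^{1-\beta}\ge N^{1-\beta}$, not $\le$. No absolute-value estimate on a vertical line can repair this. Take the diagonal contraction semigroup on $\ell^2$ with eigenvalues $\lambda_n=-\mu_n^\beta+i\mu_n$, $\mu_n=2^n$; it satisfies the resolvent hypothesis. For fixed $t$ and $\xi\le1/t$ one gets $\int_{|\omega|\ge N}|\omega|\,|(R(\xi+i\omega,G)^2e_n,e_n)|\,d\omega\ge c\mu_n^{1-\beta}\to\infty$. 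Yet $|(tGS(t)e_n,e_n)|=t|\lambda_n|e^{-t\mu_n^\beta}$ is tiny, and only the oscillation of $e^{i\omega t}$ detects that cancellation.

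At low frequencies, integrating the pointwise bound $|\omega|\norm{R(i\omega,G)}^2\le C|\omega|^{1-2\beta}$ up to $N=t^{-1/\beta}$ gives $N^{2-2\beta}=t^{2-2/\beta}$. That is $\norm{GS(t)}\lesssim t^{1-2/\beta}$, exactly the classical Crandall--Pazy loss you meant to avoid. So neither piece is $O(t^{1-1/\beta})$. Letting $\xi\downarrow0$ would also destroy the factor $\xi^{-1}\sim t$ that you need.

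\emph{A repair.} The tools you list suffice once their roles are exchanged.
\begin{enumerate}[label=(\roman*)]
\item Work on $\Re\lambda=\xi=c/t$, so $e^{\xi t}$ is bounded. Since $\xi=cN^\beta$, the Neumann series gives $\norm{R(\lambda,G)}\le C|\omega|^{-\beta}$ on this line for $|\omega|\ge N$, if $c$ is small.
\item Insert a smooth cutoff $\chi(\omega/N)$ into the weak inversion formula for $(tS(t)Gx,y)$, whose transform is $\lambda(R(\lambda,G)x,R(\lambda,G)^*y)-(R(\lambda,G)x,y)$.
\item On the low part $|\lambda|\le CN$. Cauchy--Schwarz with Plancherel for $S$ and $S^*$ gives $CN\xi^{-1}\norm x\norm y=Ct^{1-1/\beta}\norm x\norm y$, and the term $(R(\lambda,G)x,y)$ is smaller. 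No resolvent decay is used here.
\item On the high part, integrate by parts $k\ge1/\beta$ times in $\omega$, using $e^{i\omega t}=(it)^{-1}\partial_\omega e^{i\omega t}$ and $\partial_\lambda R=-R^2$. Each step costs $t^{-1}$ and produces either one more resolvent factor, of norm at most $C|\omega|^{-\beta}$, or a cutoff factor $N^{-1}\le2|\omega|^{-\beta}$.
\item The integrand becomes $O(t^{-k}|\omega|^{1-k\beta})\,\norm{R(\lambda,G)x}\,\norm{R(\lambda,G)^*y}$. On $|\omega|\ge N$, $t^{-k}|\omega|^{1-k\beta}\le t^{-k}N^{1-k\beta}=t^{-1/\beta}$, so Plancherel again gives $Ct^{1-1/\beta}$.
\end{enumerate}
This step is where the resolvent decay and the Hilbert structure act together. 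Boundary terms and pointwise validity of the inversion formula are handled for $x\in D(G^k)$ and extended by density and closedness of $G$.
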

In~\cite{WakaikiCP} the semigroup is written $e^{-tA}$; with $G=-A$ the
statement is the one above. See also~\cite[Theorem 5.1]{Wakaiki} for
general resolvent growth functions.

\begin{theorem}[Crandall--Pazy regularity of exact index]\label{thm:ctcp}
The following hold with constants depending only on $L$.
\begin{enumerate}[label=\rm(\roman*)]
\item $i\R\subset\rho(A)$ and $\norm{R(i\tau,A)}\le C\langle\tau\rangle^{-2/3}$
 for $\tau\in\R$.
\item For $0\le\alpha\le1$ and $t>0$,
 $\norm{B^\alpha T(t)}_{H\to H}\le Ct_\wedge^{-3\alpha/2}e^{-\lambda t}$.
\item There are $c_1,t_0>0$ such that $\norm{AT(t)}_{H\to H}\ge c_1t^{-3/2}$
 for $0<t\le t_0$.
\end{enumerate}
In particular $T\in\CP_{2/3}$, and the index $3/2$ cannot be improved.
\end{theorem}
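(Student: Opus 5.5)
For (i) I would use the energy identity \eqref{eq:energy} as a resolvent estimate combined with interpolation. Fix $\tau\in\R$, $g\in H$, and let $f\in D(A)$ solve $(i\tau-A)f=g$, i.e.\ $i\tau f+Bf=g$. Existence and uniqueness follow from maximal accretivity: $B$ is invertible by Proposition~\ref{prop:rosier}, and $\Re\ip{Bf}f\ge a|f|_1^2\ge\lambda\norm f_H^2$ excludes eigenvalues on $i\R$. Since $B$ is sectorial-like only in a weak sense, the spectrum of $A$ could a priori still accumulate; I would therefore derive the bound directly. Taking the real part of the inner product with $f$ gives $a|f|_1^2\le\Re\ip gf\le\norm g_H\norm f_H$, so $\norm f_{V_1}^2\le C\norm g_H\norm f_H$. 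Taking instead the imaginary part, and pairing the equation with $B^{-1}$ or using the equation itself, gives $|\tau|\norm f_H\le\norm g_H+\norm{Bf}_H$. This is too crude. Instead I would use the moment inequality \eqref{eq:moment} with $s=3$: $\norm f_{V_1}\le C\norm f_H^{2/3}\norm{f}_{V_3}^{1/3}$ runs in the wrong direction, so the better route is to bound $\norm f_H$ from $\norm f_{V_1}$ via a frequency argument. Write $|\tau|\,\norm f_{V_{-1}}\lesssim\norm g+\norm{Bf}_{V_{-1}}=\norm g+\norm f_{V_2}$. The cleaner tool is to interpolate $|\tau|\norm f_{H}\le\norm g_H+\norm f_{V_3}$ against the $V_1$ bound: from the equation, $\norm f_{V_3}\le\norm g_H+|\tau|\norm f_H$, and \eqref{eq:moment} gives $\norm f_H\le$ nothing new. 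I would therefore use the dual scale. Pair the equation with $B^{-1/3}$-type test functions, i.e.\ apply \eqref{eq:moment} between $V_{-?}$ and $V_1$: $\norm f_H\le C\norm f_{V_1}^{3/4}\norm f_{V_{-3}}^{1/4}$, with $\norm f_{V_{-3}}=\norm{B^{-1}f}\le |\tau|^{-1}(\norm{B^{-1}g}+\norm f_H)$. Combined with $\norm f_{V_1}^2\le C\norm g\norm f$, this yields $\norm f_H^{8}\lesssim(\norm g\norm f)^3|\tau|^{-2}(\norm g+\norm f)^2$, hence $\norm f_H\lesssim|\tau|^{-2/3}\norm g_H$ for large $|\tau|$. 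For bounded $\tau$, the estimate follows from $i\R\subset\rho(A)$ and continuity. The dual fractional scale requires the adjoint description \eqref{eq:adjoint} and its analogous identity; this is where the bookkeeping lies.

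For (ii), part (i) and Theorem~\ref{thm:wakaiki} with $\beta=2/3$ give $\norm{AT(t)}\le Ct^{-3/2}$ for $0<t\le1$. For $t\ge1$ I would write $AT(t)=AT(1/2)\,T(t-1/2)$ and use $\norm{T(s)}\le e^{-\lambda s}$ from Proposition~\ref{prop:rosier}, which gives $Ce^{-\lambda t}$. Since $B$ is invertible and $\norm{B^{is}}\le e^{\pi|s|/2}$, the moment inequality (or Hadamard three lines applied to $B^zT(t)$) interpolates between $\alpha=0$ and $\alpha=1$ and gives $t_\wedge^{-3\alpha/2}e^{-\lambda t}$.

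For (iii), the main obstacle, I would build a quasimode. Take $\chi\in C_c^\infty(L/4,3L/4)$ with $\norm\chi_H=1$ and $f_k=\chi e^{ikx}$. Away from the boundary, $T(t)f_k$ is approximated by the whole-line Airy flow. Its symbol is $e^{ik^3t}$ with group velocity $3k^2$, so for $t=\delta k^{-2}$ with $\delta$ small the packet moves only a distance $3\delta$ and remains interior. Boundary corrections are then $O(k^{-N})$, using a parametrix plus the energy estimate \eqref{eq:ctenergy} for the error, which is driven by a forcing supported near the boundary and is tiny. Now $\norm{AT(t)f_k}\approx k^3$, while $t^{-3/2}=\delta^{-3/2}k^3$. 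The issue is therefore to get $t^{-3/2}$ and not merely $k^3$ with a constant. Since $AT(t)$ is only bounded by $t^{-3/2}$, the lower bound $c_1t^{-3/2}$ follows by choosing $\delta$ fixed and $k=(\delta/t)^{1/2}$: then $\norm{AT(t)f_k}\ge\tfrac12k^3=c\,t^{-3/2}$ for all small $t$. Making the parametrix error uniformly small in $H$ over this range is the delicate step.
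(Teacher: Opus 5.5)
\textbf{Part (i) has a genuine gap: the interpolation you carry out does not give the exponent $2/3$.} Your chain is correct up to $\norm f_H^8\lesssim(\norm g_H\norm f_H)^3|\tau|^{-2}(\norm g_H+\norm f_H)^2$. The real part already gives $\norm f_H\le\lambda^{-1}\norm g_H$, so this only yields $\norm f_H^5\lesssim|\tau|^{-2}\norm g_H^5$, that is, $\norm{R(i\tau,A)}\lesssim|\tau|^{-2/5}$.

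You would get $2/3$ only by dropping the term $\norm{B^{-1}g}_H$. That is not allowed uniformly in $g$: for smooth low-frequency $g$ this term is comparable to $\norm g_H$. The loss comes from interpolating $V_0$ between $V_{-3}$ and $V_1$, where the factor $|\tau|^{-1}$ gained on $\norm{B^{-1}f}_H$ enters with weight only $1/4$. With decay $|\tau|^{-2/5}$, Theorem~\ref{thm:wakaiki} gives only $\norm{AT(t)}=O(t^{-5/2})$. So (ii), the class $\CP_{2/3}$ and the exact index $3/2$ do not follow from what you wrote.

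\textbf{How the paper gets the sharp exponent.} It uses the differential structure. Take $\norm x_H=1$, $k=|x|_1$ and $r_f=\norm{(i\tau-A)x}_H$; the real part gives $ak^2\le r_f$. Now test against a mollification $z\in C_c^\infty(0,L)$ of $x$ at scale $\ell=\gamma/k$. This gives $|\ip{x}{z}|\ge1/2$, and after one more integration by parts $|b(x,z)|\le\norm{x'}_2\norm{(\rho z)''}_2\le C\gamma^{-1}k^3\le Cr_f^{3/2}$. Hence $|\tau|\le C(r_f+r_f^{3/2})$.

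\textbf{Repairing your abstract route.} It can be fixed with the $B^{-1/3}$ pairing you mention but did not use. Combine the following four bounds:
\begin{itemize}
\item $|\tau|\norm{B^{-1/3}f}\le\norm{B^{-1/3}g}+\norm{B^{2/3}f}$;
\item $\norm{B^{2/3}f}\le C\norm{B^{1/3}f}^{1/2}(\norm g+|\tau|\norm f)^{1/2}$;
\item $\norm{B^{1/3}f}^2\le C\norm g\norm f$, using $\norm{B^{1/3}f}\le C|f|_1$;
\item $\norm f\le C\norm{B^{-1/3}f}^{1/2}\norm{B^{1/3}f}^{1/2}$.
\end{itemize}
Normalize $\norm g=1$ and distinguish three cases: $|\tau|\norm f\le1$, $|\tau|^{1/2}\norm f^{3/4}\le1$, or neither. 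In each case the estimate closes exactly at $\norm f\lesssim|\tau|^{-2/3}$.

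\textbf{Part (iii).} Your quasimode and time scale $t\sim k^{-2}$ are the paper's, but no parametrix or boundary analysis is needed. The residual $r_k=(A-ik^3)f_k=e^{ikx}(-\chi'''-3ik\chi''+3k^2\chi')$ satisfies $\norm{r_k}_H\le C_0k^2$. Contraction and variation of constants then give $\norm{T(t)f_k-e^{ik^3t}f_k}_H\le C_0tk^2$.

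Note also that an $H$-approximation of $T(t)f_k$ does not by itself give $\norm{AT(t)f_k}_H\approx k^3$. Use the commutation $AT(t)f_k=T(t)(ik^3f_k+r_k)$, which yields $\norm{AT(t)f_k}_H\ge k^3(1-C_0tk^2)-C_0k^2$.

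\textbf{Part (ii).} Wakaiki's theorem gives the $t^{-3/2}$ bound only on some interval $(0,t_1]$. Bridge to $t\le1$ with $BT(t)=T(t-t_1)BT(t_1)$ before treating $t\ge1$.
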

\begin{proof}
(i) By \eqref{eq:ctenergy}, every $z$ with $\Re z>-\lambda$ lies in
$\rho(A)$. Let $x\in D(A)$, $\norm x_H=1$, $f=(i\tau-A)x$,
$r_f=\norm f_H$ and $k=|x|_1$. The real part of $\ip f x$ and
\eqref{eq:energy} give $ak^2\le r_f$; together with $1\le pk$ this gives
$r_f\ge\lambda$. Fix a small $\gamma\in(0,L/(8p)]$ and put
$\ell=\gamma/k\le L/8$. Extend $x\in H_0^1$ by zero, mollify at scale
$\ell$ and cut off in end strips of that scale. This produces
$z\in C_c^\infty(0,L)$ with
\begin{equation}\label{eq:mollify}
 \norm{x-z}_2\le C\ell k,\qquad \norm{z'}_2\le Ck,\qquad
 \norm{z''}_2\le C\ell^{-1}k .
\end{equation}
The cutoff terms are controlled by
$\norm x_{L^2(0,c\ell)}\le c\ell\norm{x'}_{L^2(0,c\ell)}$ and its
analogue at the right end. Each derivative of the cutoff costs one factor
$\ell^{-1}$, which this strip estimate offsets. Choose $\gamma$ so small
that $\norm{x-z}_H\le1/2$. Then $|\ip xz|\ge1/2$. Since $z$ has compact
support, one more integration by parts gives
\[
 b(x,z)=\int_0^Lx'\,\overline{(\rho z)''}\dx,\qquad
 |b(x,z)|\le Ck\,\ell^{-1}k=C\gamma^{-1}k^3 .
\]
Testing $f=(i\tau-A)x$ against $z$ gives
$\frac12|\tau|\le r_f\norm z_H+|b(x,z)|\le C(r_f+r_f^{3/2})\le Cr_f^{3/2}$,
where the last step uses $r_f\ge\lambda$. Hence
$r_f\ge c\max\{\lambda,|\tau|^{2/3}\}$, which is (i).

(ii) By (i) and Theorem~\ref{thm:wakaiki}, $\norm{BT(t)}\le Ct^{-3/2}$ on
some interval $(0,t_1]$. For $t_1\le t\le1$ use
$BT(t)=T(t-t_1)BT(t_1)$, and for $t\ge1$ use $BT(t)=T(t-1)BT(1)$ and
\eqref{eq:ctenergy}. This proves (ii) for $\alpha=1$. The moment
inequality $\norm{B^\alpha y}\le C\norm y^{1-\alpha}\norm{By}^\alpha$,
applied to $y=T(t)x$, gives the general case.

(iii) Choose $\chi\in C_c^\infty(0,L)$ with $\norm\chi_H=1$ and set
$f_k=\chi e^{ikx}$, for real $k\ge1$. Then $f_k\in D(A)$ and
\[
 r_k:=(A-ik^3)f_k=e^{ikx}(-\chi'''-3ik\chi''+3k^2\chi'),\qquad
 \norm{r_k}_H\le C_0k^2 .
\]
Contraction and variation of constants give
$\norm{T(t)f_k-e^{ik^3t}f_k}_H\le C_0tk^2$. Fix $b_0>0$ with
$C_0b_0\le1/4$ and take $t=b_0/k^2$. Commutation on $D(A)$ yields
\[
 \norm{AT(t)f_k}_H\ge k^3\norm{T(t)f_k}_H-\norm{T(t)r_k}_H
 \ge\tfrac34k^3-C_0k^2\ge\tfrac12k^3
\]
for sufficiently large $k$. Since $k$ is an arbitrary real number,
$k=(b_0/t)^{1/2}$ proves the lower bound at every sufficiently small
$t$.
\end{proof}

Since $T(t)$ commutes with the fractional powers of $B$, part~(ii) gives
\begin{equation}\label{eq:ctsmoothing}
 \norm{T(t)}_{V_s\to V_{s'}}\le Ct_\wedge^{-(s'-s)/2}e^{-\lambda t},
 \qquad 0\le s\le s'\le3,
\end{equation}
and $\norm{T(t)}_{V_s\to V_s}\le e^{-\lambda t}$. One lost spatial
derivative costs $t^{-1/2}$, which is integrable at zero. Let $C_R$ be the
constant in part~(i). A Neumann series around $i\omega$ shows that
\begin{equation}\label{eq:strip}
 z=-\sigma+i\omega\in\rho(A),\qquad
 \norm{R(z,A)}\le2C_R\langle\omega\rangle^{-2/3},\qquad
 0\le\sigma\le(2C_R)^{-1}\langle\omega\rangle^{2/3}.
\end{equation}

The energy rate $\lambda$ is much smaller than the optimal decay rate. Let $s(A)$ be
the spectral bound and $\omega_0(A)$ the growth bound of $T$.

\begin{proposition}[Decay at the spectral bound]\label{prop:spectral}
One has $\omega_0(A)=s(A)$ and
\[
 s(A)\le-\frac{2\pi^3}{L^3}<-8\lambda .
\]
Thus for every $0<\nu<-s(A)$ there is $M_\nu$ with
$\norm{T(t)}\le M_\nu e^{-\nu t}$, $t\ge0$.
\end{proposition}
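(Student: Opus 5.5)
The plan has two parts: the equality $\omega_0(A)=s(A)$, and the bound on $s(A)$. The equality is immediate. By Theorem~\ref{thm:ctcp}, $T\in\CP_{2/3}$, so $T$ is immediately differentiable and \eqref{eq:sdg} gives $\omega_0(A)=s(A)$. The final assertion then follows from the definition of the growth bound: for every $\nu<-\omega_0(A)$ there is $M_\nu$ with $\norm{T(t)}\le M_\nu e^{-\nu t}$.

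For the spectral bound I will first reduce to eigenvalues. By Proposition~\ref{prop:rosier}, $B^{-1}$ maps $H$ boundedly into $H^3$, so $A$ has compact resolvent and $\sigma(A)$ consists of eigenvalues. I therefore take $\mu\in\sigma(A)$ and an eigenfunction $f\in D(A)$ with $-f'''=\mu f$ and $f(0)=f(L)=f'(L)=0$. The goal is $\Re\mu\le-2\pi^3/L^3$.

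The first tool is the multiplier identities, with $\rho$ replaced by $1$ and by $x$. Testing with $\bar f$ in unweighted $L^2$ gives $\Re\mu\norm f_2^2=-\frac12|f'(0)|^2$. Testing with $x\bar f$ gives $\Re\mu\int_0^Lx|f|^2\dx=-\frac32|f|_1^2$. Together with Poincar\'e, the second identity yields only $\Re\mu\le-3\pi^2/(2L^3)$, which falls short of the claim. The missing factor has to come from the oscillation of $f$, through $\Im\mu$ and the third-order structure. I plan to argue in two regimes.

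\emph{Large $|\Im\mu|$.} An eigenfunction with large $|\mu|$ oscillates at frequency $\sim|\mu|^{1/3}$. I expect $|f|_1^2\gtrsim|\mu|^{2/3}\norm f_2^2$ and also $\int x|f|^2\le L\norm f_2^2$, which push $\Re\mu$ far below $-2\pi^3/L^3$. This can be made quantitative with the moment-type bound $\norm f_2\,\norm{f'''}_2\ge c|f|_1^2$, where $\norm{f'''}_2=|\mu|\norm f_2$. Alternatively one can use the strip estimate \eqref{eq:strip}, which already excludes spectrum in $\{\Re z\ge-(2C_R)^{-1}\langle\omega\rangle^{2/3}\}$. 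With this, only a bounded box of $\mu$ remains to be checked.

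\emph{Bounded box.} Here I would use the explicit characteristic equation. Write $-\mu=\kappa^3$ with cube roots $\kappa,\kappa\omega_3,\kappa\omega_3^2$, so that $f=\sum_jc_je^{\kappa\omega_3^jx}$. The three boundary conditions give a $3\times3$ determinant whose vanishing is an entire equation in $\kappa L$. Scaling $x\mapsto x/L$ reduces everything to $L=1$, where the claim becomes $\Re\mu\le-2\pi^3$. I would show that the determinant has no zero with $\Re(-\kappa^3)<2\pi^3$. My first attempt is an argument-principle count, or a direct reduction of the determinant to a real transcendental equation, possibly $\sin$-type in $\kappa\sqrt3/2$ times an exponential. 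The value $2\pi^3$ suggests that the relevant roots sit near $\kappa\sim\pi\cdot\text{const}$.

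The main obstacle is this last step: pinning down the exact constant $2\pi^3/L^3$ from the transcendental dispersion relation, rather than merely some bound. The comparison $2\pi^3>6\pi^2$, which gives $<-8\lambda$, is then trivial.
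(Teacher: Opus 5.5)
You obtain $\omega_0(A)=s(A)$ from Theorem~\ref{thm:ctcp} and \eqref{eq:sdg}, and the final decay statement from it, exactly as the paper does. The gap is the inequality $s(A)\le-2\pi^3/L^3$, which is the real content of the proposition, and your bounded-box step does not prove it. After \eqref{eq:strip} removes large $|\Im\mu|$, the remaining box has a height set by the non-explicit constant $C_R$. You would still have to show that the $3\times3$ characteristic determinant has no zero with $\Re\mu>-2\pi^3/L^3$ in that box. You give no mechanism for this, and you name it yourself as the main obstacle.

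The plan also aims at the wrong target. The number $2\pi^3/L^3$ is not a spectral quantity that the dispersion relation would pin down. It is the maximum over $\alpha>0$ of $\frac{3\alpha\pi^2}{2L^2}-\frac{\alpha^3}{8}$ in the multiplier bound below. Equality there would force $f'(0)=0$ and $e^{\pi x/L}f\in\mathrm{span}\{\sin(\pi x/L)\}$, which is incompatible with $f'(L)=0$. So no eigenvalue lies on the line $\Re\mu=-2\pi^3/L^3$. Separately, $\norm f_2\norm{f'''}_2\ge c|f|_1^2$ is an upper bound for $|f|_1^2$ and cannot give $|f|_1^2\gtrsim|\mu|^{2/3}\norm f_2^2$. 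At high frequency only your strip-estimate alternative works.

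The multiplier $x$ falls short not because it ignores oscillation, but because a linear weight is the wrong weight. For $f\in D(A)$ and a smooth weight $w$, the boundary conditions give
\[
 \Re\int_0^Lf'''\bar fw\dx=\tfrac12w(0)|f'(0)|^2+\tfrac32\int_0^Lw'|f'|^2\dx
 -\tfrac12\int_0^Lw'''|f|^2\dx .
\]
With $w=x$ the ratio $w'/w$ is only $\ge1/L$. The paper takes $w=e^{\alpha x}$, that is, the equivalent norm $\norm f_\alpha^2=\int_0^Le^{\alpha x}|f|^2\dx$. This makes $w'/w=\alpha$ a free parameter, at the cost of the $\alpha^3$ term. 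The substitution $v=e^{\alpha x/2}f\in H_0^1$ turns the gradient term into $\norm{v'}_2^2+\frac{\alpha^2}4\norm v_2^2\ge(\frac{\pi^2}{L^2}+\frac{\alpha^2}4)\norm f_\alpha^2$. Hence
\[
 -\Re(Af,f)_\alpha\ge\Big(\frac{3\alpha\pi^2}{2L^2}-\frac{\alpha^3}{8}\Big)\norm f_\alpha^2
 =\frac{2\pi^3}{L^3}\norm f_\alpha^2\qquad\text{at }\alpha=\frac{2\pi}L .
\]
Energy integration in this equivalent norm gives $\omega_0(A)\le-2\pi^3/L^3$, and then $s(A)\le\omega_0(A)$. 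In your own eigenvalue framework, testing $-f'''=\mu f$ against $e^{2\pi x/L}\bar f$ gives $\Re\mu\le-2\pi^3/L^3$ in one line. No case split and no characteristic determinant are needed.
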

\begin{proof}
For $\alpha>0$ introduce the equivalent norm
$\norm f_\alpha^2=\int_0^Le^{\alpha x}|f|^2\dx$. Integration by parts
gives, for $f\in D(A)$,
\[
 -\Re(Af,f)_\alpha
 =\frac{3\alpha}{2}\int_0^Le^{\alpha x}|f'|^2\dx
  -\frac{\alpha^3}{2}\norm f_\alpha^2+\frac12|f'(0)|^2 .
\]
For $v=e^{\alpha x/2}f\in H_0^1$,
\[
 \int_0^Le^{\alpha x}|f'|^2\dx
 =\norm{v'}_2^2+\frac{\alpha^2}{4}\norm v_2^2
 \ge\Big(\frac{\pi^2}{L^2}+\frac{\alpha^2}{4}\Big)\norm f_\alpha^2 .
\]
At $\alpha=2\pi/L$ this yields
$-\Re(Af,f)_\alpha\ge(2\pi^3/L^3)\norm f_\alpha^2$. Energy integration in
the equivalent norm gives $\omega_0(A)\le-2\pi^3/L^3$. Since
$s(A)\le\omega_0(A)$ and $2\pi^3/L^3=(8\pi/3)\lambda$, the stated
inequality follows.

By Theorem~\ref{thm:ctcp}(ii), $T$ is of class $\CP_{2/3}$. Hence
\eqref{eq:sdg} gives $\omega_0(A)=s(A)$.
\end{proof}

\subsection{Proof of Theorem~\ref{thm:continuous}}\label{sec:analyticproof}
\emph{The energy endpoint.} The pointwise map $F:L^2\times L^2\to H^{-1}$
is not bounded. For $f_n(x)=n^{1/2}\chi(n(x-x_*))$, with $\chi$ supported
inside the interval, $\norm{f_n}_2$ is bounded whereas
$\norm{(f_n^2)'}_{H^{-1}}\asymp n^{1/2}$. The endpoint must therefore be
closed in a trajectory space. Let $V_1^*$ be the dual of $V_1$ in the
weighted pairing. For $y_t=Ay+g$ the weighted energy estimate yields
\begin{equation}\label{eq:forced}
 \E_\nu(y)\le C_{L,\nu}
 \big(\norm{y(0)}_H+\norm{e^{\nu t}g}_{L^2_t(V_1^*)}\big),
 \qquad 0<\nu<\lambda .
\end{equation}
To see this, multiply \eqref{eq:energy} by $e^{2\nu t}$ and absorb the
source into $(a-\nu p^2)|y|_1^2$; this coefficient is positive. Smooth
approximation defines the forced solution for such weak sources. Uniform
convergence of the weighted trajectories preserves continuity and decay
at infinity.

The weak product bound is
$\norm{F(w,\tilde w)}_{V_1^*}\le C\norm{w\tilde w}_2$. Agmon's inequality
gives $\norm{w\tilde w}_2^2\le C\norm w_H|w|_1\norm{\tilde w}_H^2$. If
$M_w=\sup_te^{\nu t}\norm{w(t)}_H$, then
\begin{align*}
 \norm{e^{\nu t}w\tilde w}_{L^2_tL^2_x}^2
 &\le CM_wM_{\tilde w}^2\int_0^\infty e^{-\nu t}|w|_1\,dt
  \le C_{L,\nu}\E_\nu(w)^2\E_\nu(\tilde w)^2 ,
\end{align*}
by the Cauchy--Schwarz inequality with the factor $e^{-2\nu t}$. Thus the
maps
\begin{equation}\label{eq:lowbilinear}
 (\LL f)(t)=T(t)f,\qquad
 \QQ(w,\tilde w)(t)=\int_0^tT(t-s)F(w(s),\tilde w(s))\,ds
\end{equation}
are bounded as $\LL:H\to\X_0$ and $\QQ:\X_0\times\X_0\to\X_0$, over the
complex field as well as the real field. On finite time intervals the
product also belongs to $L^1_tH$, so this weak extension agrees with the
usual mild equation.

\emph{The scale $1\le\theta\le3$.} By \eqref{eq:product} and
\eqref{eq:ctsmoothing},
\[
 e^{\nu t}\norm{\QQ(w,\tilde w)(t)}_{V_\theta}
 \le C\int_0^t(t-s)_\wedge^{-1/2}e^{-(\lambda-\nu)(t-s)}e^{-\nu s}\,ds\,
 \norm w_{\Y_\nu(V_\theta)}\norm{\tilde w}_{\Y_\nu(V_\theta)} .
\]
The integral is bounded. Hence $\QQ$ is bounded on $C_\nu^0(V_\theta)$.
The decay at infinity is preserved, first for inputs with compact support
in time and then by approximation. Together with the energy endpoint,
$\LL:V_\theta\to\X_\theta$ and $\QQ:\X_\theta\times\X_\theta\to\X_\theta$
are bounded.

\emph{The scale $0<\theta<1$.} Dirichlet sine truncations show that
$\X_1$ is dense in $\X_0$. The weighted continuous trajectory has compact
range in $H$, which gives uniform convergence. Dominated convergence
handles the time-integrated $H^1$ norm. Linear and bilinear complex
interpolation give
\[
 \LL:V_\theta\to\X_\theta,\qquad
 \QQ:\X_\theta\times\X_\theta\to\X_\theta .
\]
The evaluation maps $w\mapsto e^{\nu t}w(t)$ have bounds independent of
$t$ at both endpoints. Interpolation and density imply
$\X_\theta\hookrightarrow C_\nu^0(V_\theta)$. We interpolate the linear
and bilinear operators, not the nonlinear solution map.

\emph{The power series.} At each index write $l=\norm\LL$ and
$q=\norm\QQ$. The recursion
\[
 P_1(f)=\LL f,\qquad P_n(f)=\sum_{j+k=n}\QQ(P_j(f),P_k(f))
\]
produces continuous homogeneous polynomials. If $C_n$ are the Catalan
numbers, then
\[
 \norm{P_n(f)}_{\X_\theta}\le C_{n-1}q^{n-1}l^n\norm f_{V_\theta}^n
 \le4^{n-1}q^{n-1}l^n\norm f_{V_\theta}^n .
\]
Put $r=(8ql)^{-1}$. Then $4^{n-1}q^{n-1}l^n=(4q)^{-1}(2r)^{-n}$, which is
\eqref{eq:Pn} with $M=(4q)^{-1}$. The series converges normally on the
ball $\norm f_{V_\theta}<r$. Rearranging the absolutely convergent
double series $\sum_{j,k}\QQ(P_j,P_k)$ shows that the sum solves
$u=\LL f+\QQ(u,u)$. Its norm is at most
$l\norm f/(1-4ql\norm f)\le2l\norm f\le(4q)^{-1}$. On the ball of radius
$(4q)^{-1}$ in $\X_\theta$ the map $w\mapsto\LL f+\QQ(w,w)$ has Lipschitz
constant at most $q(\norm w+\norm{\tilde w})\le\frac12$. Hence the sum is
the unique fixed point in that ball. Summing the geometric majorant after
degree $m$ proves \eqref{eq:taylor}. Conjugation and uniqueness give real
values for real data. No implicit function theorem is used.
\hfill$\square$

\emph{The quadratic term does not vanish.} For a nonzero real direction
$\phi\in V_\theta$ put
\begin{equation}\label{eq:Wcoeff}
 \Phi=T(\cdot)\phi,\qquad P_2(\phi)=\QQ(\Phi,\Phi).
\end{equation}
For a real scalar $\xi$,
$\mathcal U(\xi\phi)-\xi\Phi=\xi^2P_2(\phi)+O(|\xi|^3)$.
The trajectory $P_2(\phi)$ cannot vanish identically. Otherwise its
equation gives $(\Phi^2)'=0$ in distributions for positive times.
Smoothing and the zero boundary value imply $\Phi(t)=0$. Strong
continuity at zero then gives $\phi=0$. Consequently
$\norm{\mathcal U(\xi\phi)-\xi\Phi}_{\Y_\nu(V_\theta)}\ge
c_\phi\xi^2$ for sufficiently small $|\xi|$. This is a lower bound
for the trajectory norm, not at every time. No direction-uniform constant
is asserted.

\section{The weighted scheme and its mesh-uniform properties}\label{sec:discrete}
\setcounter{equation}{0}

\subsection{The scheme and the projection}\label{sec:scheme}
Let $0=x_0<\cdots<x_N=L$ be a mesh with
$h=\max_j(x_j-x_{j-1})\le\min\{1,L\}$ and
$h/\min_j(x_j-x_{j-1})\le\kappa$. Let $X_h\subset V_2$ be the space of
$C^1$ piecewise cubic polynomials that satisfy the three boundary
conditions of \eqref{eq:pde}, and let $I_h$ be the cubic Hermite
interpolant~\cite[Chapter~2]{Ciarlet}. Let $\Pi_h$ be the $H$-orthogonal
projection onto $X_h$. The
weighted Galerkin operator $A_h$ on $X_h$ is defined by
\begin{equation}\label{eq:scheme}
 \ip{A_hw}\psi=-b(w,\psi),\qquad w,\psi\in X_h,
\end{equation}
with the form \eqref{eq:form}. Put $B_h=-A_h$, $T_h(t)=e^{tA_h}$ and
$F_h=\Pi_hF$. The weighted scheme is
\begin{equation}\label{eq:Wscheme}
 u_{h,t}=A_hu_h+F_h(u_h,u_h),\qquad u_h(0)=\Pi_hu_0 .
\end{equation}
All results are semidiscrete in space; time remains continuous. We do
not change the initialization and do not add artificial viscosity. In
this section and below, constants are independent of $h$. They depend
only on $L$, $\kappa$ and the indicated decay rates.

For a functional $g\in V_1^*$ we write $\Pi_hg$ for the element of $X_h$
that represents the restriction of $g$ to $X_h$ in the weighted pairing.
The discrete dual norm is
\[
 \norm r_{-1,h}=\sup_{0\ne\psi\in X_h}\frac{|\ip r\psi|}{|\psi|_1},
\]
and $\norm{\Pi_hg}_{-1,h}\le\norm g_{V_1^*}$.

\begin{lemma}[Projection, residual and graph recovery]\label{lem:projection}
For $w\in V_2\cap H^s$, $s=2,3,4$, and $x\in X_h$,
\begin{align}
 \norm{w-\Pi_hw}_{H^j}&\le Ch^{s-j}\norm w_{H^s},\qquad j=0,1,2,
 \label{eq:approx}\\
 \norm x_{H^2}&\le C\norm{A_hx}_H,\label{eq:graph}\\
 \norm{A_h\Pi_hw}_H&\le C\norm w_{H^3},\qquad w\in D(A).\label{eq:graphproj}
\end{align}
The projection $\Pi_h$ is bounded on $V_1$ and on $V_2$. For the residual
$d_h(w)=\Pi_hAw-A_h\Pi_hw$,
\begin{equation}\label{eq:residual}
 \ip{d_h(w)}\psi=\int_0^L(w-\Pi_hw)''\,\overline{(\rho\psi)'}\dx,
 \qquad
 \begin{cases}
 \norm{d_h(w)}_{-1,h}\le Ch^{s-2}\norm w_{H^s},\\
 \norm{d_h(w)}_H\le Ch^{s-3}\norm w_{H^s},
 \end{cases}\quad s=3,4 .
\end{equation}
Finally,
\begin{equation}\label{eq:oneway}
 \norm{B_h^\alpha\Pi_hf}_H\le C\norm{B^\alpha f}_H,\qquad
 0\le\alpha\le1,\ f\in V_{3\alpha}.
\end{equation}
For $w\in V_2$ the Ritz projection $R_hw\in X_h$ is defined by
$b(R_hw,\psi)=b(w,\psi)$ for all $\psi\in X_h$. It satisfies
$A_hR_hw=\Pi_hAw$ for $w\in D(A)$, and
\begin{equation}\label{eq:Ritz}
 \norm{w-R_hw}_{H^2}\le C\norm w_{H^3},\qquad
 \norm{w-R_hw}_{H^2}\le Ch\norm w_{H^4},
\end{equation}
for $w\in V_2\cap H^3$ and $w\in V_2\cap H^4$, respectively.
\end{lemma}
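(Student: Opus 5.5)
The proof compares $\Pi_h$ with the cubic Hermite interpolant $I_h$ and then exploits the structure of the form $b$. Since $X_h\subset V_2$ contains $I_hw$ whenever $w\in V_2$ (the interpolant reproduces nodal values and derivatives, hence the three boundary conditions), standard interpolation theory gives $\norm{w-I_hw}_{H^j}\le Ch^{s-j}\norm w_{H^s}$ for $j\le2\le s\le4$. For $j=0$, \eqref{eq:approx} follows because $\Pi_h$ is the best approximation in the weighted norm, which is equivalent to $L^2$. For $j=1,2$, I split $w-\Pi_hw=(w-I_hw)+\Pi_h(I_hw-w)$ and apply the inverse inequality on the quasi-uniform mesh (ratio bound $\kappa$): $\norm{\Pi_h(I_hw-w)}_{H^j}\le Ch^{-j}\norm{I_hw-w}_H\le Ch^{s-j}\norm w_{H^s}$. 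Boundedness of $\Pi_h$ on $V_1$ and $V_2$ comes from the same argument with $s=j$. This needs the quasi-interpolant versions: Clément/Scott--Zhang-type $C^1$ cubic operators preserving the boundary conditions and stable in $H^1$, respectively $H^2$, with first-order error. I would construct them by averaging nodal derivatives locally.

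For \eqref{eq:graph}, I would use the weighted energy identity on $X_h$. Testing $A_hx$ against $x$ gives, by \eqref{eq:energy}, $a|x|_1^2\le\norm{A_hx}_H\norm x_H$, and together with $\norm x_H\le p|x|_1$ this yields $|x|_1\le C\norm{A_hx}_H$. For the $H^2$ bound I would introduce the continuous solution $w\in D(A)$ of $Bw=\Pi_h^{-}$, namely $Bw=B_hx$ viewed in $H$. Then $x=R_hw$, because $b(x,\psi)=(B_hx,\psi)_\rho=(Bw,\psi)_\rho=b(w,\psi)$. Hence $\norm x_{H^2}\le\norm w_{H^2}+\norm{w-R_hw}_{H^2}\le C\norm w_{H^3}\le C\norm{B_hx}_H$, using the invertibility of $B$ from Proposition~\ref{prop:rosier} and the first Ritz bound in \eqref{eq:Ritz}. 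So \eqref{eq:graph} reduces to the Ritz estimate.

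The Ritz estimate is where I expect the main obstacle, since $b$ is non-symmetric and not coercive in $H^2$ (its real part controls only $|f|_1$). The plan is a Gårding/Céa-type argument in $V_2$ using an inf–sup condition. I would show $\sup_{\psi\in X_h}|b(x,\psi)|/\norm\psi_{H^1}\ge c\norm x_{H^2}$ for $x\in X_h$, by choosing $\psi$ as a discrete antiderivative-type test function: with $\psi=\Pi$-approximations of $\rho^{-1}\int(\cdot)$ of $x''$, $b(x,\psi)\approx\norm{x''}_2^2$. Well-posedness of the continuous problem $b(w,\cdot)=g$ in $V_2\times V_1$ then gives $\norm{w-R_hw}_{H^2}\le C\inf_{\chi\in X_h}\norm{w-\chi}_{H^2}$, and \eqref{eq:Ritz} follows from interpolation with $s=3,4$. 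The identity $A_hR_hw=\Pi_hAw$ is immediate from the definitions. For \eqref{eq:graphproj}, I write $A_h\Pi_hw=\Pi_hAw+A_h(\Pi_hw-R_hw)$ and apply the inverse inequality, $\norm{A_hy}_H\le Ch^{-1}\norm y_{H^2}$ (from $|b(y,\psi)|\le C\norm y_{H^2}|\psi|_1$ and $|\psi|_1\le Ch^{-1}\norm\psi_H$). Combined with $\norm{\Pi_hw-R_hw}_{H^2}\le Ch\norm w_{H^3}$, which uses \eqref{eq:approx} with $s=3$ and the Ritz bound at one order higher, obtained by Aubin–Nitsche-free local argument, this gives \eqref{eq:graphproj}.

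The residual identity follows from $(\Pi_hAw,\psi)_\rho=(Aw,\psi)_\rho=-b(w,\psi)$ and $(A_h\Pi_hw,\psi)_\rho=-b(\Pi_hw,\psi)$. Its $-1,h$ bound is Cauchy–Schwarz with \eqref{eq:approx} at $j=2$. The $H$ bound follows from the $-1,h$ bound and the inverse inequality $|\psi|_1\le Ch^{-1}\norm\psi_H$. Finally, \eqref{eq:oneway} holds at $\alpha=0$ trivially and at $\alpha=1$ by \eqref{eq:graphproj}. At $\alpha=1/3$ I would instead use $\norm{B_h^{1/3}y}\sim|y|_1$, which comes from the Kato–Lions equivalence for the m-accretive $B_h$ via the energy identity and the uniform sectorial bound $|\Im b|\le C\norm y_{H^2}|y|_1$. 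The full range then follows from complex interpolation of $\Pi_h$ between $V_0\to D(B_h^0)$ and $V_3\to D(B_h)$, using the imaginary-power bound $\norm{B_h^{is}}\le e^{\pi|s|/2}$ (contraction semigroup, as in \cite[Theorem 11.12]{Haase}) to identify $[X_h,D(B_h)]_\alpha=D(B_h^\alpha)$ with uniform constants.
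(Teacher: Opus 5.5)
There is a genuine gap, and it sits in your proof of the Ritz estimate \eqref{eq:Ritz}. Your arguments for \eqref{eq:approx} and for the residual identity and bounds are sound. So is your reduction of \eqref{eq:graph} to the Ritz bound via $w=B^{-1}B_hx$, $x=R_hw$, which is exactly the paper's construction.

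The discrete inf--sup condition $\sup_{\psi\in X_h}|b(x,\psi)|/\norm\psi_{H^1}\ge c\norm x_{H^2}$ is false uniformly in $h$. Take a uniform mesh and $x\in X_h$ with $x''=\ell_0-\frac{h}{2L^2}s$, where $\ell_0=2(s-x_j)/h-1$ on $[x_j,x_{j+1}]$. The linear correction enforces $\int_0^Ls\,x''\,ds=0$, which is the only constraint on second derivatives of elements of $X_h$.

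Write $\ell_0=\vartheta'$, where $\vartheta=h\,y(y-1)$ with $y=(s-x_j)/h$ on each element, so that $\vartheta$ vanishes at every node. Integrate by parts onto $\vartheta$ and sum by parts over the nodal values $\psi'(x_j)$, using $\psi'(L)=0$. What remains is $\frac{hL}{6}|\psi'(0)|\le Ch^{1/2}|\psi|_1$ plus $O(h)|\psi|_1$. Hence $|b(x,\psi)|\le Ch^{1/2}|\psi|_1$ for all $\psi\in X_h$, while $\norm{x''}_2\ge c$.

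The sawtooth is almost $L^2$-orthogonal to $\{\psi':\psi\in X_h\}$, the continuous piecewise quadratics. So any $X_h$ approximation of your antiderivative test function loses the whole pairing.

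The conclusions you want to draw from the inf--sup condition also fail. For $w=B^{-1}B_hx$ one has $R_hw=x$. The continuous isomorphism $V_2\to V_1^*$ and the $V_1$ stability of $\Pi_h$ give $\norm w_{H^2}\le C\norm{B_hx}_{V_1^*}\le Ch^{1/2}$. Also $\norm w_{H^3}\le C\norm{B_hx}_H\le Ch^{-1/2}$. Then $\norm{w-R_hw}_{H^2}\ge c-Ch^{1/2}$. This contradicts both the $H^2$ C\'ea estimate and the bound $\norm{w-R_hw}_{H^2}\le Ch\norm w_{H^3}$ that you invoke for \eqref{eq:graphproj}.

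What works, and what the paper does, uses only $H^1$ coercivity. On $X_h$, \eqref{eq:energy} gives $\Re b(y,y)\ge a|y|_1^2$, and continuity gives $|b(f,\psi)|\le C\norm{f''}_2|\psi|_1$. Test $b(R_hw-I_hw,\psi)=b(w-I_hw,\psi)$ with $\psi=R_hw-I_hw$. This yields $|R_hw-I_hw|_1\le C\norm{(w-I_hw)''}_2\le Ch^{s-2}\norm w_{H^s}$. One inverse inequality, applied only to this discrete function, together with Hermite interpolation, gives \eqref{eq:Ritz}.

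The lost factor $h^{-1}$ is paid for by the extra derivative. That is why the $s=3$ bound is only a stability bound, and why the example above does not contradict it. With this, your reduction of \eqref{eq:graph} goes through unchanged.

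For \eqref{eq:graphproj} you do not need $R_h$ at all. The identity $A_h\Pi_hw=\Pi_hAw-d_h(w)$ and your own strong residual bound with $s=3$ give it directly.

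In \eqref{eq:oneway} your step at $\alpha=1/3$ is superfluous, and its justification does not work. The bound $|\Im b(y,y)|\le C\norm y_{H^2}|y|_1$ is not a sector condition, and $b$ is not sectorial. Interpolating between $\alpha=0$ and $\alpha=1$ with the uniform imaginary-power bounds for $B$ and $B_h$ suffices; this is the paper's three-lines argument.
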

\begin{proof}
Use Hermite interpolation~\cite[Theorem~3.1.6]{Ciarlet}, the
best-approximation property of $\Pi_h$ and inverse
estimates~\cite[Section~3.2]{Ciarlet} on $\Pi_hw-I_hw=\Pi_h(w-I_hw)$. This
proves
\eqref{eq:approx} and the $V_2$ stability. For the $V_1$ stability use
the cubic interpolant $Q_hw$ that matches nodal values and sets all nodal
slopes to zero. It satisfies $\norm{(Q_hw)'}_2\le C\norm{w'}_2$ and
$\norm{w-Q_hw}_2\le Ch\norm{w'}_2$. The same decomposition applies.

For graph recovery set $w=A^{-1}A_hx$. Then $\norm w_{H^3}\le
C\norm{A_hx}_H$ and $b(w-x,\psi)=0$ for $\psi\in X_h$. Test by
$e=x-I_hw$. Coercivity and interpolation give
$\norm{e'}_2\le C\norm{(w-I_hw)''}_2\le Ch\norm{A_hx}_H$. An inverse
estimate and the $H^2$ stability of $I_h$ prove \eqref{eq:graph}.
Subtracting the weak equations gives the exact residual identity.
Approximation and one inverse estimate give the two residual bounds.
Taking $s=3$ in the strong residual bound proves \eqref{eq:graphproj}.

For \eqref{eq:oneway} consider the holomorphic function
$z\mapsto e^{c(z-\alpha)^2}B_h^z\Pi_hB^{-z}$ on the strip
$0\le\Re z\le1$, with $c>0$. On $\Re z=0$ its norm is at most
$e^{c(\alpha^2-y^2)}e^{\pi|y|}$, by the imaginary-power bounds for $B$
and $B_h$, where $y=\Im z$. On $\Re z=1$ the same holds with the extra
factor $\norm{B_h\Pi_hB^{-1}}\le C$, by \eqref{eq:graphproj}. The factor
$e^{-cy^2}$ controls the vertical growth, and the three-lines lemma gives
\eqref{eq:oneway}.

The Ritz projection exists by coercivity of $b$ on $X_h$, and
$\ip{A_hR_hw}\psi=-b(w,\psi)=\ip{Aw}\psi$ for $w\in D(A)$. Testing by
$R_hw-I_hw$ gives $|R_hw-I_hw|_1\le C\norm{(w-I_hw)''}_2$. One inverse
inequality, applied only to this piecewise polynomial, and Hermite
interpolation give \eqref{eq:Ritz}. The first bound is a stability bound,
not a convergence estimate.
\end{proof}

\subsection{Exponential stability and Kato smoothing}\label{sec:stability}
Equation~\eqref{eq:energy} holds exactly on $X_h$. As in
Proposition~\ref{prop:rosier}, $B_h$ is invertible and maximal
accretive, and
\begin{equation}\label{eq:disenergy}
 \norm{T_h(t)}_{H\to H}\le e^{-\lambda t},\qquad
 |x|_1^2\le a^{-1}\norm{B_hx}_H\norm x_H,\qquad x\in X_h .
\end{equation}
The same proof gives the mesh-uniform Kato smoothing
\begin{equation}\label{eq:disKato}
 \norm{T_h(t)x}_H^2+2a\int_0^t|T_h(s)x|_1^2\,ds\le\norm x_H^2,
 \qquad x\in X_h,\ t\ge0 .
\end{equation}
The adjoint $B_h^*$ on $X_h$ satisfies
$\Re\ip{B_h^*x}x=\Re\ip{B_hx}x\ge a|x|_1^2$. For $y_t=A_hy+g$ in $X_h$
the argument of \eqref{eq:forced} gives, with the same constant,
\begin{equation}\label{eq:disforced}
 \E_\nu(y)\le C_{L,\nu}
 \big(\norm{y(0)}_H+\norm{e^{\nu t}g}_{L^2_t(-1,h)}\big),
 \qquad 0<\nu<\lambda ,
\end{equation}
where $L^2_t(-1,h)$ is the weighted time $L^2$ space with values in the
norm $\norm\cdot_{-1,h}$.

For small energy the nonlinear scheme keeps the decay, up to a factor in
the rate.

\begin{proposition}[Nonlinear contraction for small energy]\label{prop:nonlinstab}
Let $0<\nu<\lambda$ and let $u_0\in H$ be real with
$\norm{u_0}_H\le9\pi(1-\nu/\lambda)/(2L)$. Then for every mesh and every
$t\ge0$,
\[
 \norm{u_h(t)}_H\le e^{-\nu t}\norm{\Pi_hu_0}_H,\qquad
 2\nu p^2\int_0^\infty|u_h|_1^2\,dt\le\norm{\Pi_hu_0}_H^2 .
\]
\end{proposition}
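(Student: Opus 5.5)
The plan is a direct nonlinear energy estimate in the weighted norm, as in Proposition~\ref{prop:rosier}, with the cubic term absorbed into the dissipation while the energy stays small. The scheme \eqref{eq:Wscheme} is a finite-dimensional ODE on $X_h$ with a locally Lipschitz right-hand side, so $u_h$ exists locally and is real for real data. Once a uniform bound on $\norm{u_h(t)}_H$ is available, the solution is global.

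\emph{Energy identity.} I test \eqref{eq:Wscheme} with $u_h$ in the weighted inner product. Since $u_h\in X_h$ and $\Pi_h$ is the $H$-orthogonal projection, $\ip{\Pi_hF(u_h,u_h)}{u_h}=\ip{F(u_h,u_h)}{u_h}$. By \eqref{eq:energy}, which holds exactly on $X_h$,
\[
 \frac d{dt}\norm{u_h}_H^2=-2a|u_h|_1^2-L|u_h'(0)|^2-2\int_0^Lu_h^2u_h'\rho\dx .
\]
For the real function $u=u_h$ I write $u^2u'\rho=(u^3/3)'\rho$ and integrate by parts. The boundary terms vanish because $u(0)=u(L)=0$, which leaves $+\frac23\int_0^Lu^3\dx$. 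This sign-indefinite cubic term is the only obstruction, and it is the step where the constant must be tracked.

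\emph{Absorbing the cubic term.} For $u\in H_0^1(0,L)$ I use
\[
 \norm u_\infty\le\tfrac{\sqrt L}{2}|u|_1,\qquad \norm u_2^2\le L^{-1}\norm u_H^2\quad(\rho\ge L),\qquad \norm u_H\le p|u|_1 .
\]
These give
\[
 \tfrac23\Big|\int_0^Lu^3\dx\Big|\le\frac{\norm u_H^2|u|_1}{3\sqrt L}\le\frac{p\norm u_H}{3\sqrt L}|u|_1^2 .
\]
Hence
\[
 \frac d{dt}\norm{u_h}_H^2\le-2\Big(a-\frac{p\norm{u_h}_H}{6\sqrt L}\Big)|u_h|_1^2 .
\]
The bracket is at least $\nu p^2$ whenever $\norm{u_h}_H\le6\sqrt L(a-\nu p^2)/p$. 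Since $a=\lambda p^2$, this threshold equals $6\sqrt L\,a(1-\nu/\lambda)/p=9\pi(1-\nu/\lambda)/(\sqrt2L)$. It exceeds the hypothesis $9\pi(1-\nu/\lambda)/(2L)$, so the hypothesis leaves room for cruder Sobolev constants.

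\emph{Bootstrap and conclusion.} Initially $\norm{u_h(0)}_H=\norm{\Pi_hu_0}_H\le\norm{u_0}_H$, which is below the threshold. As long as the threshold holds, the inequality above shows that $\norm{u_h}_H^2$ is nonincreasing, so a continuity argument keeps it below the threshold for all $t\ge0$; this also gives global existence. Then
\[
 \frac d{dt}\norm{u_h}_H^2\le-2\nu p^2|u_h|_1^2\le-2\nu\norm{u_h}_H^2 ,
\]
by the Poincar\'e inequality in \eqref{eq:energy}. Gronwall's inequality gives $\norm{u_h(t)}_H\le e^{-\nu t}\norm{\Pi_hu_0}_H$. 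Integrating the first form over $(0,\infty)$ gives $2\nu p^2\int_0^\infty|u_h|_1^2\,dt\le\norm{\Pi_hu_0}_H^2$. Every constant involved is mesh-independent, because the identity \eqref{eq:energy} holds exactly on the trial space.
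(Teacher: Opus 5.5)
Your proof is correct and follows the paper's argument: the weighted energy identity holds exactly on $X_h$, the orthogonal projection drops out of $\ip{\Pi_hF(u_h,u_h)}{u_h}$, the cubic term $\frac13\int_0^Lu_h^3\dx$ is absorbed into the dissipation while the norm does not increase, and the Poincar\'e inequality with Gronwall gives both bounds. The only difference is in the constants. The paper bounds the cubic term with $\norm w_\infty\le L^{1/2}|w|_1$ and uses $\norm w_2\le(L/\pi)|w|_1$ on one factor, which lands exactly on the stated radius $9\pi(1-\nu/\lambda)/(2L)$. Your sharp bound $\norm u_\infty\le\frac{\sqrt L}{2}|u|_1$ gives the larger admissible radius $9\pi(1-\nu/\lambda)/(\sqrt2L)$, so the hypothesis holds with room to spare.
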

\begin{proof}
For real $w\in H_0^1$,
$\ip{F(w,w)}w=-\int_0^Lw^2w'\rho\dx=\frac13\int_0^Lw^3\dx$, since
$\rho'=1$ and $w$ vanishes at both ends. The bounds
$\norm w_\infty\le L^{1/2}|w|_1$, $\norm w_2\le(L/\pi)|w|_1$ and
$\norm w_2\le L^{-1/2}\norm w_H$ give
\[
 \tfrac13\Big|\int_0^Lw^3\dx\Big|\le\frac{L}{3\pi}\norm w_H|w|_1^2 .
\]
Since $\Pi_h$ is the $H$-orthogonal projection,
$\ip{\Pi_hF(u_h,u_h)}{u_h}=\ip{F(u_h,u_h)}{u_h}$. Hence \eqref{eq:energy}
gives
\[
 \frac12\frac d{dt}\norm{u_h}_H^2
 \le-\Big(a-\frac L{3\pi}\norm{u_h}_H\Big)|u_h|_1^2 .
\]
Since $a=3/2$, the bracket is at least $a\nu/\lambda=\nu p^2$ while
$\norm{u_h}_H\le9\pi(1-\nu/\lambda)/(2L)$, and then the norm does not
increase. This bound therefore holds for all $t\ge0$. Integration gives
the second estimate. The inequality $\norm{u_h}_H\le p|u_h|_1$ and
$\lambda=a/p^2$ give the first.
\end{proof}
The same computation applies to the continuous solution whenever the
energy identity can be applied to it, for instance for data in a small
ball of $V_3$. The radius is explicit and proportional to $1/L$, in
accordance with the scaling of \eqref{eq:pde}.

Error equations are linearizations along a pair of solutions. The
following energy estimate controls them uniformly in the mesh. It uses
only real coefficients. We write $F_\bullet=F$ and $A_\bullet=A$ in the
continuous case, $F_\bullet=F_h$ and $A_\bullet=A_h$ in the discrete
case, and $\norm\cdot_{-1,\bullet}$ for the norm of $V_1^*$ or
$\norm\cdot_{-1,h}$, respectively.

\begin{lemma}[A common perturbation estimate]\label{lem:perturb}
Let $m$ be a real trajectory and let
\[
 y_t=A_\bullet y+2F_\bullet(m,y)+g,\qquad
 K_m=\int_0^\infty\big(\norm m_\infty+|m|_1^{4/3}\big)\,dt<\infty .
\]
For $0<\sigma<\lambda$,
\begin{equation}\label{eq:perturb}
 \E_\sigma(y)\le Ce^{CK_m}
 \big\{\norm{y(0)}_H+\norm{e^{\sigma t}g}_{L^2_t(-1,\bullet)}\big\},
\end{equation}
where $C$ depends only on $L$ and $\sigma$. Moreover $\E_\alpha(m)\le M$
for some $\alpha>0$ implies $K_m\le C_{L,\alpha}(M+M^{4/3})$.
\end{lemma}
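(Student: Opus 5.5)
We prove \eqref{eq:perturb} by a weighted energy estimate with a time-dependent integrating factor. Put $z(t)=e^{\sigma t}y(t)$. Then $z_t=(A_\bullet+\sigma)z+2F_\bullet(m,z)+e^{\sigma t}g$. We test with $z$ in the weighted pairing and take real parts. In the discrete case $\Pi_h$ is the $H$-orthogonal projection and $z\in X_h$, so $\ip{F_h(m,z)}z=\ip{F(m,z)}z$ and the argument is the same in both cases. By \eqref{eq:energy} and $\norm z_H\le p|z|_1$,
\[
 \frac12\frac d{dt}\norm z_H^2+(a-\sigma p^2)|z|_1^2
 \le 2\big|\ip{F(m,z)}z\big|+\norm{e^{\sigma t}g}_{-1,\bullet}|z|_1 ,
\]
and $a-\sigma p^2>0$ because $\sigma<\lambda$. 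Young's inequality absorbs the source term into a quarter of the dissipation, leaving $C\norm{e^{\sigma t}g}_{-1,\bullet}^2$.

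The key step is the bound on the bilinear term. Since $y$ and $m$ are real and $z$ vanishes at both ends, integration by parts gives
\[
 2\ip{F(m,z)}z=-\int_0^L(mz)'z\rho\dx=\int_0^Lmz(z\rho)'\dx ,
\]
and $(z\rho)'=z'\rho+z$. The term $\int m z z'\rho$ is bounded in two ways. The first is $\norm m_\infty\norm z_2|z|_1\le\delta|z|_1^2+C_\delta\norm m_\infty^2\norm z_H^2$, but this produces $\norm m_\infty^2$, which $K_m$ does not control. So we instead write $\int mzz'\rho=-\frac12\int m'z^2\rho-\frac12\int mz^2$. The second piece is at most $\norm m_\infty\norm z_H^2$ and is controlled by $K_m$. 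For the first piece we use Agmon, $\norm z_\infty^2\le C\norm z_H|z|_1$:
\[
 \Big|\int m'z^2\rho\Big|\le C|m|_1\norm z_2\norm z_\infty
 \le C|m|_1\norm z_H^{3/2}|z|_1^{1/2}
 \le\delta|z|_1^2+C_\delta|m|_1^{4/3}\norm z_H^2 ,
\]
where the last step is Young's inequality with exponents $4$ and $4/3$. This is exactly the integrand of $K_m$. The remaining term $\int mz^2$ is again at most $\norm m_\infty\norm z_H^2$. Choosing $\delta$ small, we arrive at
\[
 \frac d{dt}\norm z_H^2+c|z|_1^2\le C\phi(t)\norm z_H^2+C\norm{e^{\sigma t}g}_{-1,\bullet}^2,
 \qquad \phi=\norm m_\infty+|m|_1^{4/3}.
\]
Gronwall's inequality gives $\sup_t\norm{z}_H^2\le e^{CK_m}\big(\norm{y(0)}_H^2+C\norm{e^{\sigma t}g}^2_{L^2_t(-1,\bullet)}\big)$. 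Integrating the inequality once more, and bounding $\int\phi\norm z_H^2$ by $K_m\sup_t\norm z_H^2$, controls $\int|z|_1^2$ by the same quantity with an extra factor $(1+CK_m)\le e^{CK_m}$. This proves \eqref{eq:perturb}. For weak sources $g$ and for the continuous case, the identity is justified by smooth approximation, as in \eqref{eq:forced}.

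For the bound on $K_m$, we use $\norm m_\infty\le C\norm m_H^{1/2}|m|_1^{1/2}$. Hence
\[
 \int_0^\infty\norm m_\infty\,dt\le C\sup_t\big(e^{\alpha t}\norm m_H\big)^{1/2}\int_0^\infty e^{-\alpha t}\big(e^{\alpha t}|m|_1\big)^{1/2}dt .
\]
By H\"older with exponents $4$ and $4/3$ the last integral is at most $C_\alpha\big(\int e^{2\alpha t}|m|_1^2\big)^{1/4}$, so this part of $K_m$ is at most $CM$. Similarly,
\[
 \int|m|_1^{4/3}\,dt=\int e^{-\frac43\alpha t}\big(e^{\alpha t}|m|_1\big)^{4/3}dt\le C_\alpha M^{4/3}
\]
by H\"older with exponents $3/2$ and $3$. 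Together these give $K_m\le C_{L,\alpha}(M+M^{4/3})$.

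The main obstacle is the bilinear term. A naive bound leads to $\norm m_\infty^2$ or $|m|_1^2$ in the Gronwall exponent, and neither is integrable under $\E_\alpha(m)\le M$ with the right homogeneity. The integration by parts that moves the derivative onto $m$, followed by Agmon's inequality and Young's inequality with exponent $4/3$, is the step that makes the exponent exactly $K_m$.
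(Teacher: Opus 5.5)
Your proof is correct and is essentially the paper's argument: the same integration by parts to $\frac12\int_0^Lm|y|^2\dx-\frac12\int_0^L\rho m'|y|^2\dx$, the bound $\norm y_4^2\le C\norm y_H^{3/2}|y|_1^{1/2}$ (which you reach through Agmon's inequality), Young's inequality with exponents $4$ and $4/3$, Gronwall's inequality, and weighted H\"older inequalities for $K_m$. Your restriction to real $y$ is harmless: with $m$ real, taking the real part of the pairing gives the same identity with $|y|^2$ for complex $y$.
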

\begin{proof}
Integration by parts gives
\[
 \Re\ip{2F(m,y)}y=\frac12\int_0^Lm|y|^2\dx
 -\frac12\int_0^L\rho m'|y|^2\dx .
\]
Put $\delta=(1-\sigma/\lambda)/4$. The inequality
$\norm y_4^2\le C\norm y_H^{3/2}|y|_1^{1/2}$ bounds this expression by
$\delta a|y|_1^2+C(\norm m_\infty+|m|_1^{4/3})\norm y_H^2$. The source
costs another $\delta a|y|_1^2$. Multiply by $e^{2\sigma t}$, use
$\sigma\norm y_H^2\le\sigma p^2|y|_1^2=(\sigma/\lambda)a|y|_1^2$, and
apply Gronwall's inequality. The multiple $2\delta a>0$ of the
time-integrated gradient remains, since $1-2\delta-\sigma/\lambda=2\delta$.
Finally $\norm m_\infty\le C|m|_1$; weighted
Cauchy--Schwarz and H\"older inequalities give the bound on $K_m$. The
argument is identical on $X_h$, since $\Pi_h$ is the $H$-orthogonal
projection.
\end{proof}
We shall repeatedly use
\begin{equation}\label{eq:prodenergy}
 \norm{F(f,g)}_H\le C|f|_1|g|_1,\qquad
 \norm{F(f,g)}_{V_1^*}\le C|f|_1\norm g_H,\qquad f,g\in H_0^1 .
\end{equation}
Both follow by integration by parts and the embedding
$H_0^1\hookrightarrow L^\infty$. The second is symmetric in $f,g$, and it
holds for $F_h$ with the norm $\norm\cdot_{-1,h}$.

\subsection{Hilbert direct sums}\label{sec:directsum}
Uniformity in the mesh will be obtained from statements about a single
generator. Let $S_n$, $n\in\mathbb N$, be contraction semigroups with
generators $G_n$ on Hilbert spaces $\mathcal H_n$, and let
$\mathfrak H=\bigoplus_n\mathcal H_n$ be their $\ell^2$ direct sum.
Let $\iota_n:\mathcal H_n\to\mathfrak H$ be the inclusion as the $n$-th
summand and $\pi_n:\mathfrak H\to\mathcal H_n$ the coordinate map, so that
$\pi_n\iota_n=I$ and $\iota_n\pi_n$ is the orthogonal projection onto the
$n$-th summand.

\begin{lemma}[Direct sums]\label{lem:directsum}
The operators $\mathfrak S(t)=\bigoplus_nS_n(t)$ form a contraction
$C_0$-semigroup on $\mathfrak H$. Its generator is
\begin{equation}\label{eq:diagonal}
 \mathfrak G(x_n)_n=(G_nx_n)_n,\qquad
 D(\mathfrak G)=\Big\{(x_n)\in\mathfrak H:\ x_n\in D(G_n),\
 \sum_n\norm{G_nx_n}^2<\infty\Big\}.
\end{equation}
Moreover, for $z\in\C$ and $t>0$:
\begin{enumerate}[label=\rm(\roman*)]
\item $z\in\rho(\mathfrak G)$ if and only if $z\in\bigcap_n\rho(G_n)$ and
 $\sup_n\norm{R(z,G_n)}<\infty$; then
 $\norm{R(z,\mathfrak G)}=\sup_n\norm{R(z,G_n)}$;
\item $\mathfrak S(t)\mathfrak H\subset D(\mathfrak G)$ if and only if
 $S_n(t)\mathcal H_n\subset D(G_n)$ for all $n$ and
 $\sup_n\norm{G_nS_n(t)}<\infty$; then
 $\norm{\mathfrak G\mathfrak S(t)}=\sup_n\norm{G_nS_n(t)}$.
\end{enumerate}
\end{lemma}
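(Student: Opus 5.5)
The argument is elementary and entirely componentwise; I would use only the $\ell^2$ structure of $\mathfrak H$ and the contraction bounds $\norm{S_n(t)}\le1$.

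\emph{Semigroup and generator.} Since $\norm{\mathfrak S(t)}=\sup_n\norm{S_n(t)}\le1$, each $\mathfrak S(t)$ is a bounded contraction. The semigroup law holds componentwise. For strong continuity, fix $x=(x_n)\in\mathfrak H$ and $\epsilon>0$, and choose $N$ with $\sum_{n>N}\norm{x_n}^2<\epsilon^2$. The tail contributes at most $2\epsilon$ to $\norm{\mathfrak S(t)x-x}$, by contractivity. The finitely many head components converge individually.

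Let $\mathfrak G_0$ denote the operator in \eqref{eq:diagonal}. It is closed, because each $G_n$ is closed and the graph norm is the $\ell^2$ sum of the componentwise graph norms. For $\Re z>0$ we have $\norm{R(z,G_n)}\le(\Re z)^{-1}$ uniformly in $n$. Hence $\bigoplus_nR(z,G_n)$ is bounded, maps onto $D(\mathfrak G_0)$, and inverts $z-\mathfrak G_0$. It equals the Laplace transform of $\mathfrak S$, which is computed componentwise. Therefore it is $R(z,\mathfrak G)$, and so $\mathfrak G=\mathfrak G_0$.

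\emph{Part (i).} Suppose all the resolvents exist with $\sup_n\norm{R(z,G_n)}=K<\infty$. Then $\bigoplus_nR(z,G_n)$ is a bounded two-sided inverse of $z-\mathfrak G$ on $D(\mathfrak G)$. Its norm is $K$, because the norm of a block-diagonal operator is the supremum of the block norms: the bound $\le$ holds by summing squares, and $\ge$ follows by testing on single summands.

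Conversely, suppose $z\in\rho(\mathfrak G)$. I would show that $\pi_nR(z,\mathfrak G)\iota_n$ inverts $z-G_n$. The key point is that $\mathfrak G$ commutes with the orthogonal projection $\iota_n\pi_n$, which is clear from \eqref{eq:diagonal}. Hence $R(z,\mathfrak G)$ also commutes with $\iota_n\pi_n$, so it maps the $n$-th summand into itself. Then $\pi_nR(z,\mathfrak G)\iota_n$ is a two-sided inverse of $z-G_n$, and its norm is at most $\norm{R(z,\mathfrak G)}$. This step is the only one needing care: one must check that invariance of the summand gives both injectivity and surjectivity of $z-G_n$. The commutation argument settles it.

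\emph{Part (ii).} This follows the same pattern. If each $S_n(t)$ maps into $D(G_n)$ with $\sup_n\norm{G_nS_n(t)}=K<\infty$, then for every $x\in\mathfrak H$ we get $\sum_n\norm{G_nS_n(t)x_n}^2\le K^2\norm x^2$. So $\mathfrak S(t)x\in D(\mathfrak G)$ and $\norm{\mathfrak G\mathfrak S(t)}\le K$.

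Conversely, if $\mathfrak S(t)\mathfrak H\subset D(\mathfrak G)$, then $\mathfrak G\mathfrak S(t)$ is closed and everywhere defined, hence bounded by the closed graph theorem. Applying it to $\iota_nx_n$ gives $S_n(t)x_n\in D(G_n)$ and $\norm{G_nS_n(t)x_n}\le\norm{\mathfrak G\mathfrak S(t)}\norm{x_n}$. This yields the reverse inequality, and hence equality of the norms.
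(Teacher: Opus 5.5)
Your proposal is correct and follows the paper's proof essentially step by step. The componentwise Laplace transform identifies the generator: you compare the two resolvents at a single point with $\Re z>0$, whereas the paper first shows that $D(\mathfrak G)$ lies in the diagonal domain and then uses injectivity of $z$ minus the diagonal operator, which is an immaterial difference. Commutation of $R(z,\mathfrak G)$ with the projections $\iota_n\pi_n$ gives the converse in (i), and the closed graph theorem applied to $\mathfrak G\mathfrak S(t)\iota_n$ gives (ii). You leave one routine check implicit, namely that $\bigoplus_nR(z,G_n)$ maps into $D(\mathfrak G)$; it follows from $G_nR(z,G_n)=zR(z,G_n)-I$, as the paper notes.
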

\begin{proof}
\emph{Step 1: the semigroup.} Each $S_n(t)$ is a contraction, so
$\mathfrak S(t)$ is a well-defined contraction and the semigroup law holds
componentwise. For $x=(x_n)\in\mathfrak H$,
\[
 \norm{\mathfrak S(t)x-x}^2=\sum_n\norm{S_n(t)x_n-x_n}^2 .
\]
Each term tends to zero as $t\to0$ and is bounded by $4\norm{x_n}^2$.
Dominated convergence gives strong continuity. By definition,
$\pi_n\mathfrak S(t)=S_n(t)\pi_n$ and $\mathfrak S(t)\iota_n=\iota_nS_n(t)$.

\emph{Step 2: the generator.} Let $\mathfrak G$ be the generator and let
$\mathfrak G_d$ be the diagonal operator on the set $D_d$ displayed in
\eqref{eq:diagonal}. Fix $z$ with $\Re z>0$. The resolvent is the Laplace
transform $R(z,\mathfrak G)y=\int_0^\infty e^{-zt}\mathfrak S(t)y\,dt$.
Applying the bounded map $\pi_n$ under the integral gives
$\pi_nR(z,\mathfrak G)y=R(z,G_n)\pi_ny$. Let $x\in D(\mathfrak G)$ and
$y=(z-\mathfrak G)x$. Then $x_n=R(z,G_n)y_n\in D(G_n)$ and
$G_nx_n=zx_n-y_n=\pi_n\mathfrak Gx$. Hence
$\sum_n\norm{G_nx_n}^2=\norm{\mathfrak Gx}^2<\infty$, so
$D(\mathfrak G)\subset D_d$ and $\mathfrak G=\mathfrak G_d$ on
$D(\mathfrak G)$. Conversely, $z-\mathfrak G$ maps $D(\mathfrak G)$ onto
$\mathfrak H$, while $z-\mathfrak G_d$ is injective on $D_d$ because each
$z-G_n$ is injective. If $x\in D_d$, choose $\tilde x\in D(\mathfrak G)$
with $(z-\mathfrak G)\tilde x=(z-\mathfrak G_d)x$. Then
$(z-\mathfrak G_d)(x-\tilde x)=0$, so $x=\tilde x\in D(\mathfrak G)$.
Thus $\mathfrak G=\mathfrak G_d$.

\emph{Step 3: part (i).} By Step 2, $\iota_n$ maps $D(G_n)$ into
$D(\mathfrak G)$ with $\mathfrak G\iota_n=\iota_nG_n$, and $\iota_n\pi_n$
maps $D(\mathfrak G)$ into itself and commutes with $\mathfrak G$ there.
Let $z\in\rho(\mathfrak G)$. Then $R(z,\mathfrak G)$ commutes with
$\iota_n\pi_n$. For $y_n\in\mathcal H_n$ the element
$x=R(z,\mathfrak G)\iota_ny_n$ satisfies $x=\iota_n\pi_nx$, so
$x=\iota_nx_n$ with $x_n\in D(G_n)$ and $(z-G_n)x_n=y_n$. If
$(z-G_n)x_n=0$, then $(z-\mathfrak G)\iota_nx_n=0$ and $x_n=0$. Hence
$z\in\rho(G_n)$ and $R(z,G_n)=\pi_nR(z,\mathfrak G)\iota_n$, whose norm
is at most $\norm{R(z,\mathfrak G)}$. Conversely, let
$z\in\bigcap_n\rho(G_n)$ with $\norm{R(z,G_n)}\le M$ for all $n$. The
diagonal operator $\mathfrak K=\bigoplus_nR(z,G_n)$ has norm
$\sup_n\norm{R(z,G_n)}\le M$. It maps $\mathfrak H$ into $D(\mathfrak G)$,
because $G_nR(z,G_n)=zR(z,G_n)-I$, and it is a two-sided inverse of
$z-\mathfrak G$. The norm identity follows from the two inequalities.

\emph{Step 4: part (ii).} If $\mathfrak S(t)\mathfrak H\subset
D(\mathfrak G)$, then $\mathfrak G\mathfrak S(t)$ is closed and
everywhere defined, hence bounded. Applied to $\iota_nx_n$ it gives
$\iota_nG_nS_n(t)x_n$, so $\norm{G_nS_n(t)}\le\norm{\mathfrak G\mathfrak
S(t)}$. Conversely, if the blocks are uniformly bounded, then
$\sum_n\norm{G_nS_n(t)x_n}^2\le\sup_n\norm{G_nS_n(t)}^2\norm x^2$, so
$\mathfrak S(t)x\in D(\mathfrak G)$ by \eqref{eq:diagonal}, and the norm
identity follows.
\end{proof}

\begin{proposition}[Uniform Crandall--Pazy regularity]\label{prop:uniformCP}
Let $\{S_j\}_{j\in\mathfrak J}$ be a family of contraction semigroups with
generators $G_j$ on Hilbert spaces, and let $0<\beta\le1$. The following
are equivalent.
\begin{enumerate}[label=\rm(\alph*)]
\item Every $S_j$ is immediately differentiable and
 $\sup_{j\in\mathfrak J}\sup_{0<t\le1}t^{1/\beta}\norm{G_jS_j(t)}<\infty$.
\item There are $\Omega,C>0$ such that $i\omega\in\rho(G_j)$ and
 $\norm{R(i\omega,G_j)}\le C|\omega|^{-\beta}$ for all $j\in\mathfrak J$
 and all $|\omega|\ge\Omega$.
\end{enumerate}
\end{proposition}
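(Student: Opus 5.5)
The plan is to reduce both directions to Wakaiki's Theorem~\ref{thm:wakaiki} applied to a single generator, the direct sum, using Lemma~\ref{lem:directsum}. Since the index set $\mathfrak J$ may be uncountable, I first note that Lemma~\ref{lem:directsum} holds verbatim for an arbitrary index set. The $\ell^2$ direct sum $\mathfrak H=\bigoplus_{j\in\mathfrak J}\mathcal H_j$ is defined as usual, and dominated convergence in Step~1 is replaced by the fact that only countably many components of a given vector are nonzero. Alternatively, one can argue by contradiction along a sequence of indices, which lets us keep $\mathbb N$ as the index set. I would use the sequence version. If (a) holds but (b) fails, pick $j_n$ and $\omega_n$ with $|\omega_n|\ge n$ and either $i\omega_n\notin\rho(G_{j_n})$ or $\norm{R(i\omega_n,G_{j_n})}\ge n|\omega_n|^{-\beta}$. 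If (b) holds but (a) fails, pick $j_n$ and $t_n\in(0,1]$ witnessing unboundedness of $t^{1/\beta}\norm{G_jS_j(t)}$ or failure of differentiability. In either case form the direct sum $\mathfrak S$ of the countable family $\{S_{j_n}\}$.

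\emph{(a)$\Rightarrow$(b).} By Lemma~\ref{lem:directsum}(ii), for each $0<t\le1$ we have $\mathfrak S(t)\mathfrak H\subset D(\mathfrak G)$ and $\norm{\mathfrak G\mathfrak S(t)}=\sup_n\norm{G_{j_n}S_{j_n}(t)}\le Ct^{-1/\beta}$. For $t>1$, write $\mathfrak S(t)=\mathfrak S(t-1)\mathfrak S(1)$, so $\mathfrak S$ is immediately differentiable and lies in $\CP_\beta$. Theorem~\ref{thm:wakaiki} then gives $\Omega,C$ with $i\omega\in\rho(\mathfrak G)$ and $\norm{R(i\omega,\mathfrak G)}\le C|\omega|^{-\beta}$ for $|\omega|\ge\Omega$. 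Lemma~\ref{lem:directsum}(i) transfers this to every summand with the same constants, contradicting the choice of $(j_n,\omega_n)$ once $n\ge\max\{\Omega,C\}$.

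\emph{(b)$\Rightarrow$(a).} Lemma~\ref{lem:directsum}(i) shows that $i\omega\in\rho(\mathfrak G)$ with $\norm{R(i\omega,\mathfrak G)}\le C|\omega|^{-\beta}$ for $|\omega|\ge\Omega$, so Theorem~\ref{thm:wakaiki} gives $\mathfrak S\in\CP_\beta$. The main obstacle is that $\CP_\beta$ only controls $\limsup_{t\to0}$, that is, it gives a bound $t^{1/\beta}\norm{\mathfrak G\mathfrak S(t)}\le C'$ on some $(0,t_1]$. On $[t_1,1]$ I would use $\mathfrak G\mathfrak S(t)=\mathfrak S(t-t_1)\mathfrak G\mathfrak S(t_1)$ and contractivity, which gives $\norm{\mathfrak G\mathfrak S(t)}\le C't_1^{-1/\beta}$. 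Hence $\sup_{0<t\le1}t^{1/\beta}\norm{\mathfrak G\mathfrak S(t)}<\infty$. By Lemma~\ref{lem:directsum}(ii), every summand is immediately differentiable and $\norm{G_{j_n}S_{j_n}(t)}\le\norm{\mathfrak G\mathfrak S(t)}$, which contradicts the choice of $(j_n,t_n)$.

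The only delicate point is bookkeeping. Immediate differentiability of the direct sum at all $t>0$ must be derived from the uniform block bounds via Lemma~\ref{lem:directsum}(ii), and the semigroup law is what extends the small-time bound to $(0,1]$. The contrapositive-by-sequence device avoids needing an uncountable direct sum.
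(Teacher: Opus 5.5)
Your proof is correct and is essentially the paper's argument: contradiction along a sequence of indices, a countable direct sum, Wakaiki's theorem for the single generator $\mathfrak G$, and transfer to the summands through Lemma~\ref{lem:directsum}(i),(ii), with the semigroup law extending the small-time bound from $(0,t_1]$ to $(0,1]$. One small correction: in (a)$\Rightarrow$(b) you chose $\norm{R(i\omega_n,G_{j_n})}\ge n|\omega_n|^{-\beta}$ with a non-strict inequality, so the final contradiction needs $n>\max\{\Omega,C\}$ rather than $n\ge\max\{\Omega,C\}$.
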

\begin{proof}
(b)$\Rightarrow$(a). By Theorem~\ref{thm:wakaiki} every $S_j$ is in
$\CP_\beta$. Suppose that the supremum in (a) is infinite. Then there are
$j_n\in\mathfrak J$ and $t_n\in(0,1]$ with
$t_n^{1/\beta}\norm{G_{j_n}S_{j_n}(t_n)}\ge n$. Let $\mathfrak S$ be the
direct sum of $S_{j_1},S_{j_2},\dots$. By (b) and
Lemma~\ref{lem:directsum}(i),
$\norm{R(i\omega,\mathfrak G)}\le C|\omega|^{-\beta}$ for
$|\omega|\ge\Omega$. Theorem~\ref{thm:wakaiki} gives
$\mathfrak S\in\CP_\beta$: there are $t_*\in(0,1]$ and $C'$ with
$\norm{\mathfrak G\mathfrak S(t)}\le C't^{-1/\beta}$ for $0<t\le t_*$. For
$t_*\le t\le1$, $\mathfrak G\mathfrak S(t)=\mathfrak S(t-t_*)\mathfrak
G\mathfrak S(t_*)$ gives
$\norm{\mathfrak G\mathfrak S(t)}\le C't_*^{-1/\beta}\le
C't_*^{-1/\beta}t^{-1/\beta}$. By Lemma~\ref{lem:directsum}(ii), the
same bound holds for every $G_{j_n}S_{j_n}(t)$, a contradiction for large
$n$.

(a)$\Rightarrow$(b). Suppose (b) fails. Then there are $j_n$ and
$\omega_n$ with $|\omega_n|\ge n$ such that either
$i\omega_n\notin\rho(G_{j_n})$ or
$|\omega_n|^\beta\norm{R(i\omega_n,G_{j_n})}\ge n$. By (a) and
Lemma~\ref{lem:directsum}(ii), the direct sum of the $S_{j_n}$ belongs to
$\CP_\beta$. By Theorem~\ref{thm:wakaiki} there are $\Omega',C'$ with
$\norm{R(i\omega,\mathfrak G)}\le C'|\omega|^{-\beta}$ for
$|\omega|\ge\Omega'$. Lemma~\ref{lem:directsum}(i) transfers this bound to
every $G_{j_n}$, which contradicts the choice of $(j_n,\omega_n)$ for
$n>\max\{\Omega',C'\}$.
\end{proof}
Only countable direct sums are used. The index set may be uncountable,
for instance the set of all admissible meshes. The analogous
characterization of uniform exponential stability is due to Liu and
Zheng~\cite{LiuZheng94,LiuZheng99}. It is a uniform version of the
Gearhart--Huang--Pr\"uss theorem~\cite{Gearhart,Huang,Pruss}. In
Section~\ref{sec:spectral} the
direct sum is again of class $\CP_{2/3}$, so its growth bound equals its
spectral bound.

\subsection{Uniform resolvent decay and Crandall--Pazy regularity}
\label{sec:uniformcp}
\begin{theorem}[Uniform resolvent decay and Crandall--Pazy regularity]\label{thm:cp}
For the full quasi-uniform Hermite family, with constants independent of
$h$,
\begin{align}
 \norm{(i\tau-A_h)^{-1}}_{H\to H}&\le C\langle\tau\rangle^{-2/3},
 \qquad\tau\in\R,\label{eq:resolvent}\\
 \norm{B_h^\alpha T_h(t)}_{H\to H}&\le Ct_\wedge^{-3\alpha/2}e^{-\lambda t},
 \qquad 0\le\alpha\le1,\ t>0,\label{eq:cp}\\
 |T_h(t)x|_1&\le Ct_\wedge^{-3/4}e^{-\lambda t}\norm x_H,
 \qquad x\in X_h,\label{eq:cpH1}\\
 \norm{T_h(t)\Pi_hg}_H&\le Ct_\wedge^{-3/4}e^{-\lambda t}\norm g_{V_1^*},
 \qquad g\in V_1^*.\label{eq:weaksmoothing}
\end{align}
The continuous operator satisfies the same estimates, with $\Pi_h$
omitted.
\end{theorem}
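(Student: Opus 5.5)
The plan is to copy the proof of Theorem~\ref{thm:ctcp}(i) on the trial space $X_h$, which yields the uniform resolvent bound \eqref{eq:resolvent}, and then to obtain \eqref{eq:cp} from Proposition~\ref{prop:uniformCP} and the remaining two estimates by duality and interpolation. Let $x\in X_h$, $\norm x_H=1$, $f=(i\tau-A_h)x$, $r_f=\norm f_H$ and $k=|x|_1$. Since \eqref{eq:energy} holds exactly on $X_h$, the real part of $\ip fx$ gives $ak^2\le r_f$ and $r_f\ge\lambda$, exactly as in the continuous case. The difficulty is the test function. In the continuous proof one tests against a smooth $z$, but here only $\psi\in X_h$ is admissible in \eqref{eq:scheme}. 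I would therefore construct $z$ as before, with $\ell=\gamma/k$, and then test against $\Pi_hz$ (or $I_hz$).

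The proof then splits into two regimes. In the first, $\ell\ge C_*h$, that is $k\le\gamma/(C_*h)$. Here $z$ is resolved by the mesh, and interpolation gives $\norm{z-I_hz}_{H^2}\le Ch\norm z_{H^3}\le Ch\ell^{-2}k$. We also have $|b(x,\psi)|\le\norm{x''}_2\norm{(\rho\psi)'}_2$, and by the inverse estimate and \eqref{eq:graph}, $\norm{x''}_2\le C\norm{A_hx}_H\le C(r_f+|\tau|)$. This route is circular, since it puts $|\tau|$ back on the right. I would instead rewrite $b(x,I_hz)=b(x,z)+b(x,I_hz-z)$ and integrate by parts elementwise. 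Because $x\in C^1$ is piecewise cubic, $x''$ is piecewise linear and $x'''$ is piecewise constant, which gives $|b(x,I_hz-z)|\le C\norm{x'}_2\,h^{-1}\norm{(I_hz-z)'}_2$-type bounds, and these are $\le Ck\cdot h\ell^{-2}k\le Ck^3\ell^{-1}\cdot(h/\ell)$. With $\ell\gtrsim h$ this is again $\le C\gamma^{-1}k^3$, and the continuous argument goes through: $|\tau|\le Cr_f^{3/2}$.

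In the second regime, $\ell<C_*h$, that is $k\gtrsim h^{-1}$, the inverse estimate together with \eqref{eq:disenergy} gives more directly $r_f\ge ak^2\ge ch^{-2}$. It remains to handle the case $|\tau|^{2/3}\gg h^{-2}$. Testing with $\psi=x$ only gives the real part, so I would use the imaginary part together with the discrete operator norm $\norm{A_h}\le Ch^{-3}$. If $|\tau|\ge 2\norm{A_h}$, then $r_f\ge|\tau|/2$. In the intermediate range $h^{-2}\lesssim|\tau|^{2/3}\lesssim h^{-3}$ the direct bound $r_f\ge ch^{-2}$ is weaker than $|\tau|^{2/3}$, so this case needs the first regime's argument with $z$ replaced by a mesh-scale smoothing. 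I expect this matching of regimes to be the main obstacle. The first attempt would be to choose $\ell=\max\{\gamma/k,C_*h\}$ and check that the losses in $\norm{x-z}$ stay below $1/2$ using $h\,k\lesssim h\,r_f^{1/2}$.

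Once \eqref{eq:resolvent} is established, Proposition~\ref{prop:uniformCP} gives $\norm{B_hT_h(t)}\le Ct^{-3/2}$ uniformly for $t\le1$. Extension to all $t$ with the factor $e^{-\lambda t}$ follows from the semigroup law and \eqref{eq:disenergy}, exactly as in Theorem~\ref{thm:ctcp}(ii), and the moment inequality for the accretive $B_h$ gives \eqref{eq:cp} for $0\le\alpha\le1$. For \eqref{eq:cpH1}, apply $|y|_1^2\le a^{-1}\norm{B_hy}_H\norm y_H$ from \eqref{eq:disenergy} to $y=T_h(t)x$. This gives $|y|_1^2\le Ct_\wedge^{-3/2}e^{-2\lambda t}\norm x_H^2$. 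For \eqref{eq:weaksmoothing} I would argue by duality. Write $\ip{T_h(t)\Pi_hg}\psi=\ip{\Pi_hg}{T_h(t)^*\psi}$, which is bounded by $\norm g_{V_1^*}|T_h(t)^*\psi|_1$. The adjoint satisfies the same energy identity, so \eqref{eq:resolvent}, \eqref{eq:cp} and \eqref{eq:cpH1} hold for $T_h^*$ with the same constants, since resolvent norms are preserved under adjoints. The continuous statements follow in the same way from Theorem~\ref{thm:ctcp}, \eqref{eq:adjoint} and Proposition~\ref{prop:rosier}.
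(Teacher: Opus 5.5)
Your architecture matches the paper's. You use a two-regime test-function argument for \eqref{eq:resolvent}, then Proposition~\ref{prop:uniformCP}, the semigroup law and the moment inequality for \eqref{eq:cp}, the bound $|x|_1^2\le a^{-1}\norm{B_hx}_H\norm x_H$ for \eqref{eq:cpH1}, and duality through the adjoint for \eqref{eq:weaksmoothing}.

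In the fine regime your splitting $b(x,I_hz)=b(x,z)+b(x,I_hz-z)$ works, but the displayed chain does not: from $Ck^3\ell^{-1}(h/\ell)$ you only get $Ck^4/\gamma$. Compute directly instead: $Chk^2\ell^{-2}=Chk^4\gamma^{-2}\le C\gamma^{-1}k^3$ when $h\le\ell/C_*$. The paper avoids the splitting altogether. On each element $(I_hz)''$ is the $L^2$ projection of $z''$ onto linear functions, so $\norm{(I_hz)''}_2\le C\ell^{-1}k$, and one integration by parts bounds $b(x,I_hz)$ directly. The condition $h\lesssim\ell$ is really needed so that $\norm{z-I_hz}_H\le Ch^2\ell^{-1}k$ is small, which keeps $|\ip x{I_hz}|\ge1/2$.

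The gap is the coarse regime $k\gtrsim h^{-1}$, which you leave open as ``the main obstacle''. It is not one, for two reasons.

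\emph{Testing with $\psi=x$.} This does not give only the real part. The imaginary part of $\ip fx=i\tau+b(x,x)$ gives $|\tau|\le r_f+|b(x,x)|$. Since $h^{-1}\le C_*k/\gamma$ here, the inverse estimate yields $|b(x,x)|\le\norm{x''}_2\norm{(\rho x)'}_2\le Ch^{-1}k^2\le C\gamma^{-1}k^3$. Hence $|\tau|\le Cr_f^{3/2}$. This is exactly the paper's argument for $h\ge\ell$.

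\emph{Your own route also closes.} The ``intermediate range'' comes from confusing $|\tau|$ with $|\tau|^{2/3}$. Inverse estimates give $\norm{A_h}_{H\to H}\le C_Ah^{-3}$, uniformly on $\kappa$-quasi-uniform meshes. If $|\tau|\ge2C_Ah^{-3}$, then $r_f\ge|\tau|/2$. Otherwise $|\tau|^{2/3}\le(2C_A)^{2/3}h^{-2}\le Cr_f$, because $r_f\ge ak^2\ge ch^{-2}$. No case remains.

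The repair you suggest, smoothing at scale $\ell=\max\{\gamma/k,C_*h\}$, would in fact fail. In this regime $\ell=C_*h$, and the only available bound is $\norm{x-z}_2\le C\ell k=CC_*hk$, which is of order one. A mesh-scale oscillation is essentially erased by such smoothing, so $|\ip xz|\ge1/2$ is lost. With either of the two closures above, the rest of your proposal goes through as written.
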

\begin{proof}
\emph{Resolvent.} Let $x\in X_h$, $\norm x_H=1$, $f=(i\tau-A_h)x$,
$r_f=\norm f_H$ and $k=|x|_1$. As in the proof of
Theorem~\ref{thm:ctcp}(i), $ak^2\le r_f$ and $r_f\ge\lambda$. Fix a
small $\gamma$ and put $\ell=\gamma/k\le L/8$. If $h\ge\ell$, inverse
estimates give $\norm{x''}_2\le Ch^{-1}k\le C\gamma^{-1}k^2$. Hence
$|b(x,x)|\le C_\gamma k^3$, and testing by $x$ gives
$|\tau|\le r_f+C_\gamma k^3\le Cr_f^{3/2}$.

Suppose $h<\ell$. Let $z\in C_c^\infty(0,L)$ satisfy \eqref{eq:mollify}
and take $\psi=I_hz\in X_h$. On each element $(I_hz)''$ is the $L^2$
projection of $z''$ onto linear polynomials: the interpolation error and
its first derivative vanish at both element ends, so its second
derivative is orthogonal to all linear functions. Therefore
\[
 \norm{x-\psi}_H\le C(\ell k+h^2\ell^{-1}k)\le2C\gamma,
 \qquad \norm{\psi''}_2\le C\ell^{-1}k,
 \qquad \norm{\psi'}_2\le Ck .
\]
Choose $\gamma$ so that the first bound is at most $1/2$. Then
$|\ip x\psi|\ge1/2$. Since $\psi(0)=\psi'(0)=0$ and $x'(L)=0$, one more
integration by parts yields
\[
 b(x,\psi)=\int_0^Lx'\,\overline{(\rho\psi)''}\dx,
 \qquad |b(x,\psi)|\le C_\gamma k^3 .
\]
Thus again $|\tau|\le Cr_f^{3/2}$. Together with $r_f\ge\lambda$ this
proves \eqref{eq:resolvent}.

\emph{Crandall--Pazy regularity.} By \eqref{eq:resolvent} the family
$\{T_h\}$ satisfies condition (b) of Proposition~\ref{prop:uniformCP}
with $\beta=2/3$. Hence
$\sup_h\sup_{0<t\le1}t^{3/2}\norm{B_hT_h(t)}<\infty$. For $t\ge1$ write
$B_hT_h(t)=T_h(t-1)B_hT_h(1)$ and use \eqref{eq:disenergy}. This proves
\eqref{eq:cp} for $\alpha=1$. The moment inequality for maximal accretive
operators has constants independent of $h$, which gives $0<\alpha<1$.
Applying \eqref{eq:disenergy} to $T_h(t)x$ proves \eqref{eq:cpH1}.

\emph{Weak sources.} On $X_h$, $B_h$ commutes with $T_h$, so
$\norm{B_h^*T_h(t)^*}=\norm{B_hT_h(t)}$. With
$a|y|_1^2\le\Re\ip{B_h^*y}y$, \eqref{eq:cp} and contraction this gives
$|T_h(t)^*\psi|_1\le Ct_\wedge^{-3/4}e^{-\lambda t}\norm\psi_H$ for
$\psi\in X_h$. Since
$\ip{T_h(t)\Pi_hg}\psi=\langle g,T_h(t)^*\psi\rangle$, duality proves
\eqref{eq:weaksmoothing}. For the continuous operator use the adjoint
semigroup and \eqref{eq:adjoint}.
\end{proof}

In particular the family $\{T_h\}$ is uniformly of class $\CP_{2/3}$.
By Theorem~\ref{thm:ctcp}(iii) the index $3/2$ is already optimal for the
continuous semigroup, and the discrete family cannot do better uniformly.
Indeed, suppose that $\norm{B_hT_h(t)}\le Ct^{-\gamma}$ for $0<t\le1$ and
all meshes, with some $\gamma<3/2$. Fix $f\in H$ and $0<t\le1$, and put
$w_h=B_hT_h(t)\Pi_hf$. Then $B_h^{-1}\Pi_hw_h=T_h(t)\Pi_hf$, which tends
to $T(t)f$ as $h\to0$ by Proposition~\ref{thm:positive}.
Lemma~\ref{lem:resdefect} at $z=0$ and \eqref{eq:approx} give
$\norm{B_h^{-1}\Pi_h-B^{-1}}\le Ch$. The $w_h$ are bounded, so every weak
limit point $w$ of $w_h$ satisfies $B^{-1}w=T(t)f$, that is,
$w=BT(t)f$. Hence $\norm{AT(t)}\le Ct^{-\gamma}$ for $0<t\le1$, which
contradicts Theorem~\ref{thm:ctcp}(iii). The results used here do not
depend on this remark.

\subsection{Analyticity of the discrete solution map}\label{sec:uniformanalytic}
\begin{proposition}[Mesh-uniform analyticity]\label{prop:uniformanalytic}
Fix $0<\nu<\lambda$. There is $r>0$, depending only on $L$ and $\nu$,
such that for every mesh the discrete solution map $f\mapsto u_h$,
$u_h(0)=f\in X_h$, extends to a holomorphic map from the complex ball
$\{f\in X_h:\norm f_H<r\}$ into $\X_0$. There are homogeneous
polynomials $P_{n,h}$ of degree $n$ with
\[
 u_h=\sum_{n\ge1}P_{n,h}(f),\qquad P_{1,h}(f)=T_h(\cdot)f,\qquad
 \norm{P_{n,h}(f)}_{\X_0}\le4^{n-1}q^{n-1}l^n\norm f_H^n ,
\]
where $l$ and $q$ are the bounds of $\LL$ and $\QQ$ in
\eqref{eq:lowbilinear}. The series converges absolutely in $\X_0$, and
the Taylor remainder estimate \eqref{eq:taylor} at $\theta=0$ holds for
$u_h$ with constants independent of $h$.
\end{proposition}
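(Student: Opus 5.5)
The plan is to repeat the energy-endpoint part of the proof of Theorem~\ref{thm:continuous} on $X_h$, with $T_h$ and $F_h=\Pi_hF$ in place of $T$ and $F$. The goal is to show that the discrete linear and bilinear maps
\[
 (\LL_hf)(t)=T_h(t)f,\qquad
 \QQ_h(w,\tilde w)(t)=\int_0^tT_h(t-s)\Pi_hF(w(s),\tilde w(s))\,ds
\]
are bounded as $\LL_h:X_h\to\X_0$ and $\QQ_h:\X_0\times\X_0\to\X_0$, with norms $l_h\le l$ and $q_h\le q$. Here $l,q$ are the bounds from \eqref{eq:lowbilinear}, or constants equal to them by construction. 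Once this holds, the Catalan recursion $P_{1,h}=\LL_hf$, $P_{n,h}=\sum_{j+k=n}\QQ_h(P_{j,h},P_{k,h})$ and the majorant argument of Section~\ref{sec:analyticproof} apply verbatim. They give the stated bound $\norm{P_{n,h}(f)}_{\X_0}\le4^{n-1}q^{n-1}l^n\norm f_H^n$, normal convergence on the ball of radius $r=(8ql)^{-1}$, the fixed-point identity $u_h=\LL_hf+\QQ_h(u_h,u_h)$, uniqueness in a ball by the contraction estimate, and the Taylor remainder estimate by summing the geometric tail. Holomorphy then follows, since a normally convergent series of continuous homogeneous polynomials is holomorphic. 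For real $f$ the sum coincides with the solution of the ODE system \eqref{eq:Wscheme}. Indeed, on $X_h$, which is finite-dimensional, the mild and classical formulations agree, and uniqueness identifies the sum with the classical solution on the small ball.

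For $\LL_h$, the mesh-uniform Kato smoothing \eqref{eq:disKato} and the decay \eqref{eq:disenergy} give the bound on $\E_\nu(T_h(\cdot)f)$. I will obtain this through \eqref{eq:disforced} with $g=0$, which has the same constant $C_{L,\nu}$ as \eqref{eq:forced}. For $\QQ_h$ I apply \eqref{eq:disforced} with source $g=\Pi_hF(w,\tilde w)$. I use $\norm{\Pi_hg}_{-1,h}\le\norm g_{V_1^*}$ and the weak product bound $\norm{F(w,\tilde w)}_{V_1^*}\le C\norm{w\tilde w}_2$. The Agmon-inequality computation from Section~\ref{sec:analyticproof} then gives $\norm{e^{\nu t}\Pi_hF(w,\tilde w)}_{L^2_t(-1,h)}\le C\E_\nu(w)\E_\nu(\tilde w)$. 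This step involves only $w,\tilde w$ and no $h$-dependent quantity, so every constant coincides with its continuous counterpart. Continuity of the trajectory and the decay $e^{\nu t}\norm{w(t)}_H\to0$ carry over as before: the result is first proved for sources with compact support in time and then extended by approximation. On $X_h$ this is simpler, because all norms are equivalent there.

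The main point to check carefully, which I expect to be the only real obstacle, is that the constant in \eqref{eq:disforced} is genuinely the same as in \eqref{eq:forced}, independent of $h$. This rests on the fact that \eqref{eq:energy} holds exactly on $X_h$. The weighted energy identity $\frac12\frac d{dt}\norm y_H^2=-\Re b(y,y)+\Re\ip gy$ holds for $y\in X_h$. The source term is bounded by $\norm g_{-1,h}|y|_1$ using the discrete dual norm, and $\Pi_h$ is $H$-orthogonal. The absorption into $(a-\nu p^2)|y|_1^2$ then proceeds word for word. Otherwise I would state the result with $l,q$ replaced by the discrete constants, which are bounded by the same $C_{L,\nu}$.

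A minor subtlety is that $\X_0$ here denotes the trajectory space with values in $X_h$ but normed by $\E_\nu$. This is a closed subspace of the continuous $\X_0$, so completeness and all the series arguments remain valid.
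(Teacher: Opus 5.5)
Your proposal is correct and follows the paper's proof. The paper uses the same two points: \eqref{eq:energy} holds exactly on $X_h$, so \eqref{eq:disforced}, the weak product bound and $\norm{\Pi_hg}_{-1,h}\le\norm g_{V_1^*}$ give the linear and bilinear bounds for $T_h$ and $F_h$ with the same constants, after which the Catalan argument of Theorem~\ref{thm:continuous} runs verbatim and uniqueness of small discrete solutions identifies the sum with $u_h$; your extra care (reading $l,q$ as common constant bounds rather than literal operator norms, and using finite-dimensional ODE uniqueness for the identification) only makes those steps explicit.
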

\begin{proof}
Equation~\eqref{eq:energy} holds exactly on $X_h$. The weak product bound
and Agmon's inequality use only $H_0^1$, and
$\norm{\Pi_hg}_{-1,h}\le\norm g_{V_1^*}$. Hence \eqref{eq:disforced} and
the bilinear bound \eqref{eq:lowbilinear} hold for $T_h$ and $F_h$ with
the same constants. The proof of Theorem~\ref{thm:continuous} then applies
verbatim. Small discrete solutions are unique, so the sum of the series is
$u_h$.
\end{proof}
Analyticity uniform in $h$ is proved here only at the energy level. On
fine meshes the weight can be raised to any $\nu<-s(A)$
(Proposition~\ref{prop:uniformspectraldecay}).

\subsection{Resolvent comparison and decay below the spectral bound}
\label{sec:spectral}
\begin{lemma}[Uniform resolvent defect]\label{lem:resdefect}
Let $C_R$ bound both resolvents in \eqref{eq:resolvent} by
$C_R\langle\omega\rangle^{-2/3}$, and let
$c_\Gamma=\min\{\frac14,(2C_R)^{-1}\}$. For $z=-\sigma+i\omega$ with
$0\le\sigma\le c_\Gamma\langle\omega\rangle^{2/3}$ and $0\le\theta\le3$,
\[
 \norm{\{R(z,A_h)\Pi_h-\Pi_hR(z,A)\}B^{-\theta/3}}_{H\to H}
 \le Ch\langle\omega\rangle^{-\theta/3}.
\]
\end{lemma}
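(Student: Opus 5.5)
The plan is to write the defect through the Ritz projection and then use the mesh-uniform resolvent bound \eqref{eq:resolvent}, in its strip form, on the discrete side. Fix $g\in H$ and put $w=R(z,A)B^{-\theta/3}g\in D(A)$, so that $(z-A)w=B^{-\theta/3}g$. Since $A_hR_hw=\Pi_hAw$ (Lemma~\ref{lem:projection}), we have
\[
 (z-A_h)R_hw=z R_hw-\Pi_hAw=\Pi_h(z-A)w+z(R_hw-\Pi_hw).
\]
Hence $R(z,A_h)\Pi_hB^{-\theta/3}g=R_hw-zR(z,A_h)(R_hw-\Pi_hw)$, and the defect is
\[
 \{R(z,A_h)\Pi_h-\Pi_hR(z,A)\}B^{-\theta/3}g=(R_hw-\Pi_hw)-zR(z,A_h)(R_hw-\Pi_hw).
\]
The same identity can be written as $R(z,A_h)\bigl(\Pi_h(z-A)-(z-A_h)\Pi_h\bigr)w$, which equals $R(z,A_h)d_h(w)$ with the residual from \eqref{eq:residual}. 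I would use whichever form is convenient. On the strip $0\le\sigma\le c_\Gamma\langle\omega\rangle^{2/3}$, the Neumann series argument behind \eqref{eq:strip} applies to both $A$ and $A_h$. It gives $\norm{R(z,A_h)}\le2C_R\langle\omega\rangle^{-2/3}$. Since $|z|\le C\langle\omega\rangle$, we get $\norm{zR(z,A_h)}\le C\langle\omega\rangle^{1/3}$.

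\emph{Endpoint $\theta=3$.} Here $w=R(z,A)B^{-1}g=B^{-1}R(z,A)g$, so $\norm w_{H^3}\le C\norm{R(z,A)g}_H\le C\langle\omega\rangle^{-2/3}\norm g_H$. The residual in the $H$ norm only gives $\norm{d_h(w)}_H\le C\norm w_{H^3}$, with no power of $h$. I would therefore use duality: $\norm{d_h(w)}_{-1,h}\le Ch\norm w_{H^3}$, which is \eqref{eq:residual} with $s=3$. Next I need $R(z,A_h)$ mapping the discrete dual norm to $H$ with gain $\langle\omega\rangle^{-1/3}$. For that I would estimate $y=R(z,A_h)r$ by testing with $y$. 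The real part gives $a|y|_1^2\le\sigma\norm y_H^2+\norm r_{-1,h}|y|_1$. The bound $\norm y_H\le C\langle\omega\rangle^{-2/3}\norm{(z-A_h)y}$ and interpolation should then yield
\[
 \norm{R(z,A_h)}_{(-1,h)\to H}\le C\langle\omega\rangle^{-1/3},
\]
by the same duality as \eqref{eq:weaksmoothing}, applied to the resolvent. This gives the defect bound $Ch\langle\omega\rangle^{-1/3}\langle\omega\rangle^{-2/3}=Ch\langle\omega\rangle^{-1}$ at $\theta=3$.

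\emph{Endpoint $\theta=0$.} Here I would aim for $Ch$. Using $\norm{w-R_hw}_H$ and $\norm{w-\Pi_hw}_H$ requires $w\in H^3$, but now $\norm w_{H^3}\le C\norm{A w}\le C\langle\omega\rangle^{1/3}\norm g_H$, since $Aw=zw-g$. The first form of the defect gives $\norm{R_hw-\Pi_hw}_H\le Ch\norm{w-R_hw}_{H^2}$-type bounds, which lose powers of $\langle\omega\rangle$. So I would instead use a dual (Aubin--Nitsche-type) argument: pair $R(z,A_h)d_h(w)$ against $\psi\in X_h$, write $\psi=(\bar z-A_h^*)\phi$, and transfer $\phi$ to the continuous adjoint resolvent \eqref{eq:adjoint}, which enjoys the same $\langle\omega\rangle^{-2/3}$ bound. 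The residual identity pairs $(w-\Pi_hw)''$ with $(\rho\phi)'$. If the powers balance, $h$ times the moment scaling $\langle\omega\rangle^{1/3}\cdot\langle\omega\rangle^{-1/3}$ gives $Ch$ uniformly.

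\emph{Interpolation.} Once both endpoints hold, the holomorphic three-lines argument used for \eqref{eq:oneway} gives the intermediate $\theta$. Apply it to $\zeta\mapsto e^{c(\zeta-\theta/3)^2}D(z)B^{-\zeta}$ on $0\le\Re\zeta\le1$, using $\norm{B^{is}}\le e^{\pi|s|/2}$; this yields the bound $Ch\langle\omega\rangle^{-\theta/3}$.

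The main obstacle is the $\theta=0$ endpoint. There the plain residual bound loses $\langle\omega\rangle^{1/3}$, and the duality must be arranged so that the adjoint resolvent's decay exactly compensates for it. I would first try the Ritz splitting with $\norm{w-R_hw}_{H^2}\le C\norm w_{H^3}$ combined with an inverse estimate on $X_h$. I would use the dual argument only if that loses a power.
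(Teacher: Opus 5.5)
Your skeleton matches the paper's proof. It uses the identity $\{R(z,A_h)\Pi_h-\Pi_hR(z,A)\}f=-R(z,A_h)d_h(w)$ for $w=R(z,A)f$. The sign is minus, since $\Pi_h(z-A)w-(z-A_h)\Pi_hw=-d_h(w)$. It also uses the weak residual bound \eqref{eq:residual} with $s=3$, and the gain $\norm{R(z,A_h)r}_H\le C\langle\omega\rangle^{-1/3}\norm r_{-1,h}$. What is wrong is calling $\theta=0$ the main obstacle: your $\theta=3$ argument proves it verbatim. You already noted that there $\norm w_{H^3}\le C\norm{Aw}_H\le C\langle\omega\rangle^{1/3}\norm g_H$, so
\[
 \norm{R(z,A_h)d_h(w)}_H\le C\langle\omega\rangle^{-1/3}\cdot Ch\norm w_{H^3}\le Ch\norm g_H ,
\]
and the powers balance with nothing further to check. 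The paper handles all $\theta$ at once this way. It bounds the defect by $Ch\langle\omega\rangle^{-1/3}\norm{AR(z,A)B^{-\theta/3}g}_H$. It then interpolates only the continuous operator $AR(z,A)B^{-\theta/3}$, between $C\langle\omega\rangle^{1/3}$ at $\theta=0$ and $\norm{R(z,A)}\le C\langle\omega\rangle^{-2/3}$ at $\theta=3$. Your three-lines interpolation of the defect operator itself between the two endpoints is also valid, just one step longer.

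Two parts of your plan should be replaced rather than kept as options.

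First, the gain $\langle\omega\rangle^{-1/3}$ comes from duality with the \emph{discrete} adjoint resolvent, not from testing the forward equation with $y$ and interpolating. That test controls $|y|_1$ only once $\norm y_H$ is known. For $\psi\in X_h$ put $x=R(\bar z,A_h^*)\psi$. The real part of $\ip{(\bar z-A_h^*)x}{x}=\ip{\psi}{x}$ and $\Re\ip{B_h^*x}{x}\ge a|x|_1^2$ give $a|x|_1^2\le\norm\psi_H\norm x_H+\sigma\norm x_H^2\le C\langle\omega\rangle^{-2/3}\norm\psi_H^2$. This uses $\sigma\le c_\Gamma\langle\omega\rangle^{2/3}$ and $\norm x_H\le2C_R\langle\omega\rangle^{-2/3}\norm\psi_H$. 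Then $|\ip{R(z,A_h)r}{\psi}|=|\ip{r}{x}|\le\norm r_{-1,h}|x|_1$.

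This is exactly the dual argument you sketch for $\theta=0$, minus the transfer to the continuous adjoint resolvent. That transfer is unnecessary. It would also require an adjoint consistency estimate, which the paper neither proves nor needs: only adjoint stability on $X_h$ enters. Note too that $D(A^*)$ prescribes $g'(0)=0$, whereas $X_h$ imposes $x'(L)=0$.

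Second, drop the Ritz route. Write the defect as $-A_hR(z,A_h)(R_hw-\Pi_hw)$. The available bounds are $\norm{R_hw-\Pi_hw}_H\le Ch\norm w_{H^3}\le Ch\langle\omega\rangle^{1/3}\norm g_H$ and $\norm{A_hR(z,A_h)}\le C\langle\omega\rangle^{1/3}$. Together they leave $Ch\langle\omega\rangle^{2/3}$, which is not uniform in $\omega$; inverse estimates only trade powers of $h$.
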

\begin{proof}
Both resolvents exist with norm at most $2C_R\langle\omega\rangle^{-2/3}$,
by a Neumann series around $i\omega$, as in \eqref{eq:strip}. Let
$w=R(z,A)f$. Since $(z-A_h)\Pi_hw=\Pi_hf+d_h(w)$,
\[
 y:=R(z,A_h)\Pi_hf-\Pi_hR(z,A)f=-R(z,A_h)d_h(w).
\]
For $\psi\in X_h$ put $x=R(\bar z,A_h^*)\psi$, where $A_h^*$ is the
$H$-adjoint of $A_h$ on $X_h$. Then $\ip y\psi=-\ip{d_h(w)}x$, so
$|\ip y\psi|\le\norm{d_h(w)}_{-1,h}|x|_1$. The real part of
$\ip{(\bar z-A_h^*)x}x=\ip\psi x$ and \eqref{eq:energy} give
\[
 a|x|_1^2\le\norm\psi_H\norm x_H+\sigma\norm x_H^2
 \le C\langle\omega\rangle^{-2/3}\norm\psi_H^2 .
\]
By \eqref{eq:residual} with $s=3$,
$\norm{d_h(w)}_{-1,h}\le Ch\norm w_{H^3}\le Ch\norm{AR(z,A)f}_H$.
Finally $\norm{AR(z,A)}\le C\langle\omega\rangle^{1/3}$ and
$\norm{AR(z,A)B^{-1}}=\norm{R(z,A)}\le C\langle\omega\rangle^{-2/3}$.
Complex interpolation on the scale $V_\theta$ gives
$\norm{AR(z,A)B^{-\theta/3}}\le C\langle\omega\rangle^{1/3-\theta/3}$.
Combining the three bounds gives the claim.
\end{proof}

\begin{lemma}[A common absolutely convergent contour]\label{lem:CPcontour}
There is $c_\Gamma'\in(0,c_\Gamma]$, depending only on $L,\kappa$, such
that the upward-oriented curve
$\Gamma:\ z(\omega)=-c_\Gamma'\langle\omega\rangle^{2/3}+i\omega$
lies in the resolvent sets of $G=A$ and of every $G=A_h$, and
\[
 \norm{R(z(\omega),G)}\le C\langle\omega\rangle^{-2/3},\qquad
 e^{tG}=\frac1{2\pi i}\int_\Gamma e^{zt}R(z,G)\,dz,\quad t>0 .
\]
The integral converges in operator norm. For $A_h$ the formula holds on
$X_h$ and may then be composed with $\Pi_h$.
\end{lemma}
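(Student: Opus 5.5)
The plan is to take $c_\Gamma'\le c_\Gamma\le(2C_R)^{-1}$, so that the whole curve $\Gamma$ lies in the region $0\le\sigma\le(2C_R)^{-1}\langle\omega\rangle^{2/3}$. In that region the Neumann-series argument behind \eqref{eq:strip} works for $A$ and for every $A_h$ alike. That argument uses only the uniform bound \eqref{eq:resolvent} on $i\R$ and the continuous analogue from Theorem~\ref{thm:ctcp}(i). Concretely, for $z=-\sigma+i\omega$ one writes
\[
 z-G=\bigl(I-\sigma R(i\omega,G)\bigr)(i\omega-G),
\]
which gives $\norm{\sigma R(i\omega,G)}\le\sigma C_R\langle\omega\rangle^{-2/3}\le\frac12$. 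Hence $z\in\rho(G)$ and $\norm{R(z,G)}\le2C_R\langle\omega\rangle^{-2/3}$. The constant depends only on $L,\kappa$ through $C_R$. This already gives the first claim and the resolvent bound on $\Gamma$.

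For the representation formula, I would begin with the standard inverse Laplace formula on a vertical line $\Re z=\delta>0$, because each semigroup is a contraction. For $x\in D(G^2)$ this reads
\[
 e^{tG}x=\frac1{2\pi i}\int_{\delta-i\infty}^{\delta+i\infty}e^{zt}R(z,G)x\,dz,
\]
and it converges absolutely once one uses $R(z,G)x=z^{-1}x+z^{-2}Gx+z^{-2}R(z,G)G^2x$. The contour is then shifted to $\Gamma$ by Cauchy's theorem on the region between the line and $\Gamma$, truncated at $|\omega|=W$. Two facts justify this. First, the resolvent is holomorphic there: the region lies in $\{\Re z>-\lambda\}\cup\{0\le-\Re z\le c_\Gamma'\langle\omega\rangle^{2/3}\}$, which is inside $\rho(G)$ by \eqref{eq:ctenergy}, \eqref{eq:disenergy} and the Neumann series. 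Second, the horizontal connecting segments have length $O(W^{2/3})$, while the integrand on them is $O(e^{\delta t}W^{-2/3})$ in norm. Using the expansion above, the $z^{-1}x$ term integrates to $x$ times a scalar contribution, and the remainder terms decay like $W^{-1}$. So the segment contributions vanish as $W\to\infty$, possibly after restricting to $x\in D(G^2)$.

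For absolute convergence in operator norm on $\Gamma$, note that $|e^{z(\omega)t}|=e^{-c_\Gamma'\langle\omega\rangle^{2/3}t}$ and $|dz|\le C\,d\omega$. The integrand is therefore bounded by $Ce^{-c_\Gamma'\langle\omega\rangle^{2/3}t}\langle\omega\rangle^{-2/3}$, which is integrable in $\omega$ for every $t>0$, uniformly in $h$. The integral thus defines a bounded operator that agrees with $e^{tG}$ on the dense set $D(G^2)$, and hence everywhere. On $X_h$ everything is finite-dimensional, so $D(A_h^2)=X_h$ and no density step is needed there. Composing with the bounded projection $\Pi_h$ is then immediate.

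The main obstacle is the contour shift. The scalar term $z^{-1}x$ is not absolutely integrable on vertical lines, so the passage from the line to $\Gamma$ must be organized carefully: either treat the scalar part explicitly, where $\frac1{2\pi i}\int_\Gamma e^{zt}z^{-1}\,dz=1$ because $\Gamma$ passes to the right of $0$ near $\omega=0$, or work with $D(G^2)$ data as above. I would first try the $D(G^2)$ route and then extend by the uniform operator-norm convergence on $\Gamma$.
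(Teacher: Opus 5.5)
Your argument is correct, but it identifies the contour integral by a different mechanism than the paper does.

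\textbf{Your route.} You take the classical complex inversion (Bromwich) formula for $x\in D(G^2)$ on $\Re z=\delta>0$ as given, and shift it once to $\Gamma$ on symmetric truncations $|\Im z|\le W$. The expansion $R(z,G)x=z^{-1}x+z^{-2}Gx+z^{-2}R(z,G)G^2x$ is needed only on the horizontal pieces. There the naive bound $O(W^{-2/3})$ times the length $O(W^{2/3})$ would give only $O(1)$. After expansion the three terms contribute $O(e^{\delta t}W^{-1/3})$, $O(W^{-4/3})$ and $O(W^{-4/3})$, and for the last one the resolvent only needs to be bounded there. Density of $D(G^2)$ and operator-norm convergence on $\Gamma$ then finish the proof.

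\textbf{The paper's route.} It avoids any inversion theorem. It defines $S_G(t)$ by the $\Gamma$-integral and takes its Laplace transform at a real $s_0>0$ via Fubini. It multiplies by $G^{-1}$, so that $R(z,G)G^{-1}/(s_0-z)=O(|\omega|^{-5/3})$ on $\Gamma$. It then deforms to $\Re z=b>s_0$, picks up the residue at $s_0$, and lets $b\to\infty$. It concludes by uniqueness of the Laplace transform and density of the range of $G^{-1}$.

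\textbf{Comparison.} Your route is shorter once the inversion formula is granted. The paper's is self-contained, needing only $R(s,G)=\int_0^\infty e^{-st}e^{tG}\,dt$. Both rest on the same input: the spectrum-free region between $\Gamma$ and the right half-plane, with the Neumann-series bound $2C_R\langle\omega\rangle^{-2/3}$. You derive that bound correctly at all frequencies from \eqref{eq:resolvent} and Theorem~\ref{thm:ctcp}(i).

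\textbf{Two corrections.}
\begin{enumerate}
\item The Bromwich integral does not converge absolutely. The piece $e^{zt}z^{-1}x$ converges only as a symmetric limit. This is harmless because you truncate symmetrically.
\item The alternative in your last paragraph is wrong. $\Gamma$ crosses the real axis at $-c_\Gamma'<0$, so $0$ lies to the \emph{right} of $\Gamma$. Hence $\frac1{2\pi i}\int_\Gamma e^{zt}z^{-1}\,dz=0$, not $1$. Splitting off $z^{-1}x$ creates an artificial pole at $0$ inside the shifted region. Keep $e^{zt}R(z,G)x$ whole there; it is holomorphic at $0$ since $0\in\rho(G)$. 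Your $D(G^2)$ route does exactly this, so it is the one to use.
\end{enumerate}
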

\begin{proof}
The strip bound of Lemma~\ref{lem:resdefect} covers $\Gamma$ at large
$|\omega|$. At bounded frequencies choose the leftward displacement
smaller than $\lambda/2$ and use contraction. Thus the region between
$\Gamma$ and any vertical line $\Re z=b>0$ is free of spectrum, with
resolvent decay on its bounded-real-part slices. Decrease $c_\Gamma'$ so
that also $|z'(\omega)|\le2$. The integral defines a norm-continuous
operator $S_G(t)$ for $t>0$, and
\[
 \norm{S_G(t)}\le C\int_{\R}e^{-c_\Gamma't\langle\omega\rangle^{2/3}}
 \langle\omega\rangle^{-2/3}\,d\omega\le C(1+t^{-1/2})e^{-c't}.
\]
In particular its Laplace transform exists for real $s_0>0$. Fubini's
theorem is justified by $\norm{R(z,G)}/(s_0-\Re z)=O(|\omega|^{-4/3})$.
After the time integration the integrand is $R(z,G)/(s_0-z)$, of order
$|\omega|^{-5/3}$ on $\Gamma$.

To identify $S_G(t)$, multiply by $G^{-1}$, whose norm is at most
$\lambda^{-1}$. Deform the integral of $R(z,G)G^{-1}/(s_0-z)$ to a
vertical line $\Re z=b>s_0$. The horizontal joining segments at heights
$\pm R_0$ tend to zero: their length is $O(R_0^{2/3})$, while the
integrand is $O(R_0^{-5/3})$. The only enclosed pole is at $z=s_0$.
Finally use $R(z,G)G^{-1}=z^{-1}(R(z,G)+G^{-1})$ and
$\norm{R(b+i\omega,G)}\le(b+\lambda)^{-1}$. The regularized integral on
$\Re z=b$ is $O(b^{-1})$ as $b\to\infty$, since
$\int_{\R}|b+i\omega|^{-1}|b-s_0+i\omega|^{-1}d\omega=O(b^{-1})$.
Cauchy's formula now gives
$\int_0^\infty e^{-s_0t}S_G(t)G^{-1}dt=R(s_0,G)G^{-1}$. Uniqueness of the
Laplace transform identifies $S_G(t)G^{-1}=e^{tG}G^{-1}$ for $t>0$. The
range of $G^{-1}$ is dense, so the formula follows. All the estimates
just used have common constants.
\end{proof}

\begin{proposition}[Uniform decay below the continuous spectral bound]
\label{prop:uniformspectraldecay}
For every $0<\nu<-s(A)$ there are $h_0\in(0,\min\{1,L\}]$, $M_\nu\ge1$
and $C_\nu>0$, depending only on $L,\kappa,\nu$, such that for
$h\le h_0$ and $t\ge0$,
\begin{equation}\label{eq:uniformspectraldecay}
 \norm{T_h(t)}_{H\to H},\ \norm{T(t)}_{H\to H}\le M_\nu e^{-\nu t}.
\end{equation}
For $t>0$, the continuous and discrete semigroups satisfy
\[
 \norm{B_\bullet T_\bullet(t)}\le C_\nu t_\wedge^{-3/2}e^{-\nu t},\qquad
 |T_\bullet(t)x|_1\le C_\nu t_\wedge^{-3/4}e^{-\nu t}\norm x_H ,
\]
where the bullet denotes either operator. Moreover,
\begin{equation}\label{eq:spectralKato}
 \int_0^\infty e^{2\nu t}
 \bigl(|T(t)f|_1^2+|T_h(t)\Pi_hf|_1^2\bigr)\,dt
 \le C_\nu\norm f_H^2 ,
\end{equation}
and the continuous and discrete forced equations have the common estimate
\begin{equation}\label{eq:spectralforcing}
 \E_\nu(y)\le C_\nu\left(\norm{y(0)}_H+
 \norm{e^{\nu t}g}_{L^2_t(V_{1,\bullet}^*)}\right),
\end{equation}
where $V_{1,\bullet}^*=V_1^*$ in the continuous problem and its norm is
$\norm\cdot_{-1,h}$ in the discrete problem. Consequently the continuous
and discrete nonlinear solution maps are analytic on a common small $H$
ball into $\X_{0,\nu}$, with common Taylor remainder bounds. For coarse
meshes $h>h_0$ only the energy-rate bounds are asserted.
\end{proposition}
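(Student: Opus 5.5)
The plan is to obtain the uniform decay from the direct-sum characterization. Fix $\nu<\nu'<-s(A)$. For the continuous semigroup the bound is Proposition~\ref{prop:spectral}. For the discrete family, the Gearhart--Huang--Pr\"uss theorem in the form of Liu and Zheng~\cite{LiuZheng94} reduces the claim to a uniform resolvent bound on the line $\Re z=-\nu'$. By Lemma~\ref{lem:directsum}(i), this is the same as controlling the resolvent of the direct sum $\mathfrak G$ of the $A_h$, $h\le h_0$, on the half-plane $\Re z\ge-\nu'$. Equivalently, the direct sum is of class $\CP_{2/3}$ by Theorem~\ref{thm:cp} and Lemma~\ref{lem:directsum}(ii), so by \eqref{eq:sdg} it suffices that $s(\mathfrak G)<-\nu$. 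Lemma~\ref{lem:directsum}(i) reduces this to showing that $R(z,A_h)$ exists on $\{\Re z\ge-\nu'\}$ with a bound uniform in $h\le h_0$.

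I split the half-plane into three regions.

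\emph{High frequencies.} For $|\omega|\ge\Omega$, where $c_\Gamma\langle\Omega\rangle^{2/3}\ge\nu'$, the point $z=-\sigma+i\omega$ with $\sigma\le\nu'$ lies in the strip of Lemma~\ref{lem:resdefect}. There the uniform bound $2C_R\langle\omega\rangle^{-2/3}$ holds for every $h$.

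\emph{Right half-plane.} For $\Re z\ge-\lambda/2$, contraction together with \eqref{eq:disenergy} gives a uniform bound.

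\emph{The remaining compact set.} This is $K=\{-\nu'\le\Re z\le-\lambda/2,\ |\omega|\le\Omega\}$, which lies in $\rho(A)$ with $\sup_K\norm{R(z,A)}=:M_K$. Lemma~\ref{lem:resdefect} has been proved only in the strip. I therefore first establish the same comparison at a point of $K$, by the identical argument, assuming that $R(z,A_h)$ exists. I expect this to be the main obstacle, since existence and uniformity must be obtained together. The approach I would try first is a perturbation argument. Write
\[
 (z-A_h)\Pi_hR(z,A)\Pi_h=\Pi_h+E_h(z),\qquad E_h(z)=-d_h(R(z,A)\Pi_h\,\cdot\,)\text{ restricted to }X_h.
\]
Then I need to show that $E_h(z)$, measured against $(\lambda_0-A_h)^{-1}$ for a fixed $\lambda_0>0$, is $O(h)$ uniformly on $K$. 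The tools are the $H^3$ bound $\norm{R(z,A)f}_{H^3}\le C_K\norm f_H$, the dual residual bound \eqref{eq:residual} with $s=3$, and the uniform Kato bound of $(\lambda_0-A_h)^{-1}$ from $\norm\cdot_{-1,h}$ to $H$, which comes from coercivity. Concretely, I use the resolvent identity
\[
 R(z,A_h)=(\lambda_0-A_h)^{-1}\bigl[I+(\lambda_0-z)\,\cdots\bigr]^{-1}
\]
in a Fredholm-type form: $I-(\lambda_0-z)R(\lambda_0,A_h)$ is compared with $I-(\lambda_0-z)\Pi_hR(\lambda_0,A)$ through Lemma~\ref{lem:resdefect} at $z=\lambda_0$, which gives an $O(h)$ norm difference. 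The continuous counterpart is invertible on $K$, with inverse norm bounded in terms of $M_K$ and continuity in $z$. A Neumann series then gives invertibility for $h\le h_0$, with $h_0$ depending only on $L,\kappa,\nu$, together with the uniform bound. Compactness of $K$ makes the constants uniform in $z$.

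With the uniform resolvent bound on $\Re z\ge-\nu'$, the consequences follow. The semigroup bound \eqref{eq:uniformspectraldecay} follows from Liu--Zheng or from the direct sum. The Crandall--Pazy bounds with rate $\nu$ follow by applying the uniform $t^{-3/2}$ bound of Theorem~\ref{thm:cp} on $(0,1]$ and writing $B_hT_h(t)=T_h(t-1)B_hT_h(1)$ for $t\ge1$. The $H^1$ bound follows from $|x|_1^2\le a^{-1}\norm{B_hx}\norm x$. For \eqref{eq:spectralKato}, integrate the energy identity $a|T_h(s)x|_1^2\le-\frac12\frac d{ds}\norm{T_h(s)x}^2$ against $e^{2\nu s}$; by parts, the boundary terms and the term $\nu\int e^{2\nu s}\norm{T_h(s)x}^2\,ds$ are controlled by \eqref{eq:uniformspectraldecay} applied with some $\nu'>\nu$. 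For \eqref{eq:spectralforcing}, write $y$ in Duhamel form. The $\sup$ part uses \eqref{eq:weaksmoothing} with the improved rate; this rate is obtained by duality from the adjoint, whose resolvent has the same norm. Combined with $t^{-3/4}$, Young's inequality in weighted $L^2_t$ gives the bound. The $L^2(H^1)$ part follows from the weighted energy identity, with $\nu\norm y^2$ absorbed using the $\sup$ part already proved, applied with the slightly larger rate. Finally, since $\LL$ and $\QQ$ are then bounded in $\X_{0,\nu}$ with constants independent of $h$ (through Agmon's inequality as in Section~\ref{sec:analyticproof}), the Catalan-majorant argument of Theorem~\ref{thm:continuous} yields common analyticity and common Taylor remainder bounds.
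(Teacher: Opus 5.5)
\paragraph{Comparison with the paper.} Your proofs of \eqref{eq:uniformspectraldecay}, the Crandall--Pazy tails and \eqref{eq:spectralKato} are sound and essentially follow the paper. If you take $\lambda_0=0$, your compact-set factorization is exactly the paper's $R(z,A_\bullet)=(I+zJ_\bullet)^{-1}J_\bullet$ with $J=B^{-1}$, $J_h=B_h^{-1}\Pi_h$ and $\norm{J_h-J}\le Ch$. This choice also keeps you inside $\Re z\le0$, the range in which Lemma~\ref{lem:resdefect} is stated. The decay then follows from the $\CP_{2/3}$ direct sum and \eqref{eq:sdg}. Lemma~\ref{lem:directsum} concerns countable sums, so apply it, as the paper does, to a sequence of meshes that would violate a putative uniform bound. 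Your integration by parts against $e^{2\nu s}$ for \eqref{eq:spectralKato} is a slightly cleaner version of the paper's Step~4.

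\paragraph{The gap: the forced estimate \eqref{eq:spectralforcing}.} Duhamel's formula gives
\[
 e^{\nu t}\norm{y(t)-T_\bullet(t)y(0)}_H\le\int_0^tk(t-s)\,e^{\nu s}\norm{g(s)}_{V_{1,\bullet}^*}\,ds,\qquad k(r)=Cr_\wedge^{-3/4}e^{-(\nu'-\nu)r}.
\]
Bounding the supremum over $t$ by the weighted $L^2_t$ norm of $g$ would need $k\in L^2(0,\infty)$. But $r^{-3/2}$ is not integrable at $0$: for $e^{\nu s}\norm{g(s)}\approx(t_0-s)^{-1/4}$, which is locally square integrable, the majorant is infinite at $t=t_0$. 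So pointwise-in-time weak smoothing cannot give the $L^2_t(V_1^*)\to L^\infty_t(H)$ bound.

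Your second step does not rescue this. A source weighted only at rate $\nu$ produces no decay faster than $e^{-\nu t}$, so a sup bound ``at a slightly larger rate'' is not available for the forced part. A sup bound at rate $\nu$ does not control $\nu\int_0^\infty e^{2\nu t}\norm{y}_H^2\,dt$. This is exactly where the energy method breaks down for $\nu\ge\lambda$.

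\paragraph{How to fix it.} Use Young's inequality in the form $L^1*L^2\to L^2$. Since $k\in L^1$, you get
\[
 \norm{e^{\nu t}y}_{L^2_tH}\le C\bigl(\norm{y(0)}_H+\norm{e^{\nu t}g}_{L^2_t(V_{1,\bullet}^*)}\bigr).
\]
Then apply the weighted energy identity to $v=e^{\nu t}y$. Its only bad term is $\nu\int_0^T\norm v_H^2\,dt$, which is now controlled, so the identity gives the supremum and the $L^2_tH^1$ part together.

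The paper closes with the same energy identity but obtains the $L^2_t$ input differently. It uses Plancherel's theorem and the uniform bound of $R(-\nu+i\tau,A_\bullet)$ from $V_{1,\bullet}^*$ into $V_1$ on the line $\Re z=-\nu$, which even yields a weighted $L^2_tV_1$ bound. Your repaired argument stays in the time domain and needs only \eqref{eq:weaksmoothing} at a rate $\nu'>\nu$, obtained by duality as you describe.
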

\begin{proof}
\emph{Step 1: resolvent convergence on compact sets.} Choose
$\nu<\nu_1<\gamma<-s(A)$. By Lemma~\ref{lem:resdefect} at $z=0$ and
\eqref{eq:approx},
\[
 J=B^{-1},\qquad J_h=B_h^{-1}\Pi_h,\qquad \norm{J_h-J}\le Ch .
\]
The operators $J_h$ act as zero on $X_h^\perp$. On $H$,
$R(z,A)=(I+zJ)^{-1}J$ and $R(z,A_h)\Pi_h=(I+zJ_h)^{-1}J_h$. Indeed
$z-A=(I+zJ)B$, while $I+zJ_h$ acts as the identity on $X_h^\perp$. For
each compact $Z\subset\rho(A)$,
$\sup_Z\norm{(I+zJ)^{-1}}<\infty$, since $(I+zJ)^{-1}=I-zR(z,A)$. Fix
$h_Z>0$ with $Ch_Z\sup_{z\in Z}|z|\norm{(I+zJ)^{-1}}\le\frac12$. For
every mesh with $h\le h_Z$, the Neumann series and $\norm{J_h-J}\le Ch$
make $I+zJ_h$ invertible on $Z$, with
$\norm{(I+zJ_h)^{-1}}\le2\norm{(I+zJ)^{-1}}$.

\emph{Step 2: a uniform resolvent bound.} The construction in
Lemma~\ref{lem:resdefect} gives
$\norm{R(-\sigma+i\omega,A_\bullet)}\le2C_R\langle\omega\rangle^{-2/3}$
whenever $0\le\sigma\le c_\Gamma\langle\omega\rangle^{2/3}$. Choose
$\Omega$ with $\gamma\le c_\Gamma\langle\Omega\rangle^{2/3}$. This
controls the strip $-\gamma\le\Re z\le0$ for $|\Im z|\ge\Omega$. The
remaining rectangle is a compact subset of $\rho(A)$, because
$-\gamma>s(A)$. Let $Z$ be this rectangle and
$h_0=\min\{h_Z,1,L\}$; Step 1 controls it for $h\le h_0$. For $\Re z\ge0$,
the decay $\norm{T_\bullet(t)}\le e^{-\lambda t}$ gives
$\norm{R(z,A_\bullet)}\le(\Re z+\lambda)^{-1}$. Thus
the half-plane $\Re z\ge-\gamma$ lies in all resolvent sets, with
resolvents bounded uniformly in $h\le h_0$.

\emph{Step 3: uniform decay.} Suppose that no common bound at rate
$\nu_1$ exists. Then there are meshes $h_n\le h_0$ and times $t_n$ with
$e^{\nu_1t_n}\norm{T_{h_n}(t_n)}>n$. The $h_n$ need not tend to zero;
Step 2 holds for each of them. Let $\mathfrak S$ be the direct sum
of the $T_{h_n}$ and $\mathfrak G$ its generator. By \eqref{eq:cp} and
Lemma~\ref{lem:directsum}(ii), $\norm{\mathfrak G\mathfrak S(t)}\le
Ct_\wedge^{-3/2}$ for $t>0$. Thus $\mathfrak S$ is of class $\CP_{2/3}$,
and \eqref{eq:sdg} gives $\omega_0(\mathfrak G)=s(\mathfrak G)$. By
Step 2 and Lemma~\ref{lem:directsum}(i), the half-plane $\Re z\ge-\gamma$
lies in $\rho(\mathfrak G)$, so $s(\mathfrak G)\le-\gamma<-\nu_1$. Hence
$\sup_n\sup_te^{\nu_1t}\norm{T_{h_n}(t)}<\infty$, a contradiction.
Alternatively, Step 2 and the Gearhart--Huang--Pr\"uss
theorem~\cite{Gearhart,Huang,Pruss} give the same conclusion. This
proves \eqref{eq:uniformspectraldecay} with $\nu_1$ in place of $\nu$.
The continuous case is Proposition~\ref{prop:spectral}.

\emph{Step 4: smoothing tails.} For $t\ge1$ write
$B_\bullet T_\bullet(t)=B_\bullet T_\bullet(1/2)T_\bullet(t-1/2)$. The
short-time bound \eqref{eq:cp} and Step 3 give the Crandall--Pazy bound
for $t\ge1$,
in fact at the rate $\nu_1$. The energy inequality
$a|T_\bullet(t)x|_1^2\le\norm{B_\bullet T_\bullet(t)x}_H
\norm{T_\bullet(t)x}_H$ gives the $H^1$ estimate. On $[0,1]$ the energy
identity controls the integral in \eqref{eq:spectralKato}. On
$[1,\infty)$ the pointwise bound at the rate $\nu_1>\nu$ controls it.

\emph{Step 5: weak sources.} Let $z$ lie on $\Re z=-\nu$, and let $C_H$
bound the $H$ resolvents there. The adjoint identity \eqref{eq:adjoint}
and, on $X_h$, $\Re\ip{B_h^*x}x\ge a|x|_1^2$ give, for the forward and
the adjoint resolvents,
\[
 a|R(z,A_\bullet)f|_1^2\le(C_H+\nu C_H^2)\norm f_H^2 .
\]
Apply this to $R(\bar z,A_\bullet^*)$ and dualize. This yields
$\norm{R(z,A_\bullet)g}_H\le C_1\norm g_{V_{1,\bullet}^*}$. For
$u=R(z,A_\bullet)g$ the real part identity gives
$a|u|_1^2\le\norm g_{V_{1,\bullet}^*}|u|_1+\nu C_1^2
\norm g_{V_{1,\bullet}^*}^2$. Thus $R(-\nu+i\tau,A_\bullet)$ maps
$V_{1,\bullet}^*$ into $V_1$, uniformly in $\tau$ and $h$. Only adjoint
stability is used here, not adjoint consistency.

Let $y(0)=0$ and let $g$ be a smooth compactly supported $H$-valued
source. Define $y$ by the causal convolution. After weighting and
extension by zero, its Fourier transform is
$\widehat{e^{\nu\cdot}y}(\tau)=R(-\nu+i\tau,A_\bullet)
\widehat{e^{\nu\cdot}g}(\tau)$; the integrable kernel at the rate
$\nu_1>\nu$ justifies this. Plancherel's theorem gives
$\norm{e^{\nu t}y}_{L^2_tV_1}\le C\norm{e^{\nu t}g}_{L^2_t(V_{1,\bullet}^*)}$.
For $v=e^{\nu t}y$ and $f=e^{\nu t}g$ the energy identity on $[0,T]$ gives
\[
 \tfrac12\norm{v(T)}_H^2+a\int_0^T|v|_1^2\,dt
 \le\int_0^T\norm f_{V_{1,\bullet}^*}|v|_1\,dt+\nu\int_0^T\norm v_H^2\,dt .
\]
The last integral is bounded by $p^2\norm v_{L^2V_1}^2$, which is already
controlled. Hence the weighted $H$ supremum is also bounded by the source
norm; no positivity of $a-\nu p^2$ is needed. Approximation preserves
continuity, the zero initial value and weighted decay at infinity. Add
$T_\bullet(t)y(0)$, using \eqref{eq:spectralKato}, to obtain
\eqref{eq:spectralforcing}.

\emph{Step 6: analyticity.} The weighted weak product estimate and
Agmon's inequality give, with $M_w=\sup_te^{\nu t}\norm{w(t)}_H$,
\[
 \norm{e^{\nu t}F(w,\tilde w)}_{L^2_tV_1^*}^2
 \le CM_wM_{\tilde w}^2\int_0^\infty e^{-\nu t}|w|_1\,dt
 \le C_\nu\E_\nu(w)^2\E_\nu(\tilde w)^2 .
\]
The same holds after restriction to $X_h$. The linear and bilinear
trajectory maps therefore have common bounds, and the Catalan recursion of
Section~\ref{sec:analyticproof} gives analyticity on a common ball, with
uniqueness and uniform Taylor remainders.
\end{proof}

\section{Uniform convergence and error estimates in the norm of the
initial datum}\label{sec:samenorm}
\setcounter{equation}{0}

\subsection{Two orders of approximation}\label{sec:orders}
Let $u$ solve \eqref{eq:pde}, let $u_h$ solve the weighted scheme
\eqref{eq:Wscheme}, and let
\[
 v=T(\cdot)u_0,\qquad v_h=T_h(\cdot)\Pi_hu_0
\]
be the linear solutions. The linear discrete simulation $v_h$ can be
reached in two orders, and each order divides the total error in its own
way:
\begin{equation}\label{eq:allocation}
 u-v_h=\underbrace{(u-v)}_{\text{linearization}}
      +\underbrace{(v-v_h)}_{\text{discretization}}
      =\underbrace{(u-u_h)}_{\text{discretization}}
      +\underbrace{(u_h-v_h)}_{\text{linearization}} .
\end{equation}
Changing the order changes the linearization share from $u-v$ to
$u_h-v_h$, and the discretization share from $v-v_h$ to $u-u_h$. The two
changes are equal. We call
\begin{equation}\label{eq:defect}
 \Cd_h=(u-v)-(u_h-v_h)=(u-u_h)-(v-v_h)
\end{equation}
the \emph{ordering error}. It measures how much the nonlinear correction
predicted by the discrete model differs from the continuous one;
equivalently, how much the nonlinearity changes the discretization
error. The two total errors in \eqref{eq:allocation} coincide; their
difference is zero and is not \eqref{eq:defect}.

Put $z=u-v$, $z_h=u_h-v_h$, $e=u-u_h$ and $m=\frac12(u+u_h)$, and write
\[
 L_h(t)=T_h(t)\Pi_h-T(t),\quad (L_h*g)(t)=\int_0^tL_h(t-s)g(s)\,ds,
 \quad \QQ_h(f,g)(t)=\int_0^tT_h(t-s)F_h(f(s),g(s))\,ds .
\]
Then $v_h-v=L_h(\cdot)u_0$ and, since $\N(u)-\N(u_h)=2F(e,m)$,
\begin{align}
 e&=-L_h(\cdot)u_0+\Cd_h,\qquad e(0)=(I-\Pi_h)u_0,\qquad \Cd_h(0)=0,
 \label{eq:splitting}\\
 \Cd_h&=-L_h*\N(u)+2\QQ_h(e,m).\label{eq:Cdmild}
\end{align}
The linear term $L_h(\cdot)u_0$ is absent from $\Cd_h$. This cancellation
is behind every estimate of the ordering error below. Note that
$L_h(t)u_0=-T(t)(I-\Pi_h)u_0+(T_h(t)-T(t))\Pi_hu_0$ contains the initial
projection error propagated by $T$ and a consistency error. Isolating
its value at $t=0$ does not remove it at positive times.

Throughout, $\varepsilon$ denotes a norm of the datum, and no direction
is fixed. If a direction $\phi$ is fixed and $u_0=\delta\phi$, there is a
mixed-difference interpretation. Let $f(\delta,h)=u_h(\delta\phi)$
and $f(\delta,0)=u(\delta\phi)$, put $q(\delta,h)=f(\delta,h)/\delta$
for $\delta\ne0$, and extend $q$ to $\delta=0$ by the first derivative
in amplitude. Then
\[
 \Cd_h=\delta\{q(\delta,0)-q(0,0)-q(\delta,h)+q(0,h)\}.
\]
Thus $\Cd_h/\delta$ is a mixed second difference of the normalized
solution. Bounds for the four shares give only
$\Cd_h=O(\min\{\delta^2,h^p|\delta|\})$. The product bound
$O(h^p\delta^2)$ requires the cancellation in \eqref{eq:Cdmild}.

\subsection{The main results}\label{sec:mainsame}
In Theorems~\ref{thm:convergence}--\ref{thm:ordering} the error and the
datum are measured in the same norm. The constants depend only on
$L$, $\kappa$ and the decay
rate. We write
$\varepsilon_\theta=\norm{u_0}_{V_\theta}$ for $\theta=0,1,2$. For
$\varepsilon_\theta\le r$ the four solutions exist for all $t\ge0$, by
Theorem~\ref{thm:continuous} and Proposition~\ref{prop:nonlinstab}. The
results in the norm of the datum fall into three classes: estimates on
the whole half-line $t\ge0$, estimates on $[t_0,\infty)$ for every
$t_0>0$, and estimates for the ordering error. A fourth theorem shows
what a higher norm of the datum adds.

Pointwise rates for the full error cannot hold uniformly on a ball down to
$t=0$. The kernel of $\Pi_h$ contains nonzero smooth compactly supported
functions, so $\norm{e(0)}_{V_\theta}=\norm{(I-\Pi_h)u_0}_{V_\theta}$ can
equal the radius of the ball for every $h$. On the whole half-line we
therefore state convergence for each datum, and rates in time-integrated
norms.

\begin{theorem}[The whole half-line]\label{thm:convergence}
Fix $L,\kappa$ and $0<\nu<\lambda$. There are $r,C>0$, depending only
on $L,\kappa,\nu$, such that the following hold for real data and
$0<h\le\min\{1,L\}$.
\begin{enumerate}[label=\rm(\roman*)]
\item If $\varepsilon_0=\norm{u_0}_H\le r$, then
\[
 \norm{e^{\nu t}(v-v_h)}_{L^4(0,\infty;H)}
 +\norm{e^{\nu t}(u-u_h)}_{L^4(0,\infty;H)}\le Ch^{1/6}\varepsilon_0 ,
\]
and, for every fixed such $u_0$,
\[
 \sup_{t\ge0}e^{\nu t}\big(\norm{(v-v_h)(t)}_H+\norm{(u-u_h)(t)}_H\big)
 \longrightarrow0\qquad(h\to0).
\]
\item If $\varepsilon_1=\norm{u_0}_{V_1}\le r$, then
\[
 \norm{e^{\nu t}(v-v_h)}_{L^2(0,\infty;H^1)}
 +\norm{e^{\nu t}(u-u_h)}_{L^2(0,\infty;H^1)}\le Ch^{2/3}\varepsilon_1 .
\]
\end{enumerate}
\end{theorem}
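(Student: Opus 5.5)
The plan is to estimate the linear error $L_h(\cdot)u_0=v_h-v$ directly from the resolvent representation, and then to transfer the bound to $u-u_h$ through the splitting \eqref{eq:splitting}. In that splitting the ordering error $\Cd_h$ is quadratic and carries no initial error. Throughout, $\nu<\lambda$ is fixed, and both $T$ and $T_h$ decay at the rate $\lambda$ by \eqref{eq:ctenergy} and \eqref{eq:disenergy}.

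\emph{Step 1: pointwise linear bounds.} Write $L_h(t)=\{T_h(t)\Pi_h-\Pi_hT(t)\}-(I-\Pi_h)T(t)$.
\begin{itemize}
\item For the first bracket, use the contour formula of Lemma~\ref{lem:CPcontour}. The integrand $e^{zt}\{R(z,A_h)\Pi_h-\Pi_hR(z,A)\}$ is bounded by $Ch\,e^{-c't\langle\omega\rangle^{2/3}}$, by Lemma~\ref{lem:resdefect} with $\theta=0$. Integrating gives $\norm{\,\cdot\,}\le Ch\,t^{-3/2}$ for $0<t\le1$.
\item For the second term, \eqref{eq:approx} with $s=3$ and \eqref{eq:ctsmoothing} give $\norm{(I-\Pi_h)T(t)}\le Ch^3t^{-3/2}$.
\end{itemize}
Together with the trivial bound $2e^{-\lambda t}$ this gives $\norm{L_h(t)}\le C\min\{1,ht^{-3/2}\}$ on $(0,1]$.

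To reach $t\ge1$ with the weight $e^{\nu t}$, I use the identity
\[
 L_h(t)=T_h(t-\tfrac12)\Pi_hL_h(\tfrac12)+L_h(t-\tfrac12)T(\tfrac12).
\]
Its second term only sees data in $V_{3-\delta}$. On such data Lemma~\ref{lem:resdefect} with $\theta=3-\delta$ gives an integrable contour bound $Ch$. Iterating, or using the contraction $e^{-\lambda t}$ on each factor, gives $\norm{L_h(t)}\le Che^{-\lambda t}$ for $t\ge1$.

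Then
\[
 \int_0^\infty e^{4\nu t}\norm{L_h(t)u_0}_H^4\,dt\le C\Big(\int_0^1\min\{1,ht^{-3/2}\}^4dt+h^4\Big)\varepsilon_0^4\le Ch^{2/3}\varepsilon_0^4 .
\]
This is the $h^{1/6}$ bound for $v-v_h$ in (i).

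For (ii) I interpolate in the datum. By \eqref{eq:spectralKato}, or by \eqref{eq:ctenergy} and \eqref{eq:disKato}, the map $u_0\mapsto e^{\nu t}L_h(\cdot)u_0$ is bounded from $H$ to $L^2(H^1)$. For $u_0\in V_3$ I aim for the rate $h^2$ in $L^2(H^1)$. To get it, Fourier transform in time along $\Re z=-\nu$, where the transform is the resolvent defect. Bound its $H^1$ norm by the energy argument of Lemma~\ref{lem:resdefect}: the $\norm{\cdot}_{-1,h}$ residual bound \eqref{eq:residual} times the $H^1$ control of the adjoint resolvent. Add $(I-\Pi_h)T u_0$ in $L^2(H^1)$, which \eqref{eq:approx} bounds by $Ch^2\norm{u_0}_{V_3}$. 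Complex interpolation \eqref{eq:interp} at $\theta=1$ then gives $h^{2/3}\varepsilon_1$.

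\emph{Step 2: the nonlinear error.} By \eqref{eq:splitting}, $e=-L_h(\cdot)u_0+\Cd_h$. Substituting this into \eqref{eq:Cdmild} shows that $y=\Cd_h$ solves
\[
 y_t=A_hy+2F_h(m,y)+g,\qquad y(0)=0,
\]
up to the linear-error term $-L_h*\N(u)$. Here $g=-2F_h(m,L_hu_0)$. I apply Lemma~\ref{lem:perturb}; $K_m\le C$ because $\E_\alpha(u)$ and $\E_\alpha(u_h)$ are $O(\varepsilon_0)$ by Theorem~\ref{thm:continuous} and Propositions~\ref{prop:nonlinstab} and~\ref{prop:uniformanalytic}. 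For the source, the bound
\[
 \norm{F(m,w)}_{V_1^*}\le C\norm m_\infty\norm w_H,\qquad \norm m_\infty\le C\norm m_H^{1/2}|m|_1^{1/2}\in L^4_t,
\]
gives $\norm{e^{\nu t}g}_{L^2_t(-1,h)}\le C\varepsilon_0\cdot h^{1/6}\varepsilon_0$ by H\"older with $L^4\cdot L^4$.

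The term $L_h*\N(u)$ is the main obstacle. Its source $\N(u)$ lies only in $L^2(V_1^*)$ or $L^{4/3}$-type spaces, not in $L^1(H)$. I would dualize Step~1 via \eqref{eq:weaksmoothing} and the adjoint contour. This gives $\norm{L_h(t)\Pi_h g}$ and $\norm{L_h(t)g}$ bounded by $C\min\{t^{-3/4},ht^{-9/4}\}$ against $V_1^*$-data; the factor $t^{-3/4}$ comes from \eqref{eq:weaksmoothing} on $V_1^*$-data, and $t^{-9/4}=t^{-3/2}\cdot t^{-3/4}$. Next, write $\norm{F(u,u)}_{V_1^*}\le C\norm u_\infty\norm u_H$, which lies in $L^4_t$ with size $\varepsilon_0^2$. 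Young's convolution inequality then places $L_h*\N(u)$ in $L^4(H)$ with a positive power of $h$ times $\varepsilon_0^2$. If the exponents do not close at exactly $h^{1/6}$, I would instead absorb this term into $g$ as a discrete-dual source and use \eqref{eq:disforced}. Either route yields $\E_\nu(\Cd_h)\le Ch^{1/6}\varepsilon_0^2$, hence the $L^4(H)$ bound for $u-u_h$. For (ii) the same argument runs with \eqref{eq:prodenergy} in place of Agmon's inequality, giving $\Cd_h=O(h^{2/3}\varepsilon_1^2)$ in $L^2(H^1)$.

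\emph{Step 3: pointwise convergence for a fixed datum.} Approximate $u_0$ in $H$ by $w_0\in V_3$ with $\norm{w_0}_H\le r$. For $w_0$, Step~1 with $\theta=3-\delta$ gives $\sup_te^{\nu t}\norm{L_h(t)w_0}_H\le Ch\norm{w_0}_{V_3}$. Step~2 in $\E_\nu$ gives $\Cd_h\to0$ uniformly in time. The differences between the trajectories from $u_0$ and from $w_0$ are controlled, uniformly in $h$, by $C\norm{u_0-w_0}_H$. For the linear trajectories this is contraction; for the nonlinear ones it is the Lipschitz bound of the analytic solution maps on the ball (Proposition~\ref{prop:uniformanalytic}). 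An $\epsilon/3$ argument finishes (i).
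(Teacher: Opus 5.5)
Your linear estimates match Proposition~\ref{thm:positive}, \eqref{eq:kernels} and Lemma~\ref{lem:DAenergy}, and the structure of Step~2 is sound: apply Lemma~\ref{lem:perturb} to $\Cd_h+L_h*\N(u)$, which lies in $X_h$. The gap is your treatment of $L_h*\N(u)$. The kernel bound $\norm{L_h(t)}_{V_1^*\to H}\le C\min\{t^{-3/4},ht^{-9/4}\}$ does not follow from anything available. In $L_h(t)=T_h(t/2)\Pi_hL_h(t/2)+L_h(t/2)T(t/2)$, only the second term gives $ht^{-9/4}$; the first term again needs $L_h$ on $V_1^*$ data. By duality,
\[
 \norm{L_h(t)}_{V_1^*\to H}=\sup_{\norm\psi_H\le1}
 \norm{T_h(t)^*\Pi_h\psi-T(t)^*\psi}_{V_1},
\]
which is an $H^1$ error bound for the adjoint Galerkin flow. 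The exact trajectory $g=T(t)^*\psi$ lies in $D(A^*)$, so $g'(0)=0$ but in general $g'(L)\ne0$. Every element of $X_h$ has $x'(L)=0$. On the last element this costs an $H^1$ distance of order at least $h^{1/2}|g'(L)|$, so at a fixed time the rate cannot be better than $h^{1/2}$. The paper uses only adjoint stability, never adjoint consistency. Your fallback fails too. $L_h*\N(u)$ is the difference of a discrete and a continuous forced solution, and \eqref{eq:disforced} bounds it only by $C\varepsilon_0^2$, with no mesh factor.

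The missing observation is that $\N(u)$ is a strong source. Agmon's inequality gives $\norm{\N(u)}_H\le C\norm u_H^{1/2}|u|_1^{3/2}$, so $e^{\nu t}\N(u)\in L^{4/3}_tH$ with norm $C\varepsilon_0^2$; this is \eqref{eq:mixedsource}, and with this weight the source even lies in $L^1_tH$. Young's inequality with your own $H\to H$ kernel, using $\norm{\Lambda_h}_{L^2}\le Ch^{1/3}$ and $\norm{\Lambda_h}_{L^4}\le Ch^{1/6}$, then gives \eqref{eq:JconvZ} and \eqref{eq:JconvY}. Combined with \eqref{eq:mixedfeedback} for the sources $F_h(m,L_h(\cdot)u_0)$ and $F_h(m,L_h*\N(u))$, your Step~2 closes in $\Z_\nu$ and in $\Y_\nu(H)$, as in Proposition~\ref{thm:energyfull}. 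Step~3 then needs density only for the linear error. There, for smooth data, the bound uniform down to $t=0$ must come from Lemma~\ref{lem:DAenergy}: the contour with $\theta=3-\delta$ only gives $ht^{-\delta/2}$ near $0$. Your claim $\E_\nu(\Cd_h)\le Ch^{1/6}\varepsilon_0^2$ is unsupported, because nothing controls $L_h*\N(u)$ in $L^2_\nu(H^1)$.

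The nonlinear half of (ii) also has a real gap. ``The same argument'' would need $\norm{e^{\nu t}L_h*\N(u)}_{L^2_tH^1}\le Ch^{2/3}\varepsilon_1^2$. For an $H$-valued source, the $H\to H^1$ kernel $\Lambda_h^{(1)}$ gives only $h^{1/6}$. Minkowski's inequality with \eqref{eq:Erate} at $s=1$ would require $\int_0^\infty e^{\nu s}\norm{\N(u(s))}_{V_1}\,ds\le C\varepsilon_1^2$. That is a time-integrable $H^2$ smoothing estimate for the nonlinear flow, which you do not supply. Only the part of your argument that goes through Lemma~\ref{lem:perturb} is fine there, using $\norm{\Lambda_h}_{L^1}\le Ch^{2/3}$. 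The paper avoids the issue as follows. For $g\in D(A)$ small in $V_1$ it proves $\E_\nu(u-u_h)\le Ch^2\norm g_{H^3}$, using the equation for $\dot u$, the bound \eqref{eq:nonlinH4} and Lemma~\ref{lem:perturb}. It then approximates $u_0$ by $g_Q=Q(Q+B)^{-1}u_0$ and uses Lipschitz dependence of both solution maps on the energy ball. Finally it balances $Q^{-1/3}$ against $h^2Q^{2/3}$ at $Q=h^{-2}$.
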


\begin{theorem}[Times bounded away from zero]\label{thm:samenorm}
Fix $L,\kappa$ and $0<\nu<\lambda$. There are $r,C>0$, depending only
on $L,\kappa,\nu$, with the following property. Let $\theta\in\{0,1,2\}$
and let $u_0\in V_\theta$ be real with $\varepsilon_\theta\le r$. Then,
for $t>0$,
\begin{align}
 \norm{(v-v_h)(t)}_{V_\theta}
 &\le Ce^{-\nu t}a_\theta(h,t)\,\varepsilon_\theta,\label{eq:sn-linear}\\
 \norm{(u-u_h)(t)}_{V_\theta}
 &\le Ce^{-\nu t}\big\{a_\theta(h,t)\varepsilon_\theta
 +b_\theta(h,t)\varepsilon_\theta^2\big\},\label{eq:sn-full}
\end{align}
where
\[
\begin{array}{c|cc}
 \theta&a_\theta(h,t)&b_\theta(h,t)\\ \hline
 0&\min\{1,ht_\wedge^{-3/2}\}&h^{1/6}\\[1mm]
 1&\min\{h^{1/4}t_\wedge^{-5/8},h^2t_\wedge^{-2}\}&h^{1/6}t_\wedge^{-5/8}
 \\[1mm]
 2&\min\{h^{1/3}t_\wedge^{-1},ht_\wedge^{-3/2}\}&h^{4/15}t_\wedge^{-3/2}
\end{array}.
\]
In particular, for every $t_0>0$, the errors converge to zero uniformly
for $t\ge t_0$ and $\varepsilon_\theta\le r$.
\end{theorem}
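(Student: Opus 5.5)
The plan is to prove the linear estimate \eqref{eq:sn-linear} first, by resolvent comparison along the common contour. The nonlinear estimate \eqref{eq:sn-full} then follows from the splitting \eqref{eq:splitting}, $e=-L_h(\cdot)u_0+\Cd_h$, with the ordering error $\Cd_h$ controlled through its equation and Lemma~\ref{lem:perturb}.

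\emph{Linear part.} By Lemma~\ref{lem:CPcontour},
\[
 L_h(t)=\frac1{2\pi i}\int_\Gamma e^{zt}\{R(z,A_h)\Pi_h-R(z,A)\}\,dz ,
\]
and I split the resolvent difference as
\[
 R(z,A_h)\Pi_h-R(z,A)=\{R(z,A_h)\Pi_h-\Pi_hR(z,A)\}-(I-\Pi_h)R(z,A).
\]
\begin{itemize}
\item The first bracket is bounded by Lemma~\ref{lem:resdefect}.
\item The second term is bounded by \eqref{eq:approx} together with $\norm{AR(z,A)B^{-\theta/3}}\le C\langle\omega\rangle^{1/3-\theta/3}$.
\end{itemize}
With $\theta=3$ the contour integral with weight $e^{-c't\langle\omega\rangle^{2/3}}$ gives $\norm{L_h(t)B^{-1}}\le Ch\,t_\wedge^{-\delta}$ for a small loss $\delta$. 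To avoid that loss I would use the semigroup splitting
\[
 L_h(t)=T_h(t/2)\{T_h(t/2)\Pi_h-\Pi_hT(t/2)\}+L_h(t/2)T(t/2).
\]
Then $L_h(t/2)B^{-1}\cdot BT(t/2)$ together with Theorem~\ref{thm:ctcp}(ii) gives the factor $h\,t_\wedge^{-3/2}$ at $\theta=0$. The trivial bound $1$ comes from contractivity, and the factor $e^{-\nu t}$ comes from \eqref{eq:uniformspectraldecay}.

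For $\theta=1,2$ I repeat this in the $V_\theta$ norms:
\begin{itemize}
\item use the Ritz projection \eqref{eq:Ritz} and the $s=4$ residual bounds \eqref{eq:residual} to get the $h^2t^{-2}$ rate in $H^1$ and the $ht^{-3/2}$ rate in $H^2$;
\item bound $\norm{B_h\cdot}$ by \eqref{eq:cp}, and bound $\Pi_h$ on $V_1$ and $V_2$ by Lemma~\ref{lem:projection};
\item obtain the fractional rates $h^{1/4}t^{-5/8}$ and $h^{1/3}t^{-1}$ from the moment inequality \eqref{eq:moment}, interpolating the $H$ error against the uniformly bounded $V_2$ or $V_3$ norms of $T_hΠ_h u_0$ and $T u_0$ supplied by \eqref{eq:cp} and \eqref{eq:oneway}.
\end{itemize}

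\emph{Ordering error.} Subtracting the equations for $u,v,u_h,v_h$ shows that $\Cd_h$ solves
\[
 \Cd_{h,t}=A_h\Cd_h+2F_h(m,\Cd_h)+g ,\qquad \Cd_h(0)=0\quad\text{(on $X_h$, after projection)}.
\]
The source $g$ consists of the consistency terms of $L_h*\N(u)$ and of $2F_h(m,L_h(\cdot)u_0)$. Lemma~\ref{lem:perturb}, with $K_m$ bounded by Theorem~\ref{thm:continuous} and Proposition~\ref{prop:nonlinstab}, then reduces the energy-level bound on $\Cd_h$ to a bound on $\norm{e^{\nu t}g}_{L^2_t(-1,h)}$.

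\emph{Main obstacle.} The hard step is the term $F(m,L_hu_0)$. Its $V_1^*$ norm is bounded by $|m|_1\norm{L_hu_0}_H$ through \eqref{eq:prodenergy}, but $|m|_1$ is only in weighted $L^2_t$ and $L_hu_0$ is not small in $L^\infty_tH$ uniformly on a ball. My first attempt is a mixed-norm argument:
\begin{itemize}
\item put $L_hu_0$ in $L^4_tH$ with size $h^{1/6}\varepsilon_0$ by Theorem~\ref{thm:convergence}(i), which itself comes from interpolating $\min\{1,ht^{-3/2}\}$ in $L^4$;
\item control $m$ in $L^4_tL^\infty$ via Agmon's inequality;
\item if $L^2_t(V_1^*)$ does not close, replace it by an $L^{4/3}_t$ Duhamel estimate using \eqref{eq:weaksmoothing}, whose kernel $t_\wedge^{-3/4}$ is exactly $L^{4/3}$-admissible.
\end{itemize}
This should yield $\sup_te^{\nu t}\norm{\Cd_h(t)}_H\le Ch^{1/6}\varepsilon_0^2$, which is $b_0$.

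\emph{Higher norms.} For $\theta=1,2$ I would write $\Cd_h$ in mild form \eqref{eq:Cdmild} on $[t/2,t]$ and apply the smoothing bounds \eqref{eq:cp} and \eqref{eq:ctsmoothing} with the product estimate \eqref{eq:product}. This lifts the energy bound at the cost $t_\wedge^{-5/8}$ (respectively $t_\wedge^{-3/2}$), optimizing the interpolation exponents to get $h^{1/6}$ and $h^{4/15}$. Adding the linear part through \eqref{eq:splitting} then gives \eqref{eq:sn-full}.
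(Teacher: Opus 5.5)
\textbf{The lift to $\theta=2$ does not close with the tools you name.} Any mild form of $\Cd_h$ contains the discrete Duhamel term $\int T_h(t-s)\Pi_hF(e,m)(s)\,ds$, and you must bound it in $H^2$ near $s=t$. On $X_h$ the available mesh-uniform control of the $H^2$ norm is through $\norm{B_hx}_H$, by \eqref{eq:graph} or Lemma~\ref{lem:halfbridge}. Moreover, $F(e,m)=-\frac14(u^2-u_h^2)'$ lies at best in $V_1$: $u_h$ is a $C^1$ piecewise cubic, so $(u_h^2)'$ has kinks at the nodes. Hence \eqref{eq:oneway} and \eqref{eq:cp} give only $\norm{B_hT_h(t-s)\Pi_hF}_H\le\norm{B_h^{2/3}T_h(t-s)}\,\norm{B_h^{1/3}\Pi_hF}_H\le C(t-s)^{-1}\norm F_{V_1}$, and this kernel is not integrable at $s=t$. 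An inverse estimate only converts the divergence into a factor $\log(1/h)$. Absorbing the $\Cd_h$ part of $e$ would then require $\varepsilon\log(1/h)$ to be small, which is an $h$-dependent radius. The bound \eqref{eq:ctsmoothing} does not help, because the semigroup acting here is $T_h$.

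This is why the paper never puts a discrete Duhamel integral in $H^2$. It uses the Ritz identity $A_h(R_hz-z_h)=\Pi_h\dot\Cd_h-\Pi_h\{\N(u)-\N(u_h)\}$ and graph recovery \eqref{eq:recover}, so only $\norm{\dot\Cd_h}_H$ and $\norm{\N(u)-\N(u_h)}_H$ are needed. It bounds $\norm{\dot\Cd_h}_H$ by balancing $\norm{\Cd_h}_H\le Ch^{2/3}\varepsilon^2$ against the mesh-uniform time modulus $r'^{2/3}$ of $\dot z_\bullet$ (Lemmas~\ref{lem:time} and~\ref{lem:holder}). The exponent $4/15=(2/3)(2/5)$ comes from that balance. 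No optimization of interpolation exponents in your sketch produces it.

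The second missing ingredient is the improved energy bound of Lemma~\ref{lem:prelim}. Lifting the energy-ball bound $h^{1/6}\varepsilon_0^2$ cannot give $b_1$ or $b_2$. The $H\to H^1$ cost of $T_h$ is $t_\wedge^{-3/4}$ by \eqref{eq:cpH1}, not $t_\wedge^{-5/8}$, and $h^{1/6}$ is too weak a starting point for $h^{4/15}$. For $u_0\in V_s$, $s=1,2$, $\N(u)$ is bounded in $H$ pointwise in time. Then $\norm{\Lambda_h}_{L^1}\le Ch^{2/3}$ and Lemma~\ref{lem:perturb} give $\norm{\Cd_h(t)}_H\le Ch^{2/3}\varepsilon_s^2e^{-\nu_1t}$.

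At $\theta=1$ the paper uses \eqref{eq:Cdmild} on all of $[0,t]$, so there is no restart term. It also needs the discrete $H^1$ stability of Lemma~\ref{lem:H1stable}, which requires its own first-crossing argument. Finally, it uses the refined product bound \eqref{eq:strongproduct} instead of \eqref{eq:product}: with \eqref{eq:product} and $\norm{u_h}_{H^1}\lesssim\varepsilon_1 t_\wedge^{-1/2}$, the absorption produces an unabsorbable factor $t_\wedge^{-1/4}$. The weight $t_\wedge^{-5/8}$ is inherited from \eqref{eq:linear-H1}.

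At $\theta=0$ your plan is close to the paper's once two points are corrected. First, $\Cd_h\notin X_h$, so the unknown of the discrete equation is $\Cd_h+S_h$, with $S_h=L_h*\N(u)$ and source $-2F_h(m,L_h(\cdot)u_0+S_h)$. Second, $S_h$ must be bounded directly, by Young's inequality with $\norm{\Lambda_h}_{L^4}\le Ch^{1/6}$ and $\N(u)\in L^{4/3}_tH$ from \eqref{eq:mixedsource}. Consistency residuals of $u-v$ would not do, since they need $H^3$--$H^4$ regularity that an energy datum does not give near $t=0$. Your $L^4\times L^4$ bound for $F(m,L_h(\cdot)u_0)$ is \eqref{eq:mixedfeedback} and already closes in $L^2_t(V_1^*)$, so the $L^{4/3}$ fallback is unnecessary (and $t_\wedge^{-3/4}\notin L^{4/3}$ anyway).

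In the linear part, interpolating against a $V_3$ norm of $T_h(t)\Pi_hu_0$ is unavailable since $X_h\not\subset V_3$. Interpolate the operator $L_h(t)$ in its domain, between $H$ and $D(B^2)$, as in Lemma~\ref{lem:linear}.
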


\begin{theorem}[The ordering error]\label{thm:ordering}
Fix $L,\kappa$ and $0<\nu<\lambda$. There are $r,C>0$, depending only
on $L,\kappa,\nu$, with the following property. Let $\theta\in\{0,1,2\}$
and let $u_0\in V_\theta$ be real with $\varepsilon_\theta\le r$. The two
linearization shares satisfy
\begin{equation}\label{eq:sn-horizontal}
 \sup_{t\ge0}e^{\nu t}\norm{(u-v)(t)}_{V_\theta}\le C\varepsilon_\theta^2,
 \qquad
 \norm{(u_h-v_h)(t)}_{V_\theta}\le Ce^{-\nu t}c_\theta(t)\,
 \varepsilon_\theta^2,\quad t>0,
\end{equation}
with $c_0=1$ and $c_1=c_2=t_\wedge^{-1/2}$. Their difference, the
ordering error, satisfies $\Cd_h(0)=0$ and
\begin{equation}\label{eq:sn-ordering}
 \norm{\Cd_h(t)}_{V_\theta}\le Ce^{-\nu t}b_\theta(h,t)\,
 \varepsilon_\theta^2,\qquad t>0,
\end{equation}
with $b_\theta$ from Theorem~\ref{thm:samenorm}. At $\theta=0$ the
estimates hold for all $t\ge0$:
$\sup_{t\ge0}e^{\nu t}\norm{\Cd_h(t)}_H\le Ch^{1/6}\varepsilon_0^2$.
Moreover, let $D_h(u_0)=P_2(u_0)-P_{2,h}(\Pi_hu_0)$ be the discretization
error of the quadratic Taylor coefficient. Then
\[
 \sup_{t\ge0}e^{\nu t}\norm{\Cd_h(t)-D_h(u_0)(t)}_H
 \le Ch^{1/6}\varepsilon_0^3,\qquad
 \norm{D_h(u_0)(t)}_H\le Ch^{1/2}t_\wedge^{-5/2}e^{-\nu t}
 \varepsilon_0^2 .
\]
\end{theorem}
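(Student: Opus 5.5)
The plan rests on the cancellation identity \eqref{eq:Cdmild},
\[
 \Cd_h=-L_h*\N(u)+2\QQ_h(e,m),\qquad e=-L_h(\cdot)u_0+\Cd_h .
\]
We substitute the second relation into the first. This gives a linear Volterra equation for $\Cd_h$:
\[
 \Cd_h=-L_h*\N(u)-2\QQ_h(L_h(\cdot)u_0,m)+2\QQ_h(\Cd_h,m).
\]
Its forcing terms are each quadratic in the datum and each contains a factor $L_h$.

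\textbf{Linearization shares.} The bound \eqref{eq:sn-horizontal} for $u-v$ is \eqref{eq:continuous} of Theorem~\ref{thm:continuous}. For $u_h-v_h=\QQ_h(u_h,u_h)$ at $\theta=0$ we use Proposition~\ref{prop:uniformanalytic} and the Taylor bound \eqref{eq:taylor} at $m=1$. For $\theta=1,2$ we would write $u_h-v_h=\int_0^tT_h(t-s)F_h(u_h,u_h)\,ds$ and estimate the integrand with \eqref{eq:cp}. The $V_\theta$ norm is compared with $B_h^{\theta/3}$ through \eqref{eq:oneway} and graph recovery \eqref{eq:graph}. Here $\Pi_h$ is not uniformly bounded $V_{\theta-1}\to$ discrete $V_{\theta-1}$ when $\theta=2$, so we lose a factor $t^{-1/2}$. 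The source is controlled by $\norm{F(u_h,u_h)}_H\le C|u_h|_1^2$, with $u_h$ bounded in $V_\theta$ by the smoothing of the discrete flow. This produces the weight $c_\theta=t_\wedge^{-1/2}$.

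\textbf{Ordering error at $\theta=0$.} The equation above is of the form in Lemma~\ref{lem:perturb}, with $m=\frac12(u+u_h)$, $K_m\le C$, and source
\[
 g=-\Pi_h\text{-representation of }\bigl(L_h*\N(u)\bigr)'\text{-type terms}.
\]
Rather than differentiate, we work in mild form and estimate each source in $\Y_\nu(H)$. For $L_h*\N(u)$ we write
\[
 L_h(t-s)=T_h(t-s)\Pi_h-T(t-s)
\]
and use the resolvent comparison. Combining Lemma~\ref{lem:resdefect} with the contour of Lemma~\ref{lem:CPcontour}, we expect $\norm{L_h(\tau)B^{-\theta/3}}\le Ch^{a}\tau_\wedge^{-b}e^{-\nu\tau}$, and interpolating this with the trivial bound $\norm{L_h}\le2$ gives a small power $h^{\alpha}\tau^{-\beta}$. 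The weak sources are handled by \eqref{eq:weaksmoothing}, since $\norm{\N(u)}_{V_1^*}\le C|u|_1\norm u_H$. This costs $\tau^{-3/4}$, and the Kato norm gives $|u|_1\in L^2(e^{2\nu t})$, so the time factor must lie in $L^2$ near $0$. We would therefore choose $\beta<1/2$ after interpolation, which is where the exponent $1/6$ should come from.

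For $\QQ_h(L_h(\cdot)u_0,m)$ we cannot use pointwise smallness of $L_h(\cdot)u_0$. We instead use Theorem~\ref{thm:convergence}(i), namely $\norm{e^{\nu t}L_h(\cdot)u_0}_{L^4H}\le Ch^{1/6}\varepsilon_0$. We pair this with $m$ in $L^{4}$-type mixed norms: $\norm{F(w,m)}_{-1,h}\le C\norm w_H|m|_1$, and $|m|_1\in L^2$. The discrete smoothing kernel $\tau^{-3/4}$ from \eqref{eq:weaksmoothing} lies in $L^{4/3-}$, and Young/Hölder with the $L^4\times L^2$ product in $L^{4/3}$ gives a bound in $L^\infty_t$. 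The $\QQ_h(\Cd_h,m)$ term is absorbed by Gronwall as in Lemma~\ref{lem:perturb}, or by smallness of $\varepsilon_0$.

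\textbf{Strong norms and the quadratic coefficient.} For $\theta=1,2$ we repeat the argument with \eqref{eq:cp} in place of \eqref{eq:weaksmoothing}, using Theorem~\ref{thm:samenorm} for $L_h(\cdot)u_0$ at positive times. The time weights $b_\theta$ arise from the nonintegrable part near $s=t$. We expect the exponent bookkeeping producing $h^{4/15}t_\wedge^{-3/2}$ to be the main obstacle, since the bounds must remain consistent with an integrable singularity in $s$.

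For $D_h$ we note that $\Cd_h-D_h(u_0)$ is the difference of the Taylor remainders beyond degree two. Analyticity on a common ball (Proposition~\ref{prop:uniformanalytic}) allows the $\theta=0$ ordering bound to be applied degree by degree, via Cauchy estimates in a complex amplitude. This yields $h^{1/6}\varepsilon_0^3$.

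The bound $\norm{D_h(t)}_H\le Ch^{1/2}t_\wedge^{-5/2}\varepsilon_0^2$ would follow from writing
\[
 D_h=\QQ(P_1,P_1)-\QQ_h(P_{1,h},P_{1,h})
\]
and splitting it into an $L_h*F(P_1,P_1)$ term and a term with $P_1-P_{1,h}=-L_h u_0$. We would use the pointwise rate $\min\{1,ht^{-3/2}\}$ of Theorem~\ref{thm:samenorm}, interpolated to $h^{1/2}t^{-3/4}$, together with the time-weighted smoothing \eqref{eq:cp}.
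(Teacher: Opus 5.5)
Your reduction to a linear Volterra equation for $\Cd_h$ is sound. It is equivalent to the paper's cancellation: the paper first closes for $e$ in $\Z_\nu$ and then reads off $\Cd_h$ from \eqref{eq:Cdmild}. The gap is in the two forcing estimates at $\theta=0$. In each you pair a time-integrability exponent with a pointwise kernel that is too singular.

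For $L_h*\N(u)$ with $\norm{\N(u)}_{V_1^*}\le C|u|_1\norm u_H\in L^2_t$, you need $\norm{L_h(\tau)}_{V_1^*\to H}\le Ch^\alpha\tau_\wedge^{-\beta}$ with $\alpha>0$ and $\beta<1/2$. No such bound holds. A packet $g=\chi e^{ikx}$ with $kh$ large has $\norm{\Pi_hg}_H=O((kh)^{-2})$ and $\norm g_{V_1^*}=O(k^{-1})$. It keeps $H$ norm at least $\frac12$ under $T(\tau)$ for $\tau\le b_0k^{-2}$ (proof of Theorem~\ref{thm:ctcp}(iii)). Hence $\norm{L_h(\tau)}_{V_1^*\to H}\ge c\tau^{-1/2}$ for $\tau\le ch^2$. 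Weak sources give stability, but no rate uniform on a ball.

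The missing idea is to treat $\N(u)$ as a strong source. Agmon's inequality gives $\norm{\N(u)}_H\le C\norm u_H^{1/2}|u|_1^{3/2}$, so Kato smoothing puts $e^{\nu t}\N(u)$ in $L^{4/3}_tH$ \eqref{eq:mixedsource}. Meanwhile $\Lambda_h\le C\min\{1,ht_\wedge^{-3/2}\}e^{-ct}$ has $L^4$ norm $Ch^{1/6}$, the $L^4$ size of an initial layer of width $h^{2/3}$. H\"older then gives \eqref{eq:JconvY}; this is the origin of $h^{1/6}$.

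For $\QQ_h(L_h(\cdot)u_0,m)$, your pairing $\norm w_H|m|_1$ puts the source only in $L^{4/3}_t$. An $L^\infty_t$ bound would then need $\tau^{-3/4}\in L^4$, i.e.\ $\int_0\tau^{-3}\,d\tau<\infty$, which fails. A kernel in $L^{4/3-}$ yields only $L^r_t$ with $r<2$. In fact no pointwise kernel works here: even with an $L^2_t$ source, $\tau^{-3/4}\notin L^2$. You need Agmon again, $\norm m_{L^4_tL^\infty_x}\le C\E_\nu(m)$, so that $\norm{F(w,m)}_{V_1^*}\le C\norm w_H\norm m_\infty$ lies in weighted $L^2_t$ \eqref{eq:mixedfeedback}. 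Then apply the forced energy estimate \eqref{eq:disforced}, which is Kato smoothing in dual form rather than a convolution bound. With these two estimates your equation closes in $\Z_\nu$ by smallness, using \eqref{eq:JconvZ}, and then yields the $\Y_\nu(H)$ bound. Use the smallness option, not Lemma~\ref{lem:perturb}: the Cauchy-estimate step for $\Cd_h-D_h(u_0)$ needs the bound for complex data on a circle, and Lemma~\ref{lem:perturb} requires a real $m$.

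The strong-norm part is not yet an argument. At $\theta=2$ there is no integrable kernel for a Duhamel estimate. \eqref{eq:cp} and \eqref{eq:graph} give only $\norm{T_h(t)}_{H\to H^2}\le Ct^{-3/2}$, and using sources in $V_1$ would require $V_2$ control of the very quantity being estimated. The paper does not use Duhamel in $V_2$ at all. With the Ritz projection, $A_h(R_hz-z_h)=\Pi_h\dot\Cd_h-\Pi_h\{\N(u)-\N(u_h)\}$. Graph recovery therefore bounds $\norm{\Cd_h}_{H^2}$ by $\norm{z-R_hz}_{H^2}$, $\norm{\dot\Cd_h}_H$ and an absorbable nonlinear difference. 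The term $\norm{\dot\Cd_h}_H$ is obtained by interpolating the $h^{2/3}$ energy bound of Lemma~\ref{lem:prelim} against a mesh-uniform $2/3$-H\"older time modulus of $\dot z_\bullet$ (Lemma~\ref{lem:holder}). This is where $h^{4/15}t_\wedge^{-3/2}$ comes from. Lemma~\ref{lem:time} obtains $c_2$ from time-derivative bounds and graph recovery in the same spirit.

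At $\theta=1$ your account of $c_1$ is also off. For $V_1$ data there is no mesh-uniform $V_1$ bound on $u_h$ down to $t=0$; that needs $V_{3/2}$, by \eqref{eq:halfstab}. One has only $\norm{u_h(t)}_{H^1}\le C\varepsilon_1t_\wedge^{-1/2}$, from the bootstrap of Lemma~\ref{lem:H1stable}. With that, $\norm{F(u_h,u_h)}_H\le C|u_h|_1^2\lesssim s^{-1}$ is not integrable against the $H\to H^1$ kernel $(t-s)^{-3/4}$. It is $\norm{\N(w)}_H\le C\norm w_H^{1/2}\norm w_{H^1}^{3/2}\lesssim s^{-3/4}$, together with $\int_0^t(t-s)^{-3/4}s^{-3/4}\,ds=Ct^{-1/2}$, that produces $c_1$.

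Similarly, for $D_h$, your interpolated bound $h^{1/2}s^{-3/4}$ makes the $L^{4/3}_s$ pairing in the early Duhamel part diverge logarithmically. Integrate $\min\{1,hs_\wedge^{-3/2}\}^{4/3}$ directly instead, which gives $Ch^{2/3}$ and hence $h^{1/2}$.
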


The linearization shares are of order $\varepsilon_\theta^2$. The change
of allocation between them, the ordering error, carries in addition a
mesh factor. In the energy norm this holds uniformly on the whole
half-line, because the linear error $v-v_h$, which contains the initial
projection error, drops out of the mild formula for $\Cd_h$.
Since $|w(x)|\le L^{1/2}\norm{w'}_2$ on $H_0^1$ and
$\norm{w'}_\infty\le C\norm w_{H^2}$, the estimates at $\theta=1$ and
$\theta=2$ are pointwise bounds for the errors in the amplitude and in
the slope of the wave. At $\theta=2$ they include the boundary trace
$u_x(0,t)$.

A higher norm of the datum buys uniformity down to $t=0$ in stronger
norms. The next theorem is the only main result in which the right side carries
a higher norm than the left.

\begin{theorem}[Higher norms of the datum on the whole half-line]
\label{thm:higher}
Fix $L,\kappa$ and $0<\nu<\lambda$. There are $r,C>0$, depending only
on $L,\kappa,\nu$, such that the following hold for real data and
$0<h\le\min\{1,L\}$.
\begin{enumerate}[label=\rm(\roman*)]
\item If $u_0\in D(A)$ and $\norm{u_0}_{V_1}\le r$, then
\[
 \E_\nu(v-v_h)+\E_\nu(u-u_h)\le Ch^2\norm{u_0}_{H^3}.
\]
\item If $u_0\in V_{3/2}$ and $\varepsilon=\norm{u_0}_{V_{3/2}}\le r$, then
\[
 \sup_{t\ge0}e^{\nu t}\norm{\Cd_h(t)}_{H^1}\le Ch^{1/6}\varepsilon^2,
\]
and, for every fixed such $u_0$,
\[
 \sup_{t\ge0}e^{\nu t}\big(\norm{(v-v_h)(t)}_{H^1}
 +\norm{(u-u_h)(t)}_{H^1}\big)\longrightarrow0\qquad(h\to0).
\]
\end{enumerate}
\end{theorem}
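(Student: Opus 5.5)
Part (i) is a classical smooth-data estimate made global in time by the mesh-uniform energy structure. Part (ii) combines the mild formula \eqref{eq:Cdmild} with half a derivative of extra regularity on the datum.

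\emph{Part (i).} First, the linear error. Write $v-v_h=(v-R_hv)+(R_hv-v_h)$ with the Ritz projection of Lemma~\ref{lem:projection}. The function $\eta=R_hv-v_h\in X_h$ satisfies
\[
 \eta_t=A_h\eta+\Pi_h(R_hv-v)_t,
\]
because $A_hR_hv=\Pi_hAv$. Its initial value is $\eta(0)=(R_h-\Pi_h)u_0$, with $\norm{\eta(0)}_H\le Ch^2\norm{u_0}_{H^3}$ by Hermite approximation. The source $(R_h-I)v_t$ is measured in $H$, hence in $\norm\cdot_{-1,h}$. A duality (Aubin--Nitsche) argument with the adjoint \eqref{eq:adjoint} gives $\norm{w-R_hw}_H\le Ch^2\norm w_{H^3}$. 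We apply it to $w=v_t=Av$, which requires $Av\in H^3$. That is too much, so instead I would use the forced estimate \eqref{eq:disforced} with the weaker $-1,h$ norm of the source: $\norm{(I-R_h)v_t}_{-1,h}\le Ch^2\norm{Av}_{H^1}$. I expect to be able to arrange this by interpolating between the $H$ and $H^{-1}$ duality bounds. Then $\norm{Av}_{L^2(e^{2\nu t};H^1)}\le C\norm{Au_0}_H$ by the Kato smoothing \eqref{eq:ctenergy} applied to $T(t)Au_0$. Together with $\E_\nu(v-R_hv)\le Ch^2\sup e^{\nu t}\norm{v}_{H^3}$, this gives $\E_\nu(v-v_h)\le Ch^2\norm{u_0}_{H^3}$.

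For the nonlinear error I repeat the same splitting with $u$ in place of $v$. The error equation is then of the form in Lemma~\ref{lem:perturb}, with $m=\frac12(u+u_h)$ and a source $g=F(R_hu-u,\,u+R_hu)$ plus the Ritz defect. Here $K_m$ is bounded by Theorem~\ref{thm:continuous} and Proposition~\ref{prop:nonlinstab}. The source is $O(h^2\norm{u}_{H^3})$ in $L^2_t(-1,h)$ by \eqref{eq:prodenergy}. The smallness $\norm{u_0}_{V_1}\le r$ together with $u_0\in D(A)$ makes $u\in\X_3$ by Theorem~\ref{thm:continuous} at $\theta=3$, with $\norm u_{\X_3}\le C\norm{u_0}_{H^3}$. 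The only subtlety is that the radius at $\theta=3$ concerns the $V_3$ norm. I handle this by propagating $H^3$ regularity linearly along the small $V_1$ solution (energy estimates for $u_t$, which solves a linear equation of the type in Lemma~\ref{lem:perturb}), rather than asking for smallness in $V_3$.

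\emph{Part (ii), ordering error.} Start from \eqref{eq:Cdmild}, $\Cd_h=-L_h*\N(u)+2\QQ_h(e,m)$, and estimate in $H^1$.
\begin{itemize}
\item For the first term, $u\in\X_{3/2}$ gives $\N(u)\in C_\nu^0(V_{1/2})$ of size $\varepsilon^2$ by \eqref{eq:product}. By Lemma~\ref{lem:resdefect} and the contour of Lemma~\ref{lem:CPcontour}, $\norm{L_h(t)B^{-1/6}}_{H\to V_1}\le Ch^{1/6}\,t_\wedge^{-\gamma}e^{-\nu t}$ for some $\gamma<1$. To get this, interpolate the $H\to H$ defect (power $h$) against the $H^1$ smoothing bounds \eqref{eq:cpH1}; the $V_{1/2}$ regularity of the source absorbs the loss. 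The convolution is then integrable in $s$ and uniformly bounded in $t$.
\item For the second term, use \eqref{eq:cpH1} composed with $B_h^{1/3}$, and the $H^1$ bound on $m$. From $V_{3/2}$ data we have $\sup e^{\nu t}\norm m_{H^1}\le C\varepsilon$; for the discrete solution this uses \eqref{eq:oneway} and the mesh-uniform analyticity, lifted to $V_{3/2}$ through \eqref{eq:cp}. The factor $e$ is controlled by Theorem~\ref{thm:convergence}(i) in $L^4(H)$, of order $h^{1/6}\varepsilon$. Pairing it with an $L^{4/3}$-integrable kernel $t^{-3/4}\cdot t^{-1/4}$ is borderline. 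The main obstacle is exactly this kernel integrability: getting a time-integrable $V_1$-smoothing for $T_h F_h(e,m)$ when $e$ is known only in $L^4(H)$. What I would try first is to put the derivative of $F$ on $m$, via $\norm{F(e,m)}_H\le C\norm e_H\norm m_{W^{1,\infty}}$ with $m\in H^{3/2+}$ interpolated. This leaves the kernel $|T_h|_{H\to H^1}\sim t^{-3/4}$, which is in $L^{4/3}$ only marginally; I expect to need to shave it using the $V_{3/2}$ margin.
\end{itemize}

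\emph{Part (ii), convergence.} The fixed-datum convergence then follows from $e=-L_h u_0+\Cd_h$. It remains to show $\sup_t e^{\nu t}\norm{L_h(t)u_0}_{H^1}\to0$, which I would prove by density: use part (i) for $D(A)$ data, and uniform bounds $\norm{L_h(t)}_{V_{3/2}\to H^1}\le C e^{-\nu t}$, obtained from \eqref{eq:oneway} and \eqref{eq:cp}.
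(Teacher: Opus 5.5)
The decisive gap is the term $2\QQ_h(e,m)$ in part (ii), which you leave open.

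Knowing $e$ only in $L^4(H)$ cannot work. The $H\to H^1$ kernel $t_\wedge^{-3/4}$ of \eqref{eq:cpH1} is not in $L^{4/3}(0,1)$, since its $4/3$ power is $t^{-1}$, so the pairing fails logarithmically rather than marginally. Your remedy $\norm{F(e,m)}_H\le C\norm e_H\norm m_{W^{1,\infty}}$ is false, because $F(e,m)=-\frac12(e'm+em')$ contains $e'$. The derivative can be moved onto $m$ only in $V_1^*$, and then the kernel into $H^1$ is $t^{-3/2}$. Also, $V_{3/2}$ data give no $W^{1,\infty}$ bound near $t=0$.

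The missing idea is to close the estimate for $\Cd_h$ itself in $\Y_\nu(H^1)$, splitting $e=-L_h(\cdot)u_0+\Cd_h$ inside $2\QQ_h(e,m)$.
\begin{enumerate}
\item First prove $\norm{u(t)}_{H^1}+\norm{u_h(t)}_{H^1}\le C\varepsilon e^{-\nu_2t}$ down to $t=0$. For $u_h$ this is a contraction in $\Y_{\nu_2}(H^1)$ built on \eqref{eq:halfstab}. That estimate needs the bridge $\norm{x_h}_{V_1}\le C\norm{B_h^{1/2}x_h}_H$ of Lemma~\ref{lem:halfbridge}, not just \eqref{eq:oneway} and \eqref{eq:cp}.
\item The $\Cd_h$ part is then at most $C\varepsilon\norm{\Cd_h}_{\Y_\nu(H^1)}$, by $\norm{F(f,g)}_H\le C\norm f_{H^1}\norm g_{H^1}$ and the integrable kernel $t_\wedge^{-3/4}$. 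It is absorbed, and it is finite beforehand because all four solutions are bounded in $\Y_\nu(H^1)$.
\item The $L_h(\cdot)u_0$ part is not small near $t=0$. However, \eqref{eq:halfstab} and \eqref{eq:physH1} at index $3/2$ give $\norm{L_h(s)u_0}_{H^1}\le C\min\{1,h^2s_\wedge^{-7/4}\}e^{-\nu s}\varepsilon$. This is an initial layer of width $h^{8/7}$, and its convolution with $(t-s)_\wedge^{-3/4}$ is $O(h^{2/7})$ uniformly in $t$.
\end{enumerate}
For $L_h*\N(u)$, the bound $\norm{\Lambda_h^{(1)}}_{L^1}\le Ch^{1/6}$ with $\N(u)$ in $H$ suffices; your $V_{1/2}$ refinement is fine but unnecessary.

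The fixed-datum convergence of $e$ in $H^1$ likewise follows by absorbing $2\mathcal K_hF(e,m)$ in $\Y_\nu(H^1)$. Its density step needs a pointwise-in-time $H^1$ bound for smooth data, for instance Lemma~\ref{lem:smoothlinear}, or part (i) interpolated against $V_2$ stability. Part (i) by itself controls $H^1$ only in $L^2_t$.

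In part (i), your plan is the paper's: propagate $H^3$ regularity through $q=\dot u$ and Lemma~\ref{lem:perturb}, and take integrated $H^4$ regularity from the Kato smoothing of $T(\cdot)Au_0$. But the Ritz projection brings in estimates you do not have.
\begin{itemize}
\item Both $\norm{\eta(0)}_H=\norm{(R_h-\Pi_h)u_0}_H\le Ch^2\norm{u_0}_{H^3}$ and your bound on the source $F(R_hu-u,\cdot)$ require $\norm{w-R_hw}_H\le Ch^2\norm w_{H^3}$. The argument of Lemma~\ref{lem:projection} gives only $O(h)$ from $H^3$.
\item An Aubin--Nitsche argument would have to deal with the mismatch between the condition $f'(L)=0$ built into $X_h$ and the adjoint condition $g'(0)=0$ in \eqref{eq:adjoint}.
\item The weak bound $\norm{(I-R_h)w}_{-1,h}\le Ch^2\norm w_{H^1}$ cannot come from interpolation, since $R_h$ is defined only on $V_2$.
\end{itemize}
Use $\Pi_h$ instead. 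Then $\Xi=\Pi_hv-v_h$ has $\Xi(0)=0$ and source $d_h(v)$. By \eqref{eq:residual} with $s=4$, $\norm{e^{\nu t}d_h(v)}_{L^2_t(-1,h)}\le Ch^2\norm{e^{\nu t}v}_{L^2_tH^4}$, which is exactly the regularity you identified. The nonlinear case is the same, with $\zeta=u-\Pi_hu$, $\norm\zeta_H\le Ch^3\norm u_{H^3}$, and source $d_h(u)+2F_h(m,\zeta)$ in the equation for $\Pi_hu-u_h$.
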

In (i) the constant does not depend on $\norm{u_0}_{H^3}$, which enters
only linearly; smallness is required only in $V_1$. In (ii) the datum has
half a derivative more than $H_0^1$, and the ordering error in $H^1$ has
the rate of the energy level, uniformly on the ball and for all $t\ge0$.

The rates are not claimed to be optimal. Theorems~\ref{thm:convergence}
and~\ref{thm:higher} are proved in Section~\ref{sec:convproof}.
Theorems~\ref{thm:samenorm} and~\ref{thm:ordering} are proved in
Section~\ref{sec:proofsame}.

\subsection{Linear errors}\label{sec:linearerror}
The theorems are proved from linear error bounds, some of which carry a
higher norm of the datum. We collect them here.

\begin{proposition}[The linear error at positive times]\label{thm:positive}
Let $0\le\theta\le3$, $f\in V_\theta$, $0<\nu<\lambda$ and $t>0$. Then
\[
 \norm{T_h(t)\Pi_hf-T(t)f}_H^2
 =\norm{(I-\Pi_h)T(t)f}_H^2+\norm{T_h(t)\Pi_hf-\Pi_hT(t)f}_H^2,
\]
with
\begin{align*}
 \norm{(I-\Pi_h)T(t)f}_H
 &\le Ch^3t_\wedge^{-(3-\theta)/2}e^{-\lambda t}\norm f_{V_\theta},\\
 \norm{T_h(t)\Pi_hf-\Pi_hT(t)f}_H
 &\le Ch\,\varsigma_\theta(t)\,e^{-\nu t}\norm f_{V_\theta},
 \qquad
 \varsigma_\theta(t)=\begin{cases}t_\wedge^{-(3-\theta)/2},&\theta<3,\\
                  \log(e/t_\wedge),&\theta=3.\end{cases}
\end{align*}
In particular
$\norm{L_h(t)}_{H\to H}\le Cht_\wedge^{-3/2}e^{-\nu t}$.
\end{proposition}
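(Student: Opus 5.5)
The Pythagorean identity is immediate. $T_h(t)\Pi_hf-\Pi_hT(t)f\in X_h$, while $(I-\Pi_h)T(t)f\perp X_h$ in $H$. Writing
\[
T_h(t)\Pi_hf-T(t)f=-(I-\Pi_h)T(t)f+\{T_h(t)\Pi_hf-\Pi_hT(t)f\}
\]
then gives the orthogonal splitting. The two pieces are estimated separately. The final bound on $\norm{L_h(t)}$ is the case $\theta=0$, using $h^3\le h$ and $\lambda>\nu$.

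For the projection piece, apply \eqref{eq:approx} with $s=3$, $j=0$ to $w=T(t)f\in D(A)$. This gives $\norm{(I-\Pi_h)T(t)f}_H\le Ch^3\norm{T(t)f}_{H^3}\le Ch^3\norm{BT(t)f}_H$. By \eqref{eq:ctsmoothing} from $V_\theta$ to $V_3$, $\norm{BT(t)f}_H\le Ct_\wedge^{-(3-\theta)/2}e^{-\lambda t}\norm f_{V_\theta}$, which is the first estimate.

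For the consistency piece $y(t)=T_h(t)\Pi_hf-\Pi_hT(t)f$, I would use the mild form of the Galerkin error. Since $\frac{d}{dt}\Pi_hT(t)f=\Pi_hAT(t)f=A_h\Pi_hT(t)f+d_h(T(t)f)$, we have $y_t=A_hy-d_h(w(t))$ with $w=T(t)f$ and $y(0)=0$. Hence
\[
y(t)=-\int_0^tT_h(t-s)d_h(w(s))\,ds .
\]
I would split the integral at $s=t/2$ (or at $t-1$ when $t\ge2$) and use two estimates.
\begin{itemize}
\item Near $s=t$, I would use the weak smoothing \eqref{eq:weaksmoothing}. Since $d_h(w)$ lies in $X_h$ and $\norm{d_h(w)}_{-1,h}\le Ch\norm{w}_{H^3}$ by \eqref{eq:residual} with $s=3$, this gives the integrand bound $C(t-s)_\wedge^{-3/4}e^{-\nu(t-s)}h\norm{BT(s)f}_H$. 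Here the mesh-uniform decay enters at rate $\nu<\lambda$ through \eqref{eq:cp}; alternatively, $e^{-\lambda}$ can be used directly.
\item Near $s=0$, the factor $\norm{BT(s)f}_H\le Cs_\wedge^{-(3-\theta)/2}$ is not integrable when $\theta<1$. There I would trade derivatives instead: use $\norm{d_h(w)}_{-1,h}\le Ch\norm w_{H^3}$ only where it is integrable. Otherwise I would rewrite the mild formula by integrating by parts in time, or use the identity $d_h(w)=A_h(R_h-\Pi_h)w$, which follows from $A_hR_hw=\Pi_hAw$. Then
\[
T_h(t-s)d_h(w(s))=B_hT_h(t-s)(\Pi_h-R_h)w(s),
\]
and the $H$ norm of $(\Pi_h-R_h)w$ is controlled by $Ch\,|\cdot|_{H^2}$-type bounds from \eqref{eq:Ritz} and \eqref{eq:approx}. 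On $s\in(0,t/2)$ the operator $B_hT_h(t-s)$ costs $t_\wedge^{-3/2}$ by \eqref{eq:cp}. What remains is $\int_0^{t/2}\norm{(\Pi_h-R_h)T(s)f}_H\,ds$. I would bound $\norm{(I-R_h)w}_H\le Ch\norm{w}_{H^2}$ via an $L^2$ variant of \eqref{eq:Ritz} (inverse inequality plus Poincaré), giving at least $Ch\norm w_{H^3}$. Alternatively, integrate the time derivative: writing $w=A^{-1}w_t$ moves one factor $\partial_t$ onto the semigroup $T_h(t-s)$ at the cost of a further $B_h$.
\end{itemize}

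The cleanest approach is to test the three endpoint cases. At $\theta=3$, the integral $\int_0^t(t-s)_\wedge^{-3/4}\,ds$ with constant $\norm{Bw}$ gives $Ch$. At $\theta=0$, I would use the $B_hT_h$ factor on the early half. The intermediate $\theta$ follow by interpolating the linear map $f\mapsto y(t)$ between $V_0$ and $V_3$ via \eqref{eq:interp}. That interpolation is exactly what yields $t_\wedge^{-(3-\theta)/2}$, and it produces a logarithm at $\theta=3$ from $\int_0^{t}(t-s)_\wedge^{-1}\,ds$-type integrals. I would therefore first prove the endpoints $\theta=0$ (bound $ht_\wedge^{-3/2}$) and $\theta=3$ (bound $h\log(e/t_\wedge)$, arising from a borderline integral of the strong residual $\norm{d_h}_H\le C\norm w_{H^3}$ against $B_h^{\,1/3}$-type smoothing). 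The main obstacle is the $\theta=0$ endpoint. A convenient route there is resolvent-based. Use the contour representation of Lemma~\ref{lem:CPcontour} for both semigroups:
\[
T_h(t)\Pi_h-\Pi_hT(t)=\frac{1}{2\pi i}\int_\Gamma e^{zt}\{R(z,A_h)\Pi_h-\Pi_hR(z,A)\}\,dz .
\]
Lemma~\ref{lem:resdefect} with $\theta$ bounds the integrand by $Ch\langle\omega\rangle^{-\theta/3}e^{-c't\langle\omega\rangle^{2/3}}$. Integrating in $\omega$ gives $Ch\,t_\wedge^{-(3-\theta)/2}$ for $\theta<3$ and $Ch\log(e/t_\wedge)$ at $\theta=3$, directly and for all $\theta$ at once. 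For $t\ge1$, I would get the decay $e^{-\nu t}$ by writing $t=(t-1/2)+1/2$ and applying the uniform exponential bounds \eqref{eq:disenergy} and \eqref{eq:ctenergy} to the composition $T_h(t-\tfrac12)y(\tfrac12)$ plus a Duhamel tail. Equivalently, I would shift the contour left to $\Re z=-\nu$ using Step 2 of Proposition~\ref{prop:uniformspectraldecay}, since $\nu<\lambda<-s(A)$ there.
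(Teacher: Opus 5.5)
Your proof is correct, and its decisive route is the paper's: the orthogonal splitting, the projection piece via \eqref{eq:approx} and Theorem~\ref{thm:ctcp}(ii), and, for $0<t\le1$, subtracting the two representations on the common contour $\Gamma$ of Lemma~\ref{lem:CPcontour}, bounding the integrand with Lemma~\ref{lem:resdefect}, and substituting $s=t\omega^{2/3}$. You differ only for $t\ge1$. The paper telescopes into $n=\lceil t\rceil$ factors of length $t/n\le1$ and uses only contraction at rate $\lambda$ and the commutation of $T$ with $B^{\theta/3}$. Your restart at $\frac12$ also works once the tail is written out as $-\int_{1/2}^tT_h(t-s)\,d_h(T(s)f)\,ds$. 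For $s\ge\frac12$, \eqref{eq:residual} with $s=3$ and Theorem~\ref{thm:ctcp}(ii) give $\norm{d_h(T(s)f)}_{-1,h}\le Che^{-\lambda s}\norm f_H$. The duality argument behind \eqref{eq:weaksmoothing} gives $\norm{T_h(\tau)x}_H\le C\tau_\wedge^{-3/4}e^{-\lambda\tau}\norm x_{-1,h}$ for $x\in X_h$. Hence the tail is $O(h(1+t)e^{-\lambda t})\le C_\nu he^{-\nu t}$. Your route buys something more: run the same Duhamel computation on all of $(0,t)$ for $f\in V_3$. It gives $\norm{T_h(t)\Pi_hf-\Pi_hT(t)f}_H\le Che^{-\nu t}\norm f_{V_3}$, with no logarithm, so $\varsigma_3$ can be replaced by $1$. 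Interpolating this bound with the $\theta=0$ contour bound reproduces $t_\wedge^{-(3-\theta)/2}$ exactly. Interpolating against $h\log(e/t_\wedge)$, as you proposed, would leave a factor $\log(e/t_\wedge)^{\theta/3}$. Drop two of your side routes. The contour shift via Step~2 of Proposition~\ref{prop:uniformspectraldecay} is not equivalent, since that step is proved only for $h\le h_0$. The Ritz rewriting at $\theta=0$ does not close: the available bound $\norm{T(s)f}_{H^2}\le Cs_\wedge^{-1}\norm f_H$ is not integrable at $s=0$.
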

\begin{proof}
The two components are orthogonal. For the first, \eqref{eq:approx}
gives $\norm{(I-\Pi_h)w}_H\le Ch^3\norm{Aw}_H$ on $D(A)$, and
$\norm{AT(t)B^{-\theta/3}}\le Ct_\wedge^{-(3-\theta)/2}e^{-\lambda t}$ by
Theorem~\ref{thm:ctcp}(ii).

For the second, use the common contour $\Gamma$ of
Lemma~\ref{lem:CPcontour}. Subtract the two absolutely convergent
representations and use Lemma~\ref{lem:resdefect} with
$|z'(\omega)|\le2$:
\[
 \norm{\{T_h(t)\Pi_h-\Pi_hT(t)\}B^{-\theta/3}}
 \le Ch\int_{\R}e^{-c_\Gamma't\langle\omega\rangle^{2/3}}
 \langle\omega\rangle^{-\theta/3}\,d\omega .
\]
For $0<t\le1$ the substitution $s=t\omega^{2/3}$ bounds the integral by
$C\varsigma_\theta(t)$. For $t>1$ let $n=\lceil t\rceil$, $t_1=t/n$, and
telescope
\[
 T_h(t)\Pi_h-\Pi_hT(t)=\sum_{j=0}^{n-1}T_h((n-1-j)t_1)
 \{T_h(t_1)\Pi_h-\Pi_hT(t_1)\}T(jt_1).
\]
Both semigroups contract at the rate $\lambda$, and $T$ commutes with
$B^{\theta/3}$. This gives the factor $ne^{-\lambda(n-1)t_1}\le
Ce^{-\nu t}$.
\end{proof}
The weight $t_\wedge^{-3/2}$ is the time cost of the three derivatives
used in the residual bound \eqref{eq:residual}. For $\theta>3/2$ the
second bound is not sharp in $h$: Lemma~\ref{lem:DAenergy} below gives
$\sup_te^{\nu t}\norm{L_h(t)f}_H\le Ch^{2\theta/3}\norm f_{V_\theta}$.

For smooth compatible data the linear error is of higher order, with a
linear dependence on the datum.

\begin{lemma}[The linear error for smooth data]\label{lem:smoothlinear}
Let $0<\nu<\lambda$ and $f\in D(A^2)$. Then, for $t\ge0$ and $j=0,1,2$,
\[
 \norm{L_h(t)f}_{H^j}\le Ch^{2-j/2}e^{-\nu t}\norm{A^2f}_H ,
\]
and
\[
 \E_\nu(L_h(\cdot)f)\le Ch^2\norm{A^2f}_H,\qquad
 \E_\nu(\partial_tL_h(\cdot)f)\le Ch\norm{A^2f}_H .
\]
\end{lemma}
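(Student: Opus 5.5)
The approach is to split $L_h(t)f=T_h(t)\Pi_hf-T(t)f$ with the Ritz projection instead of $\Pi_h$. Put $w(t)=T(t)f$, so that $w\in C^1([0,\infty);D(A))$ with $w_t=Aw$ and $\norm{A^kw(t)}_H\le e^{-\lambda t}\norm{A^kf}_H$ for $k\le2$. Write
\[
 L_h(t)f=\big(T_h(t)\Pi_hf-R_hw(t)\big)+\big(R_hw(t)-w(t)\big)=:\xi(t)+\eta(t).
\]

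For $\eta$ we apply \eqref{eq:Ritz} to $w\in D(A^2)\subset V_2\cap H^4$ (note $\norm w_{H^4}\le C\norm{A^2w}_H$ by elliptic regularity of $B$). This gives $\norm\eta_{H^2}\le Ch\norm{A^2w}_H$. The standard Aubin--Nitsche duality with the adjoint problem \eqref{eq:adjoint} should give $\norm\eta_H\le Ch^2\norm w_{H^4}$. The $H^1$ bound $h^{3/2}$ then follows by interpolation. The same bounds hold for $\eta_t=(R_h-I)Aw$, with one power of $h$ lost since $Aw$ lies only in $D(A)$, i.e. in $H^3$: we get $\norm{\eta_t}_H\le Ch\norm{A^2f}_He^{-\lambda t}$, and $|\eta_t|_1\le C\norm{A^2f}_H e^{-\lambda t}$ from the stability part of \eqref{eq:Ritz}.

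For $\xi$ we use $A_hR_hw=\Pi_hAw=\Pi_hw_t$. This gives the error equation
\[
 \xi_t=A_h\xi+\Pi_h\eta_t,\qquad \xi(0)=\Pi_hf-R_hf=-\Pi_h\eta(0),
\]
so $\norm{\xi(0)}_H\le Ch^2\norm{A^2f}_H$. The source is controlled weakly by $\norm{\Pi_h\eta_t}_{-1,h}\le C\norm{\eta_t}_H\le Ch\norm{A^2f}_He^{-\lambda t}$, which only yields $h$. To reach $h^2$ I would rather use a duality estimate $\norm{\eta_t}_{V_1^*}\le Ch^2\norm{Aw}_{H^3}$. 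This is Aubin--Nitsche in negative norm: the dual problem with $H^1$-data has $H^3$-regularity for the adjoint, and the Ritz error is orthogonal to $X_h$ in $b$. Then \eqref{eq:disforced} gives $\E_\nu(\xi)\le Ch^2\norm{A^2f}_H$, and with the bounds on $\eta$ this gives $\E_\nu(L_hf)\le Ch^2\norm{A^2f}_H$ and the case $j=0$. For $j=1,2$ we use inverse-free arguments: differentiate the $\xi$-equation in time. Then $\xi_t$ solves the same equation with source $\Pi_h\eta_{tt}=\Pi_h(R_h-I)A^2w$, which is bounded in $V_1^*$ by $C\norm{A^2f}_H$ only. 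Its initial value is $A_h\xi(0)+\Pi_h\eta_t(0)$, of size $Ch\norm{A^2f}_H$ by \eqref{eq:graphproj}-type bounds, since $A_h\Pi_h\eta(0)$ is controlled through $A_h(\Pi_h-R_h)f$. This suggests we get only $\E_\nu(\xi_t)\le Ch$ if the source is handled as $\Pi_h\eta_{tt}$ with $\norm{\eta_{tt}}_{V_1^*}\le Ch\norm{A^2w}_{H^1}$, a duality of one order. This plus $\eta_t$ gives $\E_\nu(\partial_tL_hf)\le Ch\norm{A^2f}_H$. Then $\norm{A_h\xi}_H\le\norm{\xi_t}_H+\norm{\eta_t}_H\le Ch$, and \eqref{eq:graph} gives $\norm\xi_{H^2}\le Ch$. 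Interpolating with $\norm\xi_H\le Ch^2$ yields $h^{3/2}$ in $H^1$.

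The main obstacle is the negative-norm (duality) estimates for the Ritz error of the nonsymmetric third-order form $b$. They need the adjoint problem $A^*$ with boundary conditions at opposite ends, its $H^3$ regularity for $V_1^*$-type data, and the approximability of the adjoint solution in $X_h$, whose boundary conditions differ from $D(A^*)$. I would try first to test by $R_h^*$-type projections of the dual solution, using only that the dual solution lies in $V_2$ after correction. If adjoint consistency fails, I would fall back on the weaker $h^{3/2}$ rates and recheck whether the stated exponents survive.
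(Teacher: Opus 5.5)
\textbf{There is a genuine gap.} The Ritz splitting leaves the decisive $h^2$ estimate unproved. With $\xi=T_h(\cdot)\Pi_hf-R_hw$ the source is $\Pi_h\eta_t=\Pi_h(R_h-I)Aw$, and $Aw$ lies only in $D(A)\subset H^3$. To get $\E_\nu(\xi)\le Ch^2\norm{A^2f}_H$ from \eqref{eq:disforced} you therefore need the negative-norm bound $\norm{(I-R_h)g}_{V_1^*}\le Ch^2\norm g_{H^3}$.

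Your duality argument does not deliver this. Since $\norm{((I-R_h)g)''}_2$ is only $O(1)$, you would have to approximate the adjoint solution $\phi=(B^*)^{-1}\chi\in D(A^*)$ in $H^1$ by elements of $X_h$ to order $h^2$. But $D(A^*)$ leaves $\phi'(L)$ free, while every $\psi\in X_h$ has $\psi'(L)=0$. On the last element $\psi'$ is a quadratic vanishing at $L$, so $\inf_{\psi\in X_h}|\phi-\psi|_1$ is of order $h^{1/2}$ whenever $\phi'(L)\neq0$. The obstacle you flag is real, and your fallback to $h^{3/2}$ would not prove the lemma.

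The $\xi_t$ step has a second defect. $\eta_{tt}=(R_h-I)A^2w$ is not defined, because $A^2w=T(t)A^2f$ lies only in $H$ while $R_h$ needs $V_2$. A bound in terms of $\norm{A^2w}_{H^1}$ is also unavailable for $f\in D(A^2)$. Separately, $\norm\eta_H\le Ch^2\norm w_{H^4}$ needs no duality at all: the coercivity argument behind \eqref{eq:Ritz} already gives $|R_hw-I_hw|_1\le C\norm{(w-I_hw)''}_2\le Ch^2\norm w_{H^4}$.

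\textbf{The missing idea} is to compare with the $H$-orthogonal projection instead of the Ritz projection. With $v=T(\cdot)f$ and $\Xi=\Pi_hv-T_h(\cdot)\Pi_hf$ you get $\Xi_t=A_h\Xi+d_h(v)$ and $\Xi(0)=0$. The exact identity in \eqref{eq:residual} gives $\norm{d_h(v)}_{-1,h}\le C\norm{(v-\Pi_hv)''}_2\le Ch^2\norm v_{H^4}$ by approximation alone. Then \eqref{eq:disforced} yields $\E_\nu(\Xi)\le Ch^2\norm{A^2f}_H$ with no adjoint problem.

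For the higher norms, differentiate in time. $\Xi_t$ solves the same equation with source $d_h(v_t)$, where $v_t=Av\in D(A)$ and $\norm{d_h(v_t)}_{-1,h}\le Ch\norm{v_t}_{H^3}$. Its initial value is $\Xi_t(0)=d_h(f)$, with $\norm{d_h(f)}_H\le Ch\norm f_{H^4}$. Hence $\E_\nu(\Xi_t)\le Ch\norm{A^2f}_H$. Graph recovery \eqref{eq:graph} applied to $A_h\Xi=\Xi_t-d_h(v)$ then gives $\norm\Xi_{H^2}\le Ch\norm{A^2f}_H$.

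Your $\xi$ is in fact $-\Xi-\Pi_h\eta$, because $A_h\Pi_h\eta=A_hR_hw-A_h\Pi_hw=d_h(w)$. Integrating your source $\Pi_h\eta_t$ by parts in time turns it into the residual $d_h$. This is why $\E_\nu(\xi)=O(h^2)$ holds even though your route cannot show it.
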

\begin{proof}
Put $v=T(\cdot)f$. Since $A^2T(t)=T(t)A^2$ and the graph norm of $A^2$ is
equivalent to the $H^6$ norm,
$\norm{v(t)}_{H^6}+\norm{v_t(t)}_{H^3}\le Ce^{-\lambda t}\norm{A^2f}_H$.
Write $-L_h(t)f=v-v_h=\eta_v+\Xi$, with $\eta_v=v-\Pi_hv$ and
$\Xi=\Pi_hv-v_h$. The exact equations are
\[
 \Xi_t=A_h\Xi+d_h(v),\quad\Xi(0)=0,\qquad
 (\Xi_t)_t=A_h\Xi_t+d_h(v_t),\quad\Xi_t(0)=d_h(f).
\]
By \eqref{eq:disforced} and \eqref{eq:residual},
$\E_\nu(\Xi)\le Ch^2\norm{A^2f}_H$ and
$\E_\nu(\Xi_t)\le C(\norm{d_h(f)}_H+h\sup_te^{\nu t}\norm{v_t}_{H^3})
\le Ch\norm{A^2f}_H$. Since $A_h\Xi=\Xi_t-d_h(v)$, graph recovery
\eqref{eq:graph} gives $\norm{\Xi(t)}_{H^2}\le Che^{-\nu t}
\norm{A^2f}_H$. Together with \eqref{eq:approx} for $\eta_v$, this proves
the cases $j=0$ and $j=2$. The moment inequality \eqref{eq:moment} on
$V_2$ gives $j=1$. Finally, \eqref{eq:approx} gives
$\E_\nu(\eta_v)\le Ch^3\norm{A^2f}_H$ and
$\E_\nu(\partial_t\eta_v)\le Ch^2\norm{A^2f}_H$, since
$\partial_t\eta_v=(I-\Pi_h)v_t$. With the bounds for $\Xi$ and $\Xi_t$
this proves the last two estimates.
\end{proof}

For data in $D(A)$ the residual can be measured with time-integrated
$H^4$ regularity. This needs only three derivatives of the datum.

\begin{lemma}[Energy-trajectory errors for data in $D(A)$]
\label{lem:DAenergy}
Let $0<\nu<\lambda$ and $g\in D(A)$. Then
\begin{equation}\label{eq:H4integrated}
 \sup_{t\ge0}e^{\nu t}\norm{T(t)g}_{H^3}
 +\norm{e^{\nu t}T(\cdot)g}_{L^2_tH^4}\le C\norm{Ag}_H,
 \qquad
 \E_\nu(L_h(\cdot)g)\le Ch^2\norm{Ag}_H .
\end{equation}
Consequently
\begin{equation}\label{eq:Erate}
 \E_\nu(L_h(\cdot)f)\le Ch^{2s/3}\norm f_{V_s},\qquad 0\le s\le3 .
\end{equation}
\end{lemma}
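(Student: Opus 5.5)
We first prove the regularity estimate in \eqref{eq:H4integrated}, then the error bound for data in $D(A)$, and finally \eqref{eq:Erate} by interpolation.

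\emph{Regularity.} Put $v=T(\cdot)g$. Since $T$ commutes with $A$, $w:=v_t=T(\cdot)Ag$ is a linear trajectory with datum $Ag\in H$. Proposition~\ref{prop:rosier} and the weighted form of \eqref{eq:ctenergy} (the case $g=0$ of \eqref{eq:forced}) give $\E_\nu(w)\le C\norm{Ag}_H$. The bound $\sup_te^{\nu t}\norm{v(t)}_{H^3}\le C\norm{Av(t)}_H e^{\nu t}\le C\norm{Ag}_H$ follows from the graph-norm equivalence on $D(A)$ and contraction. For the $L^2_tH^4$ part, use $v_{xxx}=-v_t$ pointwise in time: the weighted $L^2_tH^1$ norm of $v_t$ is controlled by $\E_\nu(w)$, hence $e^{\nu t}v_{xxxx}\in L^2_tL^2$. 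Together with the $H^3$ bound this gives $e^{\nu t}v\in L^2_tH^4$ with norm at most $C\norm{Ag}_H$.

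\emph{Error for $g\in D(A)$.} Split $-L_h(\cdot)g=\eta+\Xi$ as in Lemma~\ref{lem:smoothlinear}, with $\eta=(I-\Pi_h)v$ and $\Xi=\Pi_hv-v_h$.
\begin{itemize}
\item[1.] By \eqref{eq:approx} with $s=3$ and $j=0,1$, $\sup_te^{\nu t}\norm{\eta}_H\le Ch^3\norm{Ag}_H$ and $\norm{e^{\nu t}\eta}_{L^2H^1}\le Ch^2\norm{Ag}_H$, so $\E_\nu(\eta)\le Ch^2\norm{Ag}_H$.
\item[2.] $\Xi$ solves $\Xi_t=A_h\Xi+d_h(v)$ with $\Xi(0)=0$. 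By \eqref{eq:disforced} it suffices to bound $\norm{e^{\nu t}d_h(v)}_{L^2_t(-1,h)}$, and by \eqref{eq:residual} with $s=4$ this is at most $Ch^2\norm{e^{\nu t}v}_{L^2_tH^4}\le Ch^2\norm{Ag}_H$.
\end{itemize}
This is the key step: the residual is needed only in the time-integrated $L^2$ sense, which is exactly what Kato smoothing of $v_t$ supplies. The main obstacle is making it rigorous for $v\in L^2_tH^4$ only. We first approximate $g$ by $D(A^2)$ data, where all norms are finite, and then pass to the limit by linearity and the bound just proved.

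\emph{Interpolation.} The map $f\mapsto L_h(\cdot)f$ is linear into the Banach space normed by $\E_\nu$ (i.e.\ $\X_0$). At $s=0$ its norm is bounded by a constant $C$, by \eqref{eq:ctenergy}, \eqref{eq:disKato} in weighted form and $\norm{\Pi_h}\le1$. At $s=3$ we have the bound $Ch^2$ just proved, since $\norm{Ag}_H=\norm g_{V_3}$. Complex interpolation with \eqref{eq:interp} yields the bound $Ch^{2s/3}$ on $V_s$, which is \eqref{eq:Erate}. If we prefer to avoid identifying interpolation spaces of $\X_0$, we instead split $f=f_1+f_2$ by a $K$-functional argument: take $f_1=(I+\tau B)^{-1}f$ with $\tau=h^2$, and use the moment-type bounds $\norm{f-f_1}_H\le C\tau^{s/3}\norm f_{V_s}$ and $\norm{Af_1}_H\le C\tau^{s/3-1}\norm f_{V_s}$. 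These follow from the functional calculus for the maximal accretive $B$, restricted for simplicity to $s<3$, with $s=3$ trivial.
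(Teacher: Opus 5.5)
Your proposal is correct and follows the paper's proof essentially step by step: weighted Kato smoothing for $v_t=T(\cdot)Ag$ together with $v'''=-v_t$ gives the integrated $H^4$ bound, the splitting $-L_h(\cdot)g=\eta+\Xi$ is handled by \eqref{eq:disforced} and the weak residual bound at $s=4$, and the linear map $f\mapsto L_h(\cdot)f$ is interpolated between $H$ and $D(A)$ into the fixed space $\X_0$. The deviations are harmless. You bound $\eta$ only at order $h^2$ (via $s=3$) instead of $h^3$, which still suffices. The approximation by $D(A^2)$ data is unnecessary, because for $g\in D(A)$ the $\Xi$ equation already holds classically. Your $K$-functional splitting with $\tau=h^2$ is a valid alternative to the interpolation step.
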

\begin{proof}
Put $v=T(\cdot)g$. Then $Av=T(\cdot)Ag$, and the weighted Kato smoothing
\eqref{eq:ctenergy} gives
$\int_0^\infty e^{2\nu t}|Av|_1^2\,dt\le C\norm{Ag}_H^2$. Since
$v'''=-Av$ and $v''''=-(Av)'$, this is the integrated $H^4$ bound.
Contraction gives the $H^3$ bound. Write $-L_h(\cdot)g=\eta_v+\Xi$ as in
the proof of Lemma~\ref{lem:smoothlinear}. By \eqref{eq:disforced} and
the weak residual bound \eqref{eq:residual} with $s=4$,
$\E_\nu(\Xi)\le Ch^2\norm{e^{\nu t}v}_{L^2_tH^4}\le Ch^2\norm{Ag}_H$.
By \eqref{eq:approx},
$\E_\nu(\eta_v)\le Ch^3(\sup_te^{\nu t}\norm v_{H^3}
+\norm{e^{\nu t}v}_{L^2_tH^4})\le Ch^3\norm{Ag}_H$.

Both linear flows are bounded from $H$ into the energy trajectory space.
Thus the map $f\mapsto L_h(\cdot)f$ is linear and bounded from $H$ and
from $D(A)$ into $\X_0$, with norms $C$ and $Ch^2$. Since $V_s=[H,D(A)]_{s/3}$, interpolation gives
\eqref{eq:Erate}.
\end{proof}

For data in $V_1$ the bound for the linear error in the norm of the
datum is of second order at positive times, and for data in $V_2$ it is
of first order. The key is that the resolvent maps $V_s$, $s\ge1$, into
$H^4$.

\begin{proposition}[Linear errors in $V_1$ and $V_2$]\label{prop:H1linear}
Let $0<\nu<\lambda$, $1\le s\le2$ and $t>0$. Then
\begin{align}
 \norm{L_h(t)}_{V_s\to H}&\le Ch^2t_\wedge^{-(4-s)/2}e^{-\nu t},
 \label{eq:physH}\\
 \norm{L_h(t)}_{V_s\to V_1}&\le Ch^2t_\wedge^{-(5-s)/2}e^{-\nu t},
 \label{eq:physH1}\\
 \norm{L_h(t)}_{V_s\to V_2}&\le Ch\,t_\wedge^{-(5-s)/2}e^{-\nu t}.
 \label{eq:physH2}
\end{align}
In particular the same-norm bounds are
$Ch^2t_\wedge^{-2}$ in $V_1$ and $Cht_\wedge^{-3/2}$ in $V_2$.
\end{proposition}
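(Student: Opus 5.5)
The plan is to follow the proof of Proposition~\ref{thm:positive}. We split
\[
 L_h(t)f=-(I-\Pi_h)T(t)f+\{T_h(t)\Pi_h-\Pi_hT(t)\}f
\]
and treat the projection part directly and the consistency part by the common contour of Lemma~\ref{lem:CPcontour}. The only new point is that for $f\in V_s$, $s\ge1$, the resolvent $w=R(z,A)f$ lies in $H^4$. The weak residual bound \eqref{eq:residual} with $s=4$ then gains $h^2$ instead of $h$.

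\emph{Projection part.} By \eqref{eq:approx} with $s=3$,
\[
 \norm{(I-\Pi_h)w}_{H^j}\le Ch^{3-j}\norm{Aw}_H,\qquad j=0,1,2.
\]
By \eqref{eq:ctsmoothing}, $\norm{AT(t)}_{V_s\to H}\le Ct_\wedge^{-(3-s)/2}e^{-\lambda t}$. This gives $h^3$, $h^2$ and $h$ respectively, with time weights no worse than those claimed.

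\emph{Consistency part.} On $\Gamma$ write
\[
 y(z)=R(z,A_h)\Pi_hf-\Pi_hR(z,A)f=-R(z,A_h)d_h(w),\qquad w=R(z,A)f,
\]
as in Lemma~\ref{lem:resdefect}. The steps are as follows.

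First, since $Aw=zw-f$, we have $w'''\in H^1$ and $\norm{w}_{H^4}\le C\norm{Aw}_{V_1}$. The operator $AR(z,A)=zR(z,A)-I$ commutes with $B^{\theta/3}$ and has norm $C\langle\omega\rangle^{1/3}$ on $\Gamma$. Complex interpolation on the scale \eqref{eq:interp}, with $\norm{AR(z,A)}_{V_s\to V_{s+3}}\le C\langle\omega\rangle^{1/3}$ and $\norm{AR(z,A)}_{V_s\to V_s}\le C\langle\omega\rangle^{1/3}$, should give
\[
 \norm{Aw}_{V_1}\le C\langle\omega\rangle^{(2-s)/3}\norm f_{V_s},\qquad 1\le s\le2.
\]
Consequently, by \eqref{eq:residual} with $s=4$,
\[
 \norm{d_h(w)}_{-1,h}\le Ch^2\langle\omega\rangle^{(2-s)/3}\norm f_{V_s}.
\]

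Second, for $g\in X_h$ and $x=R(z,A_h)g$, take the real part of $\ip{(z-A_h)x}x$. With $\Re z=-c_\Gamma'\langle\omega\rangle^{2/3}$ and the resolvent bound on $\Gamma$ this gives
\[
 a|x|_1^2\le\norm g_{-1,h}|x|_1+C\langle\omega\rangle^{2/3}\norm x_H^2 .
\]
The duality argument of Lemma~\ref{lem:resdefect} gives
\[
 \norm x_H\le C\langle\omega\rangle^{-1/3}\norm g_{-1,h}.
\]
Inserting this into the first inequality yields $|x|_1\le C\norm g_{-1,h}$. Hence
\[
 \norm{y(z)}_H\le Ch^2\langle\omega\rangle^{(1-s)/3}\norm f_{V_s},\qquad
 |y(z)|_1\le Ch^2\langle\omega\rangle^{(2-s)/3}\norm f_{V_s}.
\]

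Third, integrate along $\Gamma$, using $|z'|\le2$. For $0<t\le1$ the substitution $\sigma=t\omega^{2/3}$ gives
\[
 \int_{\R}e^{-c_\Gamma't\langle\omega\rangle^{2/3}}\langle\omega\rangle^{\gamma}\,d\omega\le Ct^{-3(1+\gamma)/2}.
\]
For $\gamma=(1-s)/3$ this is $t^{-(4-s)/2}$, which proves \eqref{eq:physH}. For $\gamma=(2-s)/3$ it is $t^{-(5-s)/2}$, which proves \eqref{eq:physH1}. For \eqref{eq:physH2}, the discrete component $\{T_h(t)\Pi_h-\Pi_hT(t)\}f$ lies in $X_h$. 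The inverse estimate $\norm x_{H^2}\le Ch^{-1}|x|_1$ on the quasi-uniform space then converts the $V_1$ bound into $Ch\,t_\wedge^{-(5-s)/2}$. For $t>1$ we telescope over $n=\lceil t\rceil$ steps exactly as in Proposition~\ref{thm:positive}. This uses $T(jt_1)$ on $V_s$, with the bound $e^{-\lambda jt_1}$, and contraction of $T_h$ on $H$ at rate $\lambda$. For the $V_1$ and $V_2$ norms the outer discrete factor is handled by $T_h((n-1-j)t_1)$ together with \eqref{eq:cpH1} and \eqref{eq:cp}. Since $n$ is comparable to $t$, this leaves the factor $ne^{-\lambda(n-1)t_1}\le Ce^{-\nu t}$.

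The main obstacle is the first step, namely the sharp frequency bound $\norm{w}_{H^4}\lesssim\langle\omega\rangle^{(2-s)/3}\norm f_{V_s}$ uniformly on $\Gamma$. A cruder bound would add $t^{-1/2}$ per lost power $\langle\omega\rangle^{1/3}$. The plan is to interpolate $AR(z,A)$ between $V_s\to V_s$ and $V_s\to V_{s+3}$. If $V_{s+3}$ with $s+3>3$ is unavailable on the scale, the fallback is to argue directly: $Aw\in V_1$ with $(z-A)(Aw)=Af$ in the weak sense, and the energy estimate with the weak source $Af\in V_1^*$ (valid since $s\ge1$) gives the $V_1$ bound on $Aw$.
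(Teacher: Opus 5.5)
Your plan follows the paper's proof in structure. It uses the same splitting and the contour representation of $y(z)=-R(z,A_h)d_h(w)$. It has the adjoint and real-part bounds $\norm{y}_H\le C\langle\omega\rangle^{-1/3}\norm{d_h(w)}_{-1,h}$ and $|y|_1\le C\norm{d_h(w)}_{-1,h}$, the weak residual bound \eqref{eq:residual} with $s=4$, the inverse estimate on $X_h$ for the $V_2$ target, and the same $\omega$-integrals.

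Your variations are harmless. The projection part via $H^3$ gives $h^{3-j}t_\wedge^{-(3-s)/2}$, which is dominated by the claimed bounds. Telescoping in all three norms can replace the paper's restart identity $L_h(t)=T_h(1)\Pi_hL_h(t-1)+L_h(1)T(t-1)$, with two additions. You need graph recovery \eqref{eq:graph} for the outer factor in $V_2$. You also need the contour bound at $t_1\in(1/2,1]$ for the term $j=n-1$.

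The gap is the step you flag as the main obstacle, namely $\norm{Aw}_{V_1}\le C\langle\omega\rangle^{(2-s)/3}\norm f_{V_s}$. Neither of your justifications proves it.
\begin{itemize}
\item The operator $AR(z,A)$ is not bounded from $V_s$ to $V_{s+3}$; that would make $A^2R(z,A)=zAR(z,A)-A$ bounded.
\item Interpolating two endpoint bounds that both equal $C\langle\omega\rangle^{1/3}$ can only return $C\langle\omega\rangle^{1/3}$. That is exactly the crude bound, which costs an extra $t^{-1/2}$ for $s>1$.
\item The fallback fails for $1\le s<2$. Membership $Af\in V_1^*$ requires $f\in V_2$, because the pairing is $-b(f,\psi)$ and needs $f''\in L^2$. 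For $f\in V_1$, $Af$ is only of order $-2$.
\end{itemize}

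The missing step is the moment inequality used in the paper. Since $Bw\in V_1$, we have $\norm w_{H^4}\le C\norm{B^{4/3}w}_H=C\norm{B^{\alpha}R(z,A)B^{s/3}f}_H$ with $\alpha=(4-s)/3\in[2/3,1]$. Then $\norm{B^\alpha R(z,A)}\le C\norm{R(z,A)}^{1-\alpha}\norm{BR(z,A)}^{\alpha}\le C\langle\omega\rangle^{-2(1-\alpha)/3+\alpha/3}=C\langle\omega\rangle^{(2-s)/3}$.

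In your language: interpolate $R(z,A)$, not $AR(z,A)$. Use the no-gain norm $\langle\omega\rangle^{-2/3}$ and the three-derivative-gain norm $\langle\omega\rangle^{1/3}$ as endpoints. Keep the fractional power on the resolvent, so that you never leave the scale $0\le\theta\le3$.

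Alternatively, you can repair the fallback. It is correct at $s=2$, using the uniform $V_1^*\to V_1$ resolvent bound on $\Gamma$ from your second step. Commutation with $B^{1/3}$ gives $\langle\omega\rangle^{1/3}$ at $s=1$. Interpolating the fixed operator $AR(z,A)\colon V_s\to V_1$ in $s$ then covers $1<s<2$. With either replacement the rest of your argument goes through.
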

\begin{proof}
Write $L_h(t)f=-(I-\Pi_h)T(t)f+\Theta_h(t)$ with
$\Theta_h(t)=T_h(t)\Pi_hf-\Pi_hT(t)f\in X_h$. On the contour $\Gamma$ of
Lemma~\ref{lem:CPcontour}, $\Theta_h(t)=\frac1{2\pi i}\int_\Gamma
e^{zt}y(z)\,dz$ with $y=R(z,A_h)\Pi_hf-\Pi_hw$, $w=R(z,A)f$, and, as in
the proof of Lemma~\ref{lem:resdefect}, $(z-A_h)y=-r$ with $r=d_h(w)$.
Let $z=-\sigma+i\omega\in\Gamma$ and $\tau=\langle\omega\rangle$. The
adjoint argument of Lemma~\ref{lem:resdefect}, applied to an arbitrary
right side, gives $\norm y_H\le C\tau^{-1/3}\norm r_{-1,h}$. The real
part of $\ip{(z-A_h)y}y$ gives
$a|y|_1^2\le\norm r_{-1,h}|y|_1+\sigma\norm y_H^2$; since
$\sigma\le c_\Gamma\tau^{2/3}$,
\[
 |y|_1\le C\norm r_{-1,h},\qquad \norm y_H\le C\tau^{-1/3}\norm r_{-1,h}.
\]
For $f\in V_s$, $1\le s\le2$, we have $Bw\in V_1$, hence $w'''\in H^1$ and
$\norm w_{H^4}\le C\norm{B^{4/3}w}_H$. The moment inequality, with
$\norm{R(z,A)}\le C\tau^{-2/3}$ and $\norm{BR(z,A)}\le C\tau^{1/3}$, gives
\[
 \norm w_{H^4}\le C\norm{B^{(4-s)/3}R(z,A)}\norm f_{V_s}
 \le C\tau^{(2-s)/3}\norm f_{V_s},
\]
and the weak residual bound with $s=4$ gives
$\norm r_{-1,h}\le Ch^2\tau^{(2-s)/3}\norm f_{V_s}$. For the $H^2$
target apply the inverse inequality only to $y\in X_h$:
$\norm y_{H^2}\le Ch^{-1}|y|_1$. This avoids the factor $|z|$ that graph
recovery would bring.

For $\beta>-1$ and $0<t\le1$,
$\int_\R e^{-c_\Gamma't\langle\omega\rangle^{2/3}}\langle\omega\rangle^\beta
d\omega\le C_\beta t^{-3(\beta+1)/2}$. The three estimates use
$\beta=(1-s)/3$, $(2-s)/3$ and $(2-s)/3$. The projection terms satisfy
$\norm{(I-\Pi_h)T(t)f}_{H^j}\le Ch^{4-j}\norm{T(t)f}_{H^4}
\le Ch^{4-j}t_\wedge^{-(4-s)/2}e^{-\lambda t}\norm f_{V_s}$, $j=0,1,2$,
and are dominated by the claimed bounds for $t\le1$. For \eqref{eq:physH} and $t>1$ telescope in $H$ as in the proof
of Proposition~\ref{thm:positive}. For the other estimates and $t\ge2$ use
the exact identity
\[
 L_h(t)=T_h(1)\Pi_hL_h(t-1)+L_h(1)T(t-1),
\]
the bounds for $T_h(1)\Pi_h$ from $H$ into $V_1$ and $V_2$, given by
\eqref{eq:cpH1}, \eqref{eq:cp} and \eqref{eq:graph}; the energy-target
bounds \eqref{eq:physH} and Proposition~\ref{thm:positive}; and the
stability of $T$ on $V_s$. On $1\le t\le2$ use the contour directly.
\end{proof}

On $X_h$ the physical norms are controlled by discrete fractional powers
with the exponent $\theta/2$. This gives stability in $H^1$, uniformly
down to $t=0$, for data in $V_{3/2}$.

\begin{lemma}[Physical norms and discrete fractional powers]
\label{lem:halfbridge}
For $x_h\in X_h$ and $0\le\theta\le2$,
$\norm{x_h}_{V_\theta}\le C\norm{B_h^{\theta/2}x_h}_H$. Consequently
\begin{equation}\label{eq:halfstab}
 \norm{T_h(t)\Pi_hf}_{V_\theta}\le Ce^{-\lambda t}\norm f_{V_{3\theta/2}},
 \qquad t\ge0 .
\end{equation}
\end{lemma}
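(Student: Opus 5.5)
The plan is to prove the two integer endpoints $\theta=0$ and $\theta=2$ and then interpolate. At $\theta=0$ the claim $\norm{x_h}_H\le C\norm{x_h}_H$ is trivial.

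At $\theta=2$ the norm $\norm{x_h}_{V_2}$ is equivalent to the $H^2$ norm. We need $\norm{x_h}_{H^2}\le C\norm{B_hx_h}_H$. This is exactly the graph recovery estimate \eqref{eq:graph} of Lemma~\ref{lem:projection}, so both endpoints hold with constants independent of $h$.

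For $0<\theta<2$ we use interpolation on the finite-dimensional space $X_h$. Consider the holomorphic family
\[
 z\mapsto e^{c(z-\theta/2)^2}B^{z\cdot 3/2\cdot 2/3}\cdots
\]
More cleanly, we take the operator-valued function $\Phi(z)=e^{c(z-\theta/2)^2}B^{3z/2}\,\iota_h\,B_h^{-z}$ on the strip $0\le\Re z\le1$. Here $\iota_h:X_h\to H$ is the inclusion, so $B^{3z/2}$ acts on $X_h\subset V_2$ and $\Re(3z/2)\le3/2$ keeps us inside the $V$ scale. On $\Re z=0$ the imaginary-power bounds for $B$ and $B_h$ (the functional calculus for maximal accretive operators, $\norm{B^{is}},\norm{B_h^{is}}\le e^{\pi|s|/2}$, with $h$-independent constants because $B_h$ is maximal accretive on $X_h$) give growth $e^{C|y|}$. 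On $\Re z=1$ we use $\norm{B^{3/2}x}_H\le C\norm{x}_{V_3\cap\ldots}$; more precisely $\norm{x}_{V_2}\le C\norm{B_hx}_H$ from \eqref{eq:graph}, so the endpoint there is $V_2$, i.e.\ $B^{2/3}$. So we take the exponent $2z/3$ instead, i.e.\ $\Phi(z)=e^{c(z-\theta/2)^2}B^{2z/3}B_h^{-z}$. The Gaussian factor kills the vertical growth. The three-lines lemma then yields $\norm{B^{\theta/3}B_h^{-\theta/2}}\le C$, which is the first claim, since $\norm{x}_{V_\theta}=\norm{B^{\theta/3}x}_H$. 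This is the same device as in the proof of \eqref{eq:oneway}.

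The consequence \eqref{eq:halfstab} follows by chaining three bounds. First apply the first claim to $x_h=T_h(t)\Pi_hf$. Then commute: $B_h^{\theta/2}T_h(t)=T_h(t)B_h^{\theta/2}$ and $\norm{T_h(t)}\le e^{-\lambda t}$ by \eqref{eq:disenergy}. Finally use \eqref{eq:oneway} with $\alpha=\theta/2\le1$, which gives $\norm{B_h^{\theta/2}\Pi_hf}_H\le C\norm{B^{\theta/2}f}_H=C\norm f_{V_{3\theta/2}}$.

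The main obstacle is the justification at the imaginary boundary. We must make sure the bound $\norm{B^{is}}\le e^{\pi|s|/2}$ applies to $B_h$ on $X_h$ with the weighted inner product; it does, since $B_h$ is maximal accretive there. We must also check that $\Phi$ is holomorphic and bounded on the strip for each fixed $h$. This is automatic in finite dimensions, since $B_h^{-z}$ is entire and $B^{2z/3}$ restricted to $X_h\subset V_2$ is holomorphic for $\Re z\le1$.
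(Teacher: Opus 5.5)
Your argument is correct and is essentially the paper's proof. Graph recovery \eqref{eq:graph} gives the endpoint $\theta=2$, the three-lines lemma with the Gaussian factor and the $h$-uniform imaginary-power bounds for $B_h$ gives the intermediate $\theta$, and \eqref{eq:oneway} with $\alpha=\theta/2$ yields \eqref{eq:halfstab}; composing with $B^{2z/3}$ only builds in the identity $[H,V_2]_{\theta/2}=V_\theta$, which the paper invokes for the couple $(H,V_2)$. In a write-up, drop the false starts with $B^{3z/2}$ (at $\Re z=1$ this is $B^{3/2}$, which is not defined on $X_h$) and keep only $\Phi(z)=e^{c(z-\theta/2)^2}B^{2z/3}B_h^{-z}$.
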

\begin{proof}
Graph recovery \eqref{eq:graph} gives $\norm{B_h^{-1}x}_{V_2}\le
C\norm x_H$ on $X_h$. Apply the three-lines lemma to
$z\mapsto e^{c(z-\theta/2)^2}B_h^{-z}$, as a map from
$(X_h,\norm\cdot_H)$ into the couple $(H,V_2)$, with the uniform
imaginary-power bounds for $B_h$. Since $[H,V_2]_{\theta/2}=V_\theta$,
this gives the first assertion. Then, by \eqref{eq:oneway} with
$\alpha=\theta/2$,
$\norm{T_h(t)\Pi_hf}_{V_\theta}\le C\norm{B_h^{\theta/2}T_h(t)\Pi_hf}_H
\le Ce^{-\lambda t}\norm{B^{\theta/2}f}_H$.
\end{proof}

\begin{lemma}[Interpolated linear errors and time kernels]\label{lem:linear}
Fix $0<\nu<\lambda$. For $s=1,2$ and $t>0$,
\begin{align}
 \sup_{t\ge0}e^{\nu t}\norm{L_h(t)f}_H
       &\le Ch^{2s/3}\norm f_{V_s}, \label{eq:linear-H}\\
 \norm{L_h(t)f}_{H^1}
       &\le Ch^{1/4}t_\wedge^{-5/8}e^{-\nu t}\norm f_{V_1},
                 \label{eq:linear-H1}\\
 \norm{L_h(t)f}_{H^2}
       &\le Ch^{1/3}t_\wedge^{-1}e^{-\nu t}\norm f_{V_2}.
                 \label{eq:linear-H2}
\end{align}
Put $\Lambda_h(t)=e^{\nu t}\norm{L_h(t)}_{H\to H}$ and
$\Lambda_h^{(1)}(t)=e^{\nu t}\norm{L_h(t)}_{H\to H^1}$. Then
\begin{equation}\label{eq:kernels}
 \norm{\Lambda_h}_{L^{p_0}(0,\infty)}\le Ch^{2/(3p_0)},\quad 1\le p_0\le4,
 \qquad
 \norm{\Lambda_h^{(1)}}_{L^1(0,\infty)}\le Ch^{1/6}.
\end{equation}
\end{lemma}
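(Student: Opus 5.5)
The plan is to obtain every estimate by interpolating, through the moment inequality \eqref{eq:moment} or a plain minimum, between an \emph{error bound} that carries a power of $h$ and a \emph{stability bound} that carries only a time weight. Throughout I fix $\nu<\nu'<\lambda$ and use Proposition~\ref{thm:positive} and Proposition~\ref{prop:H1linear} at the rate $\nu'$. The surplus factor $e^{-(\nu'-\nu)t}$ then makes every time integral converge at infinity. For $t\ge1$ all bounds reduce to the tail estimates already contained in those propositions, so the work is on $0<t\le1$.

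First, \eqref{eq:linear-H} is the supremum part of $\E_\nu$ in \eqref{eq:Erate} with $s=1,2$. Next come the stability bounds in $H^2$, all uniform in $h$. For $x=T_h(t)\Pi_hf$, graph recovery \eqref{eq:graph} gives $\norm x_{H^2}\le C\norm{B_hx}_H$. I then split $B_hx=B_h^{1-\alpha}T_h(t)\,B_h^{\alpha}\Pi_hf$ and use \eqref{eq:cp} on the first factor and \eqref{eq:oneway} on the second. Taking $\alpha=1/3$ for $f\in V_1$ and $\alpha=2/3$ for $f\in V_2$, this gives $\norm{T_h(t)\Pi_hf}_{H^2}\le Ct_\wedge^{-1}$ and $\le Ct_\wedge^{-1/2}$ respectively, times $\norm f_{V_s}$. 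Theorem~\ref{thm:ctcp}(ii) gives the same bounds for $T(t)f$.

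For \eqref{eq:linear-H1}, the $H$-error of $L_h(t)f$ with $f\in V_1$ is at most $Ch^{2/3}$ by \eqref{eq:linear-H}, and at most $Cht_\wedge^{-1}$ by Proposition~\ref{thm:positive} with $\theta=1$. The geometric mean with weights $3/4,1/4$ gives $Ch^{3/4}t_\wedge^{-1/4}$. Combining this with the $H^2$ bound $Ct_\wedge^{-1}$ through $\norm g_{H^1}\le C\norm g_H^{1/2}\norm g_{H^2}^{1/2}$ yields $Ch^{3/8}t_\wedge^{-5/8}$. This is at least as good as the claim, since $h\le1$.

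For \eqref{eq:linear-H2} with $f\in V_2$, I take the minimum of the stability bound $Ct_\wedge^{-1/2}$ and the error bound $Cht_\wedge^{-3/2}$ from \eqref{eq:physH2} with $s=2$. Bounding the minimum by the weighted geometric mean with weights $2/3,1/3$ gives $Ch^{1/3}t_\wedge^{-5/6}\le Ch^{1/3}t_\wedge^{-1}$.

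For the kernels \eqref{eq:kernels}, Proposition~\ref{thm:positive} and contraction give $\Lambda_h(t)\le C\min\{1,ht_\wedge^{-3/2}\}e^{-(\nu'-\nu)t}$. Splitting the $L^{p_0}$ integral at $t=h^{2/3}$, both pieces are $O(h^{2/3})$, which gives the factor $h^{2/(3p_0)}$.

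For $\Lambda_h^{(1)}$, the stability bound is $Ct_\wedge^{-3/4}$, from \eqref{eq:cpH1} and its continuous analogue. For the error bound I combine the $H$-error $Cht_\wedge^{-3/2}$ with the $H\to H^2$ stability bound $Ct_\wedge^{-3/2}$ (from \eqref{eq:graph}, \eqref{eq:cp} and Theorem~\ref{thm:ctcp}(ii)) through the moment inequality, which gives $Ch^{1/2}t_\wedge^{-3/2}$. Thus $\Lambda_h^{(1)}\le C\min\{t_\wedge^{-3/4},h^{1/2}t_\wedge^{-3/2}\}e^{-(\nu'-\nu)t}$. Splitting at $t=h^{2/3}$, both $\int_0^{h^{2/3}}t^{-3/4}\,dt$ and $\int_{h^{2/3}}^{1}h^{1/2}t^{-3/2}\,dt$ are $O(h^{1/6})$.

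The step I expect to need most care is the use of the moment inequality for $L_h(t)f$. This difference is not in $V_2$, since $(I-\Pi_h)T(t)f$ need not lie in $X_h$. I will therefore apply the Sobolev interpolation $\norm g_{H^1}\le C\norm g_H^{1/2}\norm g_{H^2}^{1/2}$ on $H^2\cap H_0^1$, which contains both $T(t)f$ and $T_h(t)\Pi_hf$, rather than the $V_\theta$ scale. The second delicate point is checking that the fractional-power splitting with \eqref{eq:oneway} really has constants independent of $h$.
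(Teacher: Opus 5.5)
Your proposal is correct. For \eqref{eq:linear-H} and for both kernel bounds in \eqref{eq:kernels} you argue exactly as the paper does. For \eqref{eq:linear-H1} and \eqref{eq:linear-H2} you take a different route.

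The paper interpolates the operators $L_h(t)$ in the datum, with parameter $s/6$. At the low endpoint, $H$, it uses only stability: $Ct_\wedge^{-3/4}$ into $H^1$ and $Ct_\wedge^{-3/2}$ into $H^2$. At the high endpoint, $D(B^2)$, Lemma~\ref{lem:smoothlinear} gives the errors $Ch^{2-j/2}$.

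You keep the datum in $V_s$ and combine bounds already available at that index. These are \eqref{eq:Erate}, Proposition~\ref{thm:positive} and \eqref{eq:physH2}, together with the $V_s\to H^2$ stability that follows from \eqref{eq:graph}, \eqref{eq:oneway} and \eqref{eq:cp}. You combine them by taking minima and by Gagliardo--Nirenberg interpolation in the target norm.

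The paper's route needs fewer prerequisites. It uses neither \eqref{eq:oneway} nor Proposition~\ref{prop:H1linear}. The latter's contour argument, with $H^4$ regularity of the resolvent, is the heaviest ingredient in your $H^2$ step.

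Your route avoids Lemma~\ref{lem:smoothlinear} and interpolation up to $D(B^2)$, and it is sharper: you obtain $h^{3/8}t_\wedge^{-5/8}$ in $H^1$ and $h^{1/3}t_\wedge^{-5/6}$ in $H^2$. Even the crude combination of $Ch^{2/3}$ in $H$ with $Ct_\wedge^{-1}$ in $H^2$ gives $Ch^{1/3}t_\wedge^{-1/2}$ in $H^1$. Since $h\le1$ and $t_\wedge\le1$, this already implies \eqref{eq:linear-H1}.

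One small correction: your worry that $L_h(t)f\notin V_2$ is unfounded. For $t>0$, $T(t)f\in D(A)\subset V_2$ and $T_h(t)\Pi_hf\in X_h\subset V_2$, so \eqref{eq:moment} would also apply. The Sobolev interpolation you use instead is equally valid.
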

\begin{proof}
Interpolate the linear operators $L_h(t)$, not any nonlinear quantity.
The bound \eqref{eq:linear-H} is contained in \eqref{eq:Erate}. By
Lemma~\ref{lem:smoothlinear} and contraction, $L_h(t)$ maps $D(B^2)$
to $H^j$ with norm $Ch^{2-j/2}e^{-\nu t}$, and $H$ to $H$ with norm
$2e^{-\lambda t}$. We interpolate with $[H,D(B^2)]_{s/6}=V_s$. For the
$H^1$ bound use the low endpoint
$\norm{L_h(t)}_{H\to H^1}\le Ct_\wedge^{-3/4}e^{-\lambda t}$, from
\eqref{eq:cpH1} and its continuous version, and the high endpoint
$Ch^{3/2}e^{-\nu t}$, with parameter $1/6$. The powers are
$(3/2)(1/6)=1/4$ and $(3/4)(5/6)=5/8$. For $H^2$ use the low endpoint
$Ct_\wedge^{-3/2}e^{-\lambda t}$, from \eqref{eq:cp} and graph recovery,
the high endpoint $Che^{-\nu t}$, and parameter $1/3$.

Fix $\nu<\nu'<\lambda$. Contraction and Proposition~\ref{thm:positive} give
$\Lambda_h(t)\le Ce^{-(\nu'-\nu)t}\min\{1,ht_\wedge^{-3/2}\}$. Split at
$h^{2/3}$ and at $1$:
\[
 \int_0^\infty\Lambda_h^{p_0}\,dt
 \le C\Big(h^{2/3}+h^{p_0}\int_{h^{2/3}}^1t^{-3p_0/2}\,dt+h^{p_0}\Big)
 \le Ch^{2/3}.
\]
Spatial interpolation between the $H$ error bound and the $H\to H^2$
stability bound gives
$\Lambda_h^{(1)}(t)\le Ce^{-(\nu'-\nu)t}
\min\{t_\wedge^{-3/4},h^{1/2}t_\wedge^{-3/2}\}$. The same splitting gives
the power $h^{1/6}$.
\end{proof}

\subsection{The energy ball}\label{sec:energyball}
Throughout this subsection $0<\nu<\lambda$. For $\phi\in H$ put
$\Phi=T(\cdot)\phi$, $\Phi_h=T_h(\cdot)\Pi_h\phi$ and
\begin{equation}\label{eq:Dh}
 D_h(\phi)=\QQ(\Phi,\Phi)-\QQ_h(\Phi_h,\Phi_h)
 =P_2(\phi)-P_{2,h}(\Pi_h\phi),
\end{equation}
the discretization error of the quadratic Taylor coefficient. By
\eqref{eq:lowbilinear} and its discrete analogue, $D_h(\phi)$ is defined
for every $\phi\in H$ and is bounded uniformly in $h$. Its inputs are
linear evolutions, so the smoothing of Section~\ref{sec:discrete}
applies to them directly.

\begin{proposition}[The quadratic coefficient]\label{prop:energyquadratic}
For all real or complex $\phi\in H$ and $t>0$,
\begin{equation}\label{eq:energyquadratic}
 \norm{D_h(\phi)(t)}_H\le Ch^{1/2}t_\wedge^{-5/2}e^{-\nu t}\norm\phi_H^2 .
\end{equation}
Moreover, there are $r,C_0>0$, independent of $h$, such that for real
$u_0\in H$ with $\norm{u_0}_H\le r$
\[
 \Cd_h(u_0)(t)=D_h(u_0)(t)+g_h(u_0)(t),\qquad
 \norm{g_h(u_0)(t)}_H\le C_0e^{-\nu t}\norm{u_0}_H^3 .
\]
\end{proposition}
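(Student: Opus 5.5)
We split the quadratic coefficient error into a linear-error part and a quadratic-discretization part, and then identify the ordering error with $D_h$ up to a cubic remainder.

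\emph{The bound \eqref{eq:energyquadratic}.} We write
\[
 D_h(\phi)=\int_0^t\big\{T(t-s)F(\Phi,\Phi)-T_h(t-s)\Pi_hF(\Phi_h,\Phi_h)\big\}\,ds ,
\]
and split the integrand as $-L_h(t-s)F(\Phi,\Phi)+T_h(t-s)\Pi_h\{F(\Phi,\Phi)-F(\Phi_h,\Phi_h)\}$, where $F(\Phi,\Phi)-F(\Phi_h,\Phi_h)=F(\Phi-\Phi_h,\Phi+\Phi_h)$.

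For the first piece we use Proposition~\ref{thm:positive}, which gives $\norm{L_h(\tau)}\le Ch\tau_\wedge^{-3/2}e^{-\nu\tau}$. This is not integrable at $\tau=0$, so we interpolate it with the trivial bound: for $0\le\alpha\le1$,
\[
 \norm{L_h(\tau)}\le Ch^\alpha\tau_\wedge^{-3\alpha/2}e^{-\nu\tau}.
\]
With $\alpha=1/2$ the kernel is $\tau^{-3/4}$, which is integrable. The source is bounded by $\norm{F(\Phi,\Phi)(s)}_H\le C|\Phi(s)|_1^2\le Cs_\wedge^{-3/2}e^{-2\lambda s}\norm\phi_H^2$, using \eqref{eq:prodenergy} and \eqref{eq:cpH1}. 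The convolution of $\tau^{-3/4}$ with $s^{-3/2}$ is not integrable at $s=0$. We avoid this by splitting the time integral at $t/2$:
\begin{itemize}
 \item On $s\le t/2$ we use the weak form instead. There $\norm{F(\Phi,\Phi)}_{V_1^*}\le C|\Phi|_1\norm\Phi_H$, whose square is integrable by Kato smoothing. We pair this with the $V_1^*\to H$ version of the $L_h$ bound, obtained by interpolating Proposition~\ref{thm:positive} with \eqref{eq:weaksmoothing}. Its kernel is evaluated at $t-s\ge t/2$, which produces powers of $t_\wedge^{-1}$.
 \item On $s\ge t/2$ the singularity $s^{-3/2}$ becomes a factor $t_\wedge^{-3/2}$, and the kernel $h^{1/2}(t-s)^{-3/4}$ is integrable.
\end{itemize}
Together these give $h^{1/2}t_\wedge^{-5/2}$ or better.

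For the second piece, $\Phi-\Phi_h=-L_h(\cdot)\phi$ is bounded in $H$ by $Chs_\wedge^{-3/2}e^{-\nu s}\norm\phi_H$. We use the weak product bound $\norm{F(f,g)}_{V_1^*}\le C|g|_1\norm f_H$ with $g=\Phi+\Phi_h$, where $|g|_1\le Cs_\wedge^{-3/4}$. We then apply \eqref{eq:weaksmoothing} with kernel $(t-s)_\wedge^{-3/4}$. The same splitting at $t/2$ applies, with the trivial $H$ bound for $L_h$ interpolated to $h^{1/2}s^{-3/4}$ near $s=0$. This yields $Ch^{1/2}t_\wedge^{-5/2}e^{-\nu t}\norm\phi_H^2$. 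Exponential factors are tracked by taking $\nu<\nu'<\lambda$ throughout. The exponents are generous, so we expect no difficulty in matching $5/2$.

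\emph{The decomposition of $\Cd_h$.} We use Theorem~\ref{thm:continuous} at $\theta=0$ and Proposition~\ref{prop:uniformanalytic} with datum $\Pi_hu_0$, noting that $\norm{\Pi_hu_0}_H\le\norm{u_0}_H$. These give
\[
 u-v=P_2(u_0)+R_3,\qquad u_h-v_h=P_{2,h}(\Pi_hu_0)+R_{3,h},
\]
with $\norm{R_3}_{\X_0},\norm{R_{3,h}}_{\X_0}\le C\norm{u_0}_H^3$ uniformly in $h$, by \eqref{eq:taylor} with $m=2$. Since $\Cd_h=(u-v)-(u_h-v_h)$, we obtain $\Cd_h=D_h(u_0)+g_h$ with $g_h=R_3-R_{3,h}$. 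The norm $\X_0$ dominates $\sup_te^{\nu t}\norm\cdot_H$, which gives the stated bound for $g_h$. This step is immediate.

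The main obstacle is the first part: organizing the time-convolution singularities so that every integral converges while retaining the factor $h^{1/2}$. The key device is splitting at $t/2$ and using the weak ($V_1^*$) forms on the early-time half.
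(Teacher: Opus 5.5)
Your decomposition $D_h=I_1+I_2$, with $I_1=-\int_0^tL_h(t-s)F(\Phi,\Phi)\,ds$ and $I_2=\int_0^tT_h(t-s)\Pi_hF(\Delta_h,\Phi+\Phi_h)\,ds$, $\Delta_h=\Phi-\Phi_h$, is the paper's. So are the cut at $t/2$ and the identification $g_h=R_3-R_{3,h}$ from the two Taylor remainders. The late parts also work as you describe.

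\textbf{The gap: the early part of $I_2$.} This is exactly where $h^{1/2}$ must be produced, and your argument does not produce it. For $s\le t/2$ the kernel in \eqref{eq:weaksmoothing} is at most $Ct_\wedge^{-3/4}$. Everything then rests on $\int_0^{t/2}\norm{\Delta_h(s)}_H\,|\Phi(s)+\Phi_h(s)|_1\,ds$. Your pointwise bounds $Ch^{1/2}s_\wedge^{-3/4}\norm\phi_H$ and $Cs_\wedge^{-3/4}\norm\phi_H$ make the integrand behave like $h^{1/2}s^{-3/2}$, which is not integrable at $s=0$. The cut at $t/2$ does not help, because this singularity sits at $s=0$, not at $s=t$.

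If you use Kato smoothing instead, $|\Phi+\Phi_h|_1$ is only in $L^2_t$. Then $\alpha_h(s)=\min\{1,hs_\wedge^{-3/2}\}$ must be taken in $L^2_t$, where its norm is of order $h^{1/3}$; the layer $s\le h^{2/3}$ alone contributes that much. As long as the gradient of $\Phi+\Phi_h$ enters to the first power, you get $h^{1/3}$, not $h^{1/2}$, so \eqref{eq:energyquadratic} is not proved.

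\textbf{The missing idea.} In the weak product, measure $\Phi+\Phi_h$ in $L^\infty$: $\norm{F(f,g)}_{V_1^*}\le C\norm f_H\norm g_\infty$. Then apply Agmon's inequality $\norm g_\infty\le C\norm g_H^{1/2}|g|_1^{1/2}$. Now only the square root of the gradient appears, and it lies in $L^4_t$ by Kato smoothing. H\"older's inequality with exponents $4/3$ and $4$ puts $\alpha_h$ in $L^{4/3}_t$. Since $\int_0^\infty e^{-cs}\alpha_h(s)^{4/3}\,ds\le Ch^{2/3}$, this gives exactly $h^{1/2}$.

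\textbf{A smaller point: the early part of $I_1$.} You interpolate the $H\to H$ bound $Ch\tau_\wedge^{-3/2}$ of Proposition~\ref{thm:positive} with the $V_1^*\to H$ bound \eqref{eq:weaksmoothing}. The latter carries no power of $h$. Interpolation therefore gives $h^{1-\theta}$ at the space of parameter $\theta$ between $H$ and $V_1^*$, and no factor of $h$ at $V_1^*$ itself. A genuine $h$-gain from $V_1^*$ would require, by duality, an $H\to H^1$ error estimate for the adjoint flows, that is, adjoint consistency, which the paper explicitly avoids.

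This detour is unnecessary. On $s\le t/2$, keep the strong form: $\norm{F(\Phi,\Phi)}_H\le C|\Phi|_1^2$ is integrable in time by weighted Kato smoothing. Combine it with $\norm{L_h(t-s)}_{H\to H}\le Cht_\wedge^{-3/2}e^{-\nu'(t-s)}$, $\nu<\nu'<\lambda$. This already gives $Cht_\wedge^{-3/2}e^{-\nu t}\norm\phi_H^2$. The paper sharpens this to $t_\wedge^{-5/4}$ with Agmon and H\"older, but either suffices.
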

\begin{proof}
Put $R=\norm\phi_H$, $\nu_1=(\lambda+\nu)/2$ and $\Delta_h=\Phi-\Phi_h$.
Proposition~\ref{thm:positive}, at the indices $0$ and $2$, gives
\[
 \norm{L_h(r')}_{H\to H}\le Chr_\wedge'^{-3/2}e^{-\nu_1r'},\qquad
 \norm{L_h(r')}_{V_2\to H}\le Chr_\wedge'^{-1/2}e^{-\nu_1r'} .
\]
Theorem~\ref{thm:ctcp}(ii) gives
$\norm{\Phi(s)}_{V_3}\le Cs_\wedge^{-3/2}e^{-\lambda s}R$. The common
energy identity, with $a-\nu_1p^2>0$, yields
\[
 \int_0^\infty e^{2\nu_1s}(|\Phi(s)|_1+|\Phi_h(s)|_1)^2\,ds\le CR^2 .
\]
Symmetry and bilinearity of $F$ give the exact decomposition
\[
 D_h=-\int_0^tL_h(t-s)F(\Phi,\Phi)\,ds
     +\int_0^tT_h(t-s)\Pi_hF(\Delta_h,\Phi+\Phi_h)\,ds=:I_1+I_2 .
\]
Split each integral at $t/2$.

\emph{Early part of $I_1$.} Agmon's inequality gives
$\norm{F(\Phi,\Phi)}_H\le C\norm\Phi_H^{1/2}|\Phi|_1^{3/2}$. By H\"older's
inequality in time,
\[
 \int_0^\tau\norm{F(\Phi,\Phi)}_H\,ds\le CR^{1/2}\tau^{1/4}
 \Big(\int_0^\tau|\Phi|_1^2\,ds\Big)^{3/4}\le C\tau^{1/4}R^2 ,
 \qquad 0<\tau\le1 .
\]
The time-integrated gradient enters with the power $3/4$, so this bound is uniform on
the ball. For $t\le1$ the early part is at most $Cht^{-5/4}R^2$. For
$t\ge1$ the same argument gives
$\int_0^\infty e^{\nu s}\norm{F(\Phi,\Phi)}_H\,ds\le CR^2$. After
extracting $e^{-\nu t}$ from the convolution, the early part is bounded
by $Cht_\wedge^{-5/4}e^{-\nu t}R^2$ for all $t$.

\emph{Late part of $I_1$.} By \eqref{eq:product} at $\theta=3$,
$\norm{F(\Phi,\Phi)}_{V_2}\le Cs_\wedge^{-3}e^{-2\lambda s}R^2$. The
$V_2\to H$ error bound yields
\[
 \norm{I_{1,\mathrm{late}}}_H\le ChR^2\int_{t/2}^t(t-s)_\wedge^{-1/2}
 s_\wedge^{-3}e^{-\nu_1(t-s)-2\lambda s}\,ds
 \le Cht_\wedge^{-5/2}e^{-\nu t}R^2 .
\]

\emph{Early part of $I_2$.} Put $\alpha_h(s)=\min\{1,hs_\wedge^{-3/2}\}$
and $\beta(s)=e^{\nu_1s}(|\Phi(s)|_1+|\Phi_h(s)|_1)$. Contraction and the
$H\to H$ error bound give $\norm{\Delta_h(s)}_H\le CRe^{-\nu_1s}\alpha_h(s)$,
and Agmon's inequality gives
$\norm{\Phi+\Phi_h}_\infty\le CR^{1/2}e^{-\nu_1s}\beta(s)^{1/2}$. Since
$\ip{F(f,g)}w=\frac12\int_0^Lfg\,\overline{(\rho w)'}\dx$,
$\norm{F(\Delta_h,\Phi+\Phi_h)}_{V_1^*}\le C\norm{\Delta_h}_H
\norm{\Phi+\Phi_h}_\infty$. By \eqref{eq:weaksmoothing}, $t-s\ge t/2$ and
H\"older's inequality with exponents $4/3$ and $4$,
\begin{align*}
 \norm{I_{2,\mathrm{early}}}_H
 &\le Ct_\wedge^{-3/4}e^{-\nu t}R^{3/2}
 \Big(\int_0^\infty e^{-\frac43(2\nu_1-\nu)s}\alpha_h(s)^{4/3}ds\Big)^{3/4}
 \Big(\int_0^\infty\beta(s)^2ds\Big)^{1/4}\\
 &\le Ch^{1/2}t_\wedge^{-3/4}e^{-\nu t}R^2 .
\end{align*}
Indeed, for fixed $c_0>0$ and $h\le1$,
\[
 \int_0^\infty e^{-c_0s}\alpha_h(s)^{4/3}\,ds
 \le h^{2/3}+h^{4/3}\int_{h^{2/3}}^1s^{-2}\,ds+C_{c_0}h^{4/3}
 \le C_{c_0}h^{2/3}.
\]
This is the only step that loses a half power of $h$.

\emph{Late part of $I_2$.} By \eqref{eq:cpH1} and Agmon's inequality,
$\norm{\Phi+\Phi_h}_\infty\le Cs_\wedge^{-3/8}e^{-\lambda s/2}R$. With
the $H\to H$ error bound and \eqref{eq:weaksmoothing},
\[
 \norm{I_{2,\mathrm{late}}}_H\le ChR^2\int_{t/2}^t(t-s)_\wedge^{-3/4}
 s_\wedge^{-15/8}e^{-\lambda(t-s)-(\nu_1+\lambda/2)s}\,ds
 \le Cht_\wedge^{-13/8}e^{-\nu t}R^2 .
\]
Adding the four bounds and using $h\le h^{1/2}$ proves
\eqref{eq:energyquadratic}.

\emph{Expansion.} By Theorem~\ref{thm:continuous} and
Proposition~\ref{prop:uniformanalytic}, the continuous and discrete
series converge on a common small ball of $H$, with common bounds. Their
quadratic difference is $D_h(u_0)$, and the tails from degree three give
the cubic remainder.
\end{proof}

The rate $h^{1/2}$ holds for the quadratic coefficient on the whole
energy ball, but the cubic remainder in Proposition~\ref{prop:energyquadratic}
has no mesh factor. Proposition~\ref{thm:energyfull} below supplies one for the complete
ordering error, at a lower rate. Write
\[
 \norm w_{\Z_\nu}=\Big(\int_0^\infty e^{4\nu t}\norm{w(t)}_H^4\,dt\Big)^{1/4}.
\]
This is a time $L^4(H)$ norm. All trajectory spaces are built on $H$;
discrete trajectories are included via $X_h\subset H_0^1\subset H$.

\begin{lemma}[Mixed time norms]\label{lem:energymixed}
For complex trajectories in $\X_0$,
\begin{align}
 \norm w_{\Z_\nu}+\norm w_{L^4_tL^\infty_x}
   &\le C\E_\nu(w),\label{eq:mixedembed}\\
 \norm{e^{\nu t}\N(w)}_{L^{4/3}_tH}
   &\le C\E_\nu(w)^2,\label{eq:mixedsource}\\
 \norm{e^{\nu t}F(f,w)}_{L^2_tV_1^*}
   &\le C\norm f_{\Z_\nu}\E_\nu(w).\label{eq:mixedfeedback}
\end{align}
In the last estimate $f\in\Z_\nu$ suffices. Moreover,
\begin{align}
 \norm{L_h(\cdot)f}_{\Z_\nu}&\le Ch^{1/6}\norm f_H,\label{eq:Jinitial}\\
 \norm{L_h*g}_{\Z_\nu}&\le Ch^{1/3}\norm{e^{\nu t}g}_{L^{4/3}_tH},
 \label{eq:JconvZ}\\
 \norm{L_h*g}_{\Y_\nu(H)}&\le Ch^{1/6}\norm{e^{\nu t}g}_{L^{4/3}_tH}.
 \label{eq:JconvY}
\end{align}
\end{lemma}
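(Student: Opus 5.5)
The embedding \eqref{eq:mixedembed} comes first, by interpolation in time between the two pieces of $\E_\nu$. The bound for $\norm w_{\Z_\nu}$ is immediate: $e^{4\nu t}\norm w_H^4\le(\sup e^{\nu t}\norm w_H)^2e^{2\nu t}\norm w_H^2$, and $\norm w_H\le p|w|_1$ controls the remaining $L^2_t$ factor by the gradient part. For $L^4_tL^\infty_x$ I use Agmon, $\norm w_\infty^2\le C\norm w_H|w|_1$. Then
\[
 \int\norm w_\infty^4\,dt\le C\sup_t\norm w_H^2\int|w|_1^2\,dt\le C\E_\nu(w)^4 ,
\]
with the exponential weight carried along when needed. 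For \eqref{eq:mixedsource} I write $\N(w)=-ww'$, so $\norm{\N(w)}_H\le C\norm w_\infty|w|_1$. Hölder in time with exponents $4$ and $2$ gives $L^{4/3}_t$. The weight $e^{\nu t}$ goes onto $|w|_1$, and $\norm w_\infty$ is used in unweighted $L^4_t$ (or with a harmless weight). For \eqref{eq:mixedfeedback} I use the identity $\ip{F(f,w)}\psi=\frac12\int fw\,\overline{(\rho\psi)'}$. This gives $\norm{F(f,w)}_{V_1^*}\le C\norm f_H\norm w_\infty$. Then Hölder $L^4\cdot L^4\subset L^2$ in time, with $e^{\nu t}$ placed on $f$, together with the $L^4_tL^\infty$ part of \eqref{eq:mixedembed}, yields the claim. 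This only needs $f\in\Z_\nu$.

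The linear estimates rest on Lemma~\ref{lem:linear}. For \eqref{eq:Jinitial}, $\norm{L_h(t)f}_H\le e^{-\nu t}\Lambda_h(t)\norm f_H$. The kernel bound \eqref{eq:kernels} with $p_0=4$ gives $\norm{\Lambda_h}_{L^4}\le Ch^{1/6}$. For the convolutions I write
\[
 e^{\nu t}\norm{(L_h*g)(t)}_H\le\int_0^t\Lambda_h(t-s)\,e^{\nu s}\norm{g(s)}_H\,ds .
\]
Young's inequality then reduces everything to kernel norms. For \eqref{eq:JconvY} ($L^\infty$ output, $L^{4/3}$ input) the kernel must lie in $L^4$, which gives $h^{1/6}$. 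For \eqref{eq:JconvZ} ($L^4$ output) we need $1+\frac14=\frac1{p_0}+\frac34$, so $p_0=2$, and \eqref{eq:kernels} gives $h^{2/(3\cdot2)}=h^{1/3}$.

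The only delicate point is bookkeeping of the exponential weights in \eqref{eq:mixedembed} and \eqref{eq:mixedsource}. There one must make sure that splitting $e^{\nu t}$ between factors leaves integrable weights. I expect no genuine obstacle, since $\Lambda_h$ already absorbs the decay and the $\E_\nu$ norm controls both the supremum and the gradient with the full weight. For complex trajectories the same bounds hold, because only absolute values enter.
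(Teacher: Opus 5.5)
Your proposal is correct and follows the paper's proof essentially step by step: the same Poincar\'e and Agmon interpolation for \eqref{eq:mixedembed}, the weak pairing $\norm{F(f,w)}_{V_1^*}\le C\norm f_H\norm w_\infty$ with the whole weight on $f$ for \eqref{eq:mixedfeedback}, and Young's inequality with the kernel norms $p_0=4$ and $p_0=2$ from \eqref{eq:kernels} for \eqref{eq:Jinitial}--\eqref{eq:JconvY}. The only difference is cosmetic and lies in \eqref{eq:mixedsource}: you apply H\"older in time to $\norm w_\infty\cdot e^{\nu t}|w|_1$ and use the $L^4_tL^\infty_x$ bound, while the paper uses the pointwise bound $\norm{\N(w)}_H\le C\norm w_H^{1/2}|w|_1^{3/2}$ and then the supremum of the weighted $H$ norm; the two arguments are equivalent.
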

\begin{proof}
Let $M_w=\sup_te^{\nu t}\norm{w(t)}_H$ and
$J_w^2=\int_0^\infty e^{2\nu t}|w(t)|_1^2\,dt$. Poincar\'e's inequality
gives
\[
 \int_0^\infty e^{4\nu t}\norm{w}_H^4\,dt\le
 M_w^2\int_0^\infty e^{2\nu t}\norm w_H^2\,dt\le p^2M_w^2J_w^2 .
\]
Agmon's
inequality gives $\norm{w(t)}_\infty^4\le C\norm{w(t)}_H^2|w(t)|_1^2$;
its unweighted time integral is at most $CM_w^2J_w^2$. For almost every
$t$, $\norm{\N(w(t))}_H\le C\norm{w(t)}_H^{1/2}|w(t)|_1^{3/2}$, hence
\[
 \int_0^\infty e^{4\nu t/3}\norm{\N(w(t))}_H^{4/3}\,dt
 \le CM_w^{2/3}\int_0^\infty e^{2\nu t/3}|w(t)|_1^2\,dt\le CM_w^{2/3}J_w^2 .
\]
Taking the $3/4$ power proves \eqref{eq:mixedsource}. The weighted weak
pairing gives $\norm{F(f,w)}_{V_1^*}\le C\norm f_H\norm w_\infty$.
H\"older's inequality in time, with $\frac12=\frac14+\frac14$, proves
\eqref{eq:mixedfeedback}. The whole exponential weight is placed on $f$.
No inverse inequality is used.

For the last three estimates, $e^{\nu t}\norm{L_h(t)f}_H\le
\Lambda_h(t)\norm f_H$, which gives \eqref{eq:Jinitial} by
\eqref{eq:kernels} with $p_0=4$. For the convolutions, extend the kernel
$\Lambda_h$ and the function $e^{\nu t}\norm{g(t)}_H$ by zero to negative
times. Young's inequality with $1+\frac14=\frac12+\frac34$ proves
\eqref{eq:JconvZ}, and with $1=\frac14+\frac34$ it proves
\eqref{eq:JconvY}.
\end{proof}

\begin{proposition}[The complete ordering error on an energy ball]
\label{thm:energyfull}
There are $r,C>0$, independent of $h$, such that every real or complex
$u_0\in H$ with $\varepsilon=\norm{u_0}_H\le r$ satisfies
\begin{align}
 \norm{u-u_h}_{\Z_\nu}&\le Ch^{1/6}\varepsilon,
       \label{eq:energyfullZ}\\
 \norm{\Cd_h(u_0)}_{\Y_\nu(H)}&\le Ch^{1/6}\varepsilon^2,
       \label{eq:energyfulldefect}\\
 \norm{\Cd_h(u_0)-D_h(u_0)}_{\Y_\nu(H)}&\le Ch^{1/6}\varepsilon^3.
       \label{eq:energyfulltail}
\end{align}
The last bound controls the sum of all Taylor terms of degree at least
three. No higher norm of $u_0$ and no $h$-dependent smallness condition
is required.
\end{proposition}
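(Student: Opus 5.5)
The proof works in the weighted trajectory spaces $\X_0$, $\Z_\nu$ and $\Y_\nu(H)$. It starts from the mild formula \eqref{eq:Cdmild}, $\Cd_h=-L_h*\N(u)+2\QQ_h(e,m)$ with $e=u-u_h$ and $m=\frac12(u+u_h)$. By Theorem~\ref{thm:continuous} at $\theta=0$ and Proposition~\ref{prop:uniformanalytic}, for $\varepsilon\le r$ we have $\E_\nu(u)+\E_\nu(u_h)+\E_\nu(m)\le C\varepsilon$, uniformly in $h$. This holds for complex data too, since both maps are analytic on a common ball and no energy identity for real data is needed.

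\emph{Step 1: the $\Z_\nu$ bound \eqref{eq:energyfullZ}.} Use the splitting \eqref{eq:splitting}, $e=-L_h(\cdot)u_0+\Cd_h$. By \eqref{eq:Jinitial}, $\norm{L_h(\cdot)u_0}_{\Z_\nu}\le Ch^{1/6}\varepsilon$. For the first term of $\Cd_h$, \eqref{eq:mixedsource} and \eqref{eq:JconvZ} give $\norm{L_h*\N(u)}_{\Z_\nu}\le Ch^{1/3}\varepsilon^2$. For the feedback term, \eqref{eq:mixedfeedback} gives $\norm{e^{\nu t}F_h(e,m)}_{L^2_t(-1,h)}\le C\norm e_{\Z_\nu}\E_\nu(m)\le C\varepsilon\norm e_{\Z_\nu}$. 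Then \eqref{eq:disforced} with zero initial value, together with \eqref{eq:mixedembed}, gives $\norm{2\QQ_h(e,m)}_{\Z_\nu}\le C\E_\nu(2\QQ_h(e,m))\le C\varepsilon\norm e_{\Z_\nu}$. Altogether
\[
 \norm e_{\Z_\nu}\le Ch^{1/6}\varepsilon+Ch^{1/3}\varepsilon^2+C\varepsilon\norm e_{\Z_\nu}.
\]
After shrinking $r$, the last term is absorbed. This step requires $\norm e_{\Z_\nu}<\infty$ a priori, which follows from $\E_\nu(e)\le C\varepsilon$ and \eqref{eq:mixedembed}. The absorption is the crux of the argument: the feedback must be measured with $e$ in $L^4_tH$ rather than in $\X_0$, because $\E_\nu(e)$ is not small in $h$ uniformly on the ball (the initial projection error is not).

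\emph{Step 2: the ordering error \eqref{eq:energyfulldefect}.} Insert Step 1 back into \eqref{eq:Cdmild}. By \eqref{eq:JconvY} and \eqref{eq:mixedsource}, $\norm{L_h*\N(u)}_{\Y_\nu(H)}\le Ch^{1/6}\varepsilon^2$. By \eqref{eq:disforced}, \eqref{eq:mixedfeedback} and Step 1, $\norm{\QQ_h(e,m)}_{\Y_\nu(H)}\le\E_\nu(\QQ_h(e,m))\le C\varepsilon\cdot h^{1/6}\varepsilon$. This gives \eqref{eq:energyfulldefect}.

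\emph{Step 3: the cubic tail \eqref{eq:energyfulltail}.} Subtract the analogous mild formula for $D_h(u_0)$, namely the decomposition in the proof of Proposition~\ref{prop:energyquadratic} with $\Phi=v$ and $\Phi_h=v_h$: $D_h=-L_h*\N(v)+\QQ_h(v-v_h,v+v_h)$. The difference then consists of three terms.
\begin{itemize}
 \item[(a)] $-L_h*(\N(u)-\N(v))=-L_h*F(u-v,u+v)$. By the bilinear version of \eqref{eq:mixedsource} (Agmon's inequality applied to the pair) and $\E_\nu(u-v)\le C\varepsilon^2$ from \eqref{eq:taylor}, this is $O(h^{1/6}\varepsilon^3)$ by \eqref{eq:JconvY}.
 \item[(b)] $\QQ_h(\Cd_h,u+u_h)$, using $e=(v-v_h)+\Cd_h$. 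By Step 2 and \eqref{eq:mixedembed}-type control ($\norm{\Cd_h}_{\Z_\nu}\le C\norm{\Cd_h}_{\Y_\nu(H)}$, using the exponential weight and a slightly smaller weight inside if needed), this is $O(h^{1/6}\varepsilon^3)$ via \eqref{eq:mixedfeedback} and \eqref{eq:disforced}.
 \item[(c)] $\QQ_h(v-v_h,(u-v)+(u_h-v_h))$. This is bounded by $\norm{L_h(\cdot)u_0}_{\Z_\nu}$, which is $O(h^{1/6}\varepsilon)$, times $\E_\nu$ of the linearization shares, which is $O(\varepsilon^2)$ by the Taylor bounds of Theorem~\ref{thm:continuous} and Proposition~\ref{prop:uniformanalytic}.
\end{itemize}
The only delicate point is in (b), where the $\Z_\nu$ norm of $\Cd_h$ is needed. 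It can be taken from Step 1 directly: $\norm{\Cd_h}_{\Z_\nu}\le\norm e_{\Z_\nu}+\norm{L_h(\cdot)u_0}_{\Z_\nu}$. But that bound is only $h^{1/6}\varepsilon$, not $h^{1/6}\varepsilon^2$. To obtain the extra power of $\varepsilon$, I would instead run Step 1 for $\Cd_h$ itself: the $\Z_\nu$ bounds of Steps 1--2 give $\norm{\Cd_h}_{\Z_\nu}\le Ch^{1/6}\varepsilon^2$ after absorption. Alternatively, use $\Y_\nu(H)$ with weight $\nu'$ slightly above $\nu$ to dominate the $\Z_\nu$ norm.
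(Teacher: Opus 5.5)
Your proposal is correct. Steps~1 and~2 are the paper's argument: the $\Z_\nu$ absorption for $e$ via \eqref{eq:Jinitial}, \eqref{eq:JconvZ}, \eqref{eq:mixedfeedback} and \eqref{eq:disforced}, followed by \eqref{eq:JconvY} and the forced estimate applied to \eqref{eq:Cdmild}. For the cubic tail you take a genuinely different route.

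\textbf{The paper's route.} The paper never decomposes $\Cd_h-D_h$. It notes that $z\mapsto\Cd_h(z\phi)$ is holomorphic into $\Y_\nu(H)$, with vanishing constant and linear coefficients and quadratic coefficient $D_h(\phi)$. It applies \eqref{eq:energyfulldefect} to complex data on the circle $|z|=R_0$. Cauchy's estimates then give $\norm{a_{n,h}(\phi)}_{\Y_\nu(H)}\le Ch^{1/6}R_0^{2-n}$, and the geometric tail is summed on the ball of radius $R_0/2$.

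\textbf{Your route.} You use the exact identity
\[
\Cd_h-D_h=-L_h*F(u-v,u+v)+\QQ_h(\Cd_h,u+u_h)+\QQ_h(v-v_h,(u-v)+(u_h-v_h)).
\]
Each term is $O(h^{1/6}\varepsilon^3)$, as you claim. Term (a) uses a bilinear form of \eqref{eq:mixedsource}, which follows from Agmon's inequality and H\"older in time with exponents $3/2$ and $3$. Term (c) uses \eqref{eq:Jinitial} together with $\E_\nu(u-v)+\E_\nu(u_h-v_h)\le C\varepsilon^2$. You need not even cite the Taylor remainders for this last bound, since $u-v=\QQ(u,u)$ and $u_h-v_h=\QQ_h(u_h,u_h)$.

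\textbf{A correction in (b).} The parenthetical claim $\norm{\Cd_h}_{\Z_\nu}\le C\norm{\Cd_h}_{\Y_\nu(H)}$ at the same rate is false, because $\int_0^\infty1\,dt$ diverges. You also cannot route through $\E_\nu(\Cd_h)$, since Lemma~\ref{lem:energymixed} gives no $\E_\nu$ bound for $L_h*\N(u)$. Your final fix is the right one, and it needs no absorption:
\[
\norm{\Cd_h}_{\Z_\nu}\le\norm{L_h*\N(u)}_{\Z_\nu}+C\,\E_\nu(\QQ_h(e,m))\le Ch^{1/3}\varepsilon^2+Ch^{1/6}\varepsilon^2 .
\]
Here the first term is bounded by \eqref{eq:JconvZ}, and the passage from $\Z_\nu$ to $\E_\nu$ for the second term uses \eqref{eq:mixedembed}.

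\textbf{What each approach buys.} The paper's argument is two lines once holomorphy in the amplitude is available. It also bounds every individual higher coefficient $P_n(\phi)-P_{n,h}(\Pi_h\phi)$. It does, however, require the complex extension and halves the radius. Your argument is elementary and works with real data alone. It makes explicit which bilinear interaction carries the mesh factor: always one slot measured in $\Z_\nu$ or convolved with $L_h$, against two factors of size $\varepsilon$.
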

\begin{proof}
By Theorem~\ref{thm:continuous} and
Proposition~\ref{prop:uniformanalytic} there are a common complex ball
$B_H(0,r_{\rm an})$ and a constant $M_0$, independent of $h$, with
$\E_\nu(u)+\E_\nu(u_h)\le M_0\norm{u_0}_H$. For the discrete map first
apply the complex-linear contraction $\Pi_h$ to $u_0$, then include the
trajectory in the common space $\X_0$. Both maps are holomorphic with
this same Banach target. The discrete version of \eqref{eq:forced} gives
$\E_\nu(\QQ_h(y_1,y_2))\le C_F\norm{e^{\nu t}F(y_1,y_2)}_{L^2_tV_1^*}$
whenever the right side is finite. Here $F_h=\Pi_hF$ acts on a weak
source by restriction to $X_h$; its weak norm does not increase.

The identities \eqref{eq:splitting} and \eqref{eq:Cdmild} give
\begin{equation}\label{eq:fullid}
 e=-L_h(\cdot)u_0-L_h*\N(u)+2\QQ_h(e,m).
\end{equation}
These are identities of energy solutions; in particular
$e,m\in\X_0$ and $e\in\Z_\nu$ before any absorption. The source
estimate \eqref{eq:mixedsource} gives local $L^1_tH$ integrability, so the
mild integrals and the weak forced solutions coincide. By
Lemma~\ref{lem:energymixed},
\[
 \norm e_{\Z_\nu}\le C_1h^{1/6}\varepsilon+C_2h^{1/3}\varepsilon^2
 +C_3\varepsilon\norm e_{\Z_\nu}.
\]
Choose $0<R_0<r_{\rm an}$ with $C_3R_0\le1/2$. For complex
$\norm{u_0}_H\le R_0$, absorption and $h^{1/3}\le h^{1/6}$ give
\eqref{eq:energyfullZ}. Then \eqref{eq:JconvY} and the forced estimate in
\eqref{eq:Cdmild} give
\[
 \norm{\Cd_h(u_0)}_{\Y_\nu(H)}
 \le Ch^{1/6}\E_\nu(u)^2+C\norm e_{\Z_\nu}\E_\nu(m)
 \le C_4h^{1/6}\varepsilon^2 .
\]
All trajectories are $H$-continuous, so these estimates hold at every
time; at zero the ordering error vanishes.

It remains to estimate the Taylor tail. For a complex unit vector
$\phi\in H$, the function $z\mapsto\Cd_h(z\phi)$ is holomorphic with
values in $\Y_\nu(H)$ on a neighborhood of $|z|\le R_0$. Its constant
and linear coefficients vanish, and its quadratic coefficient is
$D_h(\phi)$. Write $\Cd_h(z\phi)=\sum_{n\ge2}z^na_{n,h}(\phi)$. The
preceding estimate holds on the circle $|z|=R_0$, so the Banach-valued
Cauchy formula gives
$\norm{a_{n,h}(\phi)}_{\Y_\nu(H)}\le C_4h^{1/6}R_0^{2-n}$ for $n\ge2$. Put
$r=R_0/2$. For $u_0=\varepsilon\phi$ with $0<\varepsilon\le r$,
\[
 \norm{\Cd_h(u_0)-D_h(u_0)}_{\Y_\nu(H)}
 \le C_4h^{1/6}R_0^2\sum_{n\ge3}(\varepsilon/R_0)^n
 \le\frac{2C_4}{R_0}h^{1/6}\varepsilon^3 .
\]
The constants and radii are independent of the direction, of the mesh and
of the number of Taylor terms.
\end{proof}

The rate $1/6$ is the $L^4$ size of an initial layer of width $h^{2/3}$.
The proof uses first-order consistency only for strong sources, in
$L^{4/3}_tH$, and only stability for weak sources.

\subsection{Proof of Theorems~\ref{thm:convergence} and~\ref{thm:higher}}
\label{sec:convproof}
Write $\mathcal Kg(t)=\int_0^tT(t-s)g(s)\,ds$ and
$\mathcal K_hg(t)=\int_0^tT_h(t-s)\Pi_hg(s)\,ds$ for the forced solutions
with zero initial value. For weak sources they are the energy solutions
of Sections~\ref{sec:analyticproof} and~\ref{sec:stability}. Let
$L^2_\nu(V_1^*)$ be the space of sources with
$\norm g_{L^2_\nu(V_1^*)}=\norm{e^{\nu t}g}_{L^2_tV_1^*}<\infty$. By
\eqref{eq:forced} and \eqref{eq:disforced}, and by the weak product bound
of Section~\ref{sec:analyticproof},
\begin{equation}\label{eq:Kbounds}
 \E_\nu(\mathcal Kg)+\E_\nu(\mathcal K_hg)\le C\norm g_{L^2_\nu(V_1^*)},
 \qquad
 \norm{F(w,\tilde w)}_{L^2_\nu(V_1^*)}\le C_B\E_\nu(w)\E_\nu(\tilde w).
\end{equation}

\emph{Step 1: linear maps.} Both linear flows are bounded from $H$ into
$\X_0$, and $\E_\nu(L_h(\cdot)g)\le Ch^2\norm{Ag}_H$ for $g\in D(A)$, by
Lemma~\ref{lem:DAenergy}. Since $D(A)$ is dense in $H$,
$\E_\nu(v-v_h)=\E_\nu(L_h(\cdot)u_0)\to0$ for every $u_0\in H$. The same
splitting applies to forced solutions. For $G\in C_c^\infty((0,\infty);D(A))$,
$(\mathcal K-\mathcal K_h)G(t)=-\int_0^tL_h(t-s)G(s)\,ds$. A time shift
by $s$ multiplies the weighted norm by $e^{\nu s}$, so Minkowski's
inequality and Lemma~\ref{lem:DAenergy} give
\[
 \E_\nu((\mathcal K-\mathcal K_h)G)
 \le Ch^2\int_0^\infty e^{\nu s}\norm{AG(s)}_H\,ds .
\]
Such sources are dense in $L^2_\nu(V_1^*)$: truncate and mollify in time,
then approximate in space, since $D(A)$ is dense in $H$ and $H$ is
dense in $V_1^*$. By the uniform bound \eqref{eq:Kbounds},
\begin{equation}\label{eq:forcedstrong}
 \E_\nu((\mathcal K-\mathcal K_h)G)\longrightarrow0\qquad
 \text{for each fixed }G\in L^2_\nu(V_1^*).
\end{equation}
This is convergence for each source, not in operator norm.

\emph{Step 2: convergence in the energy norm.} By
Theorem~\ref{thm:continuous} and Proposition~\ref{prop:uniformanalytic},
$\E_\nu(u)+\E_\nu(u_h)\le M_0\norm{u_0}_H$ for $\norm{u_0}_H\le r$.
Subtracting the two mild equations and using $\N(u)-\N(u_h)=2F(e,m)$,
\[
 e=-L_h(\cdot)u_0+(\mathcal K-\mathcal K_h)\N(u)+2\mathcal K_hF(e,m).
\]
By \eqref{eq:Kbounds}, the last term has energy norm at most
$CC_BM_0r\,\E_\nu(e)\le\frac12\E_\nu(e)$ for a small fixed $r$. Since
$\E_\nu(e)$ is finite,
\[
 \E_\nu(e)\le2\E_\nu(L_h(\cdot)u_0)
 +2\E_\nu((\mathcal K-\mathcal K_h)\N(u))\longrightarrow0,
\]
by Step 1 and \eqref{eq:forcedstrong} with the fixed source
$\N(u)\in L^2_\nu(V_1^*)$. This proves the convergence in (i). No error
estimate for the nonlinear problem has been used. The $L^4$ bound in (i)
follows from \eqref{eq:Jinitial} and \eqref{eq:energyfullZ}.

\emph{Step 3: Theorem~\ref{thm:higher}(i) and
Theorem~\ref{thm:convergence}(ii).} First let $g\in D(A)$ be real with
$\norm g_{V_1}\le r$; its $H^3$ norm may be large. Local strong existence
follows from the integrable $V_2\to V_3$ kernel in \eqref{eq:ctsmoothing}
and the local Lipschitz property of $\N:V_3\to V_2$. The existence time
may depend on $\norm g_{H^3}$. The constants in the bounds below do not.
Then
$u\in C^1([0,T_0];H)$ on the existence interval. Difference quotients in
time solve linear equations of the form in Lemma~\ref{lem:perturb}.
That lemma, applied to differences of quotients, shows that $q=\dot u$
is the energy solution of $\dot q=Aq+2F(u,q)$ with $q(0)=Ag+\N(g)$. The energy inequality gives
$\norm{\N(g)}_H\le C|g|_1^2\le C\norm g_H\norm{Ag}_H$, so
$\norm{q(0)}_H\le C\norm g_{H^3}$. Lemma~\ref{lem:perturb}, with
$K_u$ controlled by the energy bound, gives
$\E_\nu(q)\le C\norm g_{H^3}$. From $Bu=\N(u)-q$ and
$\norm{\N(u)}_H\le C|u|_1^2\le C\norm u_H\norm{Bu}_H$, smallness of
$\norm u_H$ gives $\norm{u(t)}_{H^3}\le C\norm{q(t)}_H$. In one dimension
$\norm{\N(u)}_{V_1}\le C\norm u_{H^1}\norm u_{H^2}
\le C\norm u_H\norm u_{H^3}$, and $\N(u)$ vanishes at both ends. Hence
$B^{4/3}u=B^{1/3}(\N(u)-q)$ satisfies
$\norm{e^{\nu t}B^{4/3}u}_{L^2_tH}\le C\norm g_{H^3}$, and
\begin{equation}\label{eq:nonlinH4}
 \sup_{t\ge0}e^{\nu t}\norm{u(t)}_{H^3}
 +\norm{e^{\nu t}u}_{L^2_tH^4}\le C\norm g_{H^3}.
\end{equation}
Now write $\zeta=u-\Pi_hu$ and $y_h=\Pi_hu-u_h$. Then
$\dot y_h=A_hy_h+2F_h(m,y_h)+d_h(u)+2F_h(m,\zeta)$, $y_h(0)=0$. By
\eqref{eq:approx} and \eqref{eq:nonlinH4},
$\E_\nu(\zeta)\le Ch^3\norm g_{H^3}$. By \eqref{eq:residual} with $s=4$,
$\norm{e^{\nu t}d_h(u)}_{L^2_t(-1,h)}\le Ch^2\norm g_{H^3}$. By
\eqref{eq:prodenergy},
$\norm{e^{\nu t}F_h(m,\zeta)}_{L^2_t(-1,h)}\le
C\sup_te^{\nu t}\norm\zeta_H\norm{|m|_1}_{L^2_t}\le Ch^3\norm g_{H^3}$.
Lemma~\ref{lem:perturb} gives
\begin{equation}\label{eq:DAnonlinear}
 \E_\nu(u-u_h)\le Ch^2\norm g_{H^3}.
\end{equation}
The $H^3$ bounds also prevent loss of strong existence at a finite time.
With Lemma~\ref{lem:DAenergy} for $v-v_h$, this is
Theorem~\ref{thm:higher}(i).

For $u_0\in V_1$ with $\norm{u_0}_{V_1}\le r$ and $Q\ge1$, put
$g_Q=Q(Q+B)^{-1}u_0$. Commutation and the moment inequality give
\[
 \norm{g_Q}_{V_1}\le\norm{u_0}_{V_1},\qquad
 \norm{u_0-g_Q}_H\le CQ^{-1/3}\norm{u_0}_{V_1},\qquad
 \norm{g_Q}_{H^3}\le CQ^{2/3}\norm{u_0}_{V_1},
\]
since $u_0-g_Q=B^{2/3}(Q+B)^{-1}B^{1/3}u_0$. On the small energy ball the
continuous and discrete solution maps are Lipschitz in $\E_\nu$, by
Lemma~\ref{lem:perturb} applied to the difference of two solutions, and
$\Pi_h$ is a contraction in $H$. Hence, by \eqref{eq:DAnonlinear} for
$g_Q$,
\[
 \E_\nu(u-u_h)\le C\big(Q^{-1/3}+h^2Q^{2/3}\big)\norm{u_0}_{V_1}.
\]
The choice $Q=h^{-2}$ gives $\E_\nu(u-u_h)\le Ch^{2/3}\norm{u_0}_{V_1}$,
and \eqref{eq:Erate} with $s=1$ gives the same bound for $v-v_h$. Since
$\E_\nu$ dominates the weighted $L^2_tH^1$ norm, this proves
Theorem~\ref{thm:convergence}(ii).

\emph{Step 4: Theorem~\ref{thm:higher}(ii).} Let
$\varepsilon=\norm{u_0}_{V_{3/2}}\le r$ and put
$\nu_2=\nu+\frac23(\lambda-\nu)$, so that $\nu<\nu_2<\lambda$. By
Theorem~\ref{thm:continuous} with $\theta=3/2$,
$\norm{u(t)}_{H^1}\le C\varepsilon e^{-\nu_2t}$. For the discrete
solution, \eqref{eq:halfstab} gives
$\norm{T_h(t)\Pi_hu_0}_{H^1}\le Ce^{-\lambda t}\varepsilon$, and
\eqref{eq:cpH1} gives $\norm{T_h(t)\Pi_hg}_{H^1}\le
Ct_\wedge^{-3/4}e^{-\lambda t}\norm g_H$. Since
$\norm{\N(w)}_H\le C\norm w_{H^1}^2$, the mild equation is a contraction
in $\Y_{\nu_2}(H^1)$ on a small ball. Its fixed point is $u_h$, and
$\norm{u_h(t)}_{H^1}\le C\varepsilon e^{-\nu_2t}$.

As in Step 2,
$e=-L_h(\cdot)u_0+(\mathcal K-\mathcal K_h)\N(u)+2\mathcal K_hF(e,m)$. In
$\Y_\nu(H^1)$ the last term is at most
$C\varepsilon\norm e_{\Y_\nu(H^1)}$, by \eqref{eq:cpH1} and
$\norm{F(e,m)}_H\le C\norm e_{H^1}\norm m_{H^1}$; it is absorbed. The
second term is at most
$\norm{\Lambda_h^{(1)}}_{L^1}\sup_se^{\nu s}\norm{\N(u(s))}_H\le
Ch^{1/6}\varepsilon^2$, by \eqref{eq:kernels}. For the first term,
$\norm{L_h(t)g}_{H^1}\le Ch^{3/2}e^{-\nu t}\norm{A^2g}_H$ for
$g\in D(A^2)$ by Lemma~\ref{lem:smoothlinear}, and
$\norm{L_h(t)(u_0-g)}_{H^1}\le Ce^{-\lambda t}\norm{u_0-g}_{V_{3/2}}$ by
\eqref{eq:halfstab}. The functions $g_n=[n(n+B)^{-1}]^2u_0$ lie in
$D(A^2)$ and tend to $u_0$ in $V_{3/2}=D(B^{1/2})$: the operators
$n(n+B)^{-1}$ are uniformly bounded, commute with $B^{1/2}$ and tend
strongly to the identity. Taking first $n$ large and then $h$ small gives
$\norm{L_h(\cdot)u_0}_{\Y_\nu(H^1)}\to0$. Hence
$\norm e_{\Y_\nu(H^1)}\to0$. The first term is $v-v_h$, so the linear
error converges as well.

For the ordering error, \eqref{eq:Cdmild} and $e=-L_h(\cdot)u_0+\Cd_h$
give
\[
 \norm{\Cd_h}_{\Y_\nu(H^1)}\le Ch^{1/6}\varepsilon^2
 +C\varepsilon\norm{\Cd_h}_{\Y_\nu(H^1)}
 +2\norm{\mathcal K_hF(L_h(\cdot)u_0,m)}_{\Y_\nu(H^1)} .
\]
By \eqref{eq:halfstab}, continuous stability, and \eqref{eq:physH1}
with regularity index $3/2$,
\[
 \norm{L_h(s)u_0}_{H^1}
 \le C\phi_h(s)e^{-\nu s}\varepsilon,\qquad
 \phi_h(s)=\min\{1,h^2s_\wedge^{-7/4}\}.
\]
Put $c=\lambda-\nu>0$ and $c'=\nu_2>0$. The bound on $m$, the $H^1$
product estimate and \eqref{eq:cpH1} give
\[
 e^{\nu t}\norm{\mathcal K_hF(L_h(\cdot)u_0,m)(t)}_{H^1}
 \le C\varepsilon^2 I_h(t),\qquad
 I_h(t)=\int_0^t(t-s)_\wedge^{-3/4}e^{-c(t-s)}\phi_h(s)e^{-c's}\,ds .
\]
The two exponential factors are kept on the whole half-line. Set
$\ell=h^{8/7}\le1$. Then $h^2\ell^{-7/4}=1$, $h^2\ell^{-3/4}=\ell$ and
$h^2\ell^{-3/2}=\ell^{1/4}$. If $0<t\le2\ell$, then $t\le2$ and
\[
 I_h(t)\le C\int_0^t(t-s)^{-3/4}\,ds\le Ct^{1/4}\le C\ell^{1/4}.
\]
If $2\ell<t\le1$, we split at $\ell$ and $t/2$. The three contributions
are bounded respectively by
\[
 C\ell t^{-3/4},\qquad
 Ch^2t^{-3/4}\int_\ell^{t/2}s^{-7/4}\,ds\le C\ell t^{-3/4},\qquad
 Ch^2t^{-7/4}\int_{t/2}^t(t-s)^{-3/4}\,ds\le Ch^2t^{-3/2}.
\]
Since $t>2\ell$, each is at most $C\ell^{1/4}$. For the remaining times,
note first that
\[
 \int_0^\infty\phi_h(s)e^{-c's}\,ds
 \le C\Big(\ell+h^2\int_\ell^1s^{-7/4}\,ds+h^2\Big)\le C\ell .
\]
For $1<t<2$, we split at $s=1/2$. On the early part $t-s\ge1/2$, so the
contribution is at most $C\ell$. On the late part
$\phi_h(s)\le2^{7/4}h^2$, and the integrability of
$r_\wedge^{-3/4}e^{-cr}$ bounds the contribution by $Ch^2$. For $t\ge2$,
we split at $s=t/2$. Since $t-s\ge1$ on the early part and $s\ge1$ on
the late part,
\[
 I_h(t)\le C\ell e^{-ct/2}
 +h^2e^{-c't/2}\int_0^\infty r_\wedge^{-3/4}e^{-cr}\,dr
 \le C\ell^{1/4}.
\]
Consequently
\[
 \sup_{t>0}I_h(t)\le C\ell^{1/4}=Ch^{2/7}.
\]
Before absorption, the norm of $\Cd_h$ in $\Y_\nu(H^1)$ is finite: the
bounds already proved put $u,u_h,v,v_h$ in this space, and
$\Cd_h=(u-v)-(u_h-v_h)$. Thus the preceding inequality becomes
\[
 \norm{\Cd_h}_{\Y_\nu(H^1)}
 \le C(h^{1/6}+h^{2/7})\varepsilon^2
 +C\varepsilon\norm{\Cd_h}_{\Y_\nu(H^1)} .
\]
A fixed reduction of the radius makes $C\varepsilon\le1/2$. Since
$h^{2/7}\le h^{1/6}$ for $0<h\le1$, this proves
$\norm{\Cd_h}_{\Y_\nu(H^1)}\le Ch^{1/6}\varepsilon^2$. The trajectories
are continuous in $H^1$ and $\Cd_h(0)=0$, so the bound includes
$t=0$.\hfill$\square$

\subsection{\texorpdfstring{The norms of $H_0^1$ and $V_2$}{The norms of H\_0\textasciicircum 1 and V\_2}}\label{sec:strong}
Put $d=\lambda-\nu>0$, $\nu_1=\nu+d/3$ and $\nu_2=\nu+2d/3$, so that
$\nu<\nu_1<\nu_2<\lambda$. For $s=1,2$ put
$\varepsilon_s=\norm{u_0}_{V_s}$, and assume $\varepsilon_s\le r$ with a
radius $r$ that may be reduced finitely many times, each time depending
only on $L,\kappa$ and the fixed rates. By Theorem~\ref{thm:continuous}
and Proposition~\ref{prop:uniformanalytic},
\begin{equation}\label{eq:cts-strong}
 \norm{u(t)}_{V_s}\le C\varepsilon_se^{-\nu_2t},\qquad
 \E_{\nu_2}(u)+\E_{\nu_2}(u_h)\le C\varepsilon_s .
\end{equation}
Time derivatives are written with dots. Introduce the discrete linear
flow driven by the continuous nonlinearity,
\[
 \tilde u_h=v_h+\int_0^tT_h(t-s)\Pi_h\N(u(s))\,ds,\qquad
 \tilde e=\tilde u_h-u=L_h(\cdot)u_0+S_h,\qquad S_h=L_h*\N(u),
\]
and put $y=u_h-\tilde u_h$. Then $e=-\tilde e-y$, $\Cd_h=-S_h-y$, and
\begin{equation}\label{eq:yequation}
 y_t=A_hy+2F_h(m,y)+2F_h(m,\tilde e),\qquad y(0)=0 .
\end{equation}

\begin{lemma}[A comparison in the energy norm]\label{lem:prelim}
For $s=1,2$,
\[
 \sup_{t\ge0}e^{\nu_1t}\norm{e(t)}_H\le Ch^{2/3}\varepsilon_s,\qquad
 \sup_{t\ge0}e^{\nu_1t}\norm{\Cd_h(t)}_H\le Ch^{2/3}\varepsilon_s^2 .
\]
\end{lemma}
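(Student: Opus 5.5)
The plan is to use the splitting just introduced, $e=-\tilde e-y$ and $\Cd_h=-S_h-y$ with $\tilde e=L_h(\cdot)u_0+S_h$. We bound the three pieces $L_h(\cdot)u_0$, $S_h$ and $y$ separately in the weighted norm $\sup_te^{\nu_1t}\norm{\cdot}_H$. The linear piece is linear in $u_0$. The other two pieces are quadratic, since they are driven by $\N(u)$ or by $F_h(m,\tilde e)$ with $m$ of size $\varepsilon_s$. Throughout we use $\varepsilon_1\le C\varepsilon_2$, so it suffices to bound everything by $\norm{u_0}_{V_1}$, and we write $\varepsilon$ for $\varepsilon_s$. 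Since $\nu_1<\lambda$, all results of Section~\ref{sec:linearerror} may be applied with $\nu_1$ in place of $\nu$, at the cost of constants depending on $d$.

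\emph{Linear piece.} By \eqref{eq:linear-H} with rate $\nu_1$, $\sup_te^{\nu_1t}\norm{L_h(t)u_0}_H\le Ch^{2s/3}\varepsilon_s\le Ch^{2/3}\varepsilon$.

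\emph{Convolution piece.} For $S_h=L_h*\N(u)$ we use the strong source bound from \eqref{eq:prodenergy}, namely $\norm{\N(u(s))}_H\le C|u(s)|_1^2\le C\varepsilon^2e^{-2\nu_2s}$, which holds by \eqref{eq:cts-strong} with $V_1\hookrightarrow H_0^1$. Together with the kernel bound \eqref{eq:kernels} at $p_0=1$ (rate $\nu_1$), Young's inequality in time gives
\[
 e^{\nu_1t}\norm{S_h(t)}_H\le\norm{\Lambda_h}_{L^1}\sup_se^{\nu_1s}\norm{\N(u(s))}_H\le Ch^{2/3}\varepsilon^2 .
\]
Hence $\sup_te^{\nu_1t}\norm{\tilde e(t)}_H\le Ch^{2/3}\varepsilon$.

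\emph{Feedback piece.} For $y$ we apply Lemma~\ref{lem:perturb} to \eqref{eq:yequation} in the discrete case, with $\sigma=\nu_1$, $y(0)=0$ and the real trajectory $m=\frac12(u+u_h)$. By \eqref{eq:cts-strong}, $\E_{\nu_2}(m)\le C\varepsilon$, so $K_m\le C$ and $e^{CK_m}$ is bounded uniformly in $h$ once $r$ is small. For the source, \eqref{eq:prodenergy} (valid for $F_h$ with $\norm\cdot_{-1,h}$) gives
\[
 \norm{e^{\nu_1t}F_h(m,\tilde e)}_{L^2_t(-1,h)}\le C\sup_te^{\nu_1t}\norm{\tilde e(t)}_H\Big(\int_0^\infty|m|_1^2dt\Big)^{1/2}\le Ch^{2/3}\varepsilon^2 .
\]
Thus $\E_{\nu_1}(y)\le Ch^{2/3}\varepsilon^2$. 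Combining the three pieces yields both claimed bounds: $e=-\tilde e-y$ is $O(h^{2/3}\varepsilon)$, and $\Cd_h=-S_h-y$ is $O(h^{2/3}\varepsilon^2)$. The cancellation of the linear piece in the second bound is exactly \eqref{eq:Cdmild}.

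\emph{Main obstacle.} The delicate step is justifying Lemma~\ref{lem:perturb} for $y$ with a source that is only a weak $V_1^*$-type functional. Two points need care. First, $\tilde e$ must be a genuine energy trajectory, so that \eqref{eq:yequation} holds in the sense of energy solutions. Second, the weight placement must stay consistent: the whole exponential weight sits on $\tilde e$, and only the unweighted $L^2_tH^1$ norm of $m$ is used. If the regularity of $\tilde e$ is in doubt, I would first run the argument for data in $D(A)$, where all four solutions are strong, and then pass to the limit using the uniform Lipschitz dependence on the energy ball (Lemma~\ref{lem:perturb} applied to differences).
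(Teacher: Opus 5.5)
Your proposal is correct and follows the paper's proof essentially step for step. You use the same split $e=-\tilde e-y$, $\Cd_h=-S_h-y$; the bound \eqref{eq:linear-H} at rate $\nu_1$ for $L_h(\cdot)u_0$; the $L^1$ kernel bound \eqref{eq:kernels} for $S_h$; and, for $y$, Lemma~\ref{lem:perturb} at rate $\nu_1$ with $K_m\le C$ and the weak product bound \eqref{eq:prodenergy}, placing the whole weight on $\tilde e$. The paper simply treats \eqref{eq:yequation} as an identity of energy solutions, so your fallback approximation through data in $D(A)$ is extra caution rather than a needed step.
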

\begin{proof}
By \eqref{eq:cts-strong},
$\norm{\N(u(s))}_H\le C\varepsilon_s^2e^{-2\nu_2s}$. Lemma~\ref{lem:linear}
at the rate $\nu_1$ gives
$\sup_te^{\nu_1t}\norm{S_h(t)}_H\le\norm{\Lambda_h}_{L^1}
\sup_se^{\nu_1s}\norm{\N(u(s))}_H\le Ch^{2/3}\varepsilon_s^2$, and
$\sup_te^{\nu_1t}\norm{L_h(t)u_0}_H\le Ch^{2s/3}\varepsilon_s$. Hence
$\sup_te^{\nu_1t}\norm{\tilde e}_H\le C(h^{2s/3}\varepsilon_s+
h^{2/3}\varepsilon_s^2)\le Ch^{2/3}\varepsilon_s$. By \eqref{eq:cts-strong} and
Lemma~\ref{lem:perturb}, $K_m\le C$. The weak product bound in
\eqref{eq:prodenergy} and Lemma~\ref{lem:perturb} at the rate $\nu_1$
give
\[
 \E_{\nu_1}(y)\le C\Big(\int_0^\infty e^{2\nu_1t}|m|_1^2
 \norm{\tilde e}_H^2\,dt\Big)^{1/2}
 \le C\varepsilon_s\sup_te^{\nu_1t}\norm{\tilde e(t)}_H .
\]
Since $e=-\tilde e-y$ and $\Cd_h=-S_h-y$, the claim follows.
\end{proof}

\begin{lemma}[Discrete $H^1$ stability at positive times]\label{lem:H1stable}
If $\varepsilon_1$ is sufficiently small, then
\[
 \norm{u_h(t)}_{H^1}\le C\varepsilon_1t_\wedge^{-1/2}e^{-\nu_1t},\qquad
 t>0 .
\]
\end{lemma}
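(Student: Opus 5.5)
We will prove the bound by a fixed point argument for the discrete mild equation in a time-weighted space, using the mesh-uniform smoothing of Theorem~\ref{thm:cp}. Set
\[
 \norm w_{\mathcal W}=\sup_{t>0}t_\wedge^{1/2}e^{\nu_1t}\norm{w(t)}_{H^1},
\]
and write $u_h=T_h(\cdot)\Pi_hu_0+\QQ_h(u_h,u_h)$. By uniqueness of small discrete solutions (Proposition~\ref{prop:uniformanalytic} and Proposition~\ref{prop:nonlinstab}), it suffices to show that this map is a contraction on a small ball of $\mathcal W$. For the linear term we use the mesh-uniform $H^1$ bound of Lemma~\ref{lem:halfbridge} only at regularity index $3/2$, which we do not have. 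We therefore interpolate instead. On $X_h$, \eqref{eq:disenergy} gives $|x|_1^2\le a^{-1}\norm{B_hx}_H\norm x_H$. Applying this to $x=T_h(t)\Pi_hu_0$, with \eqref{eq:oneway} at $\alpha=1/3$ and \eqref{eq:cp} at $\alpha=2/3$ (via $B_hT_h=B_h^{2/3}T_hB_h^{1/3}$), yields
\[
 |T_h(t)\Pi_hu_0|_1^2\le C t_\wedge^{-1}e^{-2\lambda t}\norm{B^{1/3}u_0}_H\,\norm{u_0}_H .
\]
Hence $|T_h(t)\Pi_hu_0|_1\le Ct_\wedge^{-1/2}e^{-\lambda t}\varepsilon_1$, which is exactly the claimed weight.

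For the nonlinear term we use \eqref{eq:cpH1} together with the bound $\norm{F(f,g)}_H\le C|f|_1|g|_1$ from \eqref{eq:prodenergy}:
\[
 e^{\nu_1t}|\QQ_h(w,\tilde w)(t)|_1\le C\int_0^t(t-s)_\wedge^{-3/4}e^{-(\lambda-\nu_1)(t-s)}e^{-\nu_1s}s_\wedge^{-1}\,ds\,\norm w_{\mathcal W}\norm{\tilde w}_{\mathcal W}.
\]
This is the main obstacle: $s^{-1}$ is not integrable at $0$, so the naive argument fails. We remove the singularity by not putting both factors in $\mathcal W$. We use the mixed bound $\norm{F(f,g)}_H\le C\norm f_\infty|g|_1+C|f|_1\norm g_\infty$, with Agmon's inequality $\norm f_\infty\le C\norm f_H^{1/2}|f|_1^{1/2}$, and the energy decay $\norm{u_h(s)}_H\le C\varepsilon_1e^{-\nu s}$ from Proposition~\ref{prop:nonlinstab}. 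Then $\norm{\N(u_h(s))}_H\le C\varepsilon_1^{1/2}|u_h|_1^{3/2}$, which is of size $s_\wedge^{-3/4}$. The convolution of $(t-s)^{-3/4}$ with $s^{-3/4}$ gives $t^{-1/2}$, which matches the weight exactly.

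The argument therefore runs in the space $\mathcal W$ intersected with the energy ball $\{\E_\nu(w)\le C\varepsilon_1\}$. Both components are preserved, the energy part by \eqref{eq:disforced} and \eqref{eq:lowbilinear}. The Lipschitz constant of the map is $C\varepsilon_1^{1/2}\cdot$(ball radius)$^{1/2}$ plus a corresponding energy term, so it is small for small $\varepsilon_1$. For large times we split the integral at $t/2$ and keep the exponential factors, using $\nu_1<\nu_2<\lambda$ as in the earlier kernel computations. Uniqueness then identifies the fixed point with $u_h$. Alternatively, we can argue a priori: $u_h$ is smooth in time on $X_h$, so $\phi(t)=\sup_{s\le t}s_\wedge^{1/2}e^{\nu_1s}|u_h(s)|_1$ is finite, and the inequality $\phi\le C\varepsilon_1+C\varepsilon_1^{1/2}\phi^{3/2}$ closes by continuity, starting from $\phi(0)=0$. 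This a priori route is the one we would write out.
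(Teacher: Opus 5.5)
Your proposal is correct, and the a priori route you say you would write out is essentially the paper's proof. It uses the same ingredients: the linear part via \eqref{eq:oneway}, \eqref{eq:cp} and the discrete energy inequality; the source bounded through Agmon's inequality by $C\varepsilon_1^{1/2}|u_h|_1^{3/2}\sim s_\wedge^{-3/4}$; the beta integral $\int_0^t(t-s)^{-3/4}s^{-3/4}\,ds=Ct^{-1/2}$ against the kernel \eqref{eq:cpH1}; and a first-crossing argument starting at zero because $X_h$ is finite dimensional. The fixed-point variant in $\mathcal W$ intersected with an energy ball would also work, but it is not needed.
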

\begin{proof}
By \eqref{eq:oneway}, the moment inequality and \eqref{eq:cp},
\[
 \norm{B_hT_h(t)\Pi_hu_0}_H\le\norm{B_h^{2/3}T_h(t)}
 \norm{B_h^{1/3}\Pi_hu_0}_H\le Ct_\wedge^{-1}e^{-\lambda t}\varepsilon_1 .
\]
The energy inequality in \eqref{eq:disenergy} therefore bounds the linear
solution in $H^1$ by $Ct_\wedge^{-1/2}e^{-\lambda t}\varepsilon_1$. Agmon's
inequality gives $\norm{\N(w)}_H\le C\norm w_H^{1/2}\norm w_{H^1}^{3/2}$.
For a finite horizon $T_0$ put
$M_1=\sup_{0<t\le T_0}e^{\nu_1t}t_\wedge^{1/2}\norm{u_h(t)}_{H^1}$. The mild
equation, \eqref{eq:cpH1} and \eqref{eq:cts-strong} give
\[
 M_1\le C\varepsilon_1+C\varepsilon_1^{1/2}M_1^{3/2}.
\]
Near zero the convolution used is
$\int_0^t(t-s)^{-3/4}s^{-3/4}\,ds=\mathrm B(\frac14,\frac14)t^{-1/2}$.
For each fixed mesh the weighted quantity starts at zero as
$t\downarrow0$, since $X_h$ is finite dimensional. A first-crossing
argument at $M_1=2C\varepsilon_1$, with $\varepsilon_1$ small, gives the
bound on every horizon.
\end{proof}

\emph{Proof of the estimates at $\theta=1$.} For $e=u-u_h$, Agmon's
inequality, \eqref{eq:cts-strong} and Lemma~\ref{lem:H1stable} give
\begin{equation}\label{eq:strongproduct}
 \norm{F(e,m)}_H\le C\varepsilon_1e^{-\nu_1s}
 \big\{s_\wedge^{-1/4}\norm e_{H^1}+s_\wedge^{-3/4}\norm e_H\big\}.
\end{equation}
Indeed $\norm{e'm}_2\le\norm m_\infty\norm{e'}_2$ with
$\norm m_\infty\le C\varepsilon_1s_\wedge^{-1/4}e^{-\nu_1s}$, and
\[
 \norm{em'}_2\le\norm e_\infty\norm{m'}_2\le C\varepsilon_1
 s_\wedge^{-1/2}e^{-\nu_1s}\norm e_H^{1/2}\norm e_{H^1}^{1/2},
\]
to which Young's inequality with these time weights applies.

By \eqref{eq:Cdmild}, $\Cd_h=-S_h+2\QQ_h(e,m)$. By \eqref{eq:kernels},
\[
 \sup_te^{\nu t}\norm{S_h(t)}_{H^1}\le\norm{\Lambda_h^{(1)}}_{L^1}
 \sup_se^{\nu s}\norm{\N(u(s))}_H\le Ch^{1/6}\varepsilon_1^2 .
\]
In
$\QQ_h(e,m)$ use \eqref{eq:cpH1} and \eqref{eq:strongproduct}, and insert
$e=-L_h(\cdot)u_0+\Cd_h$ into the first term. By \eqref{eq:linear-H1} and
Lemma~\ref{lem:prelim}, the two known contributions involve the
convolutions
\[
 \int_0^t(t-s)^{-3/4}s^{-7/8}\,ds=Ct^{-5/8},\qquad
 \int_0^t(t-s)^{-3/4}s^{-3/4}\,ds=Ct^{-1/2},
\]
with the factors $h^{1/4}\varepsilon_1^2$ and $h^{2/3}\varepsilon_1^2$.
Both endpoint singularities are integrable. The quantity
\[
 X_1=\sup_{0<t\le T_0}e^{\nu t}t_\wedge^{5/8}\norm{\Cd_h(t)}_{H^1}
\]
is finite for each mesh, and
\[
 X_1\le Ch^{1/6}\varepsilon_1^2+C\varepsilon_1X_1 ,
\]
since $h^{1/4},h^{2/3}\le h^{1/6}$. A fixed small radius absorbs the last
term. This proves \eqref{eq:sn-ordering} at $\theta=1$, and
\eqref{eq:linear-H1} with \eqref{eq:physH1} gives \eqref{eq:sn-full}. Finally
$\norm{\N(u_h(s))}_H\le C\varepsilon_1^2s_\wedge^{-3/4}e^{-2\nu_1s}$ by
Lemma~\ref{lem:H1stable}. Its convolution with the $H\to H^1$ kernel
gives $\norm{z_h(t)}_{H^1}\le C\varepsilon_1^2t_\wedge^{-1/2}e^{-\nu t}$.
The bound for the continuous share $u-v$ is nonsingular, by
\eqref{eq:continuous}. This proves \eqref{eq:sn-horizontal} at
$\theta=1$.\hfill$\square$

\smallskip
For $\theta=2$ put $\varepsilon=\varepsilon_2$.

\begin{lemma}[Time derivatives]\label{lem:time}
On a sufficiently small fixed ball of $V_2$, for $t>0$ and for both the
continuous and the discrete solutions,
\begin{align}
 \norm{\dot u_\bullet(t)}_H+\norm{u_\bullet(t)}_{H^2}
       &\le C\varepsilon t_\wedge^{-1/2}e^{-\nu_1t},\label{eq:q0}\\
 \norm{\dot z_\bullet(t)}_H+\norm{z_\bullet(t)}_{H^2}
       &\le C\varepsilon^2t_\wedge^{-1/2}e^{-\nu_1t},\label{eq:qz0}\\
 \norm{\dot u_\bullet(t)}_{H^1}
       &\le C\varepsilon t_\wedge^{-2}e^{-\nu_1t},\label{eq:q1}\\
 \norm{\dot z_\bullet(t)}_{H^1}
       &\le C\varepsilon^2t_\wedge^{-2}e^{-\nu_1t}.\label{eq:qz1}
\end{align}
For the continuous nonlinear correction,
\begin{equation}\label{eq:zregularity}
 \norm{z(t)}_{H^3}\le C\varepsilon^2t_\wedge^{-1/2}e^{-\nu_1t},
 \qquad
 \norm{z(t)}_{H^4}\le C\varepsilon^2t_\wedge^{-2}e^{-\nu_1t}.
\end{equation}
\end{lemma}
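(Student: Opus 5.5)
We work with the time derivative $q_\bullet=\dot u_\bullet$, which solves the linearized equation $\dot q=A_\bullet q+2F_\bullet(u_\bullet,q)$. The initial value $q(0)=Au_0+\N(u_0)$ (respectively $A_h\Pi_hu_0+F_h(\Pi_hu_0,\Pi_hu_0)$) lies only in $V_2^*$-type spaces, so we do not start from $t=0$. Instead we use the mild representation and the smoothing of $T_\bullet$.

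\emph{Linear part.} For the linear flows, $\dot v=AT(t)u_0=-B^{1/3}T(t)B^{2/3}u_0$. Theorem~\ref{thm:ctcp}(ii) gives $\norm{\dot v(t)}_H\le Ct_\wedge^{-1/2}e^{-\lambda t}\varepsilon$ and $\norm{B^{4/3}T(t)B^{-2/3}}\le Ct_\wedge^{-1}$, hence $H^1$ norms of order $t_\wedge^{-1}$ at most. In the discrete case we use \eqref{eq:oneway} with $\alpha=2/3$ and \eqref{eq:cp} with $\alpha=1/3$, then \eqref{eq:disenergy} for $H^1$. The stated weight $t_\wedge^{-2}$ in \eqref{eq:q1} leaves room.

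\emph{Nonlinear part.} Write $\dot u_\bullet=\dot v_\bullet+\dot z_\bullet$. Differentiate the Duhamel term $z_\bullet=\int_0^tT_\bullet(t-s)F_\bullet(u,u)\,ds$ in the form
\[
\dot z(t)=T_\bullet(t)F_\bullet(u_0,u_0)+\int_0^tT_\bullet(t-s)\,2F_\bullet(u,\dot u)(s)\,ds .
\]
Set $M(T_0)=\sup_{t\le T_0}e^{\nu_1t}t_\wedge^{1/2}(\norm{\dot u_\bullet}_H+\norm{u_\bullet}_{H^2})$. The first term is bounded by $C\varepsilon^2$ via \eqref{eq:product} at $\theta=2$. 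The integral, estimated in $V_1^*$ with $\norm{F(u,\dot u)}_{V_1^*}\le C|u|_1\norm{\dot u}_H$ and \eqref{eq:weaksmoothing}, is bounded by $C\varepsilon\int_0^t(t-s)_\wedge^{-3/4}s_\wedge^{-1/2}ds\,M\le C\varepsilon t_\wedge^{-1/4}M$. For the discrete solution we need $|u_h|_1\le C\varepsilon$, which follows from Lemma~\ref{lem:halfbridge} on the linear part and a contraction for the rest. The $H^2$ bound then comes from $B_\bullet u=\N_\bullet(u)-\dot u$, using graph recovery \eqref{eq:graph} in the discrete case and elliptic regularity of $B$ in the continuous case, after absorbing $\norm{\N(u)}_H\le C\varepsilon\norm u_{H^2}$. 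A first-crossing argument, using finite dimensionality for the discrete case and approximation by $D(A)$ data with \eqref{eq:nonlinH4} for the continuous case, closes $M\le C\varepsilon$ uniformly in $T_0$. This gives \eqref{eq:q0}. For \eqref{eq:qz0} the same estimate applies to $\dot z$: its source terms are all quadratic, so the bound is $C\varepsilon^2$, and $\norm{z}_{H^2}$ follows from $B_\bullet z=\N_\bullet(u)-\dot z$.

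\emph{$H^1$ bounds.} For \eqref{eq:q1} and \eqref{eq:qz1} we bootstrap once more. Split at $t/2$. On the late part use the $H\to H^1$ kernel \eqref{eq:cpH1} with $\norm{F(u,\dot u)}_H\le C\norm u_{H^2}\norm{\dot u}_{H^1}$. On the early part use $|T_\bullet(t/2)\cdot|_1\le Ct^{-3/4}\norm\cdot_H$ applied to $\dot z(t/2)$. The weights $t^{-2}$ are generous and absorb the resulting singularities $\int(t-s)^{-3/4}s^{-1/2-2}$ on $[t/2,t]$. The initial term $T_\bullet(t)F(u_0,u_0)$ costs $t^{-3/4}$ in $H^1$ from $H$.

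\emph{Continuous regularity \eqref{eq:zregularity}.} Use $Bz=\N(u)-\dot z$. The $H^3$ norm comes from the $H$ bound on $\dot z$ and $\norm{\N(u)}_H\le C\varepsilon^2t_\wedge^{-1/2}$. For $H^4$ we need $\norm{B^{4/3}z}\le C(\norm{\N(u)}_{V_1}+\norm{\dot z}_{H^1})$, and $\N(u)$ vanishes at both ends. Since $\norm{\N(u)}_{V_1}\le C\norm u_{H^1}\norm u_{H^2}$ with $\norm u_{H^3}\le C\norm{\dot u}_H$, we get $\norm{\N(u)}_{V_1}\le C\varepsilon^2t_\wedge^{-1/2}$. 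The $t_\wedge^{-2}$ then comes from \eqref{eq:qz1}.

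The main obstacle is justifying the differentiated mild formula and the bootstrap near $t=0$ uniformly on the ball. The data are only in $V_2$, so $\dot u(0)\notin H$ and the singular weight must be built into $M$ from the start. For the continuous problem we would first prove everything for $D(A)$ data, where Step~3 of Section~\ref{sec:convproof} gives $\dot u$ as an energy solution, and then pass to the limit using bounds that depend only on $\varepsilon$.
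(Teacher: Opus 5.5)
Your treatment of \eqref{eq:q0}, \eqref{eq:qz0} and \eqref{eq:zregularity} is sound. It differs from the paper mainly in one ingredient. You use the uniform bound $|u_\bullet(t)|_1\le C\varepsilon e^{-\nu_2t}$, which comes from Lemma~\ref{lem:halfbridge} and the contraction in Step~4 of Section~\ref{sec:convproof}. That makes the absorption linear in the bootstrap quantity. The paper instead uses $|u_h|_1\le C\norm{u_h}_H^{1/2}\norm{\dot u_h}_H^{1/2}$ and closes a superlinear inequality by a first-crossing argument. One small point: for the scheme the initial nonlinear term is $T_h(t)\Pi_h\N(\Pi_hu_0)$, not $F_\bullet(u_0,u_0)$.

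The gap is in the $H^1$ bounds \eqref{eq:q1}--\eqref{eq:qz1}. You use $\norm{F(u,\dot u)}_H\le C\norm u_{H^2}\norm{\dot u}_{H^1}$, together with $\norm{u(s)}_{H^2}\le C\varepsilon s_\wedge^{-1/2}$ and the bootstrap hypothesis $\norm{\dot u(s)}_{H^1}\le Ks_\wedge^{-2}$. The late part is then
\[
 C\varepsilon K\int_{t/2}^t(t-s)^{-3/4}s^{-5/2}\,ds\approx C\varepsilon K\,t^{-9/4},
\]
which cannot be absorbed into $\tfrac12Kt^{-2}$ as $t\to0$. The weight $t^{-2}$ is not generous here. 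On a window of length comparable to $t$, the factor $t^{-1/2}$ from $\norm u_{H^2}$ outweighs the gain $t^{1/4}$ from the kernel, so no power weight closes with this product estimate.

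The paper avoids this by restarting on a window of length $d=ct_\wedge^2$. There the contraction coefficient is $C\varepsilon t_\wedge^{-1/2}d^{1/4}=C\varepsilon c^{1/4}$, which is small. The homogeneous term costs $d^{-3/4}\norm{\dot u(t-d)}_H\le C\varepsilon t_\wedge^{-2}$, and this is exactly where the exponent $-2$ comes from.

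Your own framework has a simpler repair. Since you already have $|u_\bullet|_1\le C\varepsilon$, use \eqref{eq:prodenergy}, $\norm{F(u,\dot u)}_H\le C|u|_1|\dot u|_1$, on the late part. It then contributes $C\varepsilon Kt^{-7/4}$ and is absorbed; this route even yields $t_\wedge^{-5/4}$.

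Whichever repair you use, do not absorb an a priori unknown quantity in the continuous case. Pointwise $H^1$ regularity of $\dot u$ at positive times is not known in advance, even for $D(A)$ data, because \eqref{eq:nonlinH4} gives $H^4$ only in $L^2_t$. Instead, treat the equation for $\dot u$, which is linear given $u$, as a contraction in the weighted $H^1$ norm, either locally as in the paper or globally. Then identify the fixed point with $\dot u$ by energy uniqueness.
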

\begin{proof}
For each fixed mesh the differentiated equation has the mild form
\begin{equation}\label{eq:q-mild}
 \dot u_h(t)=A_hT_h(t)\Pi_hu_0+T_h(t)\Pi_h\N(\Pi_hu_0)
 +\int_0^tT_h(t-s)\Pi_h\,2F(u_h(s),\dot u_h(s))\,ds .
\end{equation}
The initial nonlinear term is $\N(\Pi_hu_0)$, not $\N(u_0)$. By
\eqref{eq:oneway} with $\alpha=2/3$ and \eqref{eq:cp}, the first term is
bounded by $Ct_\wedge^{-1/2}e^{-\lambda t}\varepsilon$. The $V_1$
stability of $\Pi_h$ bounds the second by $C\varepsilon^2e^{-\lambda t}$.

Let $G=\norm{B_hu_h}_H$. The equation and \eqref{eq:disenergy} give
$G\le\norm{\dot u_h}_H+C|u_h|_1^2\le\norm{\dot u_h}_H+C\norm{u_h}_HG$.
The energy bound and a small radius absorb the last term, so
$G\le2\norm{\dot u_h}_H$ and
$|u_h|_1\le C\norm{u_h}_H^{1/2}\norm{\dot u_h}_H^{1/2}$. The weak product
estimate gives
$\norm{F(u_h,\dot u_h)}_{-1,h}\le C|u_h|_1\norm{\dot u_h}_H$. For
$Q_1=\sup_{0<t\le T_0}e^{\nu_1t}t_\wedge^{1/2}\norm{\dot u_h(t)}_H$,
\eqref{eq:q-mild} and \eqref{eq:weaksmoothing} give
\[
 Q_1\le C(\varepsilon+\varepsilon^2)+C\varepsilon^{1/2}Q_1^{3/2},
\]
with the same beta integral as in Lemma~\ref{lem:H1stable}. For each mesh
$t^{1/2}\norm{\dot u_h(t)}_H\to0$ as $t\to0$. A first-crossing argument
gives $Q_1\le C\varepsilon$, uniformly in $h$. Graph recovery proves the
discrete part of \eqref{eq:q0}; it also gives
$|u_h|_1\le C\varepsilon t_\wedge^{-1/4}e^{-\nu_1t}$.

Subtract the derivative of the linear solution in \eqref{eq:q-mild}. The
remaining terms are quadratic:
\[
 \dot z_h=T_h(t)\Pi_h\N(\Pi_hu_0)+
 \int_0^tT_h(t-s)\Pi_h\,2F(u_h,\dot u_h)\,ds .
\]
Hence $\norm{\dot z_h(t)}_H\le C\varepsilon^2t_\wedge^{-1/2}e^{-\nu_1t}$.
Also $\norm{\N(u_h)}_H\le C|u_h|_1^2\le C\norm{u_h}_H\norm{B_hu_h}_H$.
The equation $A_hz_h=\dot z_h-\Pi_h\N(u_h)$ and graph recovery give
\eqref{eq:qz0}.

For the continuous solution apply the same argument first to the
approximations $T(1/n)u_0\in D(A)$, which are bounded in $V_2$. All
constants depend only on the $V_2$ bound. Convergence of the energy
solutions and the uniform derivative bounds identify the positive-time
derivative. Continuous elliptic regularity replaces discrete graph
recovery. This gives \eqref{eq:q0}--\eqref{eq:qz0} without assuming
$u_0\in D(A)$.

For \eqref{eq:q1} fix $t>0$ and take $d=ct_\wedge^2\le t/4$. On
$[t-d,t]$, \eqref{eq:q0} gives
$\norm{u_\bullet(s)}_{H^2}\le C\varepsilon t_\wedge^{-1/2}e^{-\nu_1s}$.
The strong product estimate
$\norm{2F(u_\bullet,\dot u_\bullet)}_H\le C\norm{u_\bullet}_{H^2}
\norm{\dot u_\bullet}_{H^1}$ and the $H\to H^1$ kernel give a local
contraction in the norm
$\sup_{0<r'\le d}r'^{3/4}\norm{\dot u_\bullet(t-d+r')}_{H^1}$. Its
coefficient is at most
$C\varepsilon t_\wedge^{-1/2}d^{1/4}=C\varepsilon c^{1/4}$. Choose
$c\le1/4$ so small that it is at most $1/2$ on the fixed ball. At $r'=d$,
\[
 \norm{\dot u_\bullet(t)}_{H^1}\le Cd^{-3/4}\norm{\dot u_\bullet(t-d)}_H
 \le C\varepsilon t_\wedge^{-2}e^{-\nu_1t}.
\]
For the continuous equation, the locally constructed solution agrees with
the time derivative by energy uniqueness, so this step does not presuppose
$H^1$ regularity. On the same interval the strong forcing in the
$\dot z_\bullet$ equation is bounded by
$C\varepsilon^2t_\wedge^{-5/2}e^{-2\nu_1s}$. The homogeneous term costs
$d^{-3/4}$ and the forcing convolution costs $d^{1/4}$. With
\eqref{eq:qz0} this gives \eqref{eq:qz1}. Finally,
\[
 z'''=-\dot z+\N(u),\qquad
 \norm{\N(u)}_{H^1}\le C\norm u_{H^2}^2\le C\varepsilon^2e^{-2\nu_2t},
\]
where the continuous $V_2$ bound \eqref{eq:cts-strong} is used. This
gives \eqref{eq:zregularity}.
\end{proof}

The next step controls a time derivative of the ordering error by its
$H$ bound and a mesh-uniform modulus of continuity. It does not
differentiate an inequality.

\begin{lemma}[Time modulus]\label{lem:holder}
For $0<r'\le t_\wedge/8$,
\begin{equation}\label{eq:holder}
 \norm{\dot z_\bullet(t+r')-\dot z_\bullet(t)}_H
 \le C\varepsilon^2e^{-\nu_1t}t_\wedge^{-5/2}r'^{2/3}.
\end{equation}
Consequently
\begin{equation}\label{eq:Ctime}
 \norm{\dot\Cd_h(t)}_H\le Ch^{4/15}\varepsilon^2t_\wedge^{-3/2}e^{-\nu_1t}.
\end{equation}
\end{lemma}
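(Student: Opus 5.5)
Both estimates come from the regularity of Lemma~\ref{lem:time}: the first is a Hölder estimate in time for $\dot z_\bullet$, and the second interpolates between an $H$ bound for $\Cd_h$ and this modulus.

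\emph{Step 1: the modulus \eqref{eq:holder}.} Put $w=\dot z_\bullet$. By the proof of Lemma~\ref{lem:time}, $w$ solves
\[
 \dot w=A_\bullet w+G,\qquad G=2F(u_\bullet,\dot u_\bullet)-2F(v_\bullet,\dot v_\bullet),
\]
with $\Pi_h$ in front of $F$ in the discrete case. Equivalently, $\dot z_\bullet$ solves the equation obtained by differentiating $z_t=Az+\N(u)$. On $[t,t+r']$ I write
\[
 w(t+r')-w(t)=(T_\bullet(r')-I)w(t)+\int_t^{t+r'}T_\bullet(t+r'-s)G(s)\,ds .
\]

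For the first term I use the interpolation $\norm{(T(r')-I)x}_H\le Cr'^{\alpha}\norm{B^{\alpha}x}_H$ with $\alpha=2/3$. This follows from $\norm{(T(r')-I)B^{-1}}\le r'$ and contraction, and it holds uniformly in $h$. It needs $\norm{B_\bullet^{2/3}w(t)}_H$, i.e.\ an $H^2$-type bound on $\dot z_\bullet$. For this I do not use \eqref{eq:qz1} directly. Instead, at the intermediate time $t-t_\wedge/4$ I apply \eqref{eq:cp} with $\alpha=2/3$ to the homogeneous part, which costs $t_\wedge^{-1}$ times the $H$ bound $\varepsilon^2t_\wedge^{-1/2}$. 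The forced part I control by the $H$ norm of $G$ together with the kernel $(t-s)^{-1}$, which needs a slight upgrade to $\alpha<2/3$ plus a limiting argument. Alternatively, I interpolate between the $H$ and $H^1$ bounds \eqref{eq:qz0}, \eqref{eq:qz1} and the graph bound from the equation.

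The crude count gives $t_\wedge^{-3/2}$ from the first term. For the forcing I use $\norm G_H\le C\norm{u}_{H^2}\norm{\dot u}_{H^1}\le C\varepsilon^2 t_\wedge^{-5/2}$, since the difference $u\dot u-v\dot v$ is quadratic in $\varepsilon$ after writing it as $z\dot u+v\dot z$. Integrating over an interval of length $r'\le r'^{2/3}$ then gives the stated weight $t_\wedge^{-5/2}$. The exponential weight follows from \eqref{eq:q0}--\eqref{eq:qz1}.

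\emph{Step 2: \eqref{eq:Ctime}.} Since $\dot\Cd_h=\dot z-\dot z_h$, I use the averaging identity
\[
 \dot\Cd_h(t)=\frac{\Cd_h(t+r')-\Cd_h(t)}{r'}-\frac1{r'}\int_0^{r'}\bigl(\dot\Cd_h(t+\sigma)-\dot\Cd_h(t)\bigr)\,d\sigma .
\]
By Lemma~\ref{lem:prelim}, the first term is at most $Ch^{2/3}\varepsilon^2e^{-\nu_1t}/r'$. By Step 1 applied to both $z$ and $z_h$, the second term is at most $C\varepsilon^2t_\wedge^{-5/2}r'^{2/3}e^{-\nu_1t}$.

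To optimize I need $h^{2/3}/r'=t_\wedge^{-5/2}r'^{2/3}$, i.e.\ $r'^{5/3}=h^{2/3}t_\wedge^{5/2}$, so $r'=h^{2/5}t_\wedge^{3/2}$. This respects $r'\le t_\wedge/8$ once $h$ is small relative to $t$. Both terms then equal $h^{2/3-2/5}t_\wedge^{-3/2}=h^{4/15}t_\wedge^{-3/2}$, which is exactly the claimed rate. In the complementary regime, where $r'$ would exceed $t_\wedge/8$, I use the direct bounds $\norm{\dot z_\bullet}_H\le C\varepsilon^2t_\wedge^{-1/2}$ from \eqref{eq:qz0}. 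These are already at most $Ch^{4/15}t_\wedge^{-3/2}$ there, because in that regime $t_\wedge\lesssim h^{2/5}t_\wedge^{3/2}$, i.e.\ $t_\wedge^{-1/2}\lesssim h^{2/5}$.

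\emph{Main obstacle.} The main difficulty is Step 1, namely obtaining the Hölder exponent $2/3$ uniformly in $h$ without differentiating an inequality. The key requirement is a mesh-uniform bound on $B_h^{2/3}\dot z_h$. I would obtain it from \eqref{eq:cp} applied on a subinterval of length comparable to $t_\wedge$, combined with the strong forcing bound behind \eqref{eq:qz1}.
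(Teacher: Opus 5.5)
\textbf{The gap is in Step~1, not Step~2.} Your Step~2 is the paper's argument. The separate complementary regime is not needed: $r'=\frac18h^{2/5}t_\wedge^{3/2}\le t_\wedge/8$ always holds, since $h\le1$ and $t_\wedge\le1$.

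The problem in Step~1 is the Duhamel part of $w(t)$. Write
$w(t)=T_\bullet(t_\wedge/4)w(t-t_\wedge/4)+\int_{t-t_\wedge/4}^tT_\bullet(t-s)G(s)\,ds$.
You want a pointwise bound on $\norm{B_\bullet^{2/3}w(t)}_H$, followed by $\norm{(T_\bullet(r')-I)x}_H\le Cr'^{2/3}\norm{B_\bullet^{2/3}x}_H$.

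This works for the homogeneous part. It fails for the Duhamel part, whose source is only bounded in $H$. The kernel $\norm{B_\bullet^{2/3}T_\bullet(\sigma)}\le C\sigma^{-1}$ is not integrable at $\sigma=0$, so this route gives no bound at the exponent $2/3$.

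Neither proposed repair closes this:
\begin{itemize}
\item Taking $\alpha<2/3$ gives a constant of size $(2/3-\alpha)^{-1}$. Optimizing $\alpha$ against $r'$ yields at best $r'^{2/3}\log(1/r')$, and hence $h^{4/15}\log(1/h)$ in \eqref{eq:Ctime}.
\item The alternative through \eqref{eq:qz0} and \eqref{eq:qz1} controls only $B_\bullet^{1/3}\dot z_\bullet$, an $H^1$ bound. That gives the modulus $r'^{1/3}t_\wedge^{-2}$, and your optimization then produces only $h^{1/6}$. A graph bound for $\dot z_\bullet$ would require $\ddot z_\bullet$, which you do not control.
\end{itemize}
Since the Hölder exponent $2/3$ is exactly what produces $4/15=(2/3)(2/5)$, this is a genuine gap.

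\textbf{The missing idea:} estimate the increment of the Duhamel term directly, not a fractional power of it. With $t_0=t-t_\wedge/4$,
\[
 (T_\bullet(r')-I)\int_{t_0}^tT_\bullet(t-s)G(s)\,ds
 =\int_{t_0}^t\bigl\{T_\bullet(t-s+r')-T_\bullet(t-s)\bigr\}G(s)\,ds .
\]
Contraction and \eqref{eq:cp} with $\alpha=1$ give
$\norm{T_\bullet(\sigma+r')-T_\bullet(\sigma)}\le C\min\{1,r'\sigma_\wedge^{-3/2}\}e^{-\lambda\sigma}$.
Splitting at $\sigma=r'^{2/3}$, its integral over $(0,\infty)$ is at most $Cr'^{2/3}$. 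Multiplying by $\sup_s\norm{G(s)}_H\le C\varepsilon^2t_\wedge^{-5/2}e^{-\nu_1t}$ gives \eqref{eq:holder}, uniformly in $h$. The point is that the Duhamel term is Hölder of order $2/3$: it lies in $(H,D(B_\bullet))_{2/3,\infty}$, but not with a uniform bound in $D(B_\bullet^{2/3})$. This is the paper's route.

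\textbf{A smaller slip:} the source in the $\dot z_\bullet$ equation is $2F_\bullet(u_\bullet,\dot u_\bullet)$ alone, because $\dot v_\bullet$ solves the homogeneous linear equation. Your extra term $-2F(v_\bullet,\dot v_\bullet)$ contradicts your own ``equivalently'' sentence. The bound you actually use, $C\norm{u}_{H^2}\norm{\dot u}_{H^1}$, is the correct one.
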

\begin{proof}
Contraction and \eqref{eq:cp} imply, for $0<r'\le1$,
$\norm{T_\bullet(\sigma+r')-T_\bullet(\sigma)}_{H\to H}\le
C\min\{1,r'\sigma_\wedge^{-3/2}\}e^{-\lambda\sigma}$. Integrating in
$\sigma$ and splitting at $\sigma=r'^{2/3}$ gives
\begin{equation}\label{eq:increment-kernel}
 \int_0^\infty\norm{T_\bullet(\sigma+r')-T_\bullet(\sigma)}_{H\to H}
 \,d\sigma\le Cr'^{2/3}.
\end{equation}
Restart the $\dot z_\bullet$ equation at $t-t_\wedge/4$. By
Lemma~\ref{lem:time}, its strong $H$ forcing on the whole interval up to
$t+r'$ is bounded by $C\varepsilon^2t_\wedge^{-5/2}e^{-\nu_1t}$. The
increment of the homogeneous term is at most
$Cr'\varepsilon^2t_\wedge^{-2}e^{-\nu_1t}$. The old forcing integral is
bounded by \eqref{eq:increment-kernel}. The added interval costs $Cr'$
times the forcing bound. Since $r'\le1$, this proves \eqref{eq:holder}.

Apply the modulus to $\dot\Cd_h=\dot z-\dot z_h$. The fundamental theorem
of calculus and Lemma~\ref{lem:prelim} with $s=2$ give
\[
 \norm{\dot\Cd_h(t)}_H\le C\varepsilon^2e^{-\nu_1t}
 \Big\{\frac{h^{2/3}}{r'}+t_\wedge^{-5/2}r'^{2/3}\Big\}.
\]
Choose $r'=\frac18h^{2/5}t_\wedge^{3/2}$, which lies in the permitted
range. Both terms are then bounded by $Ch^{4/15}t_\wedge^{-3/2}$. The
exponent $4/15=(2/3)(2/5)$ comes from an $h^{2/3}$ error and a $2/3$ time
modulus.
\end{proof}

Let $R_h$ be the Ritz projection of Lemma~\ref{lem:projection}. By
\eqref{eq:Ritz}, \eqref{eq:zregularity} and
$\min\{a_1,a_2\}\le a_1^{1/3}a_2^{2/3}$,
\begin{equation}\label{eq:Ritz-z}
 \norm{z-R_hz}_{H^2}
 \le C\varepsilon^2e^{-\nu_1t}\min\{t_\wedge^{-1/2},ht_\wedge^{-2}\}
 \le Ch^{2/3}\varepsilon^2t_\wedge^{-3/2}e^{-\nu_1t}.
\end{equation}

\emph{Proof of the estimates at $\theta=2$.} Set $x_h=R_hz-z_h$. Since
$A_hR_hz=\Pi_hAz$, the two equations give the exact identity
\[
 A_hx_h=\Pi_h\dot\Cd_h-\Pi_h\{\N(u)-\N(u_h)\}.
\]
Graph recovery yields
\begin{equation}\label{eq:recover}
 \norm{\Cd_h}_{H^2}\le\norm{z-R_hz}_{H^2}+C\norm{\dot\Cd_h}_H
 +C\norm{\N(u)-\N(u_h)}_H .
\end{equation}
The strong product estimate, Sobolev interpolation and
Lemma~\ref{lem:time} give
$\norm{\N(u)-\N(u_h)}_H\le C\varepsilon t_\wedge^{-1/2}e^{-\nu_1t}
\norm e_H^{1/2}\norm e_{H^2}^{1/2}$. Put
$X_2=e^{\nu_1t}\norm{\Cd_h(t)}_{H^2}$. Use
$\norm e_H\le Ch^{2/3}\varepsilon e^{-\nu_1t}$, from
Lemma~\ref{lem:prelim}, and $e=-L_h(\cdot)u_0+\Cd_h$ with
\eqref{eq:linear-H2}. After discarding a decaying factor, Young's
inequality gives
\begin{align*}
 e^{\nu_1t}C\norm{\N(u)-\N(u_h)}_H
 &\le Ch^{1/2}t_\wedge^{-1}\varepsilon^2
 +Ch^{1/3}t_\wedge^{-1/2}\varepsilon^{3/2}X_2^{1/2}\\
 &\le\tfrac12X_2+Ch^{1/2}t_\wedge^{-1}\varepsilon^2
 +Ch^{2/3}t_\wedge^{-1}\varepsilon^3 .
\end{align*}
Insert \eqref{eq:Ctime} and \eqref{eq:Ritz-z} into \eqref{eq:recover} and
absorb $X_2/2$. Since $h\le1$, $t_\wedge\le1$ and $\varepsilon\le r$,
\[
 \norm{\Cd_h(t)}_{H^2}\le Ch^{4/15}t_\wedge^{-3/2}e^{-\nu_1t}
 \varepsilon^2 .
\]
This is \eqref{eq:sn-ordering} at $\theta=2$. The full estimate follows
from \eqref{eq:splitting}, \eqref{eq:linear-H2} and \eqref{eq:physH2},
and the bound for $u_h-v_h$ from \eqref{eq:qz0}.\hfill$\square$

\subsection{Proof of Theorems~\ref{thm:samenorm} and~\ref{thm:ordering}}
\label{sec:proofsame}
The four solutions exist globally by Theorem~\ref{thm:continuous} and
Proposition~\ref{prop:nonlinstab}, and \eqref{eq:splitting} holds by
construction.

\emph{Theorem~\ref{thm:samenorm}.} At $\theta=0$, \eqref{eq:sn-linear}
follows from Proposition~\ref{thm:positive} and contraction. At
$\theta=1,2$ it follows from \eqref{eq:linear-H1}, \eqref{eq:linear-H2}
and Proposition~\ref{prop:H1linear} with $s=\theta$. The estimate
\eqref{eq:sn-full} follows from \eqref{eq:splitting},
\eqref{eq:sn-linear} and \eqref{eq:sn-ordering}. Since $a_\theta$ and
$b_\theta$ are nonincreasing in $t$, uniform convergence on $[t_0,\infty)$
follows.

\emph{Theorem~\ref{thm:ordering}.} At $\theta=0$, \eqref{eq:sn-ordering}
including $t=0$ is \eqref{eq:energyfulldefect}. The Taylor tail is
\eqref{eq:energyfulltail}, and the bound for $D_h$ is
Proposition~\ref{prop:energyquadratic}. For $u-v$, the
bound \eqref{eq:sn-horizontal} at each $\theta=0,1,2$ follows from the
continuous Taylor remainder \eqref{eq:taylor} with $m=1$. For
$u_h-v_h$, Proposition~\ref{prop:uniformanalytic} gives this bound at
$\theta=0$. The bounds for $u_h-v_h$ and the remaining ordering
estimates at $\theta=1,2$ were proved in Section~\ref{sec:strong}.
At $\theta=1$ we use $V_1=H_0^1$ with equivalent
norms. At $\theta=2$ we use the equivalence of the $V_2$ and $H^2$
norms on $V_2$. No $V_2$ regularity at $t=0$ is asserted for rough
data. The radius was reduced finitely many times, each time depending
only on $L$, $\kappa$ and the fixed rates.\hfill$\square$

\section{Without the weight the uniformity is lost}\label{sec:unweighted}
\setcounter{equation}{0}

The weighted energy identity \eqref{eq:energy} has a coercive interior
term. It gives the mesh-uniform Kato smoothing \eqref{eq:disKato} and the
uniform resolvent estimate \eqref{eq:resolvent}. The unweighted Hermite
scheme is the standard Galerkin method in $L^2(0,L)$. It uses the same
space $X_h$, the unweighted inner product $(\cdot,\cdot)_2$ and the
generator $\mathring A_h$ defined by
$(\mathring A_hx,\psi)_2=\int_0^Lx''\,\overline{\psi'}\dx$. Its exact
energy identity is
\begin{equation}\label{eq:Sflux}
 2\Re(\mathring A_hx,x)_2=-|x'(0)|^2 .
\end{equation}
In particular $e^{t\mathring A_h}$ is a contraction in $L^2$, but the
dissipation acts only at the boundary. We show that the unweighted scheme
loses mesh-uniform Kato smoothing and Crandall--Pazy regularity already on
fine uniform meshes. The
mechanism is the one known for the wave equation~\cite{Trefethen,
InfanteZuazua}: a grid wave of zero group velocity. A corrected packet,
initially supported away from the boundary, has a small residual.
Contraction then keeps it close in $L^2$ to a pure oscillation up to time
$ch$. We do not claim exact spatial confinement or absence of damping at
later times. The inverse mass matrix need not be local. The argument uses
only the element matrices and Taylor's formula.

\subsection{A grid wave of zero group velocity}\label{sec:topmode}
Let the mesh be uniform, $x_j=jh$, $h=L/N$. Use nodal values $u_j$ and
scaled slopes $d_j=hu'(x_j)$. On one element, in the order
$(u_j,d_j,u_{j+1},d_{j+1})$, the mass matrix and the matrix of
$(x,\psi)\mapsto\int x''\overline{\psi'}\dx$ are
\[
 \frac h{420}\begin{pmatrix}156&22&54&-13\\22&4&13&-3\\
 54&13&156&-22\\-13&-3&-22&4\end{pmatrix},
 \qquad
 \frac1{h^2}\begin{pmatrix}0&1&0&-1\\-1&-\frac12&1&-\frac12\\
 0&-1&0&1\\1&\frac12&-1&\frac12\end{pmatrix},
\]
with rows indexed by the test function. The second matrix consists of the
integrals $\int_0^1N_k''(y)N_i'(y)\,dy$ of the cubic Hermite basis on
$[0,1]$. Assembly at an interior node with $(u_j,d_j)=e^{ij\xi}z$ gives
$h^{-2}K_0(\xi)z$ and $(h/420)M_0(\xi)z$, where $c=\cos\xi$,
$s=\sin\xi$ and
\begin{equation}\label{eq:bloch}
 K_0=\begin{pmatrix}0&2(1-c)\\-2(1-c)&-is\end{pmatrix},\qquad
 M_0=\begin{pmatrix}312+108c&-26is\\26is&8-6c\end{pmatrix}.
\end{equation}
The matrix $K_0$ is skew-Hermitian and $M_0$ is positive definite. Hence
the roots $\eta_0$ of $\det(K_0-\eta_0M_0)=0$ are purely imaginary. With
$\eta_0=i\varpi$ the determinant equals $-4$ times
\begin{equation}\label{eq:dispersion}
 (455-252c+7c^2)\varpi^2+s(104+c)\varpi-(1-c)^2 .
\end{equation}
A Bloch wave $e^{ij\xi}z$ with $K_0z=i\varpi M_0z$ solves the interior
equations of $\mathring A_h$ with eigenvalue $420i\varpi/h^3$. The group
velocity of this grid wave is proportional to $\varpi'(\xi)$.

\begin{lemma}[Zero group velocity]\label{lem:zgv}
Put $\mathsf a=455-252c+7c^2$, $\mathsf b=s(104+c)$,
$\Delta=\mathsf b^2+4\mathsf a(1-c)^2$, and let
$\varpi(\xi)=-(\mathsf b+\sqrt\Delta)/(2\mathsf a)$ be the negative root of
\eqref{eq:dispersion}. Then $\varpi$ is continuous on $[0,\pi]$, real
analytic on $(0,\pi]$, and $\varpi(0)=0$. There is $\xi_*\in(0,\pi)$ with
$\varpi'(\xi_*)=0$ and $|\varpi(\xi_*)|>0.276$.
\end{lemma}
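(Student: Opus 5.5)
The plan is to treat $\varpi$ as an explicit root of the quadratic \eqref{eq:dispersion} and settle the three assertions (regularity, the value at $0$, and an interior critical point of size above $0.276$) separately. The size claim will come from one exact evaluation at $\xi=\pi/3$; the critical point will then follow from an interior-minimum argument.

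\emph{Regularity.} As a polynomial in $c\in[-1,1]$, $\mathsf a=455-252c+7c^2$ is decreasing, since its vertex lies at $c=18$. Hence $\mathsf a\ge\mathsf a(1)=210>0$, and the quadratic \eqref{eq:dispersion} is nondegenerate. Its discriminant is $\Delta=\mathsf b^2+4\mathsf a(1-c)^2\ge0$. On $(0,\pi]$ we have $1-c>0$, so $\Delta>0$ there. Therefore $\sqrt\Delta$ is real analytic on $(0,\pi]$, and with it $\varpi=-(\mathsf b+\sqrt\Delta)/(2\mathsf a)$. On the closed interval $[0,\pi]$, $\varpi$ is continuous as a composition of continuous functions. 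At $\xi=0$ we have $s=0$ and $c=1$, so $\mathsf b=\Delta=0$ and $\varpi(0)=0$.

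\emph{Sign.} The product of the two roots is $-(1-c)^2/\mathsf a<0$ on $(0,\pi]$. So exactly one root is negative, and it is the smaller one, $-(\mathsf b+\sqrt\Delta)/(2\mathsf a)$. This justifies the name "negative root."

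\emph{Endpoint value.} At $\xi=\pi$ we have $c=-1$ and $s=0$. Then $\mathsf a=714$, $\mathsf b=0$ and $\Delta=16\cdot714$, which give
\[
 \varpi(\pi)=-\frac{4\sqrt{714}}{1428}\approx-0.0748 .
\]

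\emph{The key evaluation at $\xi=\pi/3$.} Here $c=1/2$ and $s=\sqrt3/2$. Then
\[
 \mathsf a=\tfrac{1323}4,\qquad \mathsf b=\tfrac{209\sqrt3}4,\qquad
 \Delta=\tfrac{131043+5292}{16}=\tfrac{136335}{16},
\]
so that
\[
 \varpi(\pi/3)=-\frac{209\sqrt3+\sqrt{136335}}{2646}.
\]
The condition $|\varpi(\pi/3)|>0.276$ is equivalent to $209\sqrt3+\sqrt{136335}>730.3$. This follows from two rational checks: $\sqrt3>1.732$ gives $209\sqrt3>361.98$, and $369.2^2=136308.64<136335$ gives $\sqrt{136335}>369.2$. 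Their sum exceeds $731.1$.

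\emph{Critical point.} By continuity, $\varpi$ attains its minimum on $[0,\pi]$. This minimum is at most $\varpi(\pi/3)<-0.276$, which is strictly below both endpoint values $0$ and $\approx-0.075$. So the minimum is attained at some $\xi_*\in(0,\pi)$. Since $\varpi$ is differentiable on the open interval, $\varpi'(\xi_*)=0$, and $|\varpi(\xi_*)|\ge|\varpi(\pi/3)|>0.276$.

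\emph{Main obstacle.} The only delicate point is numerical tightness. Sampling suggests the maximum of $|\varpi|$ is about $0.2764$, so the margin above $0.276$ is thin. The evaluation point must therefore be chosen near the true extremum, and the bounds for $\sqrt3$ and $\sqrt{136335}$ must be done in exact rational arithmetic, as above, rather than by floating-point estimates. If $\pi/3$ had not sufficed, I would have evaluated at a nearby point with rational cosine and rational sine, such as $c=3/5$, $s=4/5$.
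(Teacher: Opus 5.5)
Your proof is correct and follows the paper's argument: $\mathsf a\ge210$ and $\Delta>0$ give analyticity on $(0,\pi]$, and the exact values $\varpi(\pi)=-\sqrt{714}/357$ and $\varpi(\pi/3)=-(209\sqrt3+\sqrt{136335})/2646<-0.276$ force an interior extremum $\xi_*$ with $\varpi'(\xi_*)=0$. Your rational checks supply arithmetic the paper leaves implicit; the only imprecision is that $0.276\cdot2646=730.296$, so the condition $209\sqrt3+\sqrt{136335}>730.3$ is sufficient rather than equivalent, which is harmless.
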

\begin{proof}
For $0<\xi\le\pi$ one has $\mathsf a\ge210$ and $\Delta>0$. So $\varpi$ is
analytic there, and the two roots are distinct. Next,
$\varpi(\pi)=-\sqrt{714}/357>-0.075$ and
$\varpi(\pi/3)=-(209\sqrt3+\sqrt{136335})/2646<-0.276$. Hence $-\varpi$
attains its maximum over $[0,\pi]$ at an interior point $\xi_*$, where
$\varpi'(\xi_*)=0$.
\end{proof}

Let $C_{\rm inv}$ be a constant with
$\norm{\psi'}_2\le C_{\rm inv}h^{-1}\norm\psi_2$ and
$\norm{\psi''}_2\le C_{\rm inv}h^{-1}\norm{\psi'}_2$ for all Hermite
functions on the mesh. On a $\kappa$-quasi-uniform mesh $C_{\rm inv}$
depends only on $\kappa$.

\begin{proposition}[Failure of mesh-uniform Kato smoothing and Crandall--Pazy regularity]
\label{prop:Squasimode}
Put $\varpi_*=\varpi(\xi_*)$ and $\zeta_h=420i\varpi_*h^{-3}$. There are
$c_0,c_1>0$ and $N_0$, depending only on $L$ and on one fixed normalized
interior cutoff, such that for every uniform mesh with $N\ge N_0$:
\begin{enumerate}[label=\rm(\roman*)]
\item either $\zeta_h\notin\rho(\mathring A_h)$, or
 $\norm{(\zeta_h-\mathring A_h)^{-1}}_{L^2\to L^2}\ge c_1h$, whence
 $|\zeta_h|^{2/3}\norm{(\zeta_h-\mathring A_h)^{-1}}\ge c_1h^{-1}$ after
 decreasing $c_1$;
\item some $x_h\ne0$ satisfies
 $\int_0^{c_0h}\norm{(e^{t\mathring A_h}x_h)'}_2^2\,dt
 \ge c_1h^{-1}\norm{x_h}_2^2$;
\item $\norm{\mathring A_he^{t\mathring A_h}}_{L^2\to L^2}\ge c_1h^{-3}$
 for $0<t\le c_0h$.
\end{enumerate}
Thus a mesh-uniform imaginary-axis resolvent bound
$M\langle\tau\rangle^{-2/3}$ (including existence of these resolvents), a
mesh-uniform Crandall--Pazy bound $Ct^{-3/2}$ for $0<t\le1$, and a
mesh-uniform Kato bound
$\int_0^1\norm{(e^{t\mathring A_h}x)'}_2^2\,dt\le C\norm x_2^2$ all fail
for the unweighted scheme. No assertion is made that an individual
finite-dimensional semigroup lacks smoothness.
\end{proposition}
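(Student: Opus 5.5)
The approach is to build a quasimode of $\mathring A_h$ at $\zeta_h$ from the zero-group-velocity Bloch wave of Lemma~\ref{lem:zgv}, and then derive (i)--(iii) from its small residual together with $L^2$ contraction.

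\emph{Construction.} Fix a real cutoff $\chi\in C_c^\infty(L/4,3L/4)$ with $\norm\chi_2=1$. Let $z_*$ be the eigenvector of $K_0(\xi_*)z=i\varpi_*M_0(\xi_*)z$. Define $x_h\in X_h$ by the nodal data
\[
 (u_j,d_j)=\chi(x_j)e^{ij\xi_*}z_*+h\,\chi'(x_j)e^{ij\xi_*}w_*,
\]
where the corrector $w_*$ is chosen to cancel the first-order term of the residual. The residual is $r_h=(\zeta_h-\mathring A_h)x_h$, and we estimate it through the assembled equations $(\zeta_hM-K)x$. Taylor expansion of $\chi$ about each node turns the interior stencil into $h^{-3}\{(\eta M_0-K_0)(\xi_*)\chi+h\,\partial_\xi(\eta M_0-K_0)(\xi_*)\,(-i\chi')+O(h^2)\}$, up to the constant factors. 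The zeroth-order term vanishes by the choice of $z_*$. Because $\varpi'(\xi_*)=0$, differentiating the eigen-relation shows that the first-order term lies in the range of $(K_0-i\varpi_*M_0)$. The dispersion roots are distinct, so this matrix has one-dimensional kernel, and $w_*$ can be solved for. This solvability (Fredholm-alternative) step is the crux of the argument.

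After the correction the residual functional is $O(h^{-3}\cdot h^2)=O(h^{-1})$ relative to $\norm{x_h}_2\sim1$. We measure it in $L^2$ by inverting the mass matrix. The Gram matrix is uniformly equivalent to $h\,\mathrm{diag}$, so the nodal-level bound transfers with only constant loss, and no locality of $M^{-1}$ is needed. The boundary elements contribute nothing because $\chi$ vanishes there. This gives $\norm{r_h}_2\le C_2h^{-1}\norm{x_h}_2$, and I expect the main obstacle to be this bookkeeping, making the $O(h^2)$ Taylor terms uniform.

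\emph{Consequences.} For (i): if $\zeta_h\in\rho$, then $\norm{(\zeta_h-\mathring A_h)^{-1}}\ge\norm{x_h}/\norm{r_h}\ge c_1h$. Multiplying by $|\zeta_h|^{2/3}\sim h^{-2}$ gives $c_1h^{-1}$.

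For (ii) and (iii), write $y(t)=e^{t\mathring A_h}x_h-e^{\zeta_ht}x_h$. Variation of constants and contraction give $\norm{y(t)}_2\le t\norm{r_h}_2\le C_2th^{-1}$. For $t\le c_0h$ with $c_0C_2$ small, $\norm{y}\le\frac14$.

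For (iii), use $\mathring A_he^{t\mathring A_h}x_h=e^{t\mathring A_h}\mathring A_hx_h=e^{t\mathring A_h}(\zeta_hx_h-r_h)$. Its norm is at least $|\zeta_h|(1-\tfrac14)-C_2h^{-1}\ge c_1h^{-3}$.

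For (ii), the grid wave at frequency $\xi_*/h$ satisfies $\norm{x'}_2\ge ch^{-1}\norm{x}_2$; this follows from the Bloch symbol, with $\xi_*\ne0$. We need the same lower bound for $e^{t\mathring A_h}x_h=e^{\zeta_ht}x_h+y$. The inverse inequality gives $\norm{y'}\le C_{\rm inv}h^{-1}\norm y$, which is too crude to beat the lower bound. I would therefore shrink $c_0$ so that $C_{\rm inv}\norm y\le\frac12 c$. Then $\norm{(e^{t\mathring A_h}x_h)'}\ge\frac c2h^{-1}$, and integrating over $(0,c_0h)$ gives $c_1h^{-1}$.

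The failure statements follow at once. A uniform bound $M\langle\tau\rangle^{-2/3}$ contradicts (i). A uniform Crandall--Pazy bound $Ct^{-3/2}$ at $t=c_0h$ gives $Ch^{-3/2}$, which contradicts (iii). A uniform Kato bound contradicts (ii) as $h\to0$.
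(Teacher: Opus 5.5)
Your proposal is correct and follows essentially the paper's route. It uses the same zero-group-velocity Bloch quasimode with a first-order corrector; the paper takes the corrector as $-i\mathbf v'(\xi_*)$ from an explicit eigenvector family, which is one solution of your Fredholm equation. It then uses the same $O(h^{-1})$ residual bound from Taylor expansion and mass-matrix equivalence, followed by variation of constants and $L^2$ contraction.

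The only deviations are minor and valid. For (iii) you use $\mathring A_he^{t\mathring A_h}x_h=e^{t\mathring A_h}(\zeta_hx_h-r_h)$, which avoids the paper's bound $\norm{\mathring A_h}\le C_{\rm inv}^3h^{-3}$. For (ii) you obtain $\norm{x_h'}_2\ge ch^{-1}$ from positivity of the $H^1$ Bloch symbol at $\xi_*\ne0$. The paper instead derives it from $|(\mathring A_hx_h,x_h)_2|\ge\frac34|\zeta_h|$ together with $|(\mathring A_hx_h,x_h)_2|\le C_{\rm inv}h^{-1}\norm{x_h'}_2^2$.
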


\begin{proof}
\emph{The interior stencil.} Write
$\Sigma(\xi)=K_0(\xi)-i\varpi_*M_0(\xi)=\sum_{k=-1}^1\Sigma_ke^{ik\xi}$
with constant $2\times2$ blocks $\Sigma_k$, and
$\Sigma'(\xi)=\sum_kik\Sigma_ke^{ik\xi}$. Since
$\zeta_hh/420=i\varpi_*h^{-2}$, the assembly above shows: if $x\in X_h$
has degrees of freedom $z_j=(u_j,d_j)$ and vanishes near both ends, then
$((\mathring A_h-\zeta_h)x,\psi)_2=\sum_j\psi_j^*\varrho_j$ with
$\varrho_j=h^{-2}\sum_k\Sigma_kz_{j+k}$.

\emph{The Bloch vector and its derivative.} Let
$P(\xi)=K_0(\xi)-i\varpi(\xi)M_0(\xi)$ and
\[
 \mathbf v(\xi)=\big(2(1-c)-26s\,\varpi(\xi),\
 i\varpi(\xi)(312+108c)\big)^{\top}.
\]
The first row of $P$ annihilates $\mathbf v$. It is nonzero, since
$\varpi\ne0$ on $(0,\pi]$, and $\det P=0$, so $P\mathbf v=0$. Also
$\mathbf v\ne0$. Put $\mathbf v_*=\mathbf v(\xi_*)$ and
$\mathbf v_*'=\mathbf v'(\xi_*)$. Since $\varpi'(\xi_*)=0$, we have
$P(\xi_*)=\Sigma(\xi_*)$ and $P'(\xi_*)=\Sigma'(\xi_*)$. Differentiating
$P\mathbf v=0$ at $\xi_*$ gives
\begin{equation}\label{eq:zgvcorrector}
 \Sigma'(\xi_*)\mathbf v_*+\Sigma(\xi_*)\mathbf v_*'=0 .
\end{equation}
This is the only place where the zero group velocity enters.

\emph{The quasimode.} Choose a real $\chi\in C_c^\infty(L/4,3L/4)$ with
$\int_0^L\chi^2\dx=1$, and let $x_h\in X_h$ have
\[
 z_j=e^{ij\xi_*}\{\chi(x_j)\mathbf v_*-ih\chi'(x_j)\mathbf v_*'\}.
\]
For $N\ge N_0$, $x_h$ vanishes near both ends. Taylor's formula for
$\chi$ and $\chi'$ at $x_j$ gives
\[
 e^{-ij\xi_*}\sum_k\Sigma_kz_{j+k}
 =\chi(x_j)\Sigma(\xi_*)\mathbf v_*
 -ih\chi'(x_j)\{\Sigma'(\xi_*)\mathbf v_*+\Sigma(\xi_*)\mathbf v_*'\}
 +O(h^2),
\]
with an $O(h^2)$ bound that depends only on $\chi$ and $L$. The first
term vanishes because $\Sigma(\xi_*)\mathbf v_*=0$, the second by
\eqref{eq:zgvcorrector}. Hence $|\varrho_j|\le C$ for all $j$.

\emph{Norms.} Let $c_Mh$ be the smallest eigenvalue of the element mass
matrix. Here $c_M>0$ because the basis is linearly independent. Then
$\norm\psi_2^2\ge c_Mh\sum_j|\psi_j|^2$ on $X_h$. Since at most $N+1$
nodes carry loads,
\[
 \norm{(\mathring A_h-\zeta_h)x_h}_2
 =\sup_\psi\frac{|\sum_j\psi_j^*\varrho_j|}{\norm\psi_2}
 \le\Big(\sum_j|\varrho_j|^2\Big)^{1/2}(c_Mh)^{-1/2}\le Ch^{-1}.
\]
Moreover $\norm{x_h}_2^2\ge c_Mh\sum_j|z_j|^2\ge\frac{c_M}2|\mathbf v_*|^2$
for $N\ge N_0$, because $h\sum_j\chi(x_j)^2\to1$. So the relative residual
is at most $Ch^{-1}$.

\emph{Consequences.} Normalize $\norm{x_h}_2=1$ and put
$r_h=(\mathring A_h-\zeta_h)x_h$, so $\norm{r_h}_2\le C_rh^{-1}$. If
$\zeta_h$ is in the resolvent set, apply the inverse to $-r_h$ to obtain
(i). Otherwise the proposed imaginary-axis resolvent estimate already
fails. Let $y(t)=e^{t\mathring A_h}x_h-e^{\zeta_ht}x_h$. Variation of
constants and contraction give
\[
 \norm{y(t)}_2\le C_rt/h,\qquad
 \norm{e^{t\mathring A_h}x_h}_2\ge1-C_rt/h.
\]
Also, for sufficiently small $h$,
\[
 |(\mathring A_hx_h,x_h)_2|\ge|\zeta_h|-C_rh^{-1}\ge\tfrac34|\zeta_h|,
 \qquad
 |(\mathring A_hx_h,x_h)_2|\le C_{\rm inv}h^{-1}\norm{x_h'}_2^2 .
\]
In particular $\norm{x_h'}_2\ge g_0h^{-1}$ for a fixed $g_0>0$. The
inverse inequality applied to $y(t)\in X_h$ gives
\[
 \norm{(e^{t\mathring A_h}x_h)'}_2
 \ge g_0h^{-1}-C_{\rm inv}C_rth^{-2}\ge\tfrac12g_0h^{-1},
 \qquad 0\le t\le c_0h,
\]
provided $c_0$ is small enough. Integrating proves (ii). For arbitrary
$x,\psi\in X_h$, inverse inequalities give
$|(\mathring A_hx,\psi)_2|\le C_{\rm inv}^3h^{-3}\norm x_2\norm\psi_2$.
Consequently, after decreasing $c_0$ once more,
\[
 \norm{\mathring A_he^{t\mathring A_h}x_h}_2
 \ge\norm{\mathring A_hx_h}_2-C_{\rm inv}^3h^{-3}\norm{y(t)}_2
 \ge\tfrac12|\zeta_h|,\qquad 0<t\le c_0h.
\]
This is (iii). At $t=c_0h$ the ratio to a proposed $Ct^{-3/2}$ bound
diverges like $h^{-3/2}$, and for $c_0h\le1$ part (ii) contradicts a
fixed-time Kato bound. All constants and $N_0$ are chosen before the mesh
varies.
\end{proof}

Without the stationarity condition, the same construction leaves an
order-$h$ term proportional to $\varpi'(\xi_*)$. The residual bound is
then only $O(h^{-2})$, and the guaranteed persistence time is $O(h^2)$.
These scales give no divergent lower bound against the Crandall--Pazy and
Kato estimates considered here. The zero group velocity is therefore
essential to this obstruction.

For the weighted scheme no such quasimode exists. If $\norm x_H=1$ and
$|x|_1\ge g_0h^{-1}$, then for real $\tau$
\[
 \norm{(i\tau-A_h)x}_H\ge\Re\ip{(i\tau-A_h)x}x=\Re b(x,x)\ge a|x|_1^2
 \ge ag_0^2h^{-2}
\]
by \eqref{eq:energy}. The same coercivity damps every eigenpair
$A_h\varphi=\zeta\varphi$ of the weighted scheme:
\begin{equation}\label{eq:Wtopdamping}
 |\Re\zeta|\ge c'h|\zeta|,\qquad c'=\frac{a}{C_{\rm inv}(2L+pL^{-1/2})}.
\end{equation}
Indeed $|\Re\zeta|\norm\varphi_H^2\ge a|\varphi|_1^2$ by \eqref{eq:energy},
and, since $\norm\varphi_2\le L^{-1/2}\norm\varphi_H\le pL^{-1/2}
|\varphi|_1$, also
$|\zeta|\norm\varphi_H^2=|b(\varphi,\varphi)|\le\norm{\varphi''}_2
\norm{(\rho\varphi)'}_2\le C_{\rm inv}h^{-1}(2L+pL^{-1/2})|\varphi|_1^2$.
Where $|\zeta|\asymp h^{-3}$, this gives damping of order at least
$h^{-2}$. This is a lower bound, not a two-sided asymptotic formula, and
it supplies no existence assertion for such eigenpairs.

\section*{How this paper was made}
The author brought several decades of work on the control and
stabilization of partial differential equations to this project. This
experience served to judge what makes sense: which questions are worth
solving, which statements are plausible and which arguments hold. The
work grew out of a sustained exchange with a main session of Claude
Opus~5.5. After the author had fixed the decomposition into
subproblems, the main session drafted a task statement for each
subproblem. It launched several threads in parallel, including threads
running GPT-6~Astra, and then coordinated and collected their outputs.
The author evaluated and checked these outputs, iterating until each
result was either included or set aside. At the author's repeated
request, Claude Opus~5.5 and GPT-6~Astra revised and cross-checked the
paper, up to the twenty-seventh version. The author has checked the final text and takes full
responsibility for it.

\end{document}